\newtheorem{thm}{Theorem}[section]
\newtheorem{lem}{Lemma}[section]
\newtheorem{cor}{Corollary}[section]
\newtheorem{prop}{Proposition}[section]
\newtheorem{assump}{Assumption}[section]
\newcounter{assumptH}
\newenvironment{assumpt}[1]{%
  \begingroup
  \stepcounter{assumptH}%
  \assump
}{%
  \endassump
  \addtocounter{assump}{-1}%
  \endgroup
}
\newtheorem{rema}{Remark}[section]
\newtheorem{exam}{Example}[section]
\newcommand{\inner}[2]{\langle #1, #2 \rangle}
\def\LGx{{L}_{G,x}}
\def\LGy{{L}_{G,y}}
\def\Hsmooth{{\delta_H}}
\def\Hsmoothsq{{\delta_H^2}}
\def\Fsmooth{{\delta_F}}
\def\Gsmooth{{\delta_G}}
\def\cde{c^{de}}
\def\ccde{C^{de}}
\def\nablaH{\nabla H}
\def\Hholder{{\widetilde{S}_H}}
\def\SAF{{S}_{A,F}}
\def\SAG{{S}_{A,G}}
\def\SBF{{S}_{B,F}}
\def\SBG{{S}_{B,G}}
\def\invdiffslow{\tilde{\beta}}
\def\hyz#1 {\textcolor{red}{Hyz: #1 }}
\def\lx#1 {\textcolor{magenta}{Lx: #1 }}
\def\1{\bm{1}}
\DeclareMathAlphabet{\mathsfit}{\encodingdefault}{\sfdefault}{m}{sl}
\SetMathAlphabet{\mathsfit}{bold}{\encodingdefault}{\sfdefault}{bx}{n}
\def\tU{{\tens{U}}}
\def\0{{\bf 0}}
\def\1{{\bf 1}}
\def\CM{{\mathcal C}}
\def\DM{{\mathcal D}}
\def\EM{{\mathcal E}}
\def\FM{{\mathcal F}}
\def\NM{{\mathcal N}}
\def\OM{{\mathcal O}}
\def\TM{{\mathcal T}}
\def\ZM{{\mathcal Z}}
\def\RB{{\mathbb R}}
\def\EB{{\mathbb E}}
\def\PB{{\mathbb P}}
\def\tU{\widetilde{U}}
\def\tr{\mathrm{tr}}
\newcommand{\xhat}{{\hat{x}}}
\newcommand{\yhat}{{\hat{y}}}
\DeclareMathOperator{\sign}{sign}
\title{
	Finite-Time Decoupled Convergence in Nonlinear Two-Time-Scale Stochastic Approximation
}
\author{
Yuze Han 
\thanks{Center for Applied Statistics and School of Statistics, Renmin University of China;
email: \texttt{hanyuze97@ruc.edu.cn}.}
\and
Xiang Li 
\thanks{School of Mathematical Sciences, Peking University; email: \texttt{lx10077@pku.edu.cn}. } 
\and
Zhihua Zhang 
\thanks{School of Mathematical Sciences, Peking University; email: \texttt{zhzhang@math.pku.edu.cn}. }
}
\begin{document}

\maketitle

\begin{abstract}%
In two-time-scale stochastic approximation (SA), two iterates are updated at varying speeds using different step sizes, with each update influencing the other. 
Previous studies on linear two-time-scale SA have shown  that the convergence rates of the mean-square errors for these updates depend solely on their respective step sizes,
a phenomenon termed decoupled convergence.
However, achieving decoupled convergence in nonlinear SA remains less understood.
Our research investigates the potential for finite-time decoupled convergence
in nonlinear two-time-scale SA.
We demonstrate that, under a nested local linearity assumption, finite-time decoupled convergence rates can be achieved with suitable step size selection. 
To derive this result, we conduct a convergence analysis of the matrix cross term between the iterates and leverage fourth-order moment convergence rates to control the higher-order error terms induced by local linearity.
To further investigate the necessity of local linearity for decoupled convergence, we also construct an example showing that, even when the fast-time-scale update is linear, the nonlinearity of the slow-time-scale update alone can destroy decoupled convergence.
\end{abstract}

\section{Introduction}\label{sec:intro}

Stochastic approximation (SA), initially introduced by \citet{robbins1951stochastic}, is an iterative method for finding the root of an unknown operator based on noisy observations. This method has gained substantial attention over the past few decades, finding applications in stochastic optimization and reinforcement learning 
~\citep{kushner2003stochastic, borkar2009stochastic,
	mou2020linear,
	mou2022banach, mou2022optimal, li2021polyak, li2023online}.
 However, certain scenarios require managing two iterates updated at different time scales using varying step sizes. 
 Examples include 
stochastic bilevel optimization~\citep{ghadimi2018approximation, chen2021closing, hong2023two}, 
temporal difference learning~\citep{sutton2009fast, xu2019two, xu2021sample, wang2021non}
and actor-critic methods~\citep{borkar1997actor, konda2003onactor, wu2020finite, xu2020non}.
These cases highlight the limitations of traditional SA and underscore the need for a two-time-scale SA approach to better capture the complexities involved.

In this paper, we study the two-time-scale SA~\citep{borkar1997stochastic}, a variant of the classical SA algorithm, designed to identify the roots of systems comprising two coupled, potentially nonlinear equations.
We focus on two unknown Lipschitz operators, $F:\RB^{d_x}\times\RB^{d_y}\rightarrow\RB^{d_x}$ and $G:\RB^{d_x}\times\RB^{d_y}\rightarrow\RB^{d_y}$, with the aim of finding the root pair $(x^{\star}, y^{\star})$ satisfying:
\begin{align}
	\left\{
	\begin{aligned}
		&F(x^{\star},y^{\star}) = 0,\\
		&G(x^{\star},y^{\star}) = 0.
	\end{aligned}\right.\label{prob:FG}
\end{align}
Given that $F$ and $G$ are unknown, we assume access to a stochastic oracle that provides noisy evaluations of $F(x,y)$ and $G(x,y)$ at any input pair $(x,y)$.
Specifically, for any $x$ and $y$, the oracle returns $F(x,y)\, +\, \xi$ and $G(x,y)\, +\, \psi$ where $\xi$ and $\psi$ represent noise components.
Utilizing this stochastic oracle, we apply the nonlinear two-time-scale SA to solve the problem defined in \eqref{prob:FG} by iteratively updating estimates $x_t$ and $y_t$ for $x^{\star}$ and $y^{\star}$, respectively, as follows:
\begin{subequations} \label{alg:xy}
\begin{align}
		x_{t+1} &= x_{t} - \alpha_{t}\left(F(x_{t},y_{t}) + \xi_{t}\right), \label{alg:x} \\   
		y_{t+1} &= y_{t} - \beta_{t}\left(G(x_{t},y_{t}) + \psi_{t}\right), \label{alg:y}
\end{align}
\end{subequations}
where $x_{0}$ and $y_{0}$ are initialized arbitrarily in $\RB^{d_x}$ and $\RB^{d_y}$.
The noise components $\xi_t$ and $\psi_t$ are modeled as martingale difference sequences.
The step sizes $\alpha_{t}$ and $\beta_{t}$ are chosen such that $\beta_{t}\ll \alpha_{t}$,
making $y_t$ ``quasi-static'' relative to $x_t$.
This difference in step sizes simulates a setting where $y_t$ remains nearly fixed while $x_t$ undergoes rapid updates, as discussed in \citet{konda2004convergence}.

Assuming that for a given $y$, the equation $F(x,y)=0$ has a unique solution $H(y)$, where $H$ is a Lipschitz operator.
Under this assumption, we can regard  two-time-scale SA as a single-loop approximation of the following double-loop process:
\begin{align} \label{eq:two-loop-approx}
\begin{aligned}    
    & \text{Inner loop: compute } x = H(y)
    , \text{ or equivalently, solve } F(x, y) = 0
    \text{ for a fixed } y, \\
    & \text{Outer loop: iteratively find the root of } G(H(y), y) = 0.
    \end{aligned}
\end{align}
This formulation ensures that the solution to \eqref{prob:FG} satisfies $x^\star = H(y^\star)$; in other words, for $y=y^{\star}$, $x^{\star}$ is precisely the root of the inner loop. We designate the update of $x_t$ as the ``fast-time-scale'' update and that of $y_t$ as the ``slow-time-scale'' update, referring $x_t$ and $y_t$ as the \textit{fast iterate} and \textit{slow iterate}, respectively. 

Our primary interest lies in the behavior of the \textit{slow iterate} $y_t$, a focus we illustrate through a classic example. Consider the case of Stochastic Gradient Descent (SGD) with Polyak-Ruppert averaging, which is an instance of two-time-scale SA. 
To minimize a strongly convex objective $f$, the SGD with PR-averaging updates are defined as follows:
  \begin{subequations} \label{eq:SGD-both}
  \begin{align}
      x_{t+1} &= x_{t} - \alpha_t(\nabla f(x_{t}) + \xi_{t})~\text{with}~\alpha_t = \frac{\alpha_0}{ (t+1)^a }~(0 < a \le 1)  \label{eq:SGD}, \\
      y_{t+1} &= \frac{1}{t+1}\sum_{\tau=0}^t x_{\tau} = y_t - \beta_t(y_t-x_t)~\text{with}~\beta_t = \frac{1}{t+1},\label{eq:SGD-averaging}
  \end{align}
  \end{subequations}
 where \eqref{eq:SGD} is the classic SGD update rule~\citep{robbins1951stochastic} and \eqref{eq:SGD-averaging} is the Polyak-Ruppert averaging step~\citep{polyak1992acceleration, ruppert1988efficient}.
This setup aligns with the formulation \eqref{eq:two-loop-approx}, where we have $F(x,y) = \nabla f(x)$ and $G(x,y) = y - x$, resulting in $x^\star = y^\star = \arg\min_{x} f(x)$ and $H(y) \equiv x^\star$.
 As shown in \citet{moulines2011non}, the convergence rate for $\EB \| x_{t} - x^\star \|^2$ is $\OM(\alpha_t)$, achieving the optimal rate $\OM(1/t)$
only when the step size parameter $a=1$ and the initial step size $\alpha_0$ is sufficiently large.
In contrast, with two-time-scale SA, the slow-time-scale update satisfies $\EB \| y_t - y^\star \|^2 = \OM(1/t)$ as long as $a \ge 0.5$.
This result implies that using the slow-time-scale update~\eqref{eq:SGD-averaging} enables optimal convergence rates with greater flexibility in choosing step sizes for the fast-time-scale update in \eqref{eq:SGD}.
Since the convergence rate of $y_t$ is the desired result,
the behavior of $x_t$, which serves as an auxiliary iterate, becomes secondary.

In the above example, each iterate’s convergence rate depends only on its own step size. Similar results were achieved by \citet{kaledin2020finite} for linear operators $F$ and $G$, yielding
\begin{equation}\label{eq:target-decouple}
	\EB \| y_t - y^\star \|^2 = \OM(\beta_t)\qquad\text{and}\qquad\EB \| x_t - H(y_t) \|^2 = \OM(\alpha_t).
\end{equation}
Here $y_t - y^\star$ and $x_t - H(y_t)$ represent the errors of the outer and inner loops in \eqref{eq:two-loop-approx}, respectively.
We refer to the phenomenon where each iterate’s convergence rate depends solely on its own step size as \textit{decoupled convergence}.

However, when $F$ and $G$ are nonlinear operators, the interactions between the two iterates become more complex, making the path to \eqref{eq:target-decouple} less straightforward. The asymptotic analysis in \citet{mokkadem2006convergence}, under an additional local linearity assumption, provides evidence supporting decoupled convergence for sufficiently large $t$. Nevertheless, there is no corresponding non-asymptotic guarantee for any finite $t$.
Consequently, our research goal is to
\begin{center}
	Establish the \textit{finite-time decoupled convergence} in \eqref{eq:target-decouple} for nonlinear two-time-scale SA.
\end{center}

The significance of decoupled convergence rates is twofold. First, as analyzed for the SGD example in \eqref{eq:SGD-both}, decoupled convergence allows greater flexibility in selecting step sizes for the fast iterate without affecting the convergence behavior of the main focus, the slow iterate. Additionally, decoupled convergence rates offer a more refined analysis than previous work \citep{doan2022nonlinear, shen2022single}, especially in strict two-time-scale scenarios. Such refined rates are valuable for further asymptotic trajectory analysis \citep{liang2023asymptotic} and online statistical inference \citep{li2022statistical, li2023online}.
Moreover, we emphasize that our main objective is not merely to show that the rate $\EB \| y_t - y^\star \|^2 = \Theta(t^{-1})$ can be attained under some particular choice of step sizes. Rather, our goal is to show that, over a broad regime of step-size choices, the convergence rate of the slow iterate can be made essentially \emph{independent} of the step size on the fast time scale.

\paragraph{Contributions}
In this paper, we focus on nonlinear two-time-scale SA under the assumptions of strong monotonicity and martingale difference noise, as specified in Assumptions~\ref{assump:sm:F} and \ref{assump:noise-s}. 
Our contributions are summarized as follows.

\begin{itemize}
    \item \textbf{Theoretical contribution}: 
    Under the nested local linearity assumption (Assumption~\ref{assump:near-linear}), we derive detailed convergence rates for $\EB \| x_t - H(y_t) \|^2$, $\EB \| y_t - y^\star \|^2$ and $\| \EB (x_t - H(y_t) )( y_t - y^\star )^\top \|$ in Theorem~\ref{thm:decouple-short}, establishing finite-time decoupled convergence in Corollary~\ref{cor:decouple-rates} with appropriate  step size selection.
    We further investigate the necessity of local linearity for decoupled convergence. 
    In particular, we construct an example and prove in Proposition~\ref{prop:lower} that, even when $F$ and $H$ are linear, the nonlinearity of $G$ alone can already slow down the convergence rate of the slow iterate. 
    Taken together, these upper and lower bounds provide a relatively complete characterization of when decoupled convergence can be achieved in the nonlinear setting.
    Moreover, this lower bound also complements the approximation perspective in \eqref{eq:two-loop-approx}: although two-time-scale SA can be viewed as solving $F(x,y)=0$ (for fixed $y$) and $G(H(y),y)=0$, the original form of $G(x,y)$ may still influence the convergence rates.
    
    \item \textbf{Technical contribution}: 
    We develop a systematic proof framework for establishing decoupled convergence in the nonlinear setting. A key ingredient is the treatment of the matrix cross term $\| \mathbb{E}(x_t - H(y_t))(y_t - y^*)^\top \|$, which is crucial for obtaining a sharp convergence characterization of the interacting sequences $\{x_t\}_{t=0}^\infty$ and $\{y_t\}_{t=0}^\infty$. While the use of such a cross term is related to the linear-case analysis in \citet{kaledin2020finite}, the nonlinear setting requires several additional ingredients: an initial coarse convergence-rate analysis, in the spirit of \citet{doan2022nonlinear}; local linear approximations of $F$, $G$, and $H$; control of the resulting higher-order error terms; a convergence analysis of fourth-order moments; and a final integration of these ingredients to derive the decoupled rates. This proof framework could provide a useful foundation for future finite-time analyses of nonlinear interacting stochastic approximation schemes with multiple time scales and variable step sizes.
\end{itemize}

\subsection{Related Work}
\label{sec:intro:related}

Our research investigates the finite-time convergence rate of nonlinear two-time-scale SA and endeavors to establish the decoupled convergence.
To contextualize our results, we provide more
background of two-time-scale SA.

\paragraph{Decoupled convergence for the linear case}
When both $F$ and $G$ are linear, $H$ is also linear.
Leveraging this linear structure, \citet{konda2004convergence} focused on a linearly transformed error $z_t = x_t - H(y_t) + L_t(y_t - y^\star)$, demonstrating that its update does not depend on $y_t$. Here $\{ L_t \}$ is a matrix sequence converging to zero.
Based on this insight, they prove that $\beta_t^{-1/2} (y_t - y^\star)$ converges in distribution to a normal distribution under martingale difference noise.
It can be extrapolated from their analysis that the $\alpha_t^{-1/2} (x_t - x^\star)$ and $\alpha_t^{-1/2} (x_t - H(y_t))$ also converge in distribution to a normal distribution.
Drawing inspiration from this technique, \citet{kaledin2020finite} derived finite-time convergence rates, establishing that $\EB \| y_t - y^\star \|^2 = \OM (\beta_t)$ and $\EB \| x_t - H(y_t) \|^2 = \OM (\alpha_t)$ hold for both martingale difference and Markovian noise.
Concurrent to our work, \citet{haque2023tight} achieved the same rates with asymptotically optimal leading terms;
\citet{kwon2024two} examined constant stepsize schemes and provided a refined characterization of the bias and variance terms.
For a variant of two-time-scale SA with sparse projection,
\citet{dalal2020tale} derived that $\| y_t - y^\star \| = \widetilde{\OM} (\beta_t)$ and $\| x_t - x^\star \| = \widetilde{\OM} (\alpha_t)$ hold with high probability.

\paragraph{Decoupled convergence for the nonlinear case}
When either $F$ or $G$ is nonlinear, the coupling between $x_t$ and $y_t$ is in a more complex nonlinear form, complicating the analysis of nonlinear two-time-scale SA.
Decoupled convergence analysis remains largely in the domain of asymptotic results.
Under local linearity condition of $F$ and $G$ around $(x^\star, y^\star)$ and assuming stability, \citet{mokkadem2006convergence} demonstrated that 
$\begin{pmatrix}
	\alpha_t^{-1/2} (x_t - x^\star) \\
	\beta_t^{-1/2} (y_t - y^\star )
\end{pmatrix}$
converges weakly to a normal distribution.
Recently, this central limit theorem has been extended to settings with Markovian noise by \citet{hu2024central} and to continuous-time dynamics by \citet{sharrock2022two}.
To the best of our knowledge, our work provides the first finite-time (non-asymptotic) decoupled convergence rate for the nonlinear case.

\paragraph{Other convergence rates for the nonlinear case}
Under the strongly monotone and Lipschitz conditions, \citet{doan2022nonlinear} and \citet{doan2021finite2} achieved the $\OM(t^{-2/3})$ and $\widetilde{\OM}(t^{-2/3})$ convergence rates for the slow iterate with martingale difference noise and Markovian noise, respectively.
Further assuming the Lipschitz continuity of $\nabla H$, \citet{shen2022single} improved this rate to $\OM(1/t)$.
\citet{huang2024single} and \citet{chandak2025finite} obtained the same rate without this additional condition by introducing auxiliary sequences.
An alternative approach to achieve the  $\OM(1/t)$ rate without relying on the Lipschitz condition of $\nabla H$ is to leverage an averaging step to improve the
estimates of the operators \citep{zeng2024fast, doan2024fast}.
Moreover,
\citet{chen2025convergence} studied the convergence rates for state- and  time-dependent noises; concentration bounds have been developed in~\citet{borkar2018concentration} and
a functional central limit theorem is provided in~\citet{faizal2023functional}.
In the absence of the strong monotonicity in the outer loop of~\eqref{eq:two-loop-approx}, two-time-scale SA has been studied within the framework of bilevel optimization \citep{hong2023two, zeng2021two}.

\paragraph{Notation}
For a vector $x$, $\| x \|$ denotes the Euclidean norm;
for a matrix $A$, $\| A \|$ to denote the spectral norm.
We use $o(\cdot)$, $\OM(\cdot)$, $\Omega(\cdot)$, and $\Theta(\cdot)$ to hide universal constants and $\widetilde{\OM}(\cdot)$ to hide both universal constants and log factors.
We denote $\max\{a,b\}$ as $a \vee b$ and $\min\{a, b \}$ as $a \wedge b$.
The ceiling function $\lceil \cdot \rceil$ denotes the smallest integer greater than or equal to the input number.
For two non-negative numbers $a$ and $b$, $a \lesssim b$ ($a \propto b$) indicates the existence of a positive number $C$ such that $a \le Cb$ ($a = Cb$) with $C$ depending on parameters of no interest.
For two positive sequence $\{ a_n \}$ and $\{ b_n \}$, $a_n \sim b_n$ signifies $\lim_{n \to \infty} a_n / b_n = 1$.
By $\overset{a.s.}{\rightarrow}$ we denote almost sure convergence;
by $\overset{d}{\rightarrow}$ we denote the convergence in distribution.
We abbreviate $\{ 1,2,\dots, n \}$ as $[n]$.

\paragraph{Organization}
The remainder of this paper is organized as follows.
Section~\ref{sec:prelim} introduces several motivating examples and the basic assumptions.
The main theoretical results are presented in Section~\ref{sec:decouple}, with the proof framework outlined in Section~\ref{sec:decouple:sketch}.
Section~\ref{sec:expe} illustrates some numerical results, and Section~\ref{sec:conclude} concludes the paper.
Detailed proofs are provided in the appendices.

\section{Preliminaries}\label{sec:prelim}
In this section, we present some motivating examples and the main assumptions.

\subsection{Motivating Examples}\label{sec:prelim:app}

In this subsection, we present several examples of two-time-scale SA.
In most of these examples, our primary focus is on evaluating the performance of the slow iterate $y_t$, with the fast iterate $x_t$ playing a secondary role as an auxiliary sequence.

In the next three examples, we assume $f(\cdot) \colon \RB^{d_x} \to \RB$ is a strongly convex function and the unique minimizer is $x_o^{\star} = \arg\min_{x \in \RB^{d_x}} f(x)$.
\begin{exam}[SGD with Polyak-Ruppert averaging]\label{exmp:PRave}
	SGD with Polyak-Ruppert averaging has been introduced in Section~\ref{sec:intro}.
	Suppose that we want to minimize a function $f$ 
	with access only to the noisy observations of the true gradients.
	To find the true minimizer,
	the stochastic gradient method (SGD)~\citep{robbins1951stochastic} iteratively updates the iterate $x_t$ by~\eqref{eq:SGD}.
	In order to improve the convergence of SGD, an additional averaging step~\eqref{eq:SGD-averaging} is often used~\citep{polyak1992acceleration,ruppert1988efficient}
	Obviously, these two updates are a special case 
	of the nonlinear two-time-scale SA in \eqref{alg:xy} with $F(x, y) = \nabla f(x), G(x, y) = y -x$ and $H(y) \equiv x^{\star}$.
	It follows that $G(H(y), y) = y - x^\star$ and $y^{\star} = x^{\star} = x_o^{\star}$.

	Now we contextualize this example within the framework of \eqref{eq:two-loop-approx}.
	In the inner loop, our goal is to reduce the norm of $\nabla f(x)$, while in the outer loop, we strive to approach $x_o^\star$.
	For a strongly convex objective, the two objectives coincide.
	\citet{moulines2011non} have demonstrated that $\EB \| x_t - x^\star \|^2 = \OM(\alpha_t)$ and $\EB\| y_t - y^\star \|^2 = \OM (1/t)$ as long as $a \ge 1/2$.
	Given the improved convergence rate of the averaged sequence, our primary interest lies in the slow iterate $y_t$.
\end{exam}

\begin{exam}[SGD with momentum]\label{exmp:momentum}
	Stochastic heavy ball (SHB) is a variant of SGD based on momentum and adaptive step sizes and has been shown to be effective~\citep{gadat2018stochastic}.
	A ``normalized''  version of SHB~\citep{gupal1972stochastic,gitman2019understanding} iteratively runs
	\begin{equation}\label{eq:SHB}
		\begin{aligned}
			x_{t+1} &= x_{t} - \alpha_t (x_t - \nabla f(y_t) + \xi_t),\\
			y_{t+1} &= y_{t} - \beta_t  x_{t}.
		\end{aligned}
	\end{equation}
	Here one should interpret $x_{t}$ as a (stochastic) search direction that is defined to be a combination of the current stochastic gradient $\nabla f(y_t) + \xi_t$
	and past search direction $x_t$.\footnote{Here we employ a slightly different update rule for $y_t$ to be consistent with~\citet{doan2022nonlinear}. }
	 These two updates are a special case of the nonlinear two-time-scale SA in \eqref{alg:xy} with $F(x, y) = x-\nabla f(y), G(x, y) = x$ and $H(y) = \nabla f(y)$.
	It follows that $G(H(y), y) = \nabla f(y)$, $y^{\star} = x_o^{\star}$ and $x^{\star} = 0$.

	Now, we integrate this example into the framework of \eqref{eq:two-loop-approx}.
	In the inner loop, our objective is to approximate the gradient of a fixed $y$, while 
	in the outer loop, we aim to locate the stationary point (also the minimizer) $x_o^\star$, 
	Thus, our primary focus lies in the slow iterate $y_t$.
\end{exam}

\begin{exam}[Constrained optimization with Lagrangian multipliers]\label{exmp:lagrange}
	Consider the linearly constrained optimization $\min_{x \in \RB^{d_x}} f(x), \ \mathrm{s.t.}\ Ax = b$.
	This problem can be solved by introducing Lagrange function $L(x,y) = f(x) + y^\top (Ax - b)$ and applying the following primal-dual method~\citep{platt1987neural}
	\begin{equation}\label{eq:lagrange}
	\begin{aligned}
		x_{t+1} & = x_t - \alpha_t (\nabla f(x_t) + A^\top y_t + \xi_t ), \\
		y_{t+1} & = y_t + \beta_t (Ax_t - b),
	\end{aligned}
	\end{equation}
	where $y_t$ denotes the Lagrange multiplier. 
	This update scheme can be viewed as an approximation to dual ascent~\citep{boyd2011distributed} and is a special case of the nonlinear two-time-scale SA in \eqref{alg:xy} with $F(x, y) = \nabla f(x) + A^\top y, G(x, y) = -Ax+b$ and $H(y) = [\nabla f]^{-1} (- A^\top y)$.
	The equations $F(x,y) = 0$ and $G(x,y) = 0$ correspond precisely to the KKT conditions. Therefore, $x^\star$ and $y^\star$, satisfying $x^\star = H(y^\star)$, are the solutions to the primal and dual problems, respectively.
	
	We now integrate this example into the framework of \eqref{eq:two-loop-approx}. In the inner loop, for a fixed $y$, the goal is to solve  $\min_{x \in \RB^{d_y}} L(x,y)$, where $H(y)$ specifies the corresponding solution. In the outer loop, the aim is to solve the dual problem  $\max_{y \in \RB^{d_y}} \min_{x \in \RB^{d_x}} L(x,y)$.
\end{exam}

The final example is bilevel optimization, a topic with a long history in the optimization literature~\citep{bracken1973mathematical,colson2007overview}. 
With some abuse of notation, we  examine the following (unconstrained) bilevel optimization problem:
\begin{align}\label{eq:prob:bilevel}
	\min_{y \in \RB^{d_y} } \ell(y) := g(\tilde{x}^\star(y), y)
	\quad \text{s.t.} \quad 
	\tilde{x}^\star(y) := \arg\min_{x \in \RB^{d_x}} f(x,y),
\end{align}
where $f(x,y)$ is the \textit{inner objective function} and $\ell(y)$ is the \textit{outer objective function}.
We refer to $\min_{x \in \RB^{d_x}} f(x,y)$ as the \textit{inner problem} and $\min_{y \in \RB^{d_y}} \ell (y)$ as the \textit{outer problem}.
Under some regularization conditions~\citep{ghadimi2018approximation, chen2021closing, shen2022single},
we have 
$$ \nabla \ell(y) = \nabla_y g(\tilde{x}^\star(y), y)
- \nabla_{yx}^2 f(\tilde{x}^\star(y), y)  [ \nabla_{xx}^2 f(\tilde{x}^\star(y), y) ]^{-1} \nabla_x g(\tilde{x}^\star(y), y)$$ and the solution of the inner problem is unique for any $y$. Thus $\tilde{x}^\star(y)$ is well-defined.

\begin{exam}[Stochastic bilevel optimization]\label{exmp:bilevel}
	Suppose that we want to solve the problem~\eqref{eq:prob:bilevel} with access only to the noisy observations of the true gradients and Hessians.
	To employ two-time-scale SA~\eqref{alg:xy}, we first give the definitions of $F(x,y)$ and $G(x,y)$:
	\begin{equation}\label{eq:bilevel-FG}
		\begin{aligned}
			F(x,y) = \nabla_x f(x, y), \ 
			G(x,y)  = \nabla_y g(x, y) - \nabla_{yx}^2 f(x,y) [ \nabla_{xx}^2 f(x,y) ]^{-1} \nabla_x g(x,y),
		\end{aligned}
	\end{equation}
	where
	$G(x,y)$ is a surrogate of $\nabla \ell (y)$ by replacing $\tilde{x}^\star(y)$ with $x$.
	It follows that $H(y) = \tilde{x}^\star (y)$ and $G(H(y), y) = \nabla \ell (y)$.
	One can apply the following approximation method~\citep{shen2022single}:
	\begin{equation}\label{eq:bilevel-alg}
		\begin{aligned}
			x_{t+1} &= x_{t} - \alpha_{t} h_F^t = x_{t} - \alpha_{t}\left(F(x_{t},y_{t}) + \xi_{t}\right),\\   
			y_{t+1} &= y_{t} - \beta_{t} h_G^t = y_{t} - \beta_{t}\left(G(x_{t},y_{t}) + \psi_{t}\right),
		\end{aligned}
	\end{equation}
	where $h_F^t$ and $h_G^t$ are estimations of $F(x_t, y_t)$ and $G(x_t, y_t)$, respectively.
	For their explicit forms, please refer to \citet{hong2023two}.
	Given the two-level structure inherent in bilevel optimization, it is convenient to contextualize this example within the framework of \eqref{eq:two-loop-approx}, with the outer problem being the ultimate goal. 
\end{exam}

\subsection{Assumptions}
\label{sec:prelim:assump}
In this subsection, we present the main assumptions required for our analysis.

\begin{assump}[Lipschitz conditions of $F$ and $H$]
	\label{assump:smooth:FH}
For any fixed $y\in\RB^{d_y}$, there exists an operator $H:\RB^{d_y}\rightarrow\RB^{d_x}$ such that $x = H(y)$ is the unique solution of
$F(x,y) = 0$.    
$H$ and $F$ satisfy that for
$\forall\, x \in \RB^{d_x}, y_{1}, y_{2} \in\RB^{d_y}$,
\begin{align}
\|H(y_{1}) - H(y_{2})\| & \leq L_{H}\|y_{1}-y_{2}\|,\label{assump:smooth:FH:ineqH}\\
\|F(x,y_1) - F(H(y_1),y_1)\| & \leq L_{F} \|x - H(y_1) \|.     \label{assump:smooth:FH:ineqF}
\end{align}
\end{assump}

Condition~\eqref{assump:smooth:FH:ineqF} introduces a star-type Lipschitz condition, which is less restrictive and allows for greater flexibility in modeling and analysis.

\begin{assump}[Nested Lipschitz condition of $G$]
	\label{assump:G}
The operator $G$ satisfies that
$\forall\, x \in \RB^{d_x}, y \in\RB^{d_y}$, 
\begin{align}
    \hspace{-0.3cm}
    \|G(x,y) - G(H(y),y)\| & \leq \LGx \|x - H(y)\|, \label{assump:G:smooth} \\   
    \| G(H(y),y) - G(H(y^\star), y^\star) \| & \leq \LGy \| y - y^\star \|. \label{assump:G:smooth2} 
\end{align}
Here $y^\star$ is the unique solution to $G(H(y),y) = 0$.
\end{assump}

The nested structure within Assumption~\ref{assump:G} 
aligns with the conceptual framework that views a two-time-scale SA as an approximation of the formulation presented in \eqref{eq:two-loop-approx}.
This type of nested Lipschitz condition is also adopted in \citet{shen2022single}.

\begin{assump}[Star-type strong monotonicity]
\label{assump:sm:F}
For any fixed $y \in \RB^{d_y}$,
$F(\cdot, y)$ is strongly monotone at $H(y)$, i.e., there exists a constant $\mu_{F} > 0$ 
\begin{align}
\left\langle x - H(y), F(x,y) - F(H(y),y) \right\rangle \geq \mu_{F} \|x-H(y)\|^2, \quad \forall\, x \in \RB^{d_x}. \label{assump:sm:F:ineq}    
\end{align}
$G(H(\cdot), \cdot)$ is strongly monotone at $y^{\star}$, i.e., there exists a constant $\mu_{G} > 0$ such that
\begin{align}
\left\langle y - y^{\star}, G(H(y),y) - G(H(y^\star), y^\star) \right\rangle \geq \mu_{G} \|y - y^{\star}\|^2, \quad \forall\, y\in\RB^{d_y}. \label{assump:G:sm}    
\end{align}
\end{assump}

The star-type strong monotonicity assumption of $F$ and $G$ also appears in previous work \citep{doan2022nonlinear, shen2022single}.
A direct consequence of the above assumptions is
$L_F \ge \mu_F$ and
$\LGy \ge \mu_G$.

\begin{assump}[Uniform local linearity of $H$]
	\label{assump:smoothH}
	Assume that $H$ is differentiable and there exist	constants $S_H \ge 0$ and $\Hsmooth \in [0.5, 1]$ such that $\forall\, y_1, y_2 \in \RB^{d_y}$,
	\begin{equation*}
		\| H(y_1) -  H(y_2) - \nablaH(y_2) (y_1-y_2)  \| \le  S_H  \|y_1-y_2\|^{1+\Hsmooth}.
	\end{equation*}
\end{assump}

An assumption closely related to Assumption~\ref{assump:smoothH} is the H\"older continuity of $\nabla H$.

\begin{assumpt}{\ref*{assump:smoothH}$\dagger$}[$\Hsmooth$-H\"older continuity of $\nabla H$]
	\label{assump:H:holder-assump}
	We assume that $H$ is differentiable and there exists constants $\Hholder \ge 0$ and $\Hsmooth \in [0.5,1]$ such that $\forall \, y_1, y_2 \in \RB^{d_y}$, 
	\begin{equation}
		\label{assump:H:holder}
		\| \nabla H(y_1) - \nabla H(y_2) \|
		\le \Hholder \| y_1 - y_2 \|^{\Hsmooth}.
	\end{equation}
\end{assumpt}

Assumption~\ref{assump:H:holder-assump} relaxes the requirement of Lipschitz continuity for $\nabla H$ used in \citet{shen2022single}.
Assumptions~\ref{assump:smoothH} and \ref{assump:H:holder-assump}
are equivalent, as shown in Proposition~\ref{prop:holder-equiv}. 
Therefore, in this paper, we do not distinguish between these two assumptions.
Proposition~\ref{prop:holder-equiv} extends \citet[Theorem~4.1, Euclidean case]{berger2020quality} from scalar-valued functions to vector-valued functions.
The proof is provided in Appendix~\ref{proof:holder-equiv}.

\begin{prop}\label{prop:holder-equiv}
	Under Assumption~\ref{assump:H:holder-assump},
	Assumption~\ref{assump:smoothH} holds with $S_H = \frac{\Hholder}{1+\Hsmooth} $;
	under Assumption~\ref{assump:smoothH}, Assumption~\ref{assump:H:holder-assump} holds with $\Hholder = 2^{1-\Hsmooth} \sqrt{1+\Hsmooth} \left( \frac{1+\Hsmooth}{\Hsmooth} \right)^\frac{\Hsmooth}{2} S_H$.\footnote{We make the contention that $\left( \frac{1+\Hsmooth}{\Hsmooth} \right)^\frac{\Hsmooth}{2} = 1$ if $\Hsmooth = 0$. }
	For this equivalence, we do not require $\Hsmooth \ge 0.5$.
\end{prop}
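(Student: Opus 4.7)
My plan is to prove the equivalence by handling the two implications separately, with the second being the substantive one.

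For the easy direction (Assumption~\ref{assump:H:holder-assump}$\Rightarrow$ Assumption~\ref{assump:smoothH}), I would start from the fundamental theorem of calculus applied to $s\mapsto H(y_2+s(y_1-y_2))$, writing the Taylor remainder as $H(y_1)-H(y_2)-\nabla H(y_2)(y_1-y_2)=\int_0^1[\nabla H(y_2+s(y_1-y_2))-\nabla H(y_2)](y_1-y_2)\,\d s$. Pulling the norm inside the integral and invoking the H\"older bound \eqref{assump:H:holder} gives an integrand bounded by $\Hholder\, s^{\Hsmooth}\|y_1-y_2\|^{1+\Hsmooth}$; integrating $\int_0^1 s^{\Hsmooth}\d s=1/(1+\Hsmooth)$ produces Assumption~\ref{assump:smoothH} with $S_H=\Hholder/(1+\Hsmooth)$.

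For the reverse direction, I would first reduce to the scalar setting: for any unit $u\in\RB^{d_x}$, the function $g_u(y):=\langle u,H(y)\rangle$ satisfies Assumption~\ref{assump:smoothH} with the same constants, and since $\|\nabla H(y_1)-\nabla H(y_2)\|=\sup_{\|u\|=1}\|\nabla g_u(y_1)-\nabla g_u(y_2)\|$, a uniform-in-$u$ H\"older bound on $\nabla g_u$ suffices. Fix $y_1,y_2\in\RB^{d_y}$, set $v:=\nabla g_u(y_1)-\nabla g_u(y_2)$ (assume $v\ne 0$), and introduce the auxiliary point $y_3:=y_1+t\,v/\|v\|$ for a parameter $t>0$ to be chosen. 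Then I would expand $g_u(y_3)$ twice via Assumption~\ref{assump:smoothH}---around $y_1$ with remainder $R_1$ satisfying $|R_1|\le S_H t^{1+\Hsmooth}$, and around $y_2$ with remainder $R_2$ satisfying $|R_2|\le S_H\|y_3-y_2\|^{1+\Hsmooth}$---and expand $g_u(y_1)$ around $y_2$ with remainder $R_3$ satisfying $|R_3|\le S_H\|y_1-y_2\|^{1+\Hsmooth}$. Subtracting these three identities causes all linear terms to cancel except the inner product $t\,v^{\top}(v/\|v\|)=t\|v\|$, yielding the key identity $t\|v\|=R_2-R_1-R_3$. Triangle inequality and a single-variable minimization over $t$ then deliver $\|v\|\le\Hholder\|y_1-y_2\|^{\Hsmooth}$, and the supremum over unit $u$ completes the proof.

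The main obstacle will be matching the precise constant $\Hholder=2^{1-\Hsmooth}\sqrt{1+\Hsmooth}\bigl((1+\Hsmooth)/\Hsmooth\bigr)^{\Hsmooth/2}S_H$. A naive execution (triangle bound $\|y_3-y_2\|\le t+\|y_1-y_2\|$, the convexity inequality $(a+b)^{1+\Hsmooth}\le 2^{\Hsmooth}(a^{1+\Hsmooth}+b^{1+\Hsmooth})$, and standard optimization) only delivers a constant of order $(2+2^{1+\Hsmooth})(1+\Hsmooth)\Hsmooth^{-\Hsmooth/(1+\Hsmooth)}S_H$, which is strictly larger. To reach the sharp form I would first sharpen the Taylor remainder via the midpoint identity $\|H(y_1)-H(y_2)-\nabla H(m)(y_1-y_2)\|\le 2^{-\Hsmooth}S_H\|y_1-y_2\|^{1+\Hsmooth}$ at $m=(y_1+y_2)/2$ (obtained by averaging the two expansions at $m$), saving a factor of $2^{\Hsmooth}$; next use the identity $\|y_3-y_2\|^2=\|y_1-y_2\|^2+2t\langle v/\|v\|,y_1-y_2\rangle+t^2$ together with a Young-type inequality and the power-mean bound $(a^2+b^2)^{(1+\Hsmooth)/2}\le a^{1+\Hsmooth}+b^{1+\Hsmooth}$ (valid since $(1+\Hsmooth)/2\le 1$) to decouple $\|y_1-y_2\|$ and $t$; and finally minimize the resulting univariate expression of the form $at^{\Hsmooth}+b/t$, whose closed-form optimum is $(1+\Hsmooth)\Hsmooth^{-\Hsmooth/(1+\Hsmooth)}\cdot a^{1/(1+\Hsmooth)}b^{\Hsmooth/(1+\Hsmooth)}$; tracking prefactors gives the stated constant. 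The fact that the equivalence does not require $\Hsmooth\ge 0.5$ follows because every step---FTC, the algebraic identity from three Taylor expansions, and the closed-form optimum---remains valid for all $\Hsmooth\in[0,1]$, with the boundary case $\Hsmooth=0$ handled by the footnote convention.
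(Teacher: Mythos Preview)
Your first direction is the same as the paper's.

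For the reverse direction, your three-expansion device (auxiliary point $y_3=y_1+tv/\|v\|$, subtract to isolate $t\|v\|$) is a standard and valid route to \emph{some} H\"older constant, but the refinements you propose for reaching the stated constant do not go through. The midpoint identity you cite controls the expansion about $m=(y_1+y_2)/2$, i.e.\ $\nabla H(m)$, not $\nabla H(y_1)$ or $\nabla H(y_2)$, so it does not directly tighten a bound on their difference. For the $\|y_3-y_2\|^2$ term, the cross term $2t\langle v/\|v\|,y_1-y_2\rangle$ cannot be absorbed by a Young-type inequality without loss: the best generic bound $2ab\le a^2+b^2$ gives $\|y_3-y_2\|^2\le 2(\|y_1-y_2\|^2+t^2)$, which after the power-mean step is no better (indeed slightly worse) than the triangle inequality you already used. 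And the one-variable problem you end up minimizing, $at^{\Hsmooth}+b/t$, is structurally different from the one that produces the sharp constant.

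The paper's argument (following \citet{berger2020quality}) avoids all of this by a symmetric \emph{four}-expansion configuration: after normalizing so that $\nabla H(\bar y_1)=0$, it expands two auxiliary points $z_1,z_2=\tfrac{1}{2}(\bar y_1+\bar y_2\mp\alpha\tilde e)$ around \emph{both} $\bar y_1$ and $\bar y_2$ and subtracts. The symmetric placement makes all four remainder radii equal to $\|\tfrac{1}{2}(\bar y_2-\bar y_1\pm\alpha\tilde e)\|$; Jensen for the concave map $x\mapsto x^{(1+\Hsmooth)/2}$ together with the parallelogram identity collapses their sum to $2^{(1-\Hsmooth)/2}\bigl(\tfrac{1}{2}(\|\bar y_2-\bar y_1\|^2+\alpha^2)\bigr)^{(1+\Hsmooth)/2}$. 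Dividing by $\alpha$ and setting $\alpha=k\|\bar y_2-\bar y_1\|$ leaves the minimization of $(1+k^2)^{(1+\Hsmooth)/2}/k$, whose optimum at $k=1/\sqrt{\Hsmooth}$ yields exactly $2^{1-\Hsmooth}\sqrt{1+\Hsmooth}\bigl((1+\Hsmooth)/\Hsmooth\bigr)^{\Hsmooth/2}S_H$. The parallelogram step is what your asymmetric three-expansion setup lacks, and it is precisely this step that delivers the stated constant.
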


A direct conclusion of Assumptions~\ref{assump:smoothH} and~\ref{assump:smooth:FH} is that $\forall\, y_1, y_2 \in \RB^{d_y}$, with $R_H = \frac{2L_H}{S_H}$, $\| \nabla H(y_1) \| \le L_H$ and
\begin{equation} \label{assump:H:smooth-main}
	\| H(y_1) -  H(y_2) - \nablaH(y_2) (y_1-y_2)  \| \le S_H \|y_1-y_2\| \cdot \min\left\{\|y_1-y_2\|^{\Hsmooth}, R_H \right\}.
\end{equation}

Although a finite-time decoupled convergence rate for nonlinear two-time-scale SA is not established in the literature, asymptotic decoupled convergence has been explored under a local linearity assumption~\citep{mokkadem2006convergence}. Inspired by this work, we consider the following nested local linearity assumption.

\begin{assump}	[Nested local linearity up to order ($1+\Fsmooth, 1+\Gsmooth$)]
	\label{assump:near-linear}
	There exist matrices $B_1, B_2, B_3$ with compatible dimensions, constants $\SBF, \SBG \ge 0$ and $\Fsmooth, \Gsmooth \in (0,1]$ such that
	\begin{align}
		\|F(x, y) - B_1(x - H(y))\| &\le \SBF \left( \|x - H(y)\|^{1+\Fsmooth} +  \|y-y^{\star}\|^{1+\Fsmooth} \right),
		 \label{assump:near-linear-F}
		\\
		\|G(x, y) - B_2(x - H(y))- B_3(y-y^{\star}) \| &\le \SBG \left( \|x - H(y)\|^{1+\Gsmooth} + \|y-y^{\star}\|^{1+\Gsmooth} \right). 
		 \label{assump:near-linear-G}
	\end{align}
\end{assump}

This assumption follows the spirit that two-time-scale SA can be viewed as an approximation of the two-loop procedure~\eqref{eq:two-loop-approx}.
The parameters $\Fsmooth$ and $\Gsmooth$ quantify the order of errors in this linear approximation condition.
To demonstrate how Assumption~\ref{assump:near-linear} can be guaranteed, we introduce the following standard local linearity assumption.
\begin{assumpt}{\ref*{assump:near-linear}$\dagger$}[Standard local linearity up to order ($1+\Fsmooth, 1+\Gsmooth$)]
	\label{assump:local-linear}
	There exists matrices $A_{11}, A_{12}, A_{21}, A_{22}$ with compatible dimensions, constants $S_{A,F}, S_{A,G} \ge 0$ and $\Fsmooth, \Gsmooth \in (0,1]$ such that
	\begin{align}
		\|F(x, y) - A_{11}(x -x^{\star}) - A_{12} (y-y^{\star}) \| &\le S_{A,F} \left( \|x - x^{\star}\|^{1+\Fsmooth} + \|y-y^{\star}\|^{1+\Fsmooth} \right), \label{assump:local-linear-F} \\
		\|G(x, y) - A_{21}(x -x^{\star}) - A_{22}(y-y^{\star} )\| &\le S_{A,G} \left( \|x - x^{\star}\|^{1+\Gsmooth} + \|y-y^{\star}\|^{1+\Gsmooth} \right). \label{assump:local-linear-G}
	\end{align}
\end{assumpt}
Assumption~\ref{assump:local-linear} implies
both $F$ and $G$ are differentiable at $(x^{\star}, y^{\star})$, leading to 
$A_{11} = \nabla_x F(x^{\star}, y^{\star})$, 
$A_{12} = \nabla_y F(x^{\star}, y^{\star})$, 
$A_{21} = \nabla_x G(x^{\star}, y^{\star})$,  
$A_{22} = \nabla_y G(x^{\star}, y^{\star})$.
Notably, Assumption~\ref{assump:local-linear} with $\Fsmooth = \Gsmooth = 1$ is the local linearity assumption in \citet{mokkadem2006convergence}. 
The following
proposition provides some properties of the existing assumptions.
In particular,
it shows that
if the operators $F$ and $G$ can be locally approximated around the root $(x^{\star}, y^{\star})$ by linear functions of $x-x^{\star}$ and $y-y^{\star}$, up to a higher-order error, then the nested local linearity condition follows naturally—assuming certain prior assumptions are satisfied.
The proof is provided in Appendix~\ref{proof:de:ensure-linearity}.

\begin{prop}
	\label{prop:ensure-linearity}
	Suppose that Assumptions~\ref{assump:smooth:FH} --~\ref{assump:smoothH} hold. 
	\begin{enumerate}[(i)]
		\item \label{prop:ensure-linearity-1} If Assumption~\ref{assump:local-linear} holds, then $\| A_{11} \| \le L_F$, $ \| A_{21} \| \le \LGx $, $\frac{A_{11}+A_{11}^\top}{2} \succeq \mu_F I$, and $\nabla H(y^\star) =  - A_{11}^{-1} A_{12}$.
		If we further assume $\Hsmooth \ge \Fsmooth \vee \Gsmooth$, 	then Assumption~\ref{assump:near-linear} holds with parameters
		\begin{align*}
			B_1 & = A_{11},\ \ B_2 = A_{21},\ \ B_3 = A_{22}-A_{21}A_{11}^{-1}A_{12},\\
			S_{B,F} &= S_{A,F} + L_F (S_H \vee 2 L_H), \ 
			S_{B,G} = S_{A,G} + \LGx (S_H \vee 2 L_H).
		\end{align*}
		\item \label{prop:ensure-linearity-2}  If Assumption~\ref{assump:near-linear} holds, then $\|B_1\| \le L_F, \| B_2\| \le \LGx , \|B_3\|\le \LGy,  \frac{B_1 + B_1^\top}{2} \succeq \mu_F I$, and $\frac{B_3 + B_3^\top}{2} \succeq \mu_G I$.
	\end{enumerate}
\end{prop}

The inclusion of the condition $\Hsmooth \ge \Fsmooth \vee \Gsmooth$ in Proposition~\ref{prop:ensure-linearity}~\ref{prop:ensure-linearity-1} stems from the nested structure in Assumption~\ref{assump:near-linear}.
This structure 
is introduced to capture the nested nature of the two-loop procedure in \eqref{eq:two-loop-approx}, where $H$ is a crucial link between the inner and outer loops. 
Transforming Assumption~\ref{assump:local-linear} into a nested form necessitates a more refined smoothness condition on $H$ to preserve local linearity.
Additionally, 
$B_3$ equals the Schur complement of $A_{11}$ in the matrix $(A_{11}, A_{12}; A_{21}, A_{22})$.

Moreover, if we allow $H$ to be non-smooth or non-differentiable, Assumption~\ref{assump:near-linear} can cover a broader class of problems in which the nonlinearity is ``absorbed'' into the solution map $H$, whereas Assumption~\ref{assump:local-linear} cannot.
Typical examples include cases where $H$ is a projection-type map (e.g., under simplex or box constraints) or a proximal map (e.g., soft-thresholding for $\ell_1$ regularization).
Although these scenarios are beyond the scope of this paper, we believe that our nested local linearity condition is more consistent with the perspective in \eqref{eq:two-loop-approx} and may inspire future research.

For the examples in Section~\ref{sec:prelim:app}, Assumptions~\ref{assump:smooth:FH} --\ref{assump:near-linear} can be verified under standard regularity conditions, such as the strong convexity of the objective function and the Lipschitz continuity of the Hessian.
Details on the verification of the local linearity conditions are deferred to Appendix~\ref{sec:verify-assump}.

Next, we focus on noise models.
We denote by $\FM_{t}$ the filtration containing all the history generated by \eqref{alg:xy} before time $t$, i.e.,
	$\FM_{t} = \sigma\{x_{0},y_{0},\xi_{0},\psi_{0},\xi_{1},\psi_{1},\ldots,\xi_{t-1},\psi_{t-1}\}$.

\begin{assump}[Conditions on the noise]
	\label{assump:noise-s}

	The sequences of random variables $\{ \xi_{t} \}_{t=0}^\infty$ and $\{ \psi_{t} \}_{t=0}^\infty$ are martingale difference sequences satisfying
	\begin{align}
		\label{assump:variance-s}
		\begin{split}
			\EB [\xi_t \,|\, \FM_t ]  = 0, &\quad 
			\EB [\psi_t \,|\, \FM_t] = 0, \quad 		\EB [\|\xi_{t}\|^4|\FM_t] \le \Gamma_{11}^2,\quad
			\EB [\|\psi_{t}\|^4|\FM_t] \le \Gamma_{22}^2, \\
			\|\EB[\xi_{t}\xi_{t}^\top \,|\,\FM_{t}] \| &\le \Sigma_{11}, \quad
			\|\EB[  \psi_{t} \psi_{t}^\top \,|\,\FM_{t}]\| \le \Sigma_{22},\quad
			\| \EB [\xi_{t} \psi_{t}^\top \,|\, \FM_{t}]\| \le \Sigma_{12}.
		\end{split}
	\end{align}
\end{assump}
In Assumption \ref{assump:noise-s}, the boundedness of the fourth-order moments is imposed to control the higher-order error terms introduced by Assumption~\ref{assump:near-linear}.

The final assumption is the requirements for the step sizes.
\begin{assump}[Conditions on step sizes]
	\label{assump:stepsize-new}
	With the constants $\mu_F$, $\mu_G$, $\Fsmooth$ and $\Gsmooth$ defined in Assumptions~\ref{assump:sm:F} and \ref{assump:near-linear},
	the step sizes $\{ \alpha_{t} \}_{t=0}^\infty$ and $ \{ \beta_{t} \}_{t=0}^\infty $ satisfy the following conditions:
	\begin{enumerate}[(i)]
		\item Constant bounds:\label{assump:stepsize-new:constant} $\alpha_{t} \le \iota_1,\beta_t \le \iota_2,\frac{\beta_t}{\alpha_{t}} \le \kappa, \frac{\beta_t^2}{\alpha_{t}} \le \rho$, where $\iota_1$, $\iota_2$, $\kappa$ and $\rho$ are problem-dependent constants with specific forms defined in \eqref{assump:stepsize:constants} in Appendix~\ref{sec:assump-append}.
		
		\item Growth conditions:\label{assump:stepsize-new:growth} $1 \le \frac{\alpha_{t-1}}{\alpha_{t}} \le 1 + (\frac{ \Fsmooth \mu_F}{16}\alpha_t) \wedge (  \frac{ \Fsmooth \mu_G}{16}\beta_t) \wedge ( \frac{\Gsmooth \mu_G}{8} \beta_t )$ and $1 \le \frac{\beta_{t-1}}{\beta_{t}} \le 1 + \frac{\mu_G}{64}\beta_t$ for any $t \ge 1$.
		
		\item \label{assump:stepsize-new:other} $\frac{\beta_t}{\alpha_t}$ is non-increasing in $t$, 
		and 
		$\prod_{\tau=0}^t\left(1-\frac{\mu_{G}\beta_{\tau}}{4} \right) = \OM (\alpha_t^2) $.
	\end{enumerate}
\end{assump}
Conditions~\ref{assump:stepsize-new:constant} and \ref{assump:stepsize-new:growth} are similar to \citet[Assumption A2]{kaledin2020finite}, though our conditions are more intricate to handle non-linearity.
Condition~\ref{assump:stepsize-new:other} is a technical requirement introduced to simplify the proof.
Notably, these conditions are naturally satisfied as long as both $\alpha_{t}$ and $\beta_t$ decrease with $t$, with $\beta_t$
decreasing faster than $\alpha_t$.
Additionally, our setup includes single-time-scale SA as a special case if a constant ratio $\beta_t / \alpha_t \equiv \kappa' \le \kappa$ that satisfies Assumption~\ref{assump:stepsize-new} is adopted.

\begin{rema}[Discussion on Assumption~\ref{assump:stepsize-new}]\label{rema:step-size}
We make the following remarks on Assumption~\ref{assump:stepsize-new}.
\begin{itemize}
    \item The growth conditions imply that $\alpha_t$ and $\beta_t$ are non-increasing.
Moreover, $\alpha_t^{-1} \le \alpha_{t-1}^{-1} + \frac{\delta_F \mu_F}{16} \alpha_t / \alpha_{t-1} \le  \alpha_{t-1}^{-1} + \frac{\delta_F \mu_F}{16} =\OM(t) $ and hence $\alpha_t = \Omega(t^{-1})$.
Similarly, $\beta_t = \OM(t^{-1})$.
Consequently, $ \sum_{t=0}^\infty \alpha_t = \infty$ and $\sum_{t=0}^\infty \beta_t = \infty$, 
which is a standard condition in the study of SA to ensure convergence~\citep{borkar2009stochastic}.
    \item This assumption is formulated for a broad class of operators so as to cover all examples satisfying our operator assumptions.
    As a consequence, the associated step-size conditions are intentionally conservative, and the constants appearing therein (e.g., $1/64$) are chosen mainly for technical convenience in the proofs.
    If one restricts attention to a narrower problem class or to a specific example, these constants can often be significantly improved.
    \item As indicated by \citet{kaledin2020finite}, this assumption encompasses diminishing, piecewise constant, and constant step size schedules.
    Here we focus on diminishing step sizes, which are a standard and widely used choice.
    A typical example is $\alpha_t = \Theta(t^{-a})$ and $\beta_t = \Theta(t^{-b})$ with $0< a \le b < 1$.
    In practice, it is sufficient to impose Assumption~\ref{assump:stepsize-new} only for $t \ge t_0$, where $t_0$ is a prescribed integer, since the early stage of optimization is usually dominated by transient behavior. For the purpose of establishing finite-time convergence rates, it is therefore not necessary to require the assumption to hold from the start.
    In particular, when $a < b$, corresponding to the strict two-time-scale regime, all the constant bounds in \ref{assump:stepsize-new:constant} are of order $o(1)$ as $t_0 \to \infty$.
    Treating these terms as $o(1)$ can greatly simplify the analysis of the constants; see Remark~\ref{rema:const}.
\end{itemize}
\end{rema}

\section{Theoretical Analysis}
\label{sec:decouple}

In Section~\ref{sec:decoupled:result}, we establish upper bounds for $\EB\| y_t - y^\star \|^2$ and $\EB \|x_t - H(y_t) \|^2$, and prove finite-time decoupled convergence under the local linearity condition.
To demonstrate the necessity of local linearity for decoupled convergence, we also construct an example in which local linearity fails and derive a corresponding lower bound in Section~\ref{sec:decouple:lower}.

\subsection{Upper bounds and Decoupled Convergence}
\label{sec:decoupled:result}

To analyze the convergence rate of $(x_{t},y_{t})$ to $(x^{\star},y^{\star})$, it is common to examine the mean square errors $\EB[\|x_{t}-x^{\star}\|^2]$ and $\EB[\|y_{t}-y^{\star}\|^2]$. 
Recall that two-time-scale SA can be viewed as an approximation of the two-loop procedure in \eqref{eq:two-loop-approx}.
It is thus more fundamental to consider the following residual variables~\citep{doan2022nonlinear}:
\begin{align}
	\begin{aligned}
		\xhat_{t} = x_{t} - H(y_{t}), \quad 
		\yhat_{t} = y_{t} - y^{\star}.
	\end{aligned}    \label{alg:xyhat}
\end{align}
Here, $\xhat_t$ and $\yhat_t$ represent the errors of the inner and outer loops in \eqref{eq:two-loop-approx},
respectively.
Furthermore, given the Lipschitz continuity of
$H$ in Assumption~\ref{assump:smooth:FH}, we can bound $\| x_t - x^\star \|$ as $\| x_t - x^\star \| \le \| \xhat_t \| + L_H \| \yhat_t \|$. Thus, it suffices to focus on $(\xhat_t, \yhat_t)$.

Our main results, based on the assumptions in Section~\ref{sec:prelim:assump}, are stated as follows.

\begin{thm}
	\label{thm:decouple-short}
	Suppose that Assumptions~\ref{assump:smooth:FH} -- 
	\ref{assump:stepsize-new} hold. 
	Then we have for all $t \ge 0$,
	\begin{align}
		\EB \| \xhat_{t+1} \|^2 & \le \label{thm:decouple:x}
		\CM_{x}\,\alpha_t,
		\\
		\| \EB \xhat_{t+1} \yhat_{t+1}^\top \| 
		& \le \label{thm:decouple:xy}
		\CM_{xy,1} \, \beta_t
		+ 
		\CM_{xy,2}
		\, \alpha_t \beta_t 
		\left( \frac{\alpha_t}{\beta_t} \right)^\frac{2}{\Fsmooth}, \\
		\EB \| \yhat_{t+1} \|^2 & \le \label{thm:decouple:y}
		\CM_{y,1}
		\, \beta_t
		+ 
		\CM_{y,2} \, \alpha_t \beta_t 
		\left( \frac{\alpha_t}{\beta_t} \right)^{\frac{2}{\Fsmooth} } 
		+ 
		\CM_{y,3}
		\, \alpha_t \beta_t 
		\left( \frac{\alpha_t}{\beta_t} \right)^{\frac{1}{\Gsmooth} }.
	\end{align}
	The exact constants are given in \eqref{eq:thm:decouple-constants} in Appendix~\ref{proof:de}.
	\end{thm}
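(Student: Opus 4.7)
The plan is to carry out a joint induction on the triple
$(\EB\|\xhat_t\|^2,\ \|\EB \xhat_t\yhat_t^\top\|,\ \EB\|\yhat_t\|^2)$,
treating the matrix cross term as a first-class object rather than absorbing it via Cauchy--Schwarz. Before starting the induction, I would first obtain crude fourth-order moment bounds
$\EB\|\xhat_t\|^4=\OM(\alpha_t^2)$ and $\EB\|\yhat_t\|^4=\OM(\beta_t^2)$
by a standalone Lyapunov argument that uses the star-strong-monotonicity (Assumption~\ref{assump:sm:F}) and the bounded fourth-moment noise (Assumption~\ref{assump:noise-s}). These moments will be used to absorb every super-quadratic residual produced by the local-linearity Assumption~\ref{assump:near-linear}.

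\textbf{Deriving almost-linear recursions.}
Expanding $\xhat_{t+1}=\xhat_t-\alpha_tF(x_t,y_t)+[H(y_t)-H(y_{t+1})]-\alpha_t\xi_t$, applying Assumption~\ref{assump:smoothH} to replace $H(y_t)-H(y_{t+1})$ by $\beta_t\nablaH(y_t)(G(x_t,y_t)+\psi_t)$ up to an error of order $\|y_{t+1}-y_t\|^{1+\Hsmooth}$, and then using Assumption~\ref{assump:near-linear} to linearize $F$ and $G$ gives
\[
\xhat_{t+1}=\bigl(I-\alpha_tB_1+\beta_t\nablaH(y_t)B_2\bigr)\xhat_t+\beta_t\nablaH(y_t)B_3\yhat_t-\alpha_t\xi_t+\beta_t\nablaH(y_t)\psi_t+r^x_t,
\]
\[
\yhat_{t+1}=(I-\beta_tB_3)\yhat_t-\beta_tB_2\xhat_t-\beta_t\psi_t+r^y_t,
\]
with remainders $r^x_t,r^y_t$ controlled by the $(1+\Fsmooth)$- and $(1+\Gsmooth)$-order polynomials in $(\|\xhat_t\|,\|\yhat_t\|)$ plus an $(1+\Hsmooth)$-order term in $\|y_{t+1}-y_t\|$. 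The spectral inequalities $\tfrac{1}{2}(B_1+B_1^\top)\succeq\mu_F I$ and $\tfrac{1}{2}(B_3+B_3^\top)\succeq\mu_G I$ noted after Assumption~\ref{assump:near-linear} then furnish contraction rates $\alpha_t\mu_F$ and $\beta_t\mu_G$ in the two drifts once the recursions are squared.

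\textbf{Coupled system and the main obstacle.}
Squaring these two recursions, taking expectation (which kills the martingale noise cross terms), and splitting the $\xhat$--$\yhat$ interactions by Young's inequality with weight $\Theta(\alpha_t)$ produces a schematic three-way system of the form
\[
\begin{aligned}
\EB\|\xhat_{t+1}\|^2&\le(1-c\alpha_t)\EB\|\xhat_t\|^2+C\tfrac{\beta_t^2}{\alpha_t}\EB\|\yhat_t\|^2+C\alpha_t^2+R^{xx}_t,\\
\|\EB\xhat_{t+1}\yhat_{t+1}^\top\|&\le(1-c\alpha_t)\|\EB\xhat_t\yhat_t^\top\|+C\beta_t\,\EB\|\xhat_t\|^2+C\alpha_t\beta_t+R^{xy}_t,\\
\EB\|\yhat_{t+1}\|^2&\le(1-c\beta_t)\EB\|\yhat_t\|^2+C\beta_t\|\EB\xhat_t\yhat_t^\top\|+C\beta_t^2+R^{yy}_t,
\end{aligned}
\]
which I would unroll by the Gronwall-type summation scheme of \citet{kaledin2020finite} (enabled by the growth part of Assumption~\ref{assump:stepsize-new}\ref{assump:stepsize-new:growth}), recovering in the linear ideal case exactly the rates $\OM(\alpha_t),\OM(\beta_t),\OM(\beta_t)$ claimed for the three quantities. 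The hard part is controlling the nonlinear remainders $R^{\cdot\cdot}_t$: a representative term $\EB[\|\xhat_t\|^{1+\Fsmooth}\|\yhat_t\|]$ is bounded via H\"older together with the interpolation $\EB\|\xhat_t\|^{2(1+\Fsmooth)}\le(\EB\|\xhat_t\|^2)^{1-\Fsmooth}(\EB\|\xhat_t\|^4)^{\Fsmooth}=\OM(\alpha_t^{1+\Fsmooth})$ borrowed from the fourth-moment preparation. Propagating such terms through the cross and $\yhat$ recursions and balancing them against the contraction factor $c\beta_t$ by Young's inequality contributes precisely the anomalous $\alpha_t\beta_t(\alpha_t/\beta_t)^{2/\Fsmooth}$ and $\alpha_t\beta_t(\alpha_t/\beta_t)^{1/\Gsmooth}$ pieces appearing in \eqref{thm:decouple:xy}--\eqref{thm:decouple:y}; the exponents $2/\Fsmooth$ and $1/\Gsmooth$ emerge from the Young trade-off between polynomial error order and contraction rate. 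The induction must be closed with mutually compatible constants $\CM_x,\CM_{xy,i},\CM_{y,i}$ as in \eqref{eq:thm:decouple-constants}, which is why the three inequalities need to be analyzed simultaneously rather than sequentially.
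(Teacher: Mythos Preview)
Your plan captures the paper's essential strategy: linearize via Assumption~\ref{assump:near-linear}, track the matrix cross term $\|\EB\xhat_t\yhat_t^\top\|$ explicitly, use fourth-order moments to absorb the nonlinear residuals, and unroll via Gronwall-type sums. The coupled three-way system you write down is, up to a missing $\beta_t\EB\|\yhat_t\|^2$ term in the cross-term recursion, the content of Lemmas~\ref{lem:xhat-new}--\ref{lem:xyhat}, and the Young-inequality mechanism you describe for producing the $(\alpha_t/\beta_t)^{2/\Fsmooth}$ and $(\alpha_t/\beta_t)^{1/\Gsmooth}$ exponents is exactly how the paper does it (cf.\ \eqref{eq:constantxy-de-5} and \eqref{eq:constanty-de-2}).

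There are, however, two concrete problems with your preparation step. First, the claim $\EB\|\yhat_t\|^4=\OM(\beta_t^2)$ as a \emph{crude} bound is too strong and cannot be obtained by a standalone argument: the fourth-moment recursion for $\yhat_t$ (Lemma~\ref{lem:yhat-quartic}) contains the coupling term $\beta_t\|\xhat_t\|\|\yhat_t\|^3$, so $\EB\|\yhat_t\|^4$ inherits the fast-iterate scale and one can only prove $\EB\|\xhat_t\|^4+\EB\|\yhat_t\|^4=\OM(\alpha_t^2)$ at this stage (Lemma~\ref{lem:x+y_quartic}). Fortunately this weaker bound suffices, but you should not expect decoupling at the fourth-moment level before second-order decoupling is established.

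Second, the fourth-order analysis cannot precede all second-order analysis. The one-step descent of $\EB\|\xhat_t\|^4$ carries a term $\alpha_t^2\EB\|\xhat_t\|^2$ (and similarly for $\yhat$), so closing it at order $\OM(\alpha_t^2)$ already requires a second-order input $\EB\|\xhat_t\|^2+\EB\|\yhat_t\|^2=\OM(\alpha_t)$; your own interpolation $\EB\|\xhat_t\|^{2(1+\Fsmooth)}\le(\EB\|\xhat_t\|^2)^{1-\Fsmooth}(\EB\|\xhat_t\|^4)^{\Fsmooth}=\OM(\alpha_t^{1+\Fsmooth})$ silently uses this too. The paper resolves this by an extra preparatory step (Theorem~\ref{thm:first}): obtain the coarse coupled rate $\EB\|\xhat_t\|^2+\EB\|\yhat_t\|^2=\OM(\alpha_t)$ via Cauchy--Schwarz on the scalar cross term \emph{without} invoking local linearity, then feed that into the fourth-moment Lyapunov analysis, and only then run the refined three-way analysis you describe. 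With this reordering, your sketch becomes the paper's proof.
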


In Theorem~\ref{thm:decouple-short}, observe that the step size $\alpha_t$ of the fast iterate also influences the convergence rate of the slow iterate through terms involving the parameters
$\Fsmooth$ and $\Gsmooth$.
In the following discussion, we focus on polynomially diminishing step sizes and examine the conditions required for achieving decoupled convergence.

\begin{cor}[Decoupled convergence rates]
	\label{cor:decouple-rates}
	Under the same setting of Theorem~\ref{thm:decouple-short},
	if we use polynomially diminishing step sizes $\alpha_{t} = \frac{\alpha_{0}}{(t+T_0)^a}$ and $\beta_{t} = \frac{\beta_{0}}{(t+T_0)^b}$ with $a,b \in (0,1]$,
	$1 \le \frac{b}{a} \le 1 + \frac{\Fsmooth}{2} \wedge \Gsmooth$ and properly chosen $\alpha_{0}$, $\beta_{0}$ and $T_0$,
	then we have 
	\begin{align*}
		\EB\|\xhat_{t} \|^2 = \OM (\alpha_t),
		\quad 
		\| \EB \xhat_t \yhat_t^\top \| = \OM (\beta_t),
		\quad 
		\text{and}
		\quad 
		\EB\|\yhat_{t} \|^2 = \OM (\beta_t).
	\end{align*}
	For an example choice of the constants, please refer to \eqref{eq:poly0-decou} in Appendix~\ref{proof:de}.
\end{cor}

\begin{rema}[Comparison with previous work]\label{rema:comparison}
	Note that $x_t - x^\star = \xhat_t + H(y_t) - H(y^\star)$.
	Theorem~\ref{cor:decouple-rates} and the Lipschitz continuity of $H$, we have $\EB \| x_t - x^\star \|^2 
	= \OM(\alpha_t) $ and $ \| \EB (x_{t} - x^\star) (y_t - y^\star)^\top \| = \OM(\beta_{t}) $.
	This result is consistent with the central limit theorem established in \citet{mokkadem2006convergence}:
	\begin{align}
		\label{eq:nonlinear-clt}
		\begin{pmatrix}
			\alpha_t^{-1/2} (x_t - x^\star) \\
			\beta_t^{-1/2} (y_t - y^\star )
		\end{pmatrix}
		\overset{d}{\rightarrow}
		\NM \left( 0, \begin{pmatrix}
			\Sigma_x & 0 \\
			0 & \Sigma_y
		\end{pmatrix} \right).
	\end{align}
	Moreover, our analysis provides a more refined characterization of the matrix cross term, as \eqref{eq:nonlinear-clt} only implies $(x_{t} - x^\star) (y_t - y^\star)^\top = o(\sqrt{\alpha_{t} \beta_{t}}) $ in probability.
	
	To establish the convergence rate in \eqref{thm:decouple:y}, the convergence rate of the matrix cross term in \eqref{thm:decouple:xy} is an essential intermediate result. 
	Neither decoupled convergence for the strict two-time-scale case nor an analysis of the matrix cross term is present in prior work on general nonlinear cases~\citep{shen2022single, doan2022nonlinear}.
\end{rema}

\begin{rema}[Step size selection for the optimal convergence rate]\label{rema:decouple:optimal_rate}
To achieve the optimal convergence rate of the slow iterate $\EB \| \yhat_t \|^2 = \OM(1/t)$, we could choose $\beta_t \sim \beta_0 t^{-1}$ and $\alpha_t \sim \alpha_0 t^{-a}$ with $(1 + \frac{\Fsmooth}{2} \wedge \Gsmooth)^{-1} \le a \le 1$.
In particular, when $\Fsmooth = \Gsmooth = 1$, the feasible range for $a$ in $\alpha_t \sim \alpha_0 t^{-a}$ is $2/3 \le a \le 1$.
Our results ensure that achieving $\OM(1/t)$ convergence for the slow iterate allows greater flexibility in the step size selection for the fast iterate, extending beyond the single-time-scale case considered in \citet{shen2022single}.
\end{rema}

\begin{rema}[Leading terms in the constants]\label{rema:const}
The complete expressions of the constants in \eqref{eq:thm:decouple-constants} are fairly complicated and difficult to analyze directly.
To simplify the analysis and isolate the most essential parameter dependence, we focus on diminishing step sizes of the form $\alpha_t = \Theta(t^{-a})$ and $\beta_t = \Theta(t^{-b})$, where $a,b \in (0,1]$ and $1 < \frac{b}{a} < 1 + \frac{\Fsmooth}{2} \wedge \Gsmooth$.
Compared with the requirement in Corollary~\ref{cor:decouple-rates}, we additionally require the inequality to be strict.
The condition $\frac{b}{a} < 1 + \frac{\Fsmooth}{2} \wedge \Gsmooth$ ensures that the leading terms in \eqref{thm:decouple:x}--\eqref{thm:decouple:y} are all given by the first terms
\begin{equation*}
    \EB \| \xhat_{t+1} \|^2 \le \CM_{x}\,\alpha_t,\quad
    \| \EB \xhat_{t+1} \yhat_{t+1}^\top \| \le \CM_{xy,1} \, \beta_t
    + o(\beta_t),\quad
    \EB \| \yhat_{t+1} \|^2 \le 
    \CM_{y,1} \, \beta_t
    + o(\beta_t).
\end{equation*}
Consequently we only need to focus on the constants $\CM_{x}$, $\CM_{xy,1}$, and $\CM_{y,1}$.
We also require $b>a$, corresponding to the ``strict'' two-time-scale case, in order to highlight the role of decoupled convergence, since decoupled convergence is trivial when $a=b$.

As discussed in Remark~\ref{rema:step-size}, it suffices to focus on $t \ge t_0$ for a prescribed $t_0$.
Under strict two-time-scale diminishing step sizes, the constants $\iota_1, \iota_2, \kappa, \rho$ in Assumption~\ref{assump:stepsize-new} are all of order $o(1)$ as $t_0 \to \infty$.
Therefore, in the expressions of $\CM_{x}$, $\CM_{xy,1}$, and $\CM_{y,1}$ in \eqref{eq:thm:decouple-constants}, all terms involving these constants can be treated as higher-order infinitesimals.
In particular, the leading terms in $\CM_{x}$, $\CM_{xy,1}$, and $\CM_{y,1}$ can be summarized as
\begin{equation}
\label{eq:const-leading-sum}
\begin{aligned}
    \CM_{x} & \lesssim \frac{\Gamma_{11}}{\mu_F} + o(1), \quad
    \CM_{xy,1} \lesssim \frac{\Sigma_{12}}{\mu_F} + \frac{\LGx \Gamma_{11}}{\mu_F^2} + o(1), \\ 
    \CM_{y,1} & \lesssim \frac{\Gamma_{22}}{\mu_G} + \frac{d_y \LGx \Sigma_{12}}{\mu_F \mu_G} + \frac{d_y \LGx^2 \Gamma_{11}}{\mu_F^2 \mu_G} + o(1).
\end{aligned}
\end{equation}
For the detailed derivation, see Appendix~\ref{proof:de:const}.

Meanwhile, from the CLT in \eqref{eq:nonlinear-clt}, we can obtain $\lim_{t\to\infty} \alpha_t^{-1} \EB \| x_t \|^2 = \tr(\Sigma_x)$ and $\lim_{t\to\infty} \beta_t^{-1} \EB \| y_t \|^2 = \tr(\Sigma_y)$
under additional regular conditions.
By analyzing the detailed expression of $\Sigma_x$ and $\Sigma_y$, we obtain
\begin{equation}
\label{eq:tr-cov-x-y}
    \tr(\Sigma_x) \le \frac{\Gamma_{11}}{2\mu_F},\quad
    \tr(\Sigma_y)
    \le
    \frac{1}{\mu_G}
    \left[
    \Gamma_{22}
    +2d_y\,\frac{\LGx}{\mu_F}\,\Sigma_{12}
    +\Big(\frac{\LGx}{\mu_F}\Big)^{2}\Gamma_{11}
    \right].
\end{equation}
Moreover, let $\Sigma_{x,y} = \lim_{t\to\infty} \beta_t^{-1} \EB [x_t y_t^\top]$. Then $\lim_{t\to\infty} \beta_t^{-1} \|\EB x_t y_t^\top\| = \| \Sigma_{x,y} \|$.
\citet[Theorem~2.6]{konda2004convergence} provide a characterization of $\Sigma_{x,y}$ in the linear case, from which we obtain
\begin{equation}
\label{eq:norm-cov-xy}
    \| \Sigma_{x,y} \| \le \frac{1}{\mu_F} \left( \Sigma_{12} + \frac{\LGx \Gamma_{11}}{2 \mu_F} \right).
\end{equation}
The detailed derivations of \eqref{eq:tr-cov-x-y} and \eqref{eq:norm-cov-xy} are also deferred to Appendix~\ref{proof:de:const}.

We now compare the above results.
The upper bounds in \eqref{eq:const-leading-sum} come from the non-asymptotic analysis.
Accordingly, we omit parameter-independent constants, since the primary goal of the non-asymptotic analysis is to establish the convergence rates rather than to optimize the constants.
By contrast, the upper bounds in \eqref{eq:tr-cov-x-y} and \eqref{eq:norm-cov-xy} are derived from the asymptotic analysis and may be viewed as nearly tight upper bounds.
We find that all parameter dependencies match except for the third term in the expression of $\CM_{y,1}$.
\vspace{0.1cm}

\textbf{Dimension dependence}.
In \eqref{eq:const-leading-sum}, the third term involving $\Gamma_{11}$ in the upper bound of $\CM_{y,1}$ 
depends on the dimension, whereas in \eqref{eq:tr-cov-x-y}, the third term involving $\Gamma_{11}$ in the upper bound of $\tr(\Sigma_y)$ does not exhibit such dimensional dependence.
By contrast, the dimensional dependence in the term involving $\Sigma_{12}$ is unavoidable.
Recall that Assumption~\ref{assump:noise-s} implies $\EB [\|\xi_{t}\|^2|\FM_t] \le \Gamma_{11}$, 
$\EB [\|\psi_{t}\|^2|\FM_t] \le \Gamma_{22}$, and
$\| \EB [\xi_{t} \psi_{t}^\top \,|\, \FM_{t}]\| \le \Sigma_{12}$.
This discrepancy comes from the fact that $\Sigma_{12}$ is an upper bound on the operator norm of the cross-covariance matrix, whereas $\EB [\|\xi_{t}\|^4 | \FM_t] \le \Gamma_{11}^2$ implies $\tr( \EB [\xi_t \xi_t^\top|\FM_t] ) =  \EB [\|\xi_{t}\|^2 | \FM_t] \le \Gamma_{11}$, so that $\Gamma_{11}$ serves as an upper bound on the trace of the covariance matrix.
Clearly, an upper bound on the trace is also an upper bound on the operator norm; however, there may be a dimensional gap between the trace and the operator norm.

We believe that the third term of $\CM_{y,1}$ in \eqref{eq:const-leading-sum} can be improved to $\frac{d_y \LGx^2 \Sigma_{11}}{\mu_F^2 \mu_G}$ by carrying out a more refined analysis of $\EB \xhat_t \xhat_t^\top$, rather than $\EB \| \xhat_t \|^2$,  under a tighter upper bound $\| \EB [\xi_t \xi_t^\top | \FM_t ] \| \le \Sigma_{11}$.
However, the dimensional dependence may still be unavoidable under the current proof framework.
To illustrate this point, consider the special case in which all eigenvalues of $\EB [\xi_t \xi_t^\top | \FM_t ]$ are equal and $d_x = d_y$. In this case, we have $\Gamma_{11} = d_x \Sigma_{11} = d_y \Sigma_{11}$.
Such dimensional dependence also appears in the finite-time analysis of the linear case~\citep{kaledin2020finite}.
\vspace{0.1cm}

\textbf{Inverse dependence on the strong monotonicity parameters}.
Recall that $\mu_F$ and $\mu_G$ are the strong monotonicity parameters of the inner operator $F(\cdot, y)$ and the outer operator $G(H(\cdot), \cdot)$, respectively.
The leading term in the upper bound for $\EB \| \xhat_t \|^2$ scales as $\mu_F^{-1}$, and the leading term in the upper bound for $\EB \| \yhat_t \|^2$ scales as $\mu_G^{-1}$.
This is consistent with the classical behavior of SGD for strongly convex optimization, where the leading constant is proportional to the inverse of the strong convexity parameter; see, for example, \citet[Theorem~1]{moulines2011non}.
\vspace{0.1cm}

\textbf{Amplification factor $\LGx / \mu_F$}.
In addition to the fact that the convergence rates for $\EB \| \xhat_t \|^2$ and $\EB \| \yhat_t \|^2$ depend only on their respective step sizes, the leading constant term for $\EB \| \xhat_t \|^2$ is also almost unaffected by the noise in the slow-time-scale update. By contrast, the leading constant term for $\EB \| \yhat_t \|^2$ is affected by both the fast- and slow-time-scale updates.
The effect of the cross-covariance between the fast- and slow-time-scale noises, denoted by $\Sigma_{12}$, is amplified by a factor of $\LGx / \mu_F$, and the effect of the covariance of the fast-time-scale noise, denoted by $\Gamma_{11}$, is amplified by a factor of $(\LGx / \mu_F)^2$.

This amplification can be understood from the asymptotic viewpoint in \citet[Remark~4.1]{han2024decoupled}: the slow iterate behaves asymptotically like a standard SA iterate for the operator $G(H(\cdot), \cdot)$ with a modified noise term
$ \breve{\psi}_t = \psi_t - B_2 B_1^{-1} \xi_t$.
By Proposition~\ref{prop:ensure-linearity}, the ratio $\LGx / \mu_F$ upper bounds $\| B_2 B_1^{-1} \|$.
The amplification effect therefore comes from controlling the covariance of $\breve{\psi}_t$.
\vspace{0.1cm}

\textbf{Behavior of the matrix cross term}.
The leading constant term in the bound for $\|\EB \xhat_t \yhat_t^\top\|$ is proportional to $\mu_F^{-1}$.
This suggests that the behavior of $\xhat_t \yhat_t^\top$ is governed primarily by the strong monotonicity of the inner operator $F(\cdot, y)$, rather than that of the outer operator $G(H(\cdot), \cdot)$.
The key reason is the separation of step sizes, namely $\alpha_t \gg \beta_t$.
Moreover, the leading term depends on both $\Sigma_{12}$ and $\Gamma_{11}$, with the effect of $\Gamma_{11}$ amplified by a factor of $\LGx / \mu_F$.
Under the definition of the modified noise term $\breve{\psi}_t = \psi_t - B_2 B_1^{-1} \xi_t$, this amplification arises from controlling the cross-covariance between $\xi_t$ and $\breve{\psi}_t$.

\end{rema}

\subsection{A Lower Bound without Local Linearity}
\label{sec:decouple:lower}

As shown in the previous subsection, decoupled convergence can be achieved under the local linearity condition with appropriately chosen step sizes.
This naturally raises the following question:
\textit{Is local linearity essential for decoupled convergence?}
In this subsection, we answer this question in the affirmative and show that the local linearity condition in Assumption~\ref{assump:near-linear} is necessary for decoupled convergence.

To this end, we construct the following example in which the local linearity condition fails.
\begin{exam}
	\label{exp:slow-time}
	Consider the following nonlinear SA problem with the operators $F, G\colon \RB \times \RB \to \RB$ given by
	\begin{align}\label{eq:exp:slow-time}
		F(x, y) = x- y,\ G(x, y) = - |x-y| \cdot \sign(y) + y,
	\end{align}
	where $\sign(x) = 1_{x > 0} -1_{x < 0}$ is the sign function.
\end{exam}

In this example, $G(x,y)$ involves both the sign and absolute value functions, and therefore does not satisfy the local linearity condition in Assumption~\ref{assump:near-linear}.
However, since $F(x,y)=x-y$ is linear, the induced solution map $H(y)=y$ is also linear.
Consequently, the reduced operator $G(H(y),y)=y$ is linear as well.
That is, the nonlinearity appears only in $G(x,y)$ before substituting $x=H(y)$.
Meanwhile, this example satisfies Assumptions~\ref{assump:smooth:FH}--\ref{assump:sm:F} with all corresponding parameters equal to $1$, as well as Assumption~\ref{assump:smoothH} with $S_H=0$.
The unique root is $(x^\star,y^\star)=(0,0)$.
Applying the two-time-scale SA algorithm~\eqref{alg:xy} to this example, we obtain the following lower bound, whose proof is deferred to Appendix~\ref{append:proof:lower}.
\begin{prop}[Lower bound for Example~\ref{exp:slow-time}]\label{prop:lower}
 Suppose that: (a) the noise terms satisfy $\psi_t = 0$, and the $\xi_t$ are i.i.d. with $\EB \xi_t = 0$ and $\EB \xi_t^2 = \Sigma_1 > 0$; (b) the step sizes satisfy $\beta_t / \alpha_t \to 0$ and Assumption~\ref{assump:stepsize-new} with the parameters $\delta_F$ and $\delta_G$ in \ref{assump:stepsize-new:growth} replaced by $1$;\footnote{For $d_x = d_y = 1$, after this replacement, Assumption~\ref{assump:stepsize-new} reduces to the weaker version, Assumption~\ref{assump:stepsize-new-weak}, introduced in Section~\ref{sec:decouple:proof-3}.} (c) the initialization satisfies $y_0 \neq y^\star$.
 Then we have $\EB |\xhat_t|^2 = \Omega(\alpha_t)$ and $\EB |\yhat_t|^2 =\Omega(\alpha_t) $.
\end{prop}
The condition on the noise is imposed to simplify the analysis.
The requirement $\beta_t / \alpha_t \to 0$ on the step sizes restricts attention to the strict two-time-scale regime, since in the single-time-scale regime, where $\beta_t = \Theta(\alpha_t)$, decoupled convergence is trivial.

Proposition~\ref{prop:lower} shows that, without local linearity of $G(x,y)$, the convergence rate on the slow time scale is indeed degraded by the larger step size associated with the fast-time-scale update, even though $F(x,y)$, $H(y)$, and $G(H(y),y)$ are all linear.
This complements the approximation perspective in \eqref{eq:two-loop-approx}: 
\begin{center}
    Although the two iterates in two-time-scale SA can be interpreted as solving $F(x,y)=0$ (for fixed $y$) and $G(H(y),y)=0$, the detailed form of $G(x,y)$ before substituting $x=H(y)$ still affects the convergence rates.
\end{center}
Moreover, in Proposition~\ref{prop:lower}, the slow-time-scale update is deterministic.
This indicates that the main obstacle to decoupled convergence on the slow time scale is the interdependence between the two time scales, rather than the noise in the slow-time-scale update.

This observation also has a possible algorithmic implication.
Consider $\widetilde{G}_\alpha(x,y)=\alpha(x-y)+y$ for $\alpha \in \RB$.
Then $\widetilde{G}_\alpha(H(y),y)=G(H(y),y)$.
In other words, the linear operator $\widetilde{G}_\alpha(x,y)$ yields the same outer operator $\widetilde{G}_\alpha(H(y),y)$ as in Example~\ref{exp:slow-time}.
This suggests that, when there are multiple possible choices of the operators $F$ and $G$ leading to the same reduced operator $G(H(\cdot),\cdot)$, it is preferable to choose linear or nearly linear operators in order to ensure a faster convergence rate.

\section{Proof Framework for Our Main Theorem}
\label{sec:decouple:sketch}
In this section, we outline the framework of our proof for Theorem~\ref{thm:decouple-short}.
We first present the high-level idea and comparison with prior works, with the detailed procedure developed in Sections~\ref{sec:decouple:proof-1} -- \ref{sec:decouple:proof-4}.

\paragraph{High-level idea}
Our main technical contribution is a systematic framework for handling the cross term 
$\| \mathbb{E} \hat{x}_t \hat{y}_t^\top \|$
 in the nonlinear setting. It's crucial for the sharp convergence characterization of the interacting sequences $\{x_t\}_{t=0}^\infty$ and $\{y_t\}_{t=0}^\infty$. While similar analysis exists for the linear case (e.g., \citealt{kaledin2020finite}), the nonlinear case is harder due to the nonlinearity of $F$, $G$, and $H$. We approximate these mappings by their linear parts, which introduces residual errors. The key challenge is that the cross term's dynamics intertwine with these (higher-order) residuals. Our framework systematically tracks this interaction and controls the error accumulation via a fourth-moment analysis, showing the residuals are indeed higher-order. 

The whole proof is organized into the following four steps:
\begin{enumerate}[leftmargin=2cm]
    \item[Step~1:] Derive Convergence Rates without Local Linearity of $F$ and $G$
    \item[Step~2:] Introduce the Matrix Cross Term and Derive Refined One-Step Descent Lemmas
    \item[Step~3:] Analyze the Convergence Rates of Fourth-Order Moments
    \item[Step~4:] Integrate the Above Ingredients and Derive Decoupled Convergence Rates
\end{enumerate}
\paragraph{Comparison with prior works} 
Overall, the nonlinear setting requires several additional components beyond those in \citet{doan2022nonlinear} and \citet{kaledin2020finite}, 
even though a few individual steps are similar in spirit to these earlier analyses.
The first step is close in spirit to \citet{doan2022nonlinear}, but it only provides a preliminary convergence rate and serves as a starting point for our later analysis. The second step is partly inspired by \citet{kaledin2020finite}, where we introduce the matrix cross term $\| \EB \xhat_t \yhat_t^\top \|$ to refine the one-step descent analysis for the slow iterate; however, in the nonlinear setting, the local linear approximations of $F$, $G$, and $H$ also produce higher-order error terms, which require additional control arguments. The third step, namely the convergence analysis of fourth-order moments, does not appear in either \citet{doan2022nonlinear} or \citet{kaledin2020finite}, and is needed to handle the higher-order terms caused by nonlinearity. Finally, the last step integrates all these ingredients to establish decoupled convergence, while simultaneously handling the nonlinear remainder terms and the matrix cross term.

\subsection{Step~1: Derive Convergence Rates without Local Linearity of $F$ and $G$.}
\label{sec:decouple:proof-1}
We first establish a coarse convergence rate without local linearity as a starting point.
To this end, we first present the one-step descent lemmas for the squared errors
The analysis in this step does not require the local linearity of $F$ and $G$ in Assumption~\ref{assump:near-linear}, nor does it require the bounded fourth-order moments in Assumption~\ref{assump:noise-s}.
The requirement on the step sizes in Assumption~\ref{assump:stepsize-new} can also be weakened.
Thus, the results in this step is based on the weaker version of Assumptions~\ref{assump:noise-s} and \ref{assump:stepsize-new}, denoted as Assumptions~\ref{assump:noise} and \ref{assump:stepsize}, with the details deferred to Appendix~\ref{proof:without}.

\begin{lem}[One-step descent lemma of the fast iterate]
	\label{lem:xhat}
	Suppose that Assumptions \ref{assump:smooth:FH}  -- \ref{assump:smoothH} and 
    \ref{assump:noise} -- \ref{assump:stepsize}  hold.
	For any $t \ge 0$, we have
	\begin{equation}
		\begin{aligned}
			\EB \left[ \|\xhat_{t+1} \|^2\,|\,\FM_{t}\right]
			& \le \left(1- \mu_F \alpha_t \right) \|\xhat_{t}\|^2 + c_{x,1} \beta_t^2 \|\yhat_{t}\|^2 + 2 \Gamma_{11} \alpha_{t}^2   \\
			& \qquad + c_{x,2} \beta_t\sqrt{ 1-\alpha_t \mu_F}  \|\xhat_{t}\|\|\yhat_{t}\|+ c_{x,3} \beta_{t}^2 + c_{x,4} \frac{\beta_t^{2 + 2\Hsmooth}}{\alpha_t},
		\end{aligned}
		\label{lem:xhat:Ineq}
	\end{equation}
	where $\{c_{x,i}\}_{i \in [4]}$ are problem-dependent constants defined in 
	\eqref{eq:constantx}.
\end{lem}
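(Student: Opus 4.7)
The plan is to start from the recursion~\eqref{alg:x} and the definition $\xhat_t = x_t - H(y_t)$, yielding
\[
\xhat_{t+1} = \xhat_t - \alpha_t F(x_t, y_t) - \alpha_t \xi_t + \bigl(H(y_t) - H(y_{t+1})\bigr).
\]
I will expand $\|\xhat_{t+1}\|^2$, take the conditional expectation $\EB[\cdot \mid \FM_t]$, and match each surviving summand to one of the six pieces in~\eqref{lem:xhat:Ineq}. The essential preparatory move is to linearize the $H$-increment around $y_t$: since~\eqref{alg:y} gives $y_{t+1}-y_t = -\beta_t(G(x_t,y_t)+\psi_t)$, Assumption~\ref{assump:smoothH} yields
\[
H(y_t) - H(y_{t+1}) = \beta_t\,\nabla H(y_t)\bigl(G(x_t,y_t)+\psi_t\bigr) - e_t,
\]
where the linearization error satisfies $\|e_t\| \le S_H\|y_{t+1}-y_t\|^{1+\Hsmooth}$ and, by~\eqref{assump:H:smooth-main}, also $\|e_t\|\le 2L_H\|y_{t+1}-y_t\|$. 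Substituting decomposes the update as $\xhat_{t+1} = A_t + \beta_t\nabla H(y_t)\psi_t - \alpha_t\xi_t - e_t$, with $\FM_t$-measurable drift $A_t := \xhat_t - \alpha_t F(x_t,y_t) + \beta_t\nabla H(y_t) G(x_t,y_t)$, two mean-zero noise terms, and the higher-order remainder $-e_t$.

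For the drift, I would expand $\|A_t\|^2$ around $\xhat_t - \alpha_t F(x_t,y_t)$. Star-type strong monotonicity~\eqref{assump:sm:F:ineq}, the Lipschitz bound $\|F(x_t,y_t)\|\le L_F\|\xhat_t\|$ from~\eqref{assump:smooth:FH:ineqF}, and the step size constraint $\alpha_t \le \mu_F/(4L_F^2)$ together give $\|\xhat_t-\alpha_t F(x_t,y_t)\|^2 \le (1-\alpha_t\mu_F)\|\xhat_t\|^2$, producing the leading descent term. The cross term $2\beta_t\langle \xhat_t-\alpha_t F(x_t,y_t),\nabla H(y_t) G(x_t,y_t)\rangle$ is controlled by Cauchy--Schwarz with $\|\nabla H\|\le L_H$ and $\|G(x_t,y_t)\|\le \LGx\|\xhat_t\|+\LGy\|\yhat_t\|$ (which follows from Assumption~\ref{assump:G} and $G(H(y^\star),y^\star)=0$): its $\|\xhat_t\|^2$ portion is absorbable into the descent via $\beta_t/\alpha_t\le\kappa$, and the remainder becomes the $c_{x,2}\beta_t\sqrt{1-\alpha_t\mu_F}\|\xhat_t\|\|\yhat_t\|$ contribution. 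The square $\beta_t^2\|\nabla H(y_t) G(x_t,y_t)\|^2$ splits analogously, its $\|\xhat_t\|^2$ piece absorbed via $\beta_t^2/\alpha_t\le\rho$ and its $\|\yhat_t\|^2$ piece forming $c_{x,1}\beta_t^2\|\yhat_t\|^2$. Taking the conditional expectation annihilates the linear noise cross terms by~\eqref{assump:variance}; the quadratic noise pieces yield $\Gamma_{11}\alpha_t^2$ and a portion of $c_{x,3}\beta_t^2$, and a residual covariance $\alpha_t\beta_t\,\EB[\langle\xi_t,\nabla H\psi_t\rangle\mid\FM_t]$ is split by Cauchy--Schwarz and Young's inequality into the $\Gamma_{11}\alpha_t^2$ and $c_{x,3}\beta_t^2$ buckets.

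The hard part will be controlling the remainder $e_t$, which is nonlinear in $\psi_t$ through $y_{t+1}$ and enters the expansion both as a linear cross term $-2\langle A_t,\EB[e_t\mid\FM_t]\rangle$ and as a quadratic term $\EB[\|e_t\|^2\mid\FM_t]$, together with cross terms against $\nabla H\psi_t$ and $\xi_t$. The plan is to bound the dominant cross term via Young's inequality $2\|A_t\|\,\|\EB[e_t\mid\FM_t]\| \le \eta_t\|A_t\|^2 + \|\EB[e_t\mid\FM_t]\|^2/\eta_t$ with $\eta_t \asymp \alpha_t\mu_F$: the first piece is absorbed into the descent, and the second furnishes the $c_{x,4}\beta_t^{2+2\Hsmooth}/\alpha_t$ term. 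The delicate point is that Assumption~\ref{assump:noise} only provides second moments of $\psi_t$, so the naive bound $\|e_t\|^2\le S_H^2\|y_{t+1}-y_t\|^{2+2\Hsmooth}$ cannot be integrated directly when $\Hsmooth>0$; instead I use $\|\EB[e_t\mid\FM_t]\|\le \EB[\|e_t\|\mid\FM_t]\le S_H\,\EB[\|y_{t+1}-y_t\|^{1+\Hsmooth}\mid\FM_t]$ and apply Jensen's inequality $\EB[\|y_{t+1}-y_t\|^{1+\Hsmooth}\mid\FM_t]\le(\EB[\|y_{t+1}-y_t\|^2\mid\FM_t])^{(1+\Hsmooth)/2}$ (valid since $1+\Hsmooth\le 2$), which consumes only second moments of $\psi_t$ while retaining the $\beta_t^{1+\Hsmooth}$ scaling. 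The remaining $\EB[\|e_t\|^2\mid\FM_t]$ and cross terms with $\nabla H\psi_t$ and $\xi_t$ are handled by the cruder estimate $\|e_t\|\le 2L_H\|y_{t+1}-y_t\|$ combined with Cauchy--Schwarz and Young's inequality, absorbing the $\|\xhat_t\|^2$ pieces via the step size conditions and folding the rest into $c_{x,1}\beta_t^2\|\yhat_t\|^2$ and $c_{x,3}\beta_t^2$. Bundling all constants into $c_{x,1},\dots,c_{x,4}$ then delivers~\eqref{lem:xhat:Ineq}.
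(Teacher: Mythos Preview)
Your overall decomposition is essentially the same as the paper's, and most pieces are handled correctly. The gap is in your treatment of the cross term $-2\langle A_t,\EB[e_t\mid\FM_t]\rangle$. Applying Jensen to $\EB[\|y_{t+1}-y_t\|^{1+\Hsmooth}\mid\FM_t]$ gives
\[
\|\EB[e_t\mid\FM_t]\|\;\le\;S_H\beta_t^{1+\Hsmooth}\bigl(\|G(x_t,y_t)\|^2+\Gamma_{22}\bigr)^{(1+\Hsmooth)/2},
\]
and after your Young split with weight $\eta_t\asymp\mu_F\alpha_t$ the term $\|\EB[e_t\mid\FM_t]\|^2/\eta_t$ contains not only the desired $c_{x,4}\beta_t^{2+2\Hsmooth}/\alpha_t$ but also a piece of order $\frac{\beta_t^{2+2\Hsmooth}}{\alpha_t}\|G(x_t,y_t)\|^{2+2\Hsmooth}$. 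Since $\|G\|\le \LGx\|\xhat_t\|+\LGy\|\yhat_t\|$, this is a $(2+2\Hsmooth)$-order polynomial in $\|\xhat_t\|,\|\yhat_t\|$; there is no a priori bound on these quantities, so it cannot be absorbed into the purely quadratic right-hand side of~\eqref{lem:xhat:Ineq}. Your claim that this piece ``furnishes the $c_{x,4}\beta_t^{2+2\Hsmooth}/\alpha_t$ term'' overlooks the state-dependent part.

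The paper avoids this by not squaring $\|e_t\|$ at all for the $G$-portion. It uses the combined bound~\eqref{assump:H:smooth-main}, i.e.\ $\|e_t\|\le S_H\|y_{t+1}-y_t\|\min\{\|y_{t+1}-y_t\|^{\Hsmooth},R_H\}$ with $R_H=2L_H/S_H$, and then splits $\|y_{t+1}-y_t\|\le\beta_t(\|G\|+\|\psi_t\|)$ together with $\min\{a+b,c\}\le\min\{a,c\}+\min\{b,c\}$. The $\|G\|$-factor picks up only the Lipschitz cap $R_H$, giving a term \emph{linear} in $\|G\|$ at order $\beta_t$; only the $\|\psi_t\|$-factor keeps the H\"older exponent, and after integrating in $\psi_t$ (via Jensen, as you do) it becomes the state-independent $S_H\beta_t^{1+\Hsmooth}\Gamma_{22}^{(1+\Hsmooth)/2}$. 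This yields $\EB[\|e_t\|\mid\FM_t]\lesssim L_H\beta_t\|G\|+S_H\beta_t^{1+\Hsmooth}\Gamma_{22}^{(1+\Hsmooth)/2}$; multiplying by $\|\xhat_t-\alpha_tF\|\le\sqrt{1-\mu_F\alpha_t}\,\|\xhat_t\|$, the first piece is quadratic (absorbed into $c_{x,2}$ and the descent via $\beta_t/\alpha_t\le\kappa$) and the second piece, after Young with weight $\mu_F\alpha_t$, gives exactly $c_{x,4}\beta_t^{2+2\Hsmooth}/\alpha_t$. You already have both ingredients (the H\"older bound and the crude $2L_H$ bound); the missing move is to use them \emph{simultaneously} through the $\min$, rather than Jensen alone, on the $\langle A_t,e_t\rangle$ cross term.
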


\begin{lem}[One-step descent lemma of the slow iterate]\label{lem:yhat}
	Suppose that Assumptions \ref{assump:smooth:FH}  -- \ref{assump:smoothH} and \ref{assump:noise} -- \ref{assump:stepsize}  hold.
	For any $t \ge 0$, we have 
	\begin{align}
		\begin{split}	\EB\left[\|\yhat_{t+1}\|^2\,|\,\FM_{t}\right]
			& \leq \left(1-\mu_{G}\beta_{t} \right)\|\yhat_{t}\|^2 +  \LGx^2 \beta_{t}^2\|\xhat_{t}\|^2 \\
			& \qquad +2 \LGx \beta_t\sqrt{1-\mu_G\beta_t} \|\xhat_{t}\|\|\yhat_{t}\|  + \Gamma_{22}\beta_{t}^2.
		\end{split}
		\label{lem:yhat:Ineq}
	\end{align}
\end{lem}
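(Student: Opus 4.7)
The plan is to start from the recursion for $\yhat_t$ already recorded as \eqref{eq:yhat}, namely
\[
\yhat_{t+1} = \bigl(\yhat_t - \beta_t G(H(y_t), y_t)\bigr) + \beta_t\bigl(G(H(y_t), y_t) - G(x_t, y_t)\bigr) - \beta_t \psi_t,
\]
and then square both sides before taking the conditional expectation $\EB[\,\cdot\,|\,\FM_t]$. Because $\psi_t$ is a martingale difference with bounded second moment (Assumption~\ref{assump:noise}), all cross terms involving $\psi_t$ vanish under conditioning, and the quadratic noise term contributes exactly $\beta_t^2 \EB[\|\psi_t\|^2\,|\,\FM_t] \le \Gamma_{22}\beta_t^2$, which accounts for the last term in the lemma.

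The remaining work is to expand $\|u + v\|^2 = \|u\|^2 + 2\langle u,v\rangle + \|v\|^2$, where $u := \yhat_t - \beta_t G(H(y_t), y_t)$ and $v := \beta_t\bigl(G(H(y_t), y_t) - G(x_t, y_t)\bigr)$. For $\|u\|^2$, I would apply the star-type strong monotonicity at $y^\star$ (eq.~\ref{assump:G:sm}), combined with the fact that $G(H(y^\star), y^\star) = 0$, to get $\langle \yhat_t, G(H(y_t), y_t)\rangle \ge \mu_G\|\yhat_t\|^2$, and invoke the nested Lipschitz bound (eq.~\ref{assump:G:smooth2}) to get $\|G(H(y_t), y_t)\| \le \LGy \|\yhat_t\|$. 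Together these yield $\|u\|^2 \le (1 - 2\mu_G\beta_t + \LGy^2\beta_t^2)\|\yhat_t\|^2$, and the step-size constraint $\beta_t \le \iota_2 = \mu_G/\LGy^2$ from Assumption~\ref{assump:stepsize} absorbs the $\beta_t^2$ term to leave $\|u\|^2 \le (1-\mu_G\beta_t)\|\yhat_t\|^2$. For $\|v\|^2$, the nested Lipschitz condition (eq.~\ref{assump:G:smooth}) yields $\|v\| \le \LGx \beta_t \|\xhat_t\|$, producing the $\LGx^2 \beta_t^2\|\xhat_t\|^2$ term. The cross term is then bounded by Cauchy--Schwarz: $2\langle u, v\rangle \le 2\|u\|\|v\| \le 2\LGx\beta_t\sqrt{1-\mu_G\beta_t}\,\|\xhat_t\|\|\yhat_t\|$, which is precisely the remaining term in the lemma.

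I do not anticipate any substantive obstacle here; the argument is the standard ``strong monotonicity plus Lipschitz contraction'' computation, and every ingredient (strong monotonicity of $G(H(\cdot),\cdot)$, the two nested Lipschitz bounds, the step-size upper bound, and the bounded noise variance) is available directly from the assumptions. The only mildly delicate point is keeping the $\sqrt{1-\mu_G\beta_t}$ factor in the cross term rather than crudely applying Young's inequality, since preserving this factor is exactly what prevents the appearance of an additional standalone $\|\xhat_t\|^2$ contribution beyond the $\LGx^2\beta_t^2\|\xhat_t\|^2$ coming from $\|v\|^2$, and is what later enables the matching of scales with Lemma~\ref{lem:xhat}.
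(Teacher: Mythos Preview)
Your proposal is correct and follows essentially the same approach as the paper's proof: expand the squared norm of the recursion \eqref{eq:yhat}, bound $\|\yhat_t - \beta_t G(H(y_t),y_t)\|^2$ via strong monotonicity \eqref{assump:G:sm} plus the Lipschitz bound \eqref{assump:G:smooth2} and the step-size condition $\beta_t \le \mu_G/\LGy^2$, bound the difference term via \eqref{assump:G:smooth}, and handle the cross term by Cauchy--Schwarz while retaining the $\sqrt{1-\mu_G\beta_t}$ factor. The paper's proof is organized slightly differently (it expands all four terms of \eqref{lem:yhat:Eq1} separately rather than grouping into $u$ and $v$), but the ingredients and the resulting bounds are identical.
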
 
By combining Lemmas~\ref{lem:xhat} and \ref{lem:yhat} and using carefully designed Lyapunov functions, we can achieve the following convergence results.

\begin{thm}[Convergence rates without local linearity of $F$ and $G$]
	\label{thm:first}
	Suppose that Assumptions \ref{assump:smooth:FH}  -- \ref{assump:smoothH} and \ref{assump:noise} -- \ref{assump:stepsize}  hold.
	Then we have
		\begin{align*}
			\EB\|\xhat_{t+1} \|^2 &\le  \prod_{\tau=0}^t\left(1-\frac{\mu_{G}\beta_{\tau}}{4} \right) \left( 3 \EB\|\xhat_{0}\|^2  +  \frac{7 L_H \LGy \EB\|\yhat_{0}\|^2 }{\LGx} \right) + \frac{8\Gamma_{11}}{\mu_F} \alpha_t \\
			& \qquad + c_{x,5} \beta_t + c_{x,6} \frac{\beta_t^2}{\alpha_t} + c_{x,7} \frac{\beta_t^{2+2\Hsmooth}}{\alpha_t^2}, \\
			\EB\|\yhat_{t+1} \|^2 &\le  \prod_{\tau=0}^t\left(1-\frac{\mu_{G}\beta_{\tau}}{4} \right) \left( \EB\|\yhat_{0}\|^2 +  \frac{ 2 \LGx \EB\|\xhat_{0}\|^2 }{7 L_H \LGy}  \right)
			+ \frac{128 \LGx^2 \Gamma_{11}}{\mu_F\mu_G^2} \alpha_t \\
			& \qquad +  c_{y,1} \beta_t +  c_{y,2} \frac{\beta_t^{2+2\Hsmooth}}{\alpha_t^2},
		\end{align*}
		where $\{ c_{x,i} \}_{i \in [7] \setminus [4]}$ and $\{ c_{y,i} \}_{i \in [2]}$ are problem-dependent constants defined in 
		\eqref{eq:constanty} and \eqref{eq:constantx2}.
		In particular, when $\Hsmooth \ge 0.5$,
		\begin{align}
			\EB\|\xhat_{t+1} \|^2 + \EB\|\yhat_{t+1} \|^2 = \OM(\alpha_{t}).
			\label{eq:rate-w/o}
		\end{align} 
\end{thm}
The details proofs of Lemmas~\ref{lem:xhat}, \ref{lem:yhat} and Theorem~\ref{thm:first} are given in Appendix~\ref{proof:without}.
Although the results in this subsection follow \cite{doan2022nonlinear} at a high level, our analysis imposes refined conditions (e.g., Assumption 2.4), leading to explicit dependence on $\delta_H$, whereas Doan’s result corresponds to the special case $\delta_H = 0$. As a result, the proof must be reworked despite the similar overall idea.

\subsection{Step~2: Introduce the Matrix Cross Term and Derive Refined One-Step Descent Lemmas}
\label{sec:decouple:proof-2}

With Assumption~\ref{assump:near-linear}, we could replace 
$\EB\|\xhat_{t}\|\|\yhat_{t}\|$ in Lemma~\ref{lem:xhat} and~\ref{lem:yhat} with the matrix cross term $\| \EB \xhat_{t} \yhat_{t}^\top \|$ at the cost of introducing the higher-order residual terms. 
We also need to derive the one-step descent lemma of the matrix cross term $\| \EB \xhat_t \yhat_{t}^\top \| $.
To simplify the notation, we define
\[
\ZM_{t,\delta} := \EB \| \xhat_t \|^\delta + \EB \| \yhat_t \|^\delta.
\]

\begin{lem}[Refined one-step descent lemma of the fast iterate]\label{lem:xhat-new}
	Suppose that Assumptions~\ref{assump:smooth:FH} -- \ref{assump:stepsize-new} 
	hold. 
	We have for any $t \ge 0$,
	\begin{equation}
		\begin{aligned}
			\EB  \|\xhat_{t+1}\|^2 
			& \le \left(1- \mu_F \alpha_t
			\right) \EB \|\xhat_{t}\|^2 
			+ \cde_{x,1} \beta_t^2 \EB \|\yhat_{t}\|^2 +  \cde_{x,2} \beta_t \| \EB \xhat_t \yhat_t^\top \| 
			+ 2\Gamma_{11} \alpha_{t}^2 \\
			& \qquad + \cde_{x,3} \beta_{t}^2 + \cde_{x,4} \frac{\beta_t^{2+2\Hsmooth}}{\alpha_t}  + \Delta_{x, t}.
		\end{aligned}
		\label{lem:xhat-new:Ineq}
	\end{equation}
	where $\Delta_{x,t}$ is a higher-order residual given in the following
	\begin{equation}
		\label{eq:delta_x}
		\begin{aligned}
			\Delta_{x,t}
			& = \cde_{x,5} \beta_t \ZM_{t,2+\Hsmooth}
			+ \cde_{x,6} \alpha_t \beta_t \ZM_{t,2+\Fsmooth} 
			+ \cde_{x,7} \beta_t \ZM_{t,2+2\Gsmooth}
			+ \cde_{x,8} \beta_t^{1+\Hsmooth} \ZM_{t,2+2\Hsmooth},
		\end{aligned}
	\end{equation}
	and $\{\cde_{x,i}\}_{i \in [8]}$ are problem-dependent constants defined in 
	\eqref{eq:constantx:new}.
\end{lem}

\begin{lem}[Refined one-step descent lemma of the slow iterate]\label{lem:yhat-new}
	Suppose that Assumptions~\ref{assump:G} -- \ref{assump:sm:F} and~\ref{assump:near-linear} --~\ref{assump:stepsize-new} hold. 
	We have for any $t \ge 0$,
	\begin{align}
		\begin{split}    
			\EB  \|\yhat_{t+1}\|^2 
			& \le \left(1- \frac{2 \mu_G \beta_t}{3} \right) \EB \|\yhat_{t}\|^2 
			+ 2 \LGx^2 \beta_t^2 \EB \|\xhat_{t}\|^2 
			+ 2 d_y \LGx \beta_t \| \EB \xhat_t \yhat_t^\top \| \\
			& \qquad + \Gamma_{22} \beta_{t}^2 
			+ \Delta_{y, t},    
		\end{split}
		\label{lem:yhat-new:Ineq}
	\end{align}
	where $\Delta_{y,t}$ is a higher-order residual given in the following
	\begin{equation}
		\label{eq:delta_y}
		\begin{aligned}
			\Delta_{y,t}
			& = \SBG^2 \beta_t \left( 15 d_y^2 / \mu_G + { d_y^2 } \beta_t + 8 d_y \beta_t \right)  \ZM_{t,2+2\Gsmooth}.
		\end{aligned}
	\end{equation}
\end{lem}

\begin{lem}[One-step descent lemma of the matrix cross term]\label{lem:xyhat}
	Suppose that Assumptions~\ref{assump:smooth:FH} -- 
	\ref{assump:stepsize-new} hold. 
	We have that for any $t \ge 0$,
	\begin{equation}
		\begin{aligned}
			\|\EB\xhat_{t+1}\yhat_{t+1}^\top\| 
			&\le \left(1-\frac{\mu_F\alpha_t}{2}\right)\|\EB \xhat_t\yhat_{t}^\top \|  + \beta_t \left( \LGx \EB \|\xhat_{t}\|^2 + \cde_{xy,1} \EB\|\yhat_{t}\|^2\right)   \\
			& \qquad + \Sigma_{12} \alpha_t \beta_t + \cde_{xy,2} \beta_t^2 + \cde_{xy,3} \beta_t^{1+2\Hsmooth} + \Delta_{xy,t},
		\end{aligned}
		\label{lem:xyhat:Ineq}
	\end{equation}
	where $\Delta_{xy,t}$ is a higher-order residual given in the following
	\begin{equation}
		\label{eq:delta_xy}
		\begin{split}
			\Delta_{xy,t} 
			& = 
			2 \alpha_t \SBF \ZM_{t,2+\Fsmooth}
			+ \beta_t \SBG (1+2L_H) \ZM_{t,2+\Gsmooth}
			\\
			& \qquad
			+ \beta_t \cde_{xy,4} \ZM_{t,2+\Hsmooth}
			+ 2 \alpha_t \beta_t \SBF \SBG \ZM_{t,2+\Fsmooth+\Gsmooth},
		\end{split}
	\end{equation}
	and $\{ \cde_{xy,i} \}_{i \in [4]}$ are problem-dependent constants defined in 
	\eqref{eq:constant-xy}.
\end{lem} 
The proof of these lemmas can be found in Appendices~\ref{proof:de:xhat-new}, \ref{proof:de:yhat-new} and \ref{proof:de:xyhat}.

We conclude this step by briefly outlining the proof idea. 
Upon incorporating the update rules of $\xhat_{t+1}$ and $\yhat_{t+1}$ into our desired error metrics (e.g., $\EB\|\xhat_{t+1}\|^2$), we decompose the errors into primary components and higher-order terms. For each component, we determine individual upper bounds and then aggregate them accordingly. Specifically, we summarize the higher-order terms into single quantities, $\Delta_{x,t}$, $\Delta_{y,t}$, and $\Delta_{xy,t}$.

\subsection{Step~3: Analyze the Convergence Rates of Fourth-Order Moments}
\label{sec:decouple:proof-3}

To analyze the residual terms $\Delta_{x,t}$, $\Delta_{y,t}$ and $\Delta_{xy,t}$, 
we focus on a single quantity $\ZM_{t,4}$ due to the observation from Jensen's inequality: $\ZM_{t,\delta} \le (\ZM_{t,4})^{\delta/4}$ if $\delta \le 4$.\footnote{Here we choose the fourth-order for simplicity. It is feasible and natural to use an order smaller than $4$, which, however, would increase the complexity of the proof.}
It motivates us to analyze fourth-order moments of errors, i.e., $\EB\|\xhat_{t+1} \|^4$ and $\EB\|\yhat_{t+1}\|^4$. 
To that end, we derive one-step recursions for the conditional fourth-order moments.
The derivation process closely parallels that of Lemmas~\ref{lem:xhat-new} and \ref{lem:yhat-new}.
Moreover, we emphasize that the analysis of fourth-order moments does not require the local linearity of $F$ and $G$ in Assumption~\ref{assump:near-linear}.
Because Assumption~\ref{assump:stepsize-new} involves the parameter $\Fsmooth$ and $\Gsmooth$ in Assumption~\ref{assump:near-linear}.
The analysis relies on the following weaker version of Assumption~\ref{assump:stepsize-new} instead.

\begin{assumpt}{\ref*{assump:stepsize-new}$\dagger$}[Conditions on step sizes]
	\label{assump:stepsize-new-weak}
    The conditions in Assumption~\ref{assump:stepsize-new} hold, with $\mu_F$ and $\mu_G$ defined in Assumptions~\ref{assump:sm:F}, $\Fsmooth = \Gsmooth = 1$, and $d_x$ and $d_y$ in \eqref{assump:stepsize:constants} replaced by $1$.
\end{assumpt}

\begin{lem}[One-step descent of the fourth-order moment of the fast iterate]\label{lem:xhat-quartic}
	
	Suppose that Assumptions~\ref{assump:smooth:FH} -- \ref{assump:smoothH}, \ref{assump:noise-s}, and \ref{assump:stepsize-new-weak} hold. 
	We have for any $t \ge 0$,
	\begin{align}
		\label{lem:xhat-quartic:Ineq}
		\begin{split}
			\EB \left[ \| \xhat_{t+1} \|^4 \,|\, \FM_t \right]
			& \le \left( 1 - 
			{\mu_F \alpha_t}
			\right) \| \xhat_t \|^4 
			+ \cde_{xx, 1}  \beta_t \| \xhat_t \|^3 \| \yhat_t \|
			+ \cde_{xx,2} \beta_t^2 \| \xhat_t \|^2 \| \yhat_t \|^2 
			\\
			& \quad \ 
			+ \cde_{xx,3}  \beta_t^4 \| \yhat_t \|^4 + 32 \alpha_t^4 \Gamma_{11}^2 + 224 L_H^4 \beta_t^4 \Gamma_{22}^2 \\
			& \quad \     + \left( 20 \Gamma_{11} \alpha_t^2 + 
			20 L_H^2 \Gamma_{22} \beta_t^2 + \cde_{xx,4} \frac{\beta_t^{2+2\Hsmooth}}{\alpha_t} \right) \| \xhat_t \|^2,
		\end{split}
	\end{align}
	where $\{ c_{xx,i} \}_{i \in [5]}$ are problem-dependent constants defined in 
	\eqref{eq:constantx-quartic}.
\end{lem}

\begin{lem}[One-step descent of the fourth-order moment of the slow iterate]\label{lem:yhat-quartic}
	Suppose that Assumptions~\ref{assump:G}, \ref{assump:sm:F}, \ref{assump:noise-s}, and \ref{assump:stepsize-new-weak} hold. 
	We have for any $t \ge 0$,
	\begin{align}
		\label{lem:yhat-quartic:Ineq}
		\begin{split}
			\EB \left[ \| \yhat_{t+1} \|^4 \,|\, \FM_t \right]
			& \le \left( 1 - \frac{ 3 \mu_G \beta_t}{2} \right) \| \yhat_t \|^4
			+ 4 \LGx \beta_t \| \xhat_t \| \| \yhat_t \|^3 
			+ 18 \LGx^2 \beta_t^2 \| \xhat_t \|^2 \| \yhat_t \|^2 \\
			& \quad \ + 20 \LGx^4 \beta_t^4 \| \xhat_t \|^4
			+ 18 \Gamma_{22} \beta_t^2 \| \yhat_t \|^2 
			+ 28 \beta_t^4 \Gamma_{22}^2.
		\end{split}
	\end{align}
\end{lem}
The proof of these lemmas can be found in Appendices~\ref{proof:de:xhat-quartic} and \ref{proof:de:yhat-quartic}.

By integrating Lemmas~\ref{lem:xhat-quartic} and \ref{lem:yhat-quartic} and using a carefully designed Lyapunov function  $V_{t} = \varrho_3  \frac{\beta_t}{\alpha_t} \| \xhat_t \|^4 + \| \yhat_t \|^4$ for a properly specified $\varrho_3$, we can apply the results of Theorem~\ref{thm:first} to determine the convergence rates for the fourth-order moments. 

\begin{lem}[Convergence rates of the fourth-order moments]\label{lem:x+y_quartic}
	Suppose that Assumptions~\ref{assump:smooth:FH} -- \ref{assump:smoothH}, \ref{assump:noise-s}, and \ref{assump:stepsize-new-weak} hold. 
	Then we have for all $t \ge 0$,
	\begin{align}
		\EB \| \xhat_{t+1} \|^4 
		& \le \prod_{\tau=0}^t \left( 1 - \frac{\mu_G \beta_\tau}{4} \right)
		\left( 2 \EB \| \xhat_0 \|^4 + \frac{\mu_G^4\, \EB \| \yhat_0 \|^4 }{27 \LGx^4} 
		\right) + \frac{4 \cde_{xx,7}}{\mu_F} \alpha_t^2 + \frac{3 \cde_{xx,8}}{\mu_F} \frac{\beta_t^{4+4\Hsmooth}}{\alpha_t^4}, \notag \\
		\EB \| \yhat_{t+1} \|^4
		& \le \prod_{\tau=0}^t \left( 1 - \frac{\mu_G \beta_\tau}{4} \right) \left( \frac{\LGx^3 \EB \| \xhat_0 \|^4  }{3 \mu_G^2 L_H \LGy} 
		+ \EB \| \yhat_0 \|^4 \right) + \frac{8 \cde_{yy,1}}{\mu_G} \alpha_t^2 + \frac{10 \cde_{yy,2}}{\mu_G} \frac{\beta_t^{4+4\Hsmooth}}{\alpha_t^4}, \notag
	\end{align}
	where 
	$\{ \cde_{xx,i} \}_{i \in [7,8]}$ and $\{ \cde_{yy,i} \}_{i \in [2]}$ 
	are problem-dependent constants defined
	in 
	\eqref{eq:constanty-quartic-12} and \eqref{eq:constantx-quartic-78}.
	Moreover, when $\Hsmooth \ge 0.5$, then 
	\begin{align}\label{eq:rate-fourth}
		\EB \| \xhat_{t+1} \|^4 + \EB \| \yhat_{t+1} \|^4 = \OM(\alpha_{t}^2).
	\end{align}
\end{lem}
The proof can be found in Appendix~\ref{proof:de:x+y-quartic}.

\subsection{Step~4: Integrate the Above Ingredients and Derive Decoupled Convergence Rates}
\label{sec:decouple:proof-4}

With the aforementioned lemmas, we could integrate them to derive the convergence rates in Theorem~\ref{thm:decouple-short}.
Figure~\ref{fig:intergrate} provides a visual representation of the process. The integration follows the following procedure.

\begin{figure}[t!]\centering
	\begin{tikzpicture}[>=stealth,every node/.style={shape=rectangle,draw,rounded corners, minimum width=1cm, node distance=0.6cm},]
		\node (fourth) {\small \begin{tabular}{c} 
				Fourth-order \\ moments \eqref{eq:rate-fourth}	\end{tabular}};
		
		\node (delta) [right=of fourth]
		{ \small \begin{tabular}{c} 
				Control $\Delta_{x,t}$, \\ $\Delta_{y,t}$ and $\Delta_{xy,t}$	\end{tabular}};
		
		\node (coarse)
		[right=of delta]
		{ \small\begin{tabular}{c} 
				Coarse \\ rate \eqref{eq:rate-w/o}	\end{tabular}};
		
		\node (xyrecur) [right=of coarse]
		{\small\begin{tabular}{c} 
				Recursions \\ \eqref{lem:xhat-new:Ineq} and \eqref{lem:xyhat:Ineq} \end{tabular}};
		
		\node (yrecur) [right=of xyrecur]
		{\small\begin{tabular}{c} 
				Recursion \\ \eqref{lem:yhat-new:Ineq} \end{tabular}};
		
		\node (xrate) [below=of delta] 		
		{\small\begin{tabular}{c} 
				Rate of \\ $\EB \| \xhat_{t} \|^2$ \eqref{thm:decouple:x}
		\end{tabular}};
		
		\node (xyrate) [right=of xrate] 		
		{\small\begin{tabular}{c} 
				Rate of \\ $\| \EB \xhat_{t} \yhat_{t}^\top \| $ \eqref{thm:decouple:xy}
		\end{tabular}};
		
		\node (yrate) [right=of xyrate] 		
		{\small\begin{tabular}{c} 
				Rate of \\ $ \EB \| \yhat_{t} \|^2 $ \eqref{thm:decouple:y}
		\end{tabular}};

		\draw[->, line width=.3mm, color=brown] (fourth) to
		(delta);
		\draw[->, line width=.3mm] (coarse) to (xrate);
		\draw[->, line width=.3mm, color=blue] (delta) to (xyrate);
		\draw[->, line width=.3mm, color=blue] (coarse) to (xyrate);
		\draw[->, line width=.3mm, color=blue] (xyrecur) to (xyrate);
		\draw[->, line width=.3mm, color=red] (delta) to (yrate);
		\draw[->, line width=.3mm, color=red] (coarse) to (yrate);
		\draw[->, line width=.3mm, color=red] (yrecur) to (yrate);
		\draw[->, line width=.3mm, color=red] (xyrate) to (yrate);
	\end{tikzpicture}
	\caption{Illustration for Step~4 of the proof sketch.}
	\label{fig:intergrate}
\end{figure}

\begin{itemize}
	\item Black arrow: The convergence rate of $\EB \| \xhat_{t} \|^2$ in \eqref{thm:decouple:x-formal} directly follows from the coarse rate \eqref{eq:rate-w/o}. 
	\item Brown arrow: Using the fourth-order convergence rates in \eqref{eq:rate-fourth} 
	we could manage the higher-order residual terms $\Delta_{x,t}$, $\Delta_{y,t}$, and $\Delta_{xy,t}$. 
	For the detailed upper bounds, refer to \eqref{eq:delta_x_upper}, \eqref{eq:delta_y_upper} and \eqref{eq:delta_xy_upper}.
	
	\item Blue arrows: Combining the recursions \eqref{lem:xhat-new:Ineq} and \eqref{lem:xyhat:Ineq} with a properly chosen Lyapunov function and applying the coarse rate in \eqref{eq:rate-w/o}, we derive the convergence rate of $\| \EB \xhat_{t} \yhat_{t}^\top \|$ in \eqref{thm:decouple:xy}.
		
	\item Red arrows: Substituting the convergence rates of $\EB \| \xhat_{t} \|^2$ and $\| \EB \xhat_{t} \yhat_{t}^\top \|$ into the recursion \eqref{lem:yhat-new:Ineq} yields the convergence rate of $\EB \| \yhat_{t} \|^2$ in \eqref{thm:decouple:y}.
\end{itemize}

The details of proof can be found in Appendix~\ref{proof:de:decouple}.

\section{Numerical Experiments}\label{sec:expe}
This section presents the numerical experiments.
In Section~\ref{sec:ana:nume}, we report the results for Example~\ref{exp:slow-time} and its locally linear variant, illustrating the necessity of local linearity for decoupled convergence.
In Sections~\ref{sec:expe:decouple-toy} and \ref{sec:expe:decouple-logistic}, we consider one-dimensional toy examples and logistic regression, respectively, to illustrate the decoupled convergence rates established in Section~\ref{sec:decouple}.

\subsection{Example~\ref{exp:slow-time} and Its Locally Linear Variant}
 \label{sec:ana:nume}

In Section~\ref{sec:decouple:lower}, we have devised Example~\ref{exp:slow-time} to show that local linearity is necessary for decoupled convergence.
To illustrate this necessity, we start from $(x_0, y_0) = (2,1)$, consider noise terms $\xi_t \overset{{i.i.d.}}{\sim} \NM(0, 1)$ and 
$\psi_t =0$,
and step sizes $\alpha_t = \alpha_0 (t+1)^{-a}$ and $\beta_t = \beta_0 (t+1)^{-b}$.
To find the optimal values of $(\alpha_0, \beta_0)$, we perform a grid search on $\{ 10,3,1,0.3,0.1 \}^2$ for each pair, running $10^5$ iterations.
For each $ (a, b) \in \{ 0.7, 0.6 \} \times \{ 1.0, 0.9 \} $, we run $10^6$ steps of \eqref{alg:xy} across $10^3$ repetitions.

\begin{figure}
	\centering
	\includegraphics[width=\linewidth]{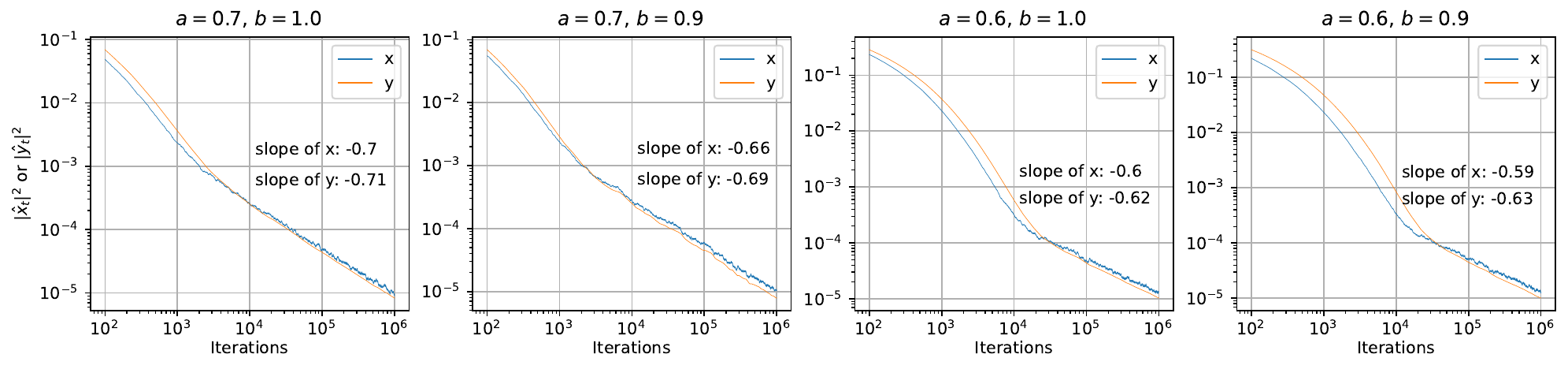}
	\caption{The convergence results within Example~\ref{exp:slow-time}. We calculate the line slopes using data from the $2\times10^5$ to $10^6$ iteration range. 
	}
	\label{fig:not-decouple}
\end{figure}
Figure~\ref{fig:not-decouple} presents results on a log-log scale, plotting the averaged values of $| \xhat_t |^2$ and $| \yhat_t |^2$ against the number of iterations for different $(a,b)$ pairs.
The slope of each line in the log-log plot reflects the convergence rate, as a relationship of the form $y = r \, x^{-s}$ corresponds to $\log y = -s \log x + \log r$.
Despite using distinctly different time scales, the slope of the orange line (representing the slow iterate $y_{t}$) nearly matches that of the blue line (representing the fast iterate $x_t$). This indicates that the nonlinear interaction in Example~\ref{exp:slow-time} hinders the convergence of the slow iterate $y_t$, 
preventing decoupled convergence, 
consistent with the theoretical results in Proposition~\ref{prop:lower}.

Next, we consider a local linear variant of Example~\ref{exp:slow-time}.
Define the auxiliary function 
\begin{equation}\label{eq:expe-aux}
    \tilde{h}_\delta (x) = \begin{cases}
    \sign(x) |x|^\delta / \delta, & |x| \le 1, \\
    \sign(x) ( |x| - 1 + 1 / \delta ), & |x| > 1.
\end{cases}
\end{equation}
One can check that when $\delta \ge 1$, $\tilde{h}_\delta$ is $1$-Lipschitz continuous.

\paragraph{A local linear variant of Example~\ref{exp:slow-time}}
Consider the following variant of Example~\ref{exp:slow-time}: 
\begin{align}
\label{eq:nume-counter-decouple}
	F(x, y) = x- y, \  G(x, y) = - \tilde{h}_{1.5}(|x-y|) \cdot \sign(y) + y.
\end{align}
Assumption~\ref{assump:near-linear} holds with $\SBF=0$ (allowing 
$\Fsmooth$ to be set to $1$) and $\Gsmooth=0.5$.
We start from $(x_0,y_0) = (2,2)$, consider noise terms $\xi_t \overset{{i.i.d.}}{\sim} \NM(0, 1)$ and $\psi_t \overset{{i.i.d.}}{\sim} \NM(0, 0.01)$.
Other settings are the same as Figure~\ref{fig:not-decouple}.
The results are shown in Figure~\ref{fig:counter-decouple}.
Based on Corollary~\ref{cor:decouple-rates}, decoupled convergence is expected within the range $1 \le b/a \le 1.5$.
Interestingly, as Figure~\ref{fig:counter-decouple} demonstrates, decoupled convergence is observed even when $b/a = 1/0.6 > 1.5$.

\begin{figure}[ht]
    \centering
    \includegraphics[width=\linewidth]{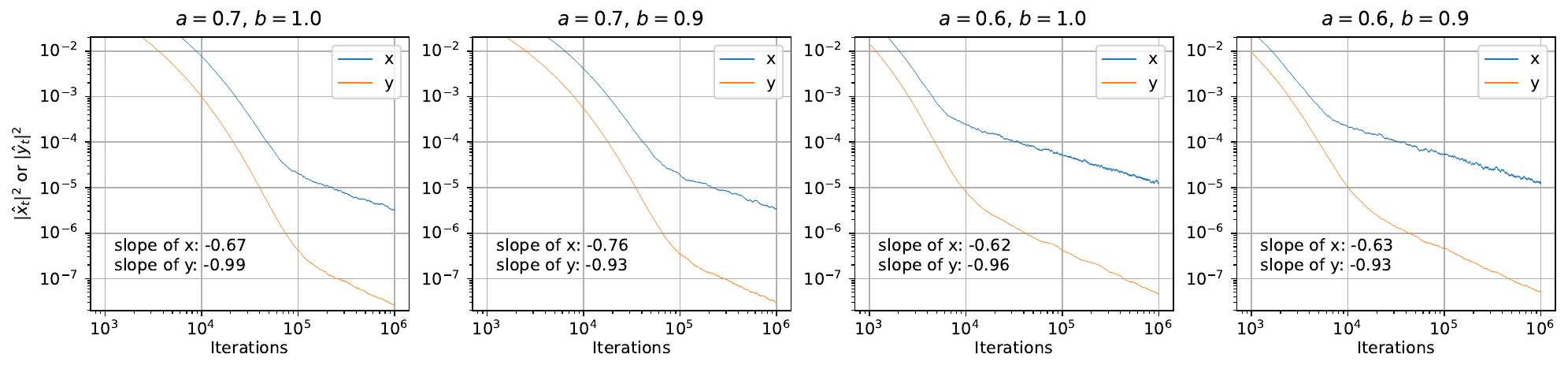}
    \caption{The convergence results for the example in \eqref{eq:nume-counter-decouple}.
    We calculate the line slopes using data from the $3 \times 10^5$ to $10^6$ iteration range.
    }
    \label{fig:counter-decouple}
\end{figure}

\subsection{Toy Examples}
\label{sec:expe:decouple-toy}

In this subsection, we illustrate the decoupled convergence in Section~\ref{sec:decouple} through numerical results on one-dimensional toy examples.
To reduce fluctuation, all experiments are repeated $1000$ times. The errors $|\xhat_t|^2$ and $|\yhat_t|^2$ are averaged over these $1000$ repetitions.


\paragraph{SGD with Polyak-Ruppert averaging}
We employ SGD with Polyak-Ruppert averaging~\eqref{eq:SGD-both} to minimize $f(x) = x^2 + \sin x$ with $(x_0,y_0) = (2,2)$, $\alpha_t = \alpha_0 (t+1)^{-a}$, $\beta_t = (t+1)^{-1}$, $\xi_t \overset{i.i.d.}{\sim} \NM (0, 1)$ and $a \in\{0.6, 0.7, 0.8, 0.9\}$. 
For each $a$, a grid search is performed on $\{10, 3, 1, 0.3, 0.1\}$ to find the optimal choice for $\alpha_0$ and each grid search is conducted with $10^4$ iterations.
The results are depicted in Figure~\ref{fig:PRave}.
The value of $a$ does not affect the convergence rate of $| \yhat_t |^2$, which is roughly $\Theta(1/t)$.

\begin{figure}[ht]
	\centering
	\includegraphics[width=\linewidth]{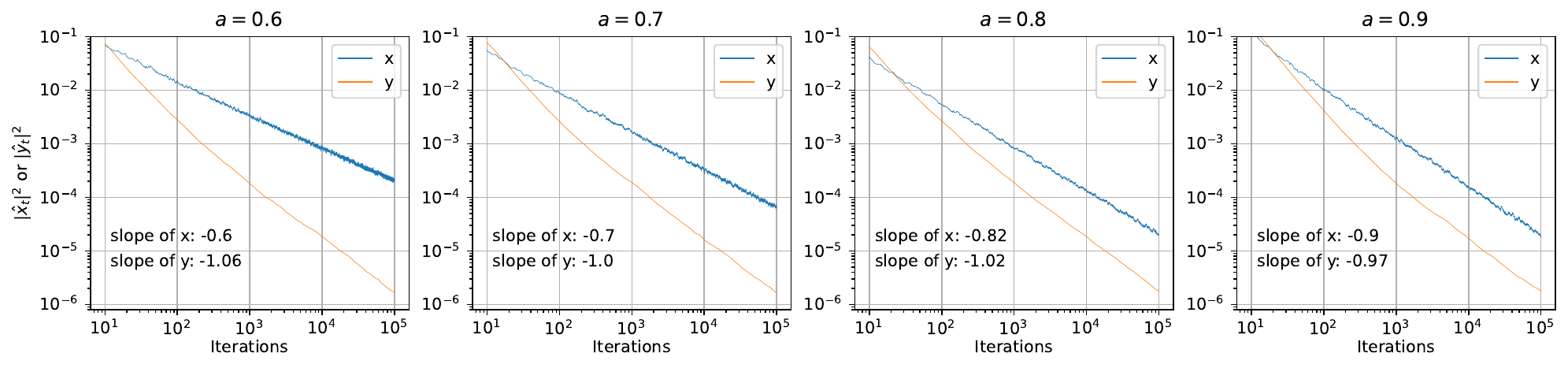}
	\caption{The convergence results for SGD with Polyak-Ruppert averaging~\eqref{eq:SGD-both}.
    We calculate the line slopes using data from the $10^4$ to $10^5$ iteration range.
	}
	\label{fig:PRave}
\end{figure}

\paragraph{SGD with momentum}
We employ SHB \eqref{eq:SHB}
to minimize $f(x) = x^2 + \sin x$ with $(x_0, y_0) = (2,2)$, $\alpha_t = \alpha_0 (t+1)^{-a}$, $\beta_t = \beta_0(t+1)^{-b}$,
$\xi_t \overset{i.i.d.}{\sim} \NM (0, 1)$ and $ (a, b) \in \{ 0.7, 0.6 \} \times \{ 1.0, 0.9 \} $.
For each $(a,b)$, a grid search is performed on $\{10, 3, 1, 0.3, 0.1\}^2$ to find the optimal choices for $(\alpha_0, \beta_0)$ and each grid search is conducted with $10^4$ iterations.
The results are depicted in Figure~\ref{fig:SHB}. 
Decoupled convergence is achieved.

\begin{figure}[ht]
	\centering
	\includegraphics[width=\linewidth]{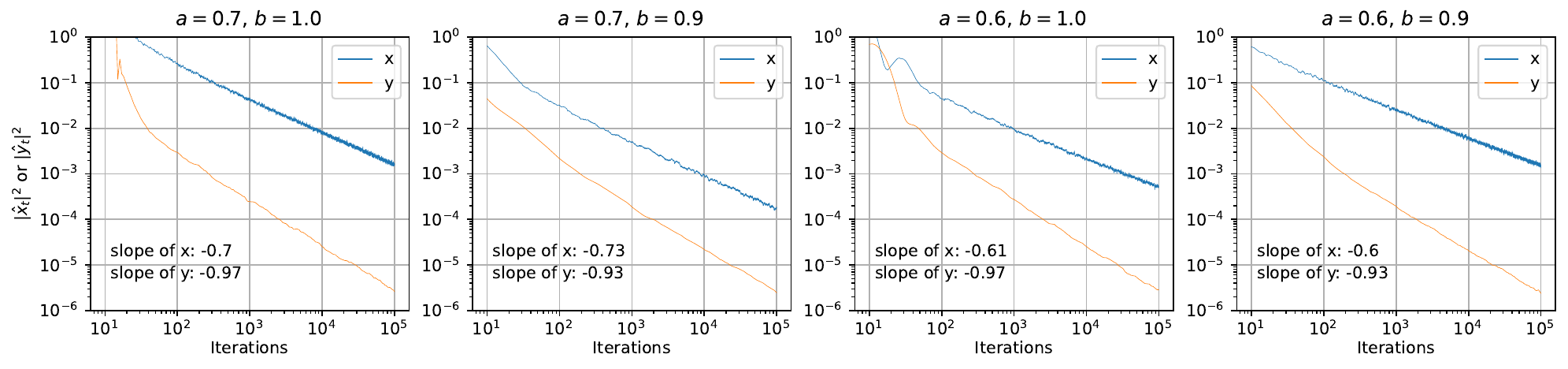}
	\caption{The convergence results for SHB~\eqref{eq:SHB}.
    We calculate the line slopes using data from the $10^4$ to $10^5$ iteration range.
	}
	\label{fig:SHB}
\end{figure}

\paragraph{Stochastic bilevel optimization}
Consider the following problem with $f(x,y) = (x + \tilde{h}_2(y))^2 + \sin ( x + \tilde{h}_2(y) )$ and $g(x,y) = (x + \tilde{h}_2 (y) )^2 + y^2 + \sin (y)$, with $\tilde{h}_2$ defined in \eqref{eq:expe-aux}:
\begin{equation}\label{eq:nume-bilevel}
\begin{aligned}
    & 
    \min_{y \in \RB } 
    (\tilde{x}^\star(y) + \tilde{h}_2 (y) )^2 + y^2 + \sin (y), \
    \\
    & 
    \text{s.t.} \
    \tilde{x}^\star(y) := \arg\min_{x \in \RB} 
    (x + \tilde{h}_2 (y) )^2 + \sin ( x + \tilde{h}_2(y) ).
\end{aligned}
\end{equation}
We apply two-time-scale SA, with $F$ and $G$ defined in \eqref{eq:bilevel-FG} 
to solve this problem,
with $(x_0, y_0) = (2,2)$, $\alpha_t = \alpha_0 (t+1)^{-a}$, $\beta_t = \beta_0(t+1)^{-b}$, $\xi_t, \psi_t \overset{{i.i.d.}}{\sim} \NM(0, 1)$ and $ (a, b) \in \{ 0.7, 0.6 \} \times \{ 1.0, 0.9 \} $.
For each $(a,b)$, a grid search is performed on $\{10, 3, 1, 0.3, 0.1\}^2$ to find the optimal choices for $(\alpha_0, \beta_0)$ and each grid search is conducted with $10^5$ iterations.
The results, depicted in Figure~\ref{fig:bilevel-unbias}, illustrate decoupled convergence for different $(a,b)$ pairs.

\begin{figure}[ht]
    \centering
    \includegraphics[width=\linewidth]{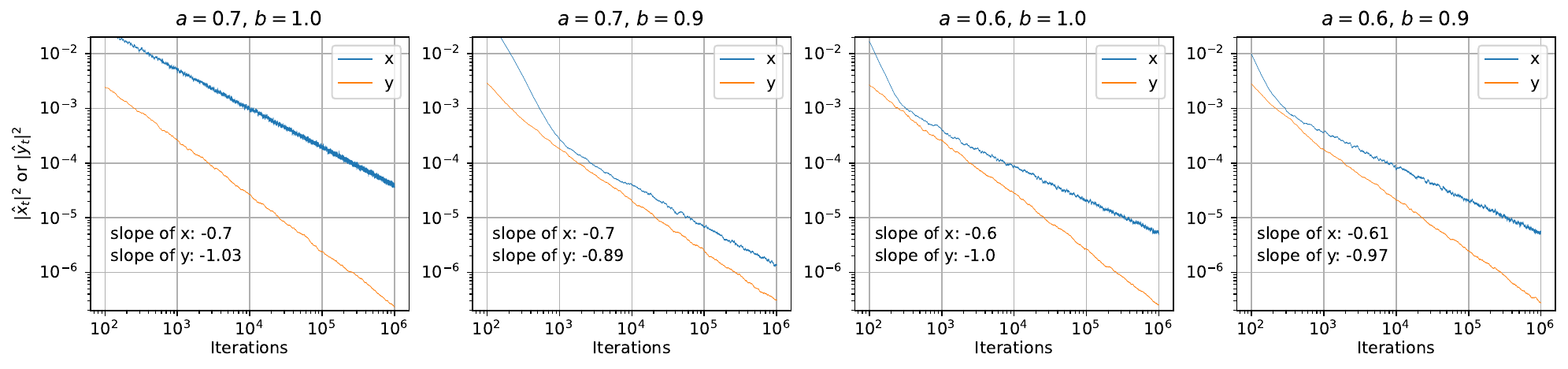}
    \caption{The convergence results for two-time-scale SA to solve \eqref{eq:nume-bilevel}.
    We calculate the line slopes using data from the $3 \times 10^5$ to $10^6$ iteration range.
    }
    \label{fig:bilevel-unbias}
\end{figure}

\subsection{Logistic Regression}
\label{sec:expe:decouple-logistic}

In this subsection, we consider the following \(\ell_2\)-regularized logistic regression problem:
\begin{equation}\label{eq:exp-logreg}
    \min_{x \in \mathbb{R}^d}
    f(x)
    :=
    \frac{1}{n}\sum_{i=1}^n
    \log\!\bigl(1+\exp(-b_i a_i^\top x)\bigr)
    + \frac{\lambda}{2}\|x\|^2,
\end{equation}
where \(a_i \in \mathbb{R}^d\) is the covariate and \(b_i \in \{-1,1\}\) is the label. 
We set the regularization parameter to \(\lambda=0.01\), so that the objective is strongly convex and therefore admits a unique minimizer.

\paragraph{Data generation}
We use a synthetic logistic regression model with dimension \(d=20\) and sample size \(n=1000\). 
To generate the dataset, we first sample a ground-truth parameter \(w_{\mathrm{true}} \in \mathbb{R}^{20}\) from a standard Gaussian distribution. 
Then we generate covariates \(a_i \sim \mathcal{N}(0,I_{20})\) independently, and produce binary labels according to the logistic model
\(\mathbb{P}(b_i = 1 \mid a_i)
    =
    \sigma(a_i^\top w_{\mathrm{true}}),\ 
    \sigma(z)=\frac{1}{1+e^{-z}}\).
Equivalently, \(b_i \in \{-1,1\}\) is sampled with
\(\mathbb{P}(b_i=1 \mid a_i)=\sigma(a_i^\top w_{\mathrm{true}}),\ 
\mathbb{P}(b_i=-1 \mid a_i)=1-\sigma(a_i^\top w_{\mathrm{true}})\).
After generation, the dataset is fixed throughout the whole experiment.

\paragraph{SGD with Polyak-Ruppert averaging}
We employ SGD with Polyak-Ruppert averaging~\eqref{eq:SGD-both} to minimize \eqref{eq:exp-logreg}, with $(x_0, y_0) = (0,0)$, $\alpha_t = \alpha_0 (t+1)^{-a}$, $\beta_t = (t+1)^{-1}$, and $a \in \{0.6,\,0.7,\,0.8,\,0.9\}$.
For each $a$, we perform a grid search over $\{10, 3, 1, 0.3, 0.1\}$ to determine the optimal choice of $\alpha_0$, and each grid search is conducted for $5\times10^4$ iterations.
The noise for the fast iterate comes from minibatch sampling with batch size $32$.
The results are shown in Figure~\ref{fig:logistic-ave}.
The y-axis represents the average of $\|\xhat_t\|^2$ or $\| \yhat_t \|^2$ over $100$ repetitions.
The figure shows that decoupled convergence can be achieved.
\begin{figure}[ht]
    \centering
    \includegraphics[width=\linewidth]{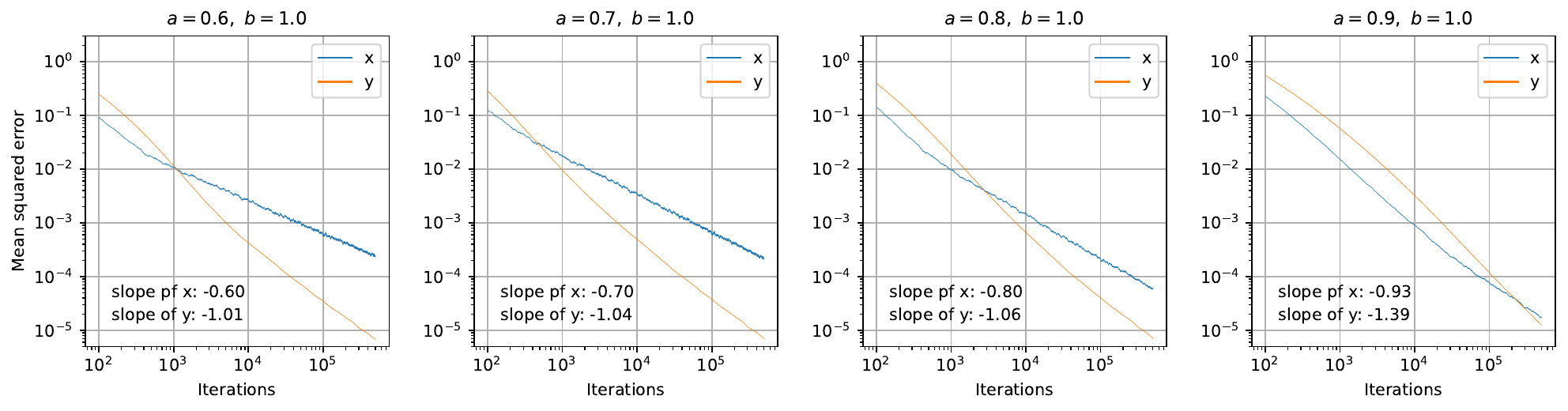}
    \caption{The convergence results for SGD with Polyak-Ruppert averaging~\eqref{eq:SGD-both} to solve \eqref{eq:exp-logreg}.
    We calculate the line slopes using data from the $10^5$ to $5\times10^5$ iteration range.
    }
    \label{fig:logistic-ave}
\end{figure}

\paragraph{SGD with momentum}
We employ SHB~\eqref{eq:SHB} to minimize \eqref{eq:exp-logreg}, with $(x_0, y_0) = (0,0)$, $\alpha_t = \alpha_0 (t+1)^{-a}$, $\beta_t = \beta_0(t+100)^{-b}$, and $ (a, b) \in \{ 0.7, 0.6 \} \times \{ 1.0, 0.9 \}$. 
For each $(a,b)$, we perform a grid search over $\{10, 3, 1, 0.3, 0.1\} \times \{1000,300,100,30,10\}$ to determine the optimal choices of $(\alpha_0, \beta_0)$, and each grid search is conducted for $5\times10^4$ iterations.
The noise for the fast iterate again comes from minibatch sampling with batch size $32$.
The results are shown in Figure~\ref{fig:logistic-shb}.
Unlike Figure~\ref{fig:logistic-ave}, the y-axis here represents the average of $\|x_t - x^\star\|^2$ or $\| \yhat_t \|^2$ over $100$ repetitions.
We plot $\|x_t - x^\star\|^2$ instead of $\|\xhat_t \|^2$ to reduce the computational cost, because computing $H(y) = \nabla f(y)$ requires passing through the entire dataset, whereas $x^\star = H(y^\star) = 0$.
As discussed in Remark~\ref{rema:comparison}, $\EB \| x_t - x^\star \|^2$ is also of order $\OM(\alpha_t)$.
Figure~\ref{fig:logistic-shb} again shows that decoupled convergence can be achieved.
\begin{figure}[ht]
    \centering
    \includegraphics[width=\linewidth]{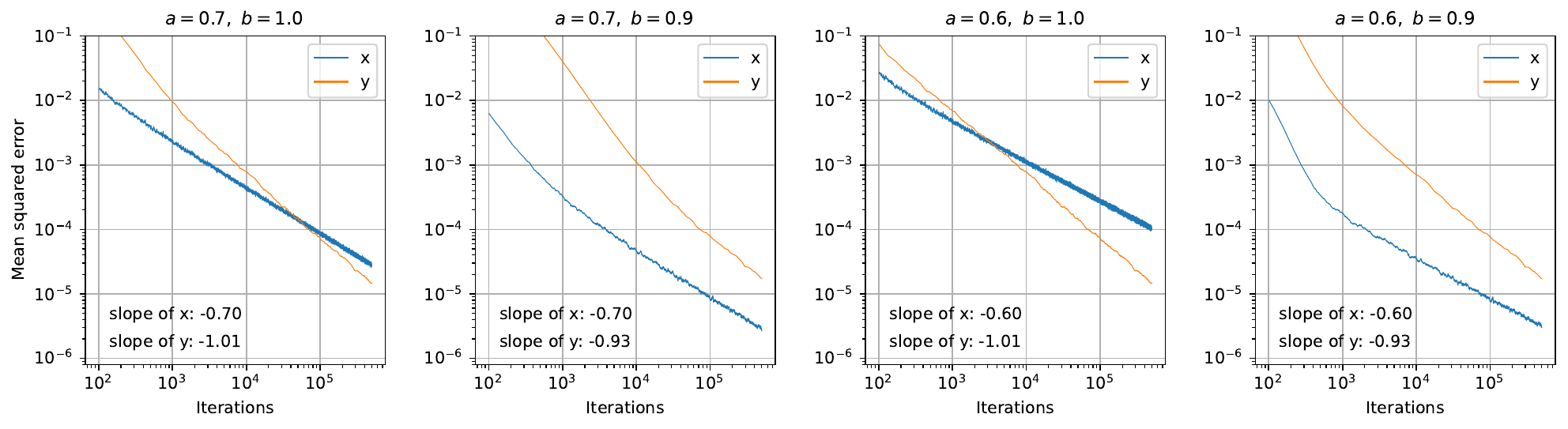}
    \caption{The convergence results for SHB~\eqref{eq:SHB} to solve \eqref{eq:exp-logreg}.
    We calculate the line slopes using data from the $10^5$ to $5\times10^5$ iteration range.
    }
    \label{fig:logistic-shb}
\end{figure}

\section{Concluding Remarks}\label{sec:conclude}
In this paper, we have investigated the potential for finite-time decoupled convergence in nonlinear two-time-scale SA under the strongly monotone condition, wherein the mean-square errors of different iterates depend solely on their respective step sizes.
Viewing the two-time-scale SA 
as an approximation of a two-loop procedure,
our primary focus is on the outer-loop iterate, i.e., the slow iterate $y_t$.

Under a nested local linearity assumption, we have established the first finite-time decoupled convergence for nonlinear two-time-scale SA with appropriate step size selection.
This decoupled convergence offers greater flexibility in choosing the step size for the fast iterate $x_t$, without impacting the convergence rate of the main focus, the slow iterate $y_t$.
Our analytical framework advances the approach for the linear operators in \citet{kaledin2020finite} to adapt complexities introduced by non-linearity.
In particular, we have derived a refined characterization of the matrix cross term, surpassing previous asymptotic results~\citep{mokkadem2006convergence}, and applied fourth-order moment convergence rates to manage higher-order error terms induced by local linearity.
In addition, we provide an example showing that decoupled convergence may fail even when the fast-time-scale update is linear, as long as the slow-time-scale update remains nonlinear. 
Together with our upper bound, this lower-bound result helps clarify when decoupled convergence should be expected in the nonlinear setting. 
It also sheds further light on the approximation perspective in \eqref{eq:two-loop-approx}: even if two-time-scale SA can be viewed as solving $F(x,y)=0$ (for fixed $y$) and $G(H(y),y)=0$, the original form of $G(x,y)$ may still affect the convergence behavior. We hope that this observation may also be useful in inspiring future algorithm design.

Despite the progress made in our paper, several avenues for future research remain. 
First, our results could be extended to include scenarios with Markovian noise or non-strongly monotone operators, broadening the applicability of our approach. 
Second, investigating the asymptotic trajectory behavior and developing online statistical inference methods for two-time-scale SA based on our non-asymptotic convergence results would be interesting future directions.
Finally, another natural direction is to generalize our framework to more complex algorithms, such as those involving multiple iterates or multiple time scales.

\bibliography{bib/twotime, bib/distributed, bib/federated, bib/optimization, bib/privacy, bib/sde, bib/stat, bib/SP}

@inproceedings{chen2021closing,
	title={Closing the gap: {T}ighter analysis of alternating stochastic gradient methods for bilevel problems},
	author={Chen, Tianyi and Sun, Yuejiao and Yin, Wotao},
	booktitle={Advances in Neural Information Processing Systems},
	volume={34},
	pages={25294--25307},
	year={2021}
}

@article{doan2022nonlinear,
	title={Nonlinear Two-Time-Scale Stochastic Approximation Convergence and Finite-Time Performance},
	author={Doan, Thinh T},
	journal={IEEE Transactions on Automatic Control},
	year={2022},
	publisher={IEEE}
}

@article{mokkadem2006convergence,
	title={Convergence rate and averaging of nonlinear two-time-scale stochastic approximation algorithms},
	author={Mokkadem, Abdelkader and Pelletier, Mariane},
	journal={Annals of Applied Probability},
	volume={16},
	number={3},
	pages={1671--1702},
	year={2006}
}

@inproceedings{shen2022single,
	author = {Shen, Han and Chen, Tianyi},
	booktitle = {Advances in Neural Information Processing Systems},
	pages = {17415--17429},
	title = {A Single-timescale Analysis for Stochastic Approximation with Multiple Coupled Sequences},
	volume = {35},
	year = {2022}
}

@article{berger2020quality,
  title={On the quality of first-order approximation of functions with H{\"o}lder continuous gradient},
  author={Berger, Guillaume O and Absil, P-A and Jungers, Rapha{\"e}l M and Nesterov, Yurii},
  journal={Journal of Optimization Theory and Applications},
  volume={185},
  pages={17--33},
  year={2020},
  publisher={Springer}
}

@article{polyak1992acceleration,
  title={Acceleration of stochastic approximation by averaging},
  author={Polyak, Boris T and Juditsky, Anatoli B},
  journal={SIAM journal on control and optimization},
  volume={30},
  number={4},
  pages={838--855},
  year={1992},
  publisher={SIAM}
}

@article{gadat2018stochastic,
	title={Stochastic Heavy ball},
	author={Gadat, S{\'e}bastien and Panloup, Fabien and Saadane, Sofiane},
	journal={Electronic Journal of Statistics},
	volume={12},
	pages={461--529},
	year={2018}
}

@article{gupal1972stochastic,
	author={Gupal, A. M.  and Bazhenov, L. T.},
	title={A stochastic analog of the conjugate gradient method},
	journal={Cybernetics},
	year={1972},
	volume={8},
	pages={138-140}
}

@inproceedings{gitman2019understanding,
	title={Understanding the role of momentum in stochastic gradient methods},
	author={Gitman, Igor and Lang, Hunter and Zhang, Pengchuan and Xiao, Lin},
	booktitle={Advances in neural information processing systems},
	volume={32},
	year={2019}
}

@book{van2000asymptotic,
	title={Asymptotic statistics},
	author={Van der Vaart, Aad W},
	volume={3},
	year={2000},
	publisher={Cambridge university press}
}

@article{konda2004convergence,
	title={Convergence rate of linear two-time-scale stochastic approximation},
	author={Konda, Vijay R and Tsitsiklis, John N},
	journal={The Annals of Applied Probability},
	volume={14},
	number={2},
	pages={796--819},
	year={2004},
	publisher={Institute of Mathematical Statistics}
}

@inproceedings{kaledin2020finite,
	title={Finite time analysis of linear two-timescale stochastic approximation with {M}arkovian noise},
	author={Kaledin, Maxim and Moulines, Eric and Naumov, Alexey and Tadic, Vladislav and Wai, Hoi-To},
	booktitle={Conference on Learning Theory},
	pages={2144--2203},
	year={2020},
	organization={PMLR}
}

@book{borkar2009stochastic,
	title={Stochastic approximation: {A} dynamical systems viewpoint},
	author={Borkar, Vivek S},
	volume={48},
	year={2009},
	publisher={Springer}
}

@inproceedings{moulines2011non,
	title={Non-asymptotic analysis of stochastic approximation algorithms for machine learning},
	author={Moulines, Eric and Bach, Francis},
	booktitle={Advances in Neural Information Processing Systems},
	volume={24},
	year={2011}
}

@inproceedings{li2021polyak,
  title={A statistical analysis of {P}olyak-{R}uppert averaged {Q}-learning},
  author={Li, Xiang and Yang, Wenhao and Zhang, Zhihua and Jordan, Michael I},
    booktitle={International Conference on Artificial Intelligence and Statistics},
    volume={206},
    year={2023}
}

@book{kushner2003stochastic,
	title={Stochastic approximation and recursive algorithms and applications},
	author={Kushner, Harold and Yin, G George},
	volume={35},
	year={2003},
	publisher={Springer Science \& Business Media}
}

@article{robbins1951stochastic,
	title={A stochastic approximation method},
	author={Robbins, Herbert and Monro, Sutton},
	journal={The annals of mathematical statistics},
	pages={400--407},
	year={1951},
	publisher={JSTOR}
}

@inproceedings{mou2020linear,
  title={On linear stochastic approximation: Fine-grained {P}olyak-{R}uppert and non-asymptotic concentration},
  author={Mou, Wenlong and Li, Chris Junchi and Wainwright, Martin J and Bartlett, Peter L and Jordan, Michael I},
  booktitle={Conference on Learning Theory},
  pages={2947--2997},
  year={2020},
  organization={PMLR}
}

@article{mou2022optimal,
	title={Optimal Oracle Inequalities for Projected Fixed-Point Equations, with Applications to Policy Evaluation},
	author={Mou, Wenlong and Pananjady, Ashwin and Wainwright, Martin J},
	journal={Mathematics of Operations Research},
	year={2022},
	publisher={INFORMS}
}

@article{mou2022banach,
	title={Optimal variance-reduced stochastic approximation in {B}anach spaces},
	author={Mou, Wenlong and Khamaru, Koulik and Wainwright, Martin J and Bartlett, Peter L and Jordan, Michael I},
	journal={arXiv preprint arXiv:2201.08518},
	year={2022}
}

@techreport{ruppert1988efficient,
  title={Efficient estimations from a slowly convergent Robbins-Monro process},
  author={Ruppert, David},
  year={1988},
  institution={Cornell University Operations Research and Industrial Engineering}
}

@article{li2023online,
  title={Online statistical inference for nonlinear stochastic approximation with {M}arkovian data},
  author={Li, Xiang and Liang, Jiadong and Zhang, Zhihua},
  journal={arXiv preprint arXiv:2302.07690},
  year={2023}
}

@article{borkar1997stochastic,
  title={Stochastic approximation with two time scales},
  author={Borkar, Vivek S},
  journal={Systems \& Control Letters},
  volume={29},
  number={5},
  pages={291--294},
  year={1997},
  publisher={Elsevier}
}

@inproceedings{sutton2009fast,
  title={Fast gradient-descent methods for temporal-difference learning with linear function approximation},
  author={Sutton, Richard S and Maei, Hamid Reza and Precup, Doina and Bhatnagar, Shalabh and Silver, David and Szepesv{\'a}ri, Csaba and Wiewiora, Eric},
  booktitle={Proceedings of the 26th annual international conference on machine learning},
  pages={993--1000},
  year={2009}
}

@article{hong2023two,
  title={A two-timescale stochastic algorithm framework for bilevel optimization: {C}omplexity analysis and application to actor-critic},
  author={Hong, Mingyi and Wai, Hoi-To and Wang, Zhaoran and Yang, Zhuoran},
  journal={SIAM Journal on Optimization},
  volume={33},
  number={1},
  pages={147--180},
  year={2023},
  publisher={SIAM}
}

@article{konda2003onactor,
  title={On actor-critic algorithms},
  author={Konda, Vijay R and Tsitsiklis, John N},
  journal={SIAM journal on Control and Optimization},
  volume={42},
  number={4},
  pages={1143--1166},
  year={2003},
  publisher={SIAM}
}

@inproceedings{dalal2020tale,
  title={A tale of two-timescale reinforcement learning with the tightest finite-time bound},
  author={Dalal, Gal and Szorenyi, Balazs and Thoppe, Gugan},
  booktitle={Proceedings of the AAAI Conference on Artificial Intelligence},
  volume={34},
  pages={3701--3708},
  year={2020}
}

@article{xu2019two,
  title={Two time-scale off-policy {TD} learning: {N}on-asymptotic analysis over {M}arkovian samples},
  author={Xu, Tengyu and Zou, Shaofeng and Liang, Yingbin},
  journal={Advances in Neural Information Processing Systems},
  volume={32},
  year={2019}
}

@inproceedings{xu2021sample,
  title={Sample complexity bounds for two timescale value-based reinforcement learning algorithms},
  author={Xu, Tengyu and Liang, Yingbin},
  booktitle={International Conference on Artificial Intelligence and Statistics},
  pages={811--819},
  year={2021},
  organization={PMLR}
}

@article{wang2021non,
  title={Non-asymptotic analysis for two time-scale {TDC} with general smooth function approximation},
  author={Wang, Yue and Zou, Shaofeng and Zhou, Yi},
  journal={Advances in Neural Information Processing Systems},
  volume={34},
  pages={9747--9758},
  year={2021}
}

@inproceedings{borkar2018concentration,
  title={Concentration bounds for two time scale stochastic approximation},
  author={Borkar, Vivek S and Pattathil, Sarath},
  booktitle={2018 56th Annual Allerton Conference on Communication, Control, and Computing (Allerton)},
  pages={504--511},
  year={2018},
  organization={IEEE}
}

@article{doan2021finite2,
  title={Finite-time convergence rates of nonlinear two-time-scale stochastic approximation under {M}arkovian noise},
  author={Doan, Thinh T},
  journal={arXiv preprint arXiv:2104.01627},
  year={2021}
}

@article{borkar1997actor,
  title={The actor-critic algorithm as multi-time-scale stochastic approximation},
  author={Borkar, Vivek S and Konda, Vijaymohan R},
  journal={Sadhana},
  volume={22},
  pages={525--543},
  year={1997},
  publisher={Springer}
}

@article{wu2020finite,
  title={A finite-time analysis of two time-scale actor-critic methods},
  author={Wu, Yue Frank and Zhang, Weitong and Xu, Pan and Gu, Quanquan},
  journal={Advances in Neural Information Processing Systems},
  volume={33},
  pages={17617--17628},
  year={2020}
}

@article{xu2020non,
  title={Non-asymptotic convergence analysis of two time-scale (natural) actor-critic algorithms},
  author={Xu, Tengyu and Wang, Zhe and Liang, Yingbin},
  journal={arXiv preprint arXiv:2005.03557},
  year={2020}
}

@article{colson2007overview,
  title={An overview of bilevel optimization},
  author={Colson, Beno{\^\i}t and Marcotte, Patrice and Savard, Gilles},
  journal={Annals of operations research},
  volume={153},
  pages={235--256},
  year={2007},
  publisher={Springer}
}

@article{ghadimi2018approximation,
  title={Approximation methods for bilevel programming},
  author={Ghadimi, Saeed and Wang, Mengdi},
  journal={arXiv preprint arXiv:1802.02246},
  year={2018}
}

@article{faizal2023functional,
  title={Functional Central Limit Theorem for Two Timescale Stochastic Approximation},
  author={Faizal, Fathima Zarin and Borkar, Vivek},
  journal={arXiv preprint arXiv:2306.05723},
  year={2023}
}

@article{zeng2021two,
  title={A two-time-scale stochastic optimization framework with applications in control and reinforcement learning},
  author={Zeng, Sihan and Doan, Thinh T and Romberg, Justin},
  journal={SIAM Journal on Optimization},
  volume={34},
  number={1},
  pages={946--976},
  year={2024},
  publisher={SIAM}
}

@article{sharrock2022two,
  title={Two-Timescale Stochastic Approximation for Bilevel Optimisation Problems in Continuous-Time Models},
  author={Sharrock, Louis},
  journal={arXiv preprint arXiv:2206.06995},
  year={2022}
}

@article{bracken1973mathematical,
  title={Mathematical programs with optimization problems in the constraints},
  author={Bracken, Jerome and McGill, James T},
  journal={Operations research},
  volume={21},
  number={1},
  pages={37--44},
  year={1973},
  publisher={INFORMS}
}

@inproceedings{li2022statistical,
  title={Statistical estimation and online inference via {Local SGD}},
  author={Li, Xiang and Liang, Jiadong and Chang, Xiangyu and Zhang, Zhihua},
  booktitle={Conference on Learning Theory},
  pages={1613--1661},
  year={2022},
  organization={PMLR}
}

@article{liang2023asymptotic,
  title={Asymptotic Behaviors and Phase Transitions in Projected Stochastic Approximation: {A} Jump Diffusion Approach},
  author={Liang, Jiadong and Han, Yuze and Li, Xiang and Zhang, Zhihua},
  journal={arXiv preprint arXiv:2304.12953},
  year={2023}
}

@article{haque2023tight,
  title={Tight Finite Time Bounds of Two-Time-Scale Linear Stochastic Approximation with Markovian Noise},
  author={Haque, Shaan Ul and Khodadadian, Sajad and Maguluri, Siva Theja},
  journal={arXiv preprint arXiv:2401.00364},
  year={2023}
}

@article{doan2024fast,
  title={Fast Nonlinear Two-Time-Scale Stochastic Approximation: Achieving $\mathcal{O}(1/K) $ Finite-Sample Complexity},
  author={Doan, Thinh T},
  journal={arXiv preprint arXiv:2401.12764},
  year={2024}
}

@inproceedings{hu2024central,
  title={Central Limit Theorem for Two-Timescale Stochastic Approximation with Markovian Noise: Theory and Applications},
  author={Hu, Jie and Doshi, Vishwaraj and others},
  booktitle={International Conference on Artificial Intelligence and Statistics},
  pages={1477--1485},
  year={2024},
  organization={PMLR}
}

@inproceedings{kwon2024two,
  title={Two-Timescale Linear Stochastic Approximation: Constant Stepsizes Go a Long Way},
  author={Kwon, Jeongyeol and Dotson, Luke and Chen, Yudong and Xie, Qiaomin},
    booktitle={The 28th International Conference on Artificial Intelligence and Statistics},
    year={2025}
}

@article{huang2024single,
  author={Huang, Yue and Wu, Zhaoxian and Ma, Shiqian and Ling, Qing},
  journal={IEEE Transactions on Signal Processing}, 
  title={Single-Timescale Multi-Sequence Stochastic Approximation Without Fixed Point Smoothness: Theories and Applications}, 
  year={2025},
  volume={73},
  number={},
  pages={1939-1953}
}

@inproceedings{zeng2024fast,
  title={Fast two-time-scale stochastic gradient method with applications in reinforcement learning},
  author={Zeng, Sihan and Doan, Thinh T},
  booktitle={The Thirty Seventh Annual Conference on Learning Theory},
  pages={5166--5212},
  year={2024},
  organization={PMLR}
}

@inproceedings{chen2025convergence,
  title={Convergence Rate in a Nonlinear Two-Time-Scale Stochastic Approximation with State (Time)-Dependence},
  author={Chen, Zixi and Xu, Yumin and Zhang, Ruixun},
  booktitle={Proceedings of the AAAI Conference on Artificial Intelligence},
  volume={39},
  number={15},
  pages={15993--16000},
  year={2025}
}

@inproceedings{platt1987neural,
 author = {Platt, John and Barr, Alan},
  title = {Constrained Differential Optimization},
 booktitle = {Neural Information Processing Systems},
year = {1987},
}

@article{boyd2011distributed,
  title={Distributed optimization and statistical learning via the alternating direction method of multipliers},
  author={Boyd, Stephen and Parikh, Neal and Chu, Eric and Peleato, Borja and Eckstein, Jonathan and others},
  journal={Foundations and Trends{\textregistered} in Machine learning},
  volume={3},
  number={1},
  pages={1--122},
  year={2011},
  publisher={Now Publishers, Inc.}
}

@article{chandak2025non,
  title={Non-Expansive Mappings in Two-Time-Scale Stochastic Approximation: Finite-Time Analysis},
  author={Chandak, Siddharth},
  journal={arXiv preprint arXiv:2501.10806},
  year={2025}
}

@article{chandak2025finite,
  title={$ {O} (1/k) $ Finite-Time Bound for Non-Linear Two-Time-Scale Stochastic Approximation},
  author={Chandak, Siddharth},
  journal={arXiv preprint arXiv:2504.19375},
  year={2025}
}

@article{han2024decoupled,
  title={Decoupled functional central limit theorems for two-time-scale stochastic approximation},
  author={Han, Yuze and Li, Xiang and Liang, Jiadong and Zhang, Zhihua},
  journal={arXiv preprint arXiv:2412.17070},
  year={2024}
}
\bibliographystyle{plainnat}

\newpage
\appendix
\setcounter{section}{1}

\section{Omitted Details in Section~\ref{sec:prelim:assump}}
\label{sec:assump-append}

In this section, we present the omitted details in Section~\ref{sec:prelim:assump}.

First,
we give the detailed definition of the constants in Assumption~\ref{assump:stepsize-new}.
\begin{align}\label{assump:stepsize:constants}
	\begin{split}
	\iota_1 &=   \frac{\mu_F}{4L_F^2} \wedge \frac{1}{12 \mu_F}, \ 
	\iota_2 = \frac{\mu_G}{\LGx^2} \wedge \frac{1}{14 \mu_G},
	\ \kappa =  \frac{\mu_{F}\mu_{G}}{( 28 d_x \vee 200 \LGy ) L_H \LGx
	} \wedge \frac{\mu_F}{5 \mu_G}, \\
	\rho & = \frac{\mu_{F} }{
		(16 d_x \vee 200) L_H^2 \LGx^2 }.
		\end{split}
\end{align}

As long as $\beta_t / \alpha_t = o(1)$ and $\beta_{t} = o(1)$, e.g., $\alpha_{t} \sim \alpha_{0} t^{-a}$ and $\beta_{t} \sim \beta_{0} \sim t^{-b}$, Assumption~\ref{assump:stepsize-new} will hold for sufficiently large $t$, regardless of the initial values $
\alpha_{0}$ and $\beta_{0}$.
However, if $\beta_{t} / \alpha_{t}$ remains constant, $\alpha_{0}$ and $\beta_{0}$ must be appropriately chosen to satisfy  Assumption~\ref{assump:stepsize-new}.
This comparison highlights the advantage of using different time scales over the single-time-scale case in terms of flexible parameter selection.

In the remaining part,
the proofs of Propositions~\ref{prop:holder-equiv} and \ref{prop:ensure-linearity} are given in Appendices~\ref{proof:holder-equiv} and \ref{proof:de:ensure-linearity}, respectively.
The verification of Assumptions~\ref{assump:smoothH} and \ref{assump:near-linear} is provided in Appendix~\ref{sec:verify-assump}.
For completeness, we repeat Propositions~\ref{prop:holder-equiv} and \ref{prop:ensure-linearity} below.

\begin{prop}[Proposition~\ref{prop:holder-equiv}]\label{prop:holder-equiv-rep}
	Under Assumption~\ref{assump:H:holder-assump},
	Assumption~\ref{assump:smoothH} holds with $S_H = \frac{\Hholder}{1+\Hsmooth} $;
	under Assumption~\ref{assump:smoothH}, Assumption~\ref{assump:H:holder-assump} holds with $\Hholder = 2^{1-\Hsmooth} \sqrt{1+\Hsmooth} \left( \frac{1+\Hsmooth}{\Hsmooth} \right)^\frac{\Hsmooth}{2} S_H$.\footnote{We make the contention that $\left( \frac{1+\Hsmooth}{\Hsmooth} \right)^\frac{\Hsmooth}{2} = 1$ if $\Hsmooth = 0$. }
	For this equivalence, we do not require $\Hsmooth \ge 0.5$.
\end{prop}

\begin{prop}[Proposition~\ref{prop:ensure-linearity}]
	\label{prop:ensure-linearity-rep}
	Suppose that Assumptions~\ref{assump:smooth:FH} --~\ref{assump:smoothH} hold. 
	\begin{enumerate}[(i)]
		\item 
        If Assumption~\ref{assump:local-linear} holds, then $\| A_{11} \| \le L_F$, $ \| A_{21} \| \le \LGx $, $\frac{A_{11}+A_{11}^\top}{2} \succeq \mu_F I$, and $\nabla H(y^\star) =  - A_{11}^{-1} A_{12}$.
		If we further assume $\Hsmooth \ge \Fsmooth \vee \Gsmooth$, 	then Assumption~\ref{assump:near-linear} holds with parameters
		\begin{align*}
			B_1 & = A_{11},\ \ B_2 = A_{21},\ \ B_3 = A_{22}-A_{21}A_{11}^{-1}A_{12},\\
			S_{B,F} &= S_{A,F} + L_F (S_H \vee 2 L_H), \ 
			S_{B,G} = S_{A,G} + \LGx (S_H \vee 2 L_H).
		\end{align*}
		\item 
        If Assumption~\ref{assump:near-linear} holds, then $\|B_1\| \le L_F, \| B_2\| \le \LGx , \|B_3\|\le \LGy,  \frac{B_1 + B_1^\top}{2} \succeq \mu_F I$, and $\frac{B_3 + B_3^\top}{2} \succeq \mu_G I$.
	\end{enumerate}
\end{prop}

\subsection{Proof of Proposition~\ref{prop:holder-equiv}}
\label{proof:holder-equiv}

\begin{proof}[Proof of Proposition~\ref{prop:holder-equiv}]
The proof is divided into two parts.
The first part is straightforward while the second part follows a similar procedure as the proof of \citet[Theorem~4.1]{berger2020quality}

\textbf{Assumption~\ref{assump:H:holder-assump} $\implies$ Assumption~\ref{assump:smoothH} }.
For any unit vector $e \in \RB^{d_x}$,
we define $f(y) = \inner{e}{H(y)}$. Then we have $ f'(y) = \nabla H(y)^\top e$ and
\begin{align*}
    & \quad f(y_1) - f(y_2) - \inner{ f'(y_2) }{y_1 - y_2} 
    = \int_0^1 \inner{ f' ( y_2 + t(y_1 - y_2) ) - f' (y_2) }{ y_1 - y_2 } \mathrm{d} t
   \\
   & 
    \le \| y_1 - y_2 \| \int_0^1 \| \nabla H ( y_2 + t(y_1 - y_2) ) - \nabla H (y_2) \|  \mathrm{d} t 
   \\
   &
	\overset{\eqref{assump:H:holder}}{\le} \Hholder \| y_1 - y_2 \|^{1+\Hsmooth} \int_0^1  t^{\Hsmooth} \mathrm{d} t 
    \le \frac{\Hholder}{1+\Hsmooth} \| y_1 - y_2 \|^{1+\Hsmooth}.
\end{align*}
From the definition of $f$, we have
$
    \inner{e}{ H(y_1) - H(y_2) - \nabla H(y_2) (y_1 - y_2) }
    \le \frac{\Hholder}{1+\Hsmooth} \| y_1 - y_2 \|^{1+\Hsmooth}.
    $
Since $e$ is an arbitrary unit vector,  Assumption~\ref{assump:smoothH} holds with $S_H = \frac{\Hholder}{1+\Hsmooth}$.

\textbf{Assumption~\ref{assump:smoothH} $\implies$ Assumption~\ref{assump:H:holder-assump} }.
Under Assumption~\ref{assump:smoothH}, we have for  any $y_1, y_2 \in \RB^{d_y}$ and any unit vector $e \in \RB^{d_x}$, it holds that
\begin{align}
\label{prop:holder-equiv:eq1}
    - S_H \| y_1 - y_2 \|^{1+\Hsmooth}
    \le \inner{e}{ H(y_1) - H(y_2) - \nabla H(y_2) (y_1 - y_2) }
    \le S_H \| y_1 - y_2 \|^{1+\Hsmooth}.
\end{align}
Now we fix two arbitrary points $\bar{y}_1$ and $\bar{y}_2 \in \RB^{d_y}$ with $\bar{y}_1 \neq \bar{y}_2$.
Without loss of generality, we assume $H(\bar{y}_1) = 0$ and $\nabla H( \bar{y}_1 ) = 0$.
Otherwise, we could replace $H(y)$ by $\widetilde{H} (y) =: H(y) - H(\bar{y}_1) - \nabla H(\bar{y}_1) (y-\bar{y}_1)$ in \eqref{prop:holder-equiv:eq1}.
For any $z_1, z_2 \in \RB^{d_y}$, setting $(y_1, y_2) = (z_1, \bar{y}_1)$ and $(z_1, \bar{y}_2)$ in \eqref{prop:holder-equiv:eq1} yields
\begin{align}
\label{prop:holder-equiv:eq2}
    - S_H \| z_1 - \bar{y}_1 \|^{1+\Hsmooth}
    \le \inner{e}{ H(z_1) }
    \le \inner{e}{ H(\bar{y}_2) + \nabla H( \bar{y}_2 ) (z_1 - \bar{y}_2) } + S_H \| z_1 - \bar{y}_2 \|^{1+\Hsmooth}.
\end{align}
Similarly, setting $(y_1, y_2) = (z_2, \bar{y}_1)$ and $(z_2, \bar{y}_2)$ in \eqref{prop:holder-equiv:eq1} yields
\begin{align}
\label{prop:holder-equiv:eq3}
    S_H \| z_2 - \bar{y}_1 \|^{1+\Hsmooth}
    \ge \inner{e}{H(z_2)}
    \ge \inner{e}{ H(\bar{y}_2) + \nabla H(\bar{y}_2) (z_2 - \bar{y}_2) } - S_H \| z_2 - \bar{y}_2 \|^{1+\Hsmooth}.
\end{align}
Subtracting the rightmost and leftmost sides of \eqref{prop:holder-equiv:eq2} from \eqref{prop:holder-equiv:eq3}, we obtain
\begin{align*}
    \inner{e}{ \nabla H(\bar{y}_2) (z_2 - z_1) }
    \le S_H \left( \| z_1 - \bar{y}_1 \|^{1+\Hsmooth} + \| z_2 - \bar{y}_1 \|^{1+\Hsmooth} + \| z_1 - \bar{y}_2 \|^{1+\Hsmooth} + \| z_2 - \bar{y}_2 \|^{1+\Hsmooth} \right).
\end{align*}
Let $\tilde{e} \in \RB^{d_y}$ be an arbitrary unit vector, $z_1 = \frac{ \bar{y}_1 + \bar{y}_2 - \alpha \tilde{e} }{2} $
and $z_2 = \frac{ \bar{y}_1 + \bar{y}_2 + \alpha \tilde{e} }{2} $ with $\alpha$ a positive constant determined later.
Then we have
\begin{align}
\label{prop:holder-equiv:eq4}
    \alpha \inner{e}{ \nabla H(\bar{y}_2)\, \tilde{e} }
    \le 2 S_H \left( \left\| \frac{ \bar{y}_2 - \bar{y}_1 + \alpha \tilde{e} }{2}  \right\|^{1+\Hsmooth} + \left\| \frac{ \bar{y}_2 - \bar{y}_1 - \alpha \tilde{e} }{2}  \right\|^{1+\Hsmooth} \right).
\end{align}
Since the function $h(x) = x^\frac{1+\Hsmooth}{2}$ is concave, Jensen's inequality together with the parallelogram identity implies
\begin{align}
    \left\| \frac{ \bar{y}_2 - \bar{y}_1 + \alpha \tilde{e} }{2}  \right\|^{1+\Hsmooth} + \left\| \frac{ \bar{y}_2 - \bar{y}_1 - \alpha \tilde{e} }{2}  \right\|^{1+\Hsmooth}
    & \le 2^\frac{1-\Hsmooth}{2} \left( \frac{ \| \bar{y}_2 - \bar{y}_1 \|^2 + \alpha^2 }{2} \right)^\frac{1+\Hsmooth}{2}, \label{prop:holder-equiv:eq5}
\end{align}
Plugging \eqref{prop:holder-equiv:eq5} into \eqref{prop:holder-equiv:eq4}, dividing both sides by $\alpha$ and setting $\alpha = k \left\| \bar{y}_2 - \bar{y}_1 \right\| $, we obtain
$
    \inner{e}{ \nabla H(\bar{y}_2) \, \tilde{e} }
    \le 2^{1-\Hsmooth} S_H \frac{ (1+k^2)^\frac{1+\Hsmooth}{2} }{k} \| \bar{y}_2 - \bar{y}_1 \|^\Hsmooth.
    $

If $\Hsmooth = 0$, letting $k \to \infty$ yields
$
    \inner{e}{ \nabla H(\bar{y}_2) \, \tilde{e} }
    \le 2^{1-\Hsmooth} S_H \| \bar{y}_2 - \bar{y}_1 \|^\Hsmooth.
    $
If $\Hsmooth > 0$, setting $k = 1/ \sqrt{\Hsmooth}$ yields
$
    \inner{e}{ \nabla H(\bar{y}_2) \, \tilde{e} }
    \le 2^{1-\Hsmooth} S_H \sqrt{1+\Hsmooth} \left( \frac{1+\Hsmooth}{\Hsmooth} \right)^\frac{\Hsmooth}{2} \| \bar{y}_2 - \bar{y}_1 \|^\Hsmooth.
    $
Since $e$ and $\tilde{e}$ are two arbitrary unit vectors and $\nabla H(\bar{y}_1) = 0$, summarizing the two cases shows that Assumption~\ref{assump:H:holder-assump} holds with $\Hholder = 2^{1-\Hsmooth} \sqrt{1+\Hsmooth} \left( \frac{1+\Hsmooth}{\Hsmooth} \right)^\frac{\Hsmooth}{2} S_H$.
\end{proof}

\subsection{Proof of Proposition~\ref{prop:ensure-linearity}}
\label{proof:de:ensure-linearity}

\begin{proof}[Proof of Proposition~\ref{prop:ensure-linearity}]
We prove the results step by step.
\vspace{0.2cm}
		
\textbf{Proof of Part~\ref{prop:ensure-linearity-1}}.	
We first prove $\| A_{11} \| \le L_F$ and $\| A_{21} \| \le \LGx$.
Setting $y = y^\star$ in \eqref{assump:local-linear-F} yields
$
	\| F(x,y^\star) - A_{11} (x - x^\star) \| \le \SAF \| x - x^\star \|^{1+\Fsmooth}.
    $
Then by Condition~\eqref{assump:smooth:FH:ineqF},
we have
$
	L_F \| x - H(y^\star) \| 
	\ge \| F(x,y^\star) \|
	\ge \| A_{11} (x - x^\star) \| - \| F(x,y^\star) - A_{11} (x - x^\star) \|.
    $
Recall that $x^\star = H(y^\star)$.
It follows that
\begin{align*}
	\| A_{11} (x - x^\star) \| \le L_F \| x - x^\star \| + \SAF \| x - x^\star \|^{1+\Fsmooth}.
\end{align*}
Let $x - x^\star = t e_1$ where $e_1 \in \RB^{d_x}$ is an arbitrary unit vector and $t > 0$.
Dividing both sides by $t$ and letting $t \to 0$ yields $\| A_{11} e_1 \| \le L_F$.
Since $e_1$ is arbitrary, we obtain $\| A_{11} \| \le L_F$.
Setting $y = y^\star$ in \eqref{assump:local-linear-G} and applying Condition~\eqref{assump:G:smooth},
we can obtain
$
	\| A_{21} (x - x^\star) \| \le \LGx \| x - x^\star \| + \SAG \| x - x^\star \|^{1+\Gsmooth}.
    $
Similarly, we can obtain $\| A_{21} \| \le \LGx$.

Next, we prove $ A_{11} \nabla H(y^\star) +  A_{12} = 0$.
Recall that the first inequality of Assumption~\ref{assump:local-linear} is
$
	\|F(x, y) - A_{11}(x -x^{\star}) - A_{12} (y-y^{\star}) \| \le S_{A,F} \left( \|x - x^{\star}\|^{1+\Fsmooth} + \|y-y^{\star}\|^{1+\Fsmooth} \right).
    $
Since $F(H(y) , y) = 0$,
setting $x = H(y)$ yields
\begin{align*}
	\| A_{11} (H(y) - H(y^\star)) + A_{12} (y-y^\star) \| \le S_{A,F} (1 + L_H^{1+\Fsmooth}) \|y-y^{\star}\|^{1+\Fsmooth}.
\end{align*}
By Assumption~\ref{assump:smoothH} and the triangle inequality,
we have
\begin{align*}
	\| (A_{11} H(y^\star) + A_{12} )  (y-y^\star) \| \le  S_{A,F} (1 + L_H^{1+\Fsmooth}) \|y-y^{\star}\|^{1+\Fsmooth} + L_F S_H \|y-y^{\star}\|^{1+\Hsmooth}. 
\end{align*}
Setting $y - y^* = t e_2$ for an arbitrary unit vector $e_2 \in \RB^{d_y}$ and letting $t \to 0$ yields $\|  (A_{11} H(y^\star) + A_{12} ) e_2 \| = 0 $.
Since $e_2$ is arbitrary, we have $ A_{11} \nabla H(y^\star) +  A_{12} = 0$.

In the following, we prove that the first inequality of Assumption~\ref{assump:near-linear} holds.
Since $\| A_{11} \| \le L_F$ and $\| A_{21} \| \le \LGx$.
we have
	\begin{align*}
		& \quad \| F(x, y) - A_{11}(x - H(y))\| \\
		&\le 	\|F(x, y) - A_{11}(x -x^{\star}) - A_{12} (y-y^{\star}) \|  \notag 
		+  \|A_{11} \left( H(y) - H(y^{\star}) - \nabla H(y^{\star})(y-y^{\star}) \right) \|  \notag \\
		& \overset{(a)}{\le} S_{A,F} \left( \|x - x^{\star}\|^{1+\Fsmooth} + \|y-y^{\star}\|^{1+\Fsmooth} \right) \notag 
		+ L_F S_H \|y-y^{\star} \| \cdot \min \left\{ \| y - y^\star \|^{\Hsmooth}, R_H \right\},
	\end{align*}
	where (a) also uses \eqref{assump:H:smooth-main} and $R_H = \frac{2 L_H}{S_H}$.
	If $\|y - y^\star \| \le 1$, then we have
	\begin{align*}
		\| F(x, y) - A_{11}(x - H(y))\|
		& \le  S_{A,F} \left( \|x - x^{\star}\|^{1+\Fsmooth} + \|y-y^{\star}\|^{1+\Fsmooth} \right) 
		+ L_F S_H \| y - y^\star \|^{1+\Hsmooth} \\
		& \le S_{A,F} \|x - x^{\star}\|^{1+\Fsmooth} + (S_{A,F} + L_F S_H) \| y - y^\star \|^{1 + \Fsmooth}.
	\end{align*}
	Otherwise, we have
	\begin{align*}
		\| F(x, y) - A_{11}(x - H(y))\|
		& \le  S_{A,F} \left( \|x - x^{\star}\|^{1+\Fsmooth} + \|y-y^{\star}\|^{1+\Fsmooth} \right) 
		+ 2 L_F L_H \| y - y^\star \| \\
		& \le S_{A,F}  \|x - x^{\star}\|^{1+\Fsmooth} + (S_{A,F} + 2 L_F L_H) \|y - y^\star \|^{1+\Fsmooth}. 
	\end{align*}
	Combining the two cases yields
	\begin{align}
		\label{eq:linear-F}
		\| F(x, y) - A_{11}(x - H(y))\|
		\le S_{A,F}  \|x - x^{\star}\|^{1+\Fsmooth} + (S_{A,F} + L_F \max\{ S_H, 2L_H \}) \|y - y^\star \|^{1+\Fsmooth}. 
	\end{align}
	
Before proving the second inequality in Assumption~\ref{assump:local-linear}, we first check the lower bound for $ \frac{A_{11} + A_{11}^\top}{2}$.
	By Assumption~\ref{assump:sm:F} and~\eqref{eq:linear-F}, it follows that
	\begin{align*}
		\mu_{F} \|x-x^{\star}\|^2 \le \left\langle x - x^{\star}, F(x,y^{\star}) \right\rangle 
		&\le \left\langle x - x^{\star}, A_{11}(x - x^{\star})  \right\rangle  + \SBF \|x - x^{\star}\|^{2+\Fsmooth} \\
		&= \left\langle x - x^{\star}, \frac{A_{11}+A_{11}^\top}{2}\left(x - x^{\star}\right)  \right\rangle  + \SBF \|x - x^{\star}\|^{2+\Fsmooth}.
	\end{align*}
	Dividing $ \|x-x^{\star}\|^2 $ on the both sides of the last inequality and letting $x$ converge to $x^{\star}$ along the direction $v$, we then have that $\mu_F \le  v^\top  \frac{A_{11}+A_{11}^\top}{2} v$ for any unit one vector $v$, which implies that $ \frac{A_{11} + A_{11}^\top}{2} \succeq \mu_F I$.	This condition also implies $A_{11}$ is non-singular and consequently $H(y^\star) = - A_{11}^{-1} A_{12}$.
	
Finally, we prove the second inequality in Assumption~\ref{assump:local-linear}. We have
	\begin{align*}
		&\| G(x, y) - A_{21}(x - H(y))  - (A_{22}-A_{21}A_{11}^{-1}A_{12})(y-y^{\star}) \|\\
		&\le \|G(x, y) - A_{21}(x -x^{\star}) - A_{22}(y-y^{\star}) )\| 
		+ \|A_{21}\left(H(y) - x^{\star} + A_{11}^{-1} A_{12}(y-y^{\star})\right)\|\\
		&\le S_{A,G} \left( \|x - x^{\star}\|^{1+\Gsmooth} + \|y-y^{\star}\|^{1+\Gsmooth} \right)  + \LGx S_H\|y-y^{\star}\|\cdot \min \{ \| y - y^\star \|^\Hsmooth, R_H \}.
	\end{align*}
	Similar to the proof of \eqref{eq:linear-F}, we can obtain
	\begin{align*}
		& \| G(x, y) - A_{21}(x - H(y))  - (A_{22}-A_{21}A_{11}^{-1}A_{12})(y-y^{\star}) \|\\
		& \quad \le  S_{A,G} \| x - x^\star \|^{1+\Gsmooth} + (S_{A,G} + \LGx \max\{ S_H, 2L_H \}) \|y - y^\star \|^{1+\Gsmooth}. 
	\end{align*}
	Hence, the near linearity conditions in Assumption~\ref{assump:near-linear} follow with
	$B_1 = A_{11}$, $B_2 = A_{21}$, $B_3 = A_{22}-A_{21}A_{11}^{-1}A_{12}$,
	$\SBF = S_{A,F} + L_F (S_H \vee 2 L_H)$ and $\SBG = S_{A,G} + \LGx (S_H \vee 2 L_H)$.
\vspace{0.1cm}
	
\textbf{Proof of Part~\ref{prop:ensure-linearity-2}}.	
Setting $y = y^\star$ in \eqref{assump:near-linear-F} and applying Condition~\eqref{assump:smooth:FH:ineqF}, we obtain
\begin{align*}
	\| B_1 (x - H(y^\star)) \| \le L_F \| x - H(y^\star) \| + \SBF \| x - H(y^\star) \|^{1+\Fsmooth}.
\end{align*}
Similar to the proof in Part~\ref{prop:ensure-linearity-1}, we can obtain $\| B_1 \| \le L_F$.
Setting $y = y^\star$ in \eqref{assump:near-linear-G} and applying Condition~\eqref{assump:G:smooth}, we can obtain $\| B_2 \| \le \LGx$.
Setting $x = H(y)$ in \eqref{assump:near-linear-G} and applying Condition~\eqref{assump:G:smooth2},
we can obtain $\|B_3\| \le \LGy$.
The proof of the lower bounds for $ \frac{B_1 + B_1^\top}{2}$ and  $ \frac{B_3 + B_3^\top}{2}$ is also similar ot that of the lower bound for $ \frac{A_{11} + A_{11}^\top}{2}$ in Part~\ref{prop:ensure-linearity-1}.
\end{proof}

\subsection{Verification of Local Linearity Conditions on Examples}
\label{sec:verify-assump}
In this subsection, we present the verification of local linearity conditions in Assumptions~\ref{assump:smoothH} and \ref{assump:near-linear} on the examples in Section~\ref{sec:prelim:app}.

First, we would like to emphasize an important simplification in verifying Assumption~\ref{assump:near-linear}.
Under the Lipschitz continuity of $F$, $G$, and $H$ in Assumptions~\ref{assump:smooth:FH} and \ref{assump:G}, together with the uniform local linearity of $H$ in Assumption~\ref{assump:smoothH}$,$ if either \eqref{assump:near-linear-F}--\eqref{assump:near-linear-G} or \eqref{assump:local-linear-F}--\eqref{assump:local-linear-G} holds in a neighborhood of the solution $(x^\star,y^\star)$, then these inequalities automatically extends to all $(x,y)$, thereby yielding Assumption~\ref{assump:near-linear}; see \citet[Propositions~A.2 and~A.3]{han2024decoupled}.
Therefore, for the examples, it is sufficient to verify \eqref{assump:near-linear-F}--\eqref{assump:near-linear-G} or \eqref{assump:local-linear-F}--\eqref{assump:local-linear-G} only locally around $(x^\star,y^\star)$.

For Examples~\ref{exmp:PRave}--\ref{exmp:lagrange}, we assume that $f \colon \RB^{d_x} \to \RB$ is strongly convex, with unique minimizer $x_o^{\star} = \arg\min_{x \in \RB^{d_x}} f(x)$.
\vspace{0.1cm}

\textbf{Example~\ref{exmp:PRave}: SGD with Polyak-Ruppert averaging}.
SGD with Polyak-Ruppert averaging in \eqref{eq:SGD-both} an example of two-time-scale SA \eqref{alg:xy} with $F(x,y)=\nabla f(x)$, $G(x,y)=y-x$, and $H(y)\equiv x^\star$.
It follows that $G(H(y),y)=y-x^\star$, and hence $y^\star=x^\star=x_o^\star$.
If $\nabla f(x)$ is Lipschitz continuous, then $F$, $G$, and $H$ are all Lipschitz continuous.
It therefore remains only to verify \eqref{assump:near-linear-F} and \eqref{assump:near-linear-G} locally.
Since $H$ is constant, Assumption~\ref{assump:smoothH} holds trivially with $S_H=0$ and $\Hsmooth=1$.
If $\nabla f(x)$ is differentiable in a neighborhood of $x_o^\star$ and $\nabla^2 f(x)$ is $\delta$-H\"older continuous there, then
$\|F(x,y)-\nabla^2 f(x^\star)(x-H(y))\|=\|\nabla f(x)-\nabla^2 f(x^\star)(x-x_o^\star)\|=\OM(\|x-x^\star\|^{1+\delta})$
in that neighborhood, while
$\|G(x,y)+(x-H(y))-(y-y^\star)\|=0$.
Thus, Assumption~\ref{assump:near-linear} holds with $B_1=\nabla^2 f(x_o^\star)$, $B_2=-I$, $B_3=I$, $\Fsmooth=\delta$, $\SBG=0$, and $\Gsmooth=1$.
\vspace{0.1cm}

\textbf{Example~\ref{exmp:momentum}: SGD with momentum}.
SGD with momentum in \eqref{eq:SHB} is an example of two-time-scale SA \eqref{alg:xy} with $F(x,y)=x-\nabla f(y)$, $G(x,y)=x$, and $H(y)=\nabla f(y)$.
It follows that $G(H(y),y)=\nabla f(y)$, $y^\star=x_o^\star$, and $x^\star=0$.
If $\nabla f(x)$ is Lipschitz continuous, then $F$, $G$, and $H$ are all Lipschitz continuous.
It therefore remains only to verify \eqref{assump:near-linear-F} and \eqref{assump:near-linear-G} locally.
If $\nabla^2 f(x)$ is $\delta$-H\"older continuous, then Assumption~\ref{assump:smoothH} holds with $\Hsmooth=\delta$.
Moreover, $\|F(x,y)-(x-H(y))\|=0$, and
$\|G(x,y)-(x-H(y))-\nabla^2 f(x_o^\star)(y-y^\star)\|=\OM(\|y-y^\star\|^{1+\delta})$.
Therefore, Assumption~\ref{assump:near-linear} holds with $B_1=I$, $B_2=I$, $B_3=\nabla^2 f(x_o^\star)$, $\SBF=0$, $\Fsmooth=1$, and $\Gsmooth=\delta$.
\vspace{0.1cm}

\textbf{Example~\ref{exmp:lagrange}: Constrained optimization with Lagrange multipliers}.
The algorithm in \eqref{eq:lagrange} is an example of two-time-scale SA \eqref{alg:xy} with $F(x,y)=\nabla f(x)+A^\top y$, $G(x,y)=-Ax+b$, and $H(y)=[\nabla f]^{-1}(-A^\top y)$.
If $\nabla f(x)$ and $[\nabla f]^{-1}(x)$ are Lipschitz continuous, then $F$, $G$, and $H$ are all Lipschitz continuous.
For this example, it is more convenient to first verify \eqref{assump:local-linear-F} and \eqref{assump:local-linear-G} locally.
If $\nabla^2 f(x)$ is $\delta$-H\"older continuous, then \eqref{assump:local-linear-F} and \eqref{assump:local-linear-G} hold locally with $A_{11}=\nabla^2 f(x^\star)$, $A_{12}=A^\top$, $A_{21}=-A$, $A_{22}=0$, $\SAG=0$, and $\Fsmooth=\Gsmooth=\delta$.
Since $H(y)=[\nabla f]^{-1}(-A^\top y)$, we have $\nabla H(y)=-[\nabla^2 f(H(y))]^{-1}A^\top$.
We now show that $\nabla H(y)$ is also $\delta$-H\"older continuous.

Because $f$ is strongly convex, there exists $m>0$ such that $\nabla^2 f(x)\succeq mI$, and hence $\|[\nabla^2 f(x)]^{-1}\|\le 1/m$.
Therefore,
$\|\nabla H(y_1)-\nabla H(y_2)\|
\le
\|A\|\,\|[\nabla^2 f(H(y_1))]^{-1}-[\nabla^2 f(H(y_2))]^{-1}\|$.
Using the matrix inverse identity $M^{-1}-N^{-1}=M^{-1}(N-M)N^{-1}$, we obtain
$\|M^{-1}-N^{-1}\|\le \|M^{-1}\|\,\|N^{-1}\|\,\|M-N\|$.
Taking $M=\nabla^2 f(H(y_1))$ and $N=\nabla^2 f(H(y_2))$ yields
\begin{equation}\label{eq:exmp-lagrange-matrix-inverse}
    \|
    [\nabla^2 f(H(y_1))]^{-1}
    -
    [\nabla^2 f(H(y_2))]^{-1}
    \|
    \le
    m^{-2}\|\nabla^2 f(H(y_1))-\nabla^2 f(H(y_2))\|.
\end{equation}
Combining the $\delta$-H\"older continuity of $\nabla^2 f$ with the Lipschitz continuity of $H$, we obtain
$\|\nabla H(y_1)-\nabla H(y_2)\| \lesssim \|y_1-y_2\|^\delta$.
By Proposition~\ref{prop:holder-equiv}, Assumption~\ref{assump:smoothH} holds with $\Hsmooth=\delta$.
Therefore, by Proposition~\ref{prop:ensure-linearity} and \citet[Proposition~A.3]{han2024decoupled}, Assumption~\ref{assump:near-linear} holds with $B_1=A_{11}=\nabla^2 f(x^\star)$, $B_2=A_{21}=-A$, $B_3=A_{22}-A_{21}A_{11}^{-1}A_{12}=A[\nabla^2 f(x^\star)]^{-1}A^\top$, and $\Fsmooth=\Gsmooth=\delta$.
For the verification of the strong monotonicity condition, please refer to \citet{chandak2025non}.
\vspace{0.1cm}

\textbf{Example~\ref{exmp:bilevel}: Stochastic bilevel optimization}.
With a slight abuse of notation, consider the unconstrained bilevel optimization problem in \eqref{eq:prob:bilevel}.
Suppose that $\ell(y)$ is strongly convex and that $f(x,y)$ is strongly convex in $x$ for each fixed $y$.
To apply two-time-scale SA, $F(x,y)$ and $G(x,y)$ are of the following form
\begin{equation*}
    \begin{aligned}
        F(x,y) &= \nabla_x f(x,y),\\
        G(x,y) &= \nabla_y g(x,y)-\nabla_{yx}^2 f(x,y)[\nabla_{xx}^2 f(x,y)]^{-1}\nabla_x g(x,y).
    \end{aligned}
\end{equation*}
Then $H(y)=\tilde{x}^\star(y)$ and the solution $(x^\star,y^\star)$ satisfies $y^\star=\arg\min_{y \in \RB^{d_y}} \ell(y)$ and $x^\star=\tilde{x}^\star(y^\star)$.
\citet[Lemma~1]{shen2022single} provide conditions under which $F$, $G$, $H$, and $\nabla H$ are Lipschitz continuous.
For simplicity, we impose the slightly stronger assumption that $\nabla^2 f(x,y)$, $\nabla f(x,y)$, $\nabla g(x,y)$, and $g(x,y)$ are all Lipschitz continuous.
Then, by Proposition~1, Assumption~\ref{assump:smoothH} holds with $\Hsmooth=1$.
Under these conditions, $F(x,y)$ satisfies \eqref{assump:local-linear-F} with $A_{11}=\nabla_{xx} f(x^\star,y^\star)$, $A_{12}=\nabla_{xy} f(x^\star,y^\star)$, and $\Fsmooth=1$.

For $G$, we further assume that $\nabla^2 g(x,y)$ is Lipschitz continuous in a neighborhood of $(x^\star,y^\star)$.
For convenience, define
\[
A(x,y):=\nabla_y g(x,y),\
B(x,y):=\nabla_{yx}^2 f(x,y),\
C(x,y):=\nabla_{xx}^2 f(x,y),\
D(x,y):=\nabla_x g(x,y),
\]
so that $G(x,y)=A(x,y)-B(x,y)C(x,y)^{-1}D(x,y)$.
Since $f(x,y)$ is strongly convex in $x$, there exists $\mu>0$ such that $C(x,y)\succeq \mu I$, and hence $\|C(x,y)^{-1}\|\le \mu^{-1}$.
Let $z:=(x,y)$.
Arguing as in the derivation of \eqref{eq:exmp-lagrange-matrix-inverse}, we obtain
$\|C(z_1)^{-1}-C(z_2)^{-1}\|\le \mu^{-2}\|C(z_2)-C(z_1)\|\lesssim \|z_1-z_2\|$,
which shows that $C^{-1}$ is Lipschitz continuous.
For the gradient of $G$, the product rule together with the derivative of the inverse gives
\begin{equation}\label{eq:exmp-bilevel-nablaG}
\nabla G = \nabla A - (\nabla B)C^{-1}D + BC^{-1}(\nabla C)C^{-1}D - BC^{-1}\nabla D.
\end{equation}
Here, $\nabla A$, $\nabla B$, $\nabla C$, and $\nabla D$ denote derivatives with respect to the full variable $z=(x,y)$.
Under our regularity conditions, $\nabla A$, $\nabla B$, $\nabla C$, and $\nabla D$ are all Lipschitz continuous in a neighborhood of $(x^\star,y^\star)$, while $B$, $C^{-1}$, and $D$ are bounded there.
Hence $\nabla G$ is Lipschitz continuous in a neighborhood of $(x^\star,y^\star)$.
This implies that \eqref{assump:local-linear-G} holds with $A_{21}=\nabla_x G(x^\star,y^\star)$, $A_{22}=\nabla_y G(x^\star,y^\star)$, and $\Gsmooth=1$.
Therefore, by Proposition~\ref{prop:ensure-linearity}, Assumption~\ref{assump:near-linear} holds with $B_1=A_{11}=\nabla_{xx} f(x^\star,y^\star)$, $B_2=A_{21}=\nabla_x G(x^\star,y^\star)$, $B_3=A_{22}-A_{21}A_{11}^{-1}A_{12}=\nabla_y G(x^\star,y^\star)-\nabla_x G(x^\star,y^\star)[\nabla_{xx} f(x^\star,y^\star)]^{-1}\nabla_{xy} f(x^\star,y^\star)$, and $\Fsmooth=\Gsmooth=1$,
where $\nabla G$ is given by \eqref{eq:exmp-bilevel-nablaG}.


\section{Convergence Rates without Local Linearity}
\label{proof:without}

In this section, we focus on the convergence rates of nonlinear two-time-scale SA without imposing local linearity on $F$ and $G$, i.e., without Assumption~\ref{assump:near-linear}.
For Assumption~\ref{assump:smoothH}, we consider the more general case $\Hsmooth \in [0,1]$.
Moreover, we only need the following weaker version of Assumptions~\ref{assump:noise-s} and \ref{assump:stepsize-new} throughout this section.
\begin{assump}
	[Martingale difference noise with bounded variance]
	\label{assump:noise}
	The sequences of random variables $\{ \xi_{t} \}_{t=0}^\infty$ and $\{ \psi_{t} \}_{t=0}^\infty$ are martingale difference sequences satisfying
	\begin{equation}
		\EB [\xi_t \,|\, \FM_t ] = 0, \quad
		\EB [\psi_t \,|\, \FM_t] = 0, \quad 
		\EB[ \|\xi_{t}\|^2 \,|\,\FM_{t}] \le \Gamma_{11},\quad 
		\EB[  \|\psi_{t}\|^2 \,|\,\FM_{t}] \le \Gamma_{22}.
		\label{assump:variance}
	\end{equation}
\end{assump}

\begin{assump}[Conditions on step sizes]
	\label{assump:stepsize}
	The step sizes $\{ \alpha_{t} \}_{t=0}^\infty$ and $ \{ \beta_{t} \}_{t=0}^\infty $ satisfy the following conditions that for $\forall t \ge 1$:
	\begin{itemize}
		\item Constant bounds: $\alpha_{t} \le \iota_1,\beta_t \le \iota_2,\frac{\beta_t}{\alpha_{t}} \le \kappa, \frac{\beta_t^2}{\alpha_{t}} \le \rho$ with
		\[
		\iota_1 =   \frac{\mu_F}{4L_F^2},\  
		\iota_2 = \frac{\mu_G}{\LGy^2},\ 
		\kappa =  \frac{\mu_F \mu_G}{28 L_H \LGx (1 \vee \LGy) }
		\wedge \frac{\mu_F}{\mu_G},\ 
		\rho =   \frac{\mu_{F}}{
			16L_H^2 \LGx^2 }.
		\]
		\item Growth conditions: $1 \le \frac{\alpha_{t-1}}{\alpha_{t}} \le 1 + (\frac{\mu_F}{4}\alpha_t) \wedge (\frac{\mu_G}{8}\beta_t)$ and $1 \le \frac{\beta_{t-1}}{\beta_{t}} \le 1 + \frac{\mu_G}{32}\beta_t$.
		\item $\frac{\beta_t}{\alpha_t}$ is non-increasing in $t$, and $\prod_{\tau=0}^t\left(1-\frac{\mu_{G}\beta_{\tau}}{4} \right) = \OM (\alpha_t) $
	\end{itemize}
	
\end{assump}

\subsection{Proof of Lemma \ref{lem:xhat}}
\label{proof:xhat}
\begin{proof}[Proof of Lemma \ref{lem:xhat}]
	We first present the specific forms of the constants.
	\begin{equation}
		\label{eq:constantx}
		c_{x,1} =  4 L_{H}^2 \LGy^2, \  c_{x,2} = 7 L_H \LGy, \   
		c_{x,3} = 2 L_H^2 \Gamma_{22}, \  c_{x,4} = \frac{16 S_H^2 \Gamma_{22}^{1+\Hsmooth} }{\mu_F}.
	\end{equation}
	
	Recall that $\xhat_t = x_t-H(y_t)$. We then consider 
	\begin{align}
		\xhat_{t+1} = x_{t+1} - H(y_{t+1})
		&= \xhat_{t} -  \alpha_{t}F(x_{t},y_{t}) - \alpha_{t}\xi_{t}  + H(y_{t}) - H(y_{t+1}), \label{eq:xhat_update}
	\end{align}
	which implies
	\begin{align}
		\|\xhat_{t+1}\|^2 
		&= \|\xhat_{t}  - \alpha_{t}F(x_{t},y_{t})\|^2 + \left\|H(y_{t}) - H(y_{t+1}) - \alpha_{t}\xi_{t}\right\|^2\notag\\
		&\quad + 2 \langle\xhat_{t}-\alpha_t F(x_{t},y_{t}), H(y_{t}) - H(y_{t+1}) \rangle - 2\alpha_{t} \langle \xhat_{t} - \alpha_{t}F(x_{t},y_{t}), \xi_{t}\rangle. \label{lem:xhat:Eq1}
	\end{align}
	We then analyze each term on the right-hand side of \eqref{lem:xhat:Eq1}. 
	
	For the first term, noting that $F(H(y_{t}),y_{t}) = 0$, we have 
	\begin{align}
		& \quad \  \|\xhat_{t}  - \alpha_{t}F(x_{t},y_{t})\|^2 \notag \\ 
		&= \|\xhat_{t}\|^2 - 2\alpha_{t} \langle \xhat_{t}, F(x_{t},y_{t}) - F(H(y_{t}),y_{t}) \rangle \notag
		+ \alpha_{t}^2\| F(x_{t},y_{t}) - F(H(y_{t}),y_{t})\|^2\notag\\
		&\leq \|\xhat_{t}\|^2 - 2\mu_{F}\alpha_{t}\|\xhat_{t} \|^2 + L_{F}^2\alpha_{t}^2\|\xhat_{t} \|^2
		\le \left(1-\frac{7\mu_{F}\alpha_{t}}{4} \right) \|\xhat_{t}\|^2,\label{lem:xhat:Eq1a}
	\end{align}
	where the inequality uses strong monotone and Lispchitz continuity of $F$ in~\eqref{assump:sm:F:ineq} and~\eqref{assump:smooth:FH:ineqF} respectively. 
	
	For the second term, recall that $\FM_{t} = \sigma\{x_{0},y_{0},\xi_{0},\psi_{0},\xi_{1},\psi_{1},\ldots,\xi_{t-1},\psi_{t-1}\}.$
	We then take the conditional expectation of the second term on the right-hand side of \eqref{lem:xhat:Eq1} w.r.t. $\FM_{t}$ and using Assumption \ref{assump:noise} to have
	\begin{align}
		&\EB\left[\left\|H(y_{t}) - H(y_{t+1}) - \alpha_{t}\xi_{t}\right\|^2\,|\,\FM_{t}\right]\notag\\
		&\le 2L_{H}^2\beta_{t}^2\left\|G(x_{t},y_{t})\right\|^2 + 2L_{H}^2\beta_{t}^2\EB\left[\|\psi_{t}\|^2\,|\,\FM_{t}\right]  +  2\alpha_{t}^2\EB\left[\left\|\xi_{t}\right\|^2\,|\,\FM_{t}\right]\notag\\
		&\leq 4L_{H}^2 \LGx^2\beta_{t}^2\|\xhat_{t}\|^2 + 4 L_{H}^2 \LGy^2\beta_{t}^2\|\yhat_{t}\|^2     +  2\beta_{t}^2L_H^2\Gamma_{22} +  2\alpha_{t}^2\Gamma_{11}, \label{lem:xhat:Eq1b}
	\end{align}
	where the last inequality uses the following inequality that depends on the fact $G(H(y^{\star}),y^{\star}) = 0$ and Assumption~\ref{assump:G}
	\begin{align}
		\|G(x_{t},y_{t})\|
		&\leq \|G(x_{t},y_{t}) - G(H(y_{t}),y_{t})\| +  \|G(H(y_{t}),y_{t}) - G(H(y^{\star}),y^{\star})\|\notag\\
		&\leq \LGx \|\xhat_{t}\| + \LGy \|\yhat_{t}\|,\label{lem:xhat:Eq1b1}
	\end{align}
	and thus $\|G(x_{t},y_{t})\|^2 \le 2 \LGx^2\|\xhat_{t}\|^2 + 2\LGy^2 \|\yhat_{t}\|^2.$
	
	For the last term, we have that
	\begin{align}
		\label{eq:xhat_third}
		\begin{split}
			\langle \xhat_{t} - \alpha_{t}F(x_{t},y_{t}), &H(y_{t}) - H(y_{t+1}) \rangle
			=	\underbrace{\langle \xhat_{t} - \alpha_{t}F(x_{t},y_{t}), \nablaH(y_{t})(y_{t+1}-y_{t})  \rangle}_{\spadesuit_1}
			\\
			&	+	\underbrace{\langle \xhat_{t} - \alpha_{t}F(x_{t},y_{t}), \nablaH(y_{t})(y_{t}-y_{t+1}) + H(y_{t}) - H(y_{t+1}) \rangle}_{\spadesuit_2}.
		\end{split}
	\end{align}
	For one thing, Assumptions \ref{assump:smooth:FH} and \ref{assump:smoothH} imply that $\| \nablaH (y_t) \| \le L_H$.
	Then we have
	\begin{align*}
		\EB\left[ \spadesuit_1 |\FM_t  \right] &= \beta_t \langle \xhat_{t} - \alpha_{t}F(x_{t},y_{t}), \nablaH(y_{t}) G(x_{t}, y_{t}) \rangle\\
		&\le  \beta_t L_H \|  \xhat_{t} - \alpha_{t}F(x_{t},y_{t})\| \|G(x_{t}, y_{t}) \|
		\overset{\eqref{lem:xhat:Eq1a}}{\le}  \beta_t \sqrt{ 1-\alpha_t \mu_F} L_H \|  \xhat_{t}\| \|G(x_{t}, y_{t}) \|.
	\end{align*}
	For another thing, by Assumption~\ref{assump:smoothH}, it follows that
	\begin{align}
		\label{eq:residual}
		&\|\nablaH(y_{t})(y_{t}-y_{t+1}) + H(y_{t}) - H(y_{t+1})\| 
		\le S_H \|y_{t} - y_{t+1}\| \cdot \min \left\{ \|y_{t} - y_{t+1}\|^{\Hsmooth} , R_H  \right\} \notag \\
		& \overset{(a)}{\le} S_H\beta_t   \left( \|G(x_t, y_t)\| + \|\psi_{t}\|  \right) \cdot \min \left\{
		\beta_t^\Hsmooth   \left( \|G(x_t, y_t)\|^\Hsmooth + \|\psi_{t}\|^\Hsmooth  \right), R_H \right\} \notag \\
		& \overset{(b)}{\le}  S_H\beta_t   \left( \|G(x_t, y_t)\| + \|\psi_{t}\|  \right) \cdot  \left( \min \left\{
		\beta_t^\Hsmooth   \|G(x_t, y_t)\|^\Hsmooth , R_H \right\}
		+ \min \left\{\beta_t^\Hsmooth  \|\psi_{t}\|^\Hsmooth, R_H \right\}\right)\notag \\
		& \overset{(c)}{\le} S_H  \Bigg( 2\beta_t \|G(x_t, y_t)\|  R_H + \frac{1}{1+\Hsmooth} \beta_t^{1{+}\Hsmooth} \|\psi_t\|^{1{+}\Hsmooth} + \frac{\Hsmooth}{1 + \Hsmooth}  \min \left\{
		\beta_t^\Hsmooth   \|G(x_t, y_t)\|^\Hsmooth , R_H \right\}^\frac{1+\Hsmooth}{\Hsmooth} \notag \\
		& \qquad \qquad  + \beta_t^{1+\Hsmooth} \|\psi_t\|^{1+\Hsmooth}
		\Bigg) \notag \\
		& 
		\le S_H  \left( \frac{2 + 3\Hsmooth}{1+\Hsmooth} \beta_t \|G(x_t, y_t)\|  R_H + \frac{2+\Hsmooth}{1+\Hsmooth} \beta_t^{1+\Hsmooth} \|\psi_t\|^{1+\Hsmooth}
		\right),
	\end{align}
	where (a) uses $(a+b)^\Hsmooth \le a^\Hsmooth + b^\Hsmooth$ for any $a, b > 0$ and $\Hsmooth \in [0,1]$, (b) uses the inequality that $\min \{ a+b, c\} \le  \min \left\{a, c\right\}+ \min \left\{b, c\right\}$ for any non-negative $a, b,c$, and
	(c) follows from Young's inequality $ab \le \frac{a^p}{p} + \frac{b^q}{q}$ for any $a,b, p, q \ge 0$ and $\frac{1}{p} + \frac{1}{q} = 1$.
	Note that Jensen's inequality implies $\EB [ \| \phi_t \|^{1+\Hsmooth} | \FM_t ] \le \left( \EB [ \| \phi_t \|^{2} | \FM_t ] \right)^\frac{1+\Hsmooth}{2} \le \Gamma_{22}^\frac{1+\Hsmooth}{2}$.
	Hence,
	\[
	\EB \left[
	\|\nablaH(y_{t})(y_{t}-y_{t+1}) + H(y_{t}) - H(y_{t+1})\|  |\FM_t
	\right]
	\le 6 L_H\beta_t  \|G(x_t, y_t)\|  + 4 S_H \beta_t^{1+\Hsmooth} \Gamma_{22}^\frac{1+\Hsmooth}{2},
	\]
	\begin{align*}
\text{and}~\EB\left[ \spadesuit_2 |\FM_t  \right] &
		\le \|\xhat_{t} - \alpha_{t}F(x_{t},y_{t})\| \cdot \EB \left[ \|\nablaH(y_{t})(y_{t}-y_{t+1}) + H(y_{t}) - H(y_{t+1})\|  |\FM_t\right] \\
		&\overset{\eqref{lem:xhat:Eq1a}}{\le}  \sqrt{1-\alpha_t \mu_F}\|  \xhat_{t}\| \left(
		6 L_H\beta_t  \|G(x_t, y_t)\|  + 4 S_H \beta_t^{1+\Hsmooth} \Gamma_{22}^\frac{1+\Hsmooth}{2}
		\right).
	\end{align*}
	Combing these two inequalities, we then obtain the preceding relation
	\begin{align}
		&\EB\left[ 	\langle \xhat_{t} - \alpha_{t}F(x_{t},y_{t}), H(y_{t}) - H(y_{t+1}) \rangle|\FM_t  \right]   \notag\\
		&  \le 7 L_H \beta_t \sqrt{ 1-\alpha_t \mu_F}  \|  \xhat_{t}\| \|G(x_{t}, y_{t}) \| +  4 S_H\sqrt{ 1-\alpha_t \mu_F} \beta_t^{1+\Hsmooth} \|\xhat_{t}\|  \Gamma_{22}^\frac{1+\Hsmooth}{2} \notag\\
		& \overset{\eqref{lem:xhat:Eq1b1}}{\le}
		7 L_H \beta_t \left( \LGx \sqrt{ 1-\alpha_t \mu_F} \|\xhat_{t}\|^2 +  \LGy   \sqrt{ 1-\alpha_t \mu_F} \|\xhat_{t}\|\|\yhat_{t}\| \right)+  4 S_H \sqrt{ 1-\alpha_t \mu_F} \beta_t^{1+\Hsmooth} \|\xhat_{t}\|  \Gamma_{22}^\frac{1+\Hsmooth}{2} \notag\\
		\notag\\
		& \le \frac{ \mu_F \alpha_{t}}{2}\|\xhat_{t}\|^2 +  7 L_H \LGy  \beta_t \sqrt{ 1-\alpha_t \mu_F}  \|\xhat_{t}\|\|\yhat_{t}\| +  \frac{16 S_H^2 \Gamma_{22}^{1+\Hsmooth} }{\mu_F} \frac{\beta_t^{2 + 2\Hsmooth} }{\alpha_t} 
		\label{lem:xhat:Eq1c}
	\end{align}
	where the last inequality uses the relation obtained from the choices of step sizes $7L_H \LGx \beta_t \le \frac{ \mu_F \alpha_{t}}{4}$ and Cauchy–Schwarz inequality.


	Combing the three bounds in~\eqref{lem:xhat:Eq1a},~\eqref{lem:xhat:Eq1b}, and~\eqref{lem:xhat:Eq1c}, we obtain that
	\begin{align*}
		\EB\left[\|\xhat_{t+1}\|^2\,|\,\FM_{t}\right]
		&\le \left(1-\frac{7\mu_{F}\alpha_{t}}{4} \right) \|\xhat_{t}\|^2 
		+ 4 L_{H}^2 \LGx^2 \beta_{t}^2 \|\xhat_{t}\|^2 + 4 L_{H}^2 \LGy^2 \beta_{t}^2 \|\yhat_{t}\|^2 +  2L_H^2\Gamma_{22} \beta_{t}^2 \\
		& \qquad  +  2 \Gamma_{11} \alpha_{t}^2  {+} \frac{\mu_F \alpha_{t}}{2}\|\xhat_{t}\|^2 {+}  7 L_H \LGy  \beta_t \sqrt{ 1{-}\alpha_t \mu_F} \|\xhat_{t}\|\|\yhat_{t}\| {+}  \frac{16 S_H^2 \Gamma_{22}^{1+\Hsmooth} }{\mu_F} \frac{\beta_t^{2 + 2\Hsmooth} }{\alpha_t}  \\
		&\le \left(1- \mu_F \alpha_t \right) \|\xhat_{t}\|^2  + c_{x,1} \beta_t^2 \|\yhat_{t}\|^2 + c_{x,2} \beta_t\sqrt{ 1-\alpha_t \mu_F}  \|\xhat_{t}\|\|\yhat_{t}\| + 2\alpha_{t}^2 \Gamma_{11} \notag \\
		& \qquad + c_{x,3} \beta_{t}^2 +  c_{x,4} \frac{\beta_t^{2 + 2\Hsmooth} }{\alpha_t},
	\end{align*}
	where the second inequality uses the fact that $4L_{H}^2 \LGx^2 \beta_{t}^2 \le  \frac{ \mu_F \alpha_{t}}{4}$ and the constants are defined as
	the last inequality uses the notations in~\eqref{eq:constantx}.
\end{proof}

\subsection{Proof of Lemma \ref{lem:yhat}}
\label{proof:yhat}
\begin{proof}[Proof of Lemma \ref{lem:yhat}]
	Recall that $\yhat = y - y^{\star}$. Using \eqref{alg:xy} we consider
	\begin{align}
		\yhat_{t+1} 
		&= \yhat_{t} - \beta_{t}G(H(y_{t}),y_{t}) + \beta_{t} \left(G(H(y_{t}),y_{t}) - G(x_{t},y_{t})\right)  - \beta_{t}\psi_{t}, \label{eq:yhat_update}
	\end{align}
	which implies that
	\begin{align}
		& \|\yhat_{t+1}\|^2 = \|\yhat_{t} - \beta_{t}G(H(y_{t}),y_{t})\|^2 + \left\|\beta_{t} \left(G(H(y_{t}),y_{t}) - G(x_{t},y_{t})\right)  - \beta_{t}\psi_{t}\right\|^2\notag\\
		&\quad + 2\beta_{t} \langle \yhat_{t} - \beta_{t} G(H(y_{t}),y_{t}), G(H(y_{t}),y_{t}) - G(x_{t},y_{t})   \rangle
		- 2\beta_{t} \langle \yhat_{t} - \beta_{t}G(H(y_{t}),y_{t}), \psi_{t}  \rangle. \label{lem:yhat:Eq1}
	\end{align}
	We next analyze each term on the right-hand side of \eqref{lem:yhat:Eq1}. 
	
	For the first term, we have that
	\begin{align}
		\left\|\yhat_{t} - \beta_{t}G(H(y_{t}),y_{t})\right\|^2 
		&\overset{\eqref{assump:G:sm}}{\le} \|\yhat_{t}\|^2 - 2\mu_{G}\beta_{t}\|\yhat_{t}\|^2  + \beta_{t}^2 \|G(H(y_{t}),y_{t}) - G(H(y^{\star}),y^{\star})\|^2\notag\\
		& \leq (1- 2\mu_{G}\beta_{t})\|\yhat_{t}\|^2  + \LGy^2 \beta_t^2 \| \yhat_t \|^2 
		\leq \left(1-\mu_{G}\beta_{t} \right)\|\yhat_{t}\|^2,
		\label{lem:yhat:Eq1a}
	\end{align}
	where the first inequality also uses ${G(H(y^{\star}),y^{\star}) = 0}$ and the last inequality uses the relation $\LGy^2 \beta_t^2 \le \mu_G \beta_t$.
	
	For the second term, taking the conditional expectation on its both sides w.r.t $\FM_{t}$ and using Assumptions \ref{assump:noise} and \ref{assump:G}, we have
	\begin{align}
		&\EB\left[\left\|\beta_{t} \left(G(H(y_{t}),y_{t}) - G(x_{t},y_{t})\right)  - \beta_{t}\psi_{t}\right\|^2\;|\;\FM_{t}\right]
        \leq \LGx^2 \beta_{t}^2\|\xhat_{t}\|^2 + \beta_{t}^2\Gamma_{22}. 
		\label{lem:yhat:Eq1b}
	\end{align}
	
	For the third term, it follows that
	\begin{align}
		&2\beta_{t} \langle \yhat_{t} - \beta_{t}G(H(y_{t}),y_{t}), G(H(y_{t}),y_{t}) - G(x_{t},y_{t})  \rangle \notag\\
		&\overset{\eqref{assump:G:smooth}}{\leq}2\beta_{t}\|\yhat_{t}- \beta_{t}G(H(y_{t}),y_{t})\| \cdot \LGx \|\xhat_{t}\|
		\overset{\eqref{lem:yhat:Eq1a}}{\le} 2 \LGx \beta_t\sqrt{1-\mu_G\beta_t} \|\xhat_{t}\|\|\yhat_{t}\|
		.\label{lem:yhat:Eq1c}
	\end{align}
	Finally, taking the conditional expectation of \eqref{lem:yhat:Eq1} w.r.t $\FM_{t}$ and using \eqref{lem:yhat:Eq1a}--\eqref{lem:yhat:Eq1c} yields
	\begin{align*}
		\EB\left[\|\yhat_{t+1}\|^2\,|\,\FM_{t}\right]
		&\leq \left(1-\mu_{G}\beta_{t} \right)\|\yhat_{t}\|^2 +  \LGx^2 \beta_{t}^2\|\xhat_{t}\|^2  + \beta_{t}^2\Gamma_{22} +2 \LGx \beta_t\sqrt{1-\mu_G\beta_t} \|\xhat_{t}\|\|\yhat_{t}\|,
	\end{align*}
	which concludes our proof. 
\end{proof}


\subsection{Proof of Theorem~\ref{thm:first}}
\label{proof:thm:first}

Before proving Theorem~\ref{thm:first}, we first present a refined one-step descent lemma
by applying Cauchy-Schwarz inequality to the cross term $\OM(\beta_t \| \xhat_t \| \| \yhat_t \| )$.

\begin{lem}[One-step descent lemma]
	\label{lem:iterate1}
	Under Assumptions \ref{assump:smooth:FH}  -- \ref{assump:smoothH}, \ref{assump:noise} and \ref{assump:stepsize}, it follows that
	\begin{align}
			\EB[|\xhat_{t+1}\|^2|\FM_t] &\le \left(1- \frac{\mu_F \alpha_t }{2}\right) \|\xhat_{t}\|^2  + \frac{c_{x,2}^2}{2\mu_{F}} \frac{\beta_{t}^2}{\alpha_{t}} \|\yhat_{t}\|^2 + 2\Gamma_{11} \alpha_{t}^2 + c_{x,3} \beta_{t}^2 + c_{x,4} \frac{\beta_t^{2+2\Hsmooth}}{\alpha_t}, \label{lem:iterate1-xhat} \\
			\EB [\|\yhat_{t+1}\|^2|\FM_t] &\le  \left(1-\frac{\mu_{G}\beta_{t}}{2} \right)\|\yhat_{t}\|^2 +  \frac{2\LGx^2 }{\mu_G} \beta_{t} \|\xhat_{t}\|^2    + \Gamma_{22}\beta_{t}^2, \label{lem:iterate1-yhat}
	\end{align}
	where $c_{x,2}$ and $c_{x,3}$ are defined in Lemma~\ref{lem:xhat}.
\end{lem}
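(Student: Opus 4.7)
The plan is to start from the one-step descent estimates in Lemmas~\ref{lem:xhat} and~\ref{lem:yhat} and to eliminate the cross product $\beta_t\|\xhat_t\|\|\yhat_t\|$ in each of them by a weighted Young's inequality $ab \le \tfrac{\lambda a^2}{2} + \tfrac{b^2}{2\lambda}$. I would tune $\lambda$ in each application so that the portion returned to $\|\xhat_t\|^2$ (respectively $\|\yhat_t\|^2$) consumes exactly half of the strong-monotonicity contraction $\mu_F\alpha_t$ (respectively $\mu_G\beta_t$) already available, leaving a clean factor $1-\mu_F\alpha_t/2$ (respectively $1-\mu_G\beta_t/2$) and transferring the rest onto the opposite residual with the advertised coefficients. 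All noise and higher-order remainder terms should then pass through verbatim.

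For~\eqref{lem:iterate1-xhat}, I would apply Young's inequality with $\lambda = 1/(\mu_F\alpha_t)$ to the cross term in~\eqref{lem:xhat:Ineq} to obtain
\[
c_{x,2}\beta_t\sqrt{1-\alpha_t\mu_F}\,\|\xhat_t\|\|\yhat_t\|
\le \frac{\mu_F\alpha_t}{2}\|\xhat_t\|^2 + \frac{c_{x,2}^2\beta_t^2(1-\alpha_t\mu_F)}{2\mu_F\alpha_t}\|\yhat_t\|^2.
\]
The first piece compresses $(1-\mu_F\alpha_t)\|\xhat_t\|^2$ to $(1-\mu_F\alpha_t/2)\|\xhat_t\|^2$. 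For the $\|\yhat_t\|^2$ coefficient I would merge the already-present $c_{x,1}\beta_t^2$ with this new residual via
\[
c_{x,1}\beta_t^2 + \frac{c_{x,2}^2\beta_t^2(1-\alpha_t\mu_F)}{2\mu_F\alpha_t}
= \frac{c_{x,2}^2}{2\mu_F}\cdot\frac{\beta_t^2}{\alpha_t} + \beta_t^2\Bigl(c_{x,1}-\tfrac{c_{x,2}^2}{2}\Bigr),
\]
and the parenthesis is negative thanks to the explicit values $c_{x,1}=4L_H^2\LGy^2$ and $c_{x,2}=7L_H\LGy$ from~\eqref{eq:constantx} (so $c_{x,2}^2/2 = \tfrac{49}{2}L_H^2\LGy^2 > c_{x,1}$). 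Hence the $\|\yhat_t\|^2$ term is bounded by $\tfrac{c_{x,2}^2}{2\mu_F}\cdot\tfrac{\beta_t^2}{\alpha_t}\|\yhat_t\|^2$, and the remaining terms $2\Gamma_{11}\alpha_t^2$, $c_{x,3}\beta_t^2$, $c_{x,4}\beta_t^{2+2\Hsmooth}/\alpha_t$ carry over unchanged, which is exactly~\eqref{lem:iterate1-xhat}.

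For~\eqref{lem:iterate1-yhat}, applying Young's inequality with $\lambda=1/(\mu_G\beta_t)$ to the cross term in~\eqref{lem:yhat:Ineq} yields
\[
2\LGx\beta_t\sqrt{1-\mu_G\beta_t}\,\|\xhat_t\|\|\yhat_t\|
\le \frac{\mu_G\beta_t}{2}\|\yhat_t\|^2 + \frac{2\LGx^2\beta_t(1-\mu_G\beta_t)}{\mu_G}\|\xhat_t\|^2.
\]
Plugging this into~\eqref{lem:yhat:Ineq} collapses the $\|\yhat_t\|^2$ coefficient to $1-\mu_G\beta_t/2$, and summing the two $\|\xhat_t\|^2$ coefficients gives
\[
\LGx^2\beta_t^2 + \frac{2\LGx^2\beta_t(1-\mu_G\beta_t)}{\mu_G}
= \LGx^2\beta_t\Bigl(\tfrac{2}{\mu_G}-\beta_t\Bigr) \le \frac{2\LGx^2\beta_t}{\mu_G},
\]
which is precisely the stated coefficient. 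The argument is essentially mechanical once Lemmas~\ref{lem:xhat} and~\ref{lem:yhat} are in hand; the only non-routine point will be the small numerical check $c_{x,1}\le c_{x,2}^2/2$ used to close the merge in the fast-iterate bound, and this is the single place where the explicit values in~\eqref{eq:constantx} (rather than generic constants) actually enter the proof.
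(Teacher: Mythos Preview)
Your proposal is correct and follows essentially the same approach as the paper's proof: both start from Lemmas~\ref{lem:xhat} and~\ref{lem:yhat}, apply a weighted Young (Cauchy--Schwarz) inequality to the cross term with exactly the weights you chose, and then absorb the resulting $\|\yhat_t\|^2$ (respectively $\|\xhat_t\|^2$) contribution using the numerical check $2c_{x,1}\le c_{x,2}^2$ (respectively the simple algebraic simplification for the $\yhat$-bound). The only difference is cosmetic---you spell out the algebraic identity for the merge a bit more explicitly than the paper does.
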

\begin{proof}[Proof of Lemma~\ref{lem:iterate1}]
	By Lemma~\ref{lem:xhat}, using the Cauchy-Schwarz inequality, we have
	\begin{align*}
		\EB\left[\|\xhat_{t+1}\|^2\,|\,\FM_{t}\right]
		&\le\left(1- \frac{\mu_F \alpha_t }{2}\right) \|\xhat_{t}\|^2  + \left( c_{x,1} \beta_t^2 +\frac{c_{x,2}^2\beta_{t}^2(1-\mu_F\alpha_t)}{2\mu_F\alpha_{t}} \right)\|\yhat_{t}\|^2 + 2\Gamma_{11} \alpha_{t}^2 \\
		& \qquad + c_{x,3} \beta_{t}^2 + c_{x,4} \frac{\beta_t^{2+2\Hsmooth}}{\alpha_t} \\
		&\le \left(1- \frac{\mu_F \alpha_t }{2}\right) \|\xhat_{t}\|^2  + \frac{c_{x,2}^2}{2\mu_{F}} \frac{\beta_{t}^2}{\alpha_{t}} \|\yhat_{t}\|^2 + 2\Gamma_{11} \alpha_{t}^2 + c_{x,3} \beta_{t}^2 + c_{x,4} \frac{\beta_t^{2+2\Hsmooth}}{\alpha_t},
	\end{align*}
	where the last inequality uses the fact that $2c_{x,1} - c_{x,2}^2\le 0$.
	Similarly, 	by Lemma~\ref{lem:yhat}, using the Cauchy-Schwarz inequality, we have
	\begin{align*}
		\EB\left[\|\yhat_{t+1}\|^2\,|\,\FM_{t}\right]
		& \leq \left(1-\frac{\mu_{G}\beta_{t}}{2} \right)\|\yhat_{t}\|^2 +  \left( \LGx^2 \beta_{t}^2 + \frac{2 \LGx^2 (1-\mu_G\beta_t) \beta_{t}}{\mu_G} \right)\|\xhat_{t}\|^2    + \Gamma_{22}\beta_{t}^2\\
		&\leq \left(1-\frac{\mu_{G}\beta_{t}}{2} \right)\|\yhat_{t}\|^2 +  \frac{2 \LGx^2 }{\mu_G} \beta_{t} \|\xhat_{t}\|^2    + \Gamma_{22}\beta_{t}^2.
	\end{align*}
	We then complete the proof.
\end{proof}

Lemma~\ref{lem:iterate1} is akin to \citet[Lemmas~1 and 2]{doan2022nonlinear}. While there are minor differences, the key distinction lies in the incorporation of $\Hsmooth$ in the term $\OM ( \beta_t^{2+2\Hsmooth} / \alpha_t )$, which is $\OM(\beta_t^2 / \alpha_t)$ in \citet{doan2022nonlinear}.
Combining \eqref{lem:iterate1-xhat} and \eqref{lem:iterate1-yhat} through the careful construction of Lyapunov functions, we can establish both almost sure convergence and $L_2$-convergence.

\begin{proof}[Proof of Theorem~\ref{thm:first}]
	We first present the specific forms of the constants:
	\begin{align}
		\label{eq:constanty}
		& c_{y,1} = \frac{ 8 \Gamma_{22} (2 L_H \LGx + 3 \LGy)}{3 \mu_G \LGy}, \ \,
		c_{y,2} = \frac{1280 \LGx^2 S_H^2 \Gamma_{22}^{1+\Hsmooth}}{\mu_F^2 \mu_G^2}, \\
		& \begin{aligned}
			c_{x,5} &= \frac{ 1120 L_H \LGx \LGy \Gamma_{11}}{\mu_{F}^2\mu_{G}}, \quad \quad
			c_{x,6} = \frac{21 \mu_G L_H \LGy c_{y,1}}{4 \mu_F \LGx} + \frac{8 L_H^2 \Gamma_{22}}{\mu_F}, \\
			c_{x,7} & = \frac{7 \mu_G^2 c_{y,2}}{2 \LGx^2} + \frac{48 S_H^2 \Gamma_{22}^{1+\Hsmooth}}{\mu_F^2}. 
		\end{aligned}
		\label{eq:constantx2}
	\end{align}
	

	We 
	characterize the $L_2$-convergence rates by introducing the
	Lyapunov function 
    $
	\tU_t = \varrho_1 \frac{\beta_t}{\alpha_t} \|\xhat_{t}\|^2 + \|\yhat_{t}\|^2~\text{with}~\varrho_1=\frac{8 \LGx^2}{\mu_F\mu_G}.
    $
	Note that
	$
    \frac{\beta_t}{\alpha_t} \le \frac{\mu_G\mu_F}{4c_{x,2} \LGx }\implies
	\frac{\mu_G}{4} \beta_t \ge \varrho_1  \frac{ c_{x,2}^2}{2\mu_F} \frac{\beta_{t}^3}{\alpha_{t}^2}.
    $
	Since
	$\frac{\beta_{t+1}}{\alpha_{t+1}} \le \frac{\beta_t}{\alpha_t}$, it follows that
	\begin{align}
		\label{eq:U-iterate}
		\EB\tU_{t+1} 
		&\le \varrho_1 \frac{\beta_t}{\alpha_t} \cdot \left[\left(1- \frac{\mu_F \alpha_t }{2}\right) \EB\|\xhat_{t}\|^2  + \frac{c_{x,2}^2}{2\mu_{F}} \frac{\beta_{t}^2}{\alpha_{t}}\EB \|\yhat_{t}\|^2 + \left( 2\Gamma_{11} \alpha_{t}^2 + c_{x,3} \beta_{t}^2 + c_{x,4} \frac{\beta_t^{2+2\Hsmooth}}{\alpha_t} \right)
		\right] \notag \\
		&\qquad + \left(1-\frac{\mu_{G}\beta_{t}}{2} \right)\EB\|\yhat_{t}\|^2 +  \frac{2 \LGx^2 }{\mu_G} \beta_{t} \EB\|\xhat_{t}\|^2    + \Gamma_{22}\beta_{t}^2\notag \\
		&\le \left(1- \frac{\mu_G\beta_t}{4}\right) \EB \tU_t
		+ 2\varrho_1 \Gamma_{11} \alpha_t \beta_t + \left( \Gamma_{22} + \frac{c_{x,3} \LGx }{3 L_H \LGy} \right)  \beta_{t}^2
		+ \varrho_1 c_{x,4} \frac{\beta_t^{3 + 2\Hsmooth}}{\alpha_t^2},
	\end{align}
	where the last inequality uses $\frac{\beta_t}{\alpha_t} \le \kappa \le \frac{\mu_F}{\mu_G}$.
	For simplicity, we set 
    $
	\alpha_{j, t} = \prod_{\tau=j}^t\left(1-\frac{\mu_{F}\alpha_{\tau}}{2} \right)~\text{and}~\beta_{j, t} = \prod_{\tau=j}^t\left(1-\frac{\mu_{G}\beta_{\tau}}{4} \right).
    $
	Then iterating~\eqref{eq:U-iterate} yields
	\begin{align}
		\label{eq:U-final}
		\EB\tU_{t+1}  
		&\le  \beta_{0, t} \EB \tU_0 {+} 2\varrho_1 \Gamma_{11} {\cdot} \sum_{\tau=0}^t\beta_{\tau+1, t}   \alpha_{\tau} \beta_{\tau} {+} \left( \Gamma_{22} {+} \frac{c_{x,3} \LGx}{3 L_H \LGy} \right)  \cdot \sum_{\tau=0}^t \beta_{\tau+1, t} \beta_{\tau}^2 {+} \varrho_1 c_{x,4} {\cdot} \sum_{\tau=0}^t \beta_{\tau+1, t} \frac{\beta_\tau^{3 + 2\Hsmooth}}{\alpha_\tau^2} \notag \\
		&\le \beta_{0, t} \EB \tU_0 
		+ \frac{16\varrho_1 \Gamma_{11}}{\mu_G} \alpha_t
		+ \frac{ 8 \Gamma_{22} (2 L_H \LGx + 3 \LGy)}{3 \mu_G \LGy}  \beta_t + \frac{ 10 \varrho_1 c_{x,4} }{\mu_G} \frac{\beta_t^{2+2\Hsmooth}}{\alpha_t^2}.
	\end{align}
	where the last inequality follows from Lemma~\ref{lem:step-size-ineq}, whose proof is 
    deferred to Appendix~\ref{proof:step-size-ineq}.
	
	\begin{lem}[Step sizes inequalities]
		\label{lem:step-size-ineq}
		Under Assumption~\ref{assump:stepsize}, it follows that
		\begin{enumerate}[(i)]
			\item $\sum_{\tau=0}^t \beta_{\tau+1, t} \beta_{\tau}^2 \le \frac{8\beta_t}{\mu_G}$ and $\sum_{\tau=0}^t \alpha_{\tau+1, t} \alpha_{\tau}^2 \le \frac{4\alpha_t}{\mu_F}$. 
			\item $\sum_{\tau=0}^{t}  \alpha_{\tau+1, t}\beta_{\tau}^2 \le \frac{4}{\mu_F} \frac{\beta_t^2}{\alpha_{t}}$ and $\sum_{\tau=0}^{t}  \alpha_{\tau+1, t}\beta_{\tau}  \beta_{\tau-1} \le  \frac{6}{\mu_F} \frac{\beta_t^2}{\alpha_t}$.
			\item $\sum_{\tau=0}^t\beta_{\tau+1, t}   \alpha_{\tau} \beta_{\tau} \le \frac{8\alpha_t}{\mu_G}$ and $\sum_{\tau=0}^{t}  \alpha_{\tau+1, t}\beta_{\tau}  \alpha_{\tau-1} \le  \frac{10}{\mu_F} \beta_t$.
			\item $\sum_{\tau=0}^{t}  \alpha_{\tau+1, t}\beta_{\tau}  \beta_{0, \tau-1} \le   \frac{8}{\mu_G}\beta_{0, t}$.
			\item $\sum_{\tau=0}^t \beta_{\tau+1, t} \frac{\beta_\tau^{3 + 2\Hsmooth}}{\alpha_\tau^2} \le \frac{10 }{\mu_G} \frac{\beta_t^{2 + 2\Hsmooth}}{\alpha_t^2}$.
			\item $\sum_{\tau=0}^t \alpha_{\tau+1, t} \frac{\beta_\tau^{2 + 2\Hsmooth}}{\alpha_\tau} \le \frac{3 }{\mu_F} \frac{\beta_t^{2 + 2\Hsmooth}}{\alpha_t^2}$ and $\sum_{\tau=0}^t \alpha_{\tau+1, t} \beta_\tau \frac{\beta_{\tau-1}^{1 + 2\Hsmooth}}{\alpha_{\tau-1}} \le \frac{4 }{\mu_F} \frac{\beta_t^{2 + 2\Hsmooth}}{\alpha_t^2}$.
		\end{enumerate}
	\end{lem}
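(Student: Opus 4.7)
My plan is to prove all six inequalities by induction on $t$, exploiting a common recursive structure. For each sum write $S_t:=\sum_{\tau=0}^t P_{\tau+1,t}\,g(\tau)$, where $P$ denotes one of the decaying products $\alpha_{\cdot,\cdot}$ or $\beta_{\cdot,\cdot}$ and $g(\tau)$ is a monomial in the step sizes (possibly involving shifted indices $\tau-1$). Splitting off the $\tau=t$ contribution yields the one-step recursion
\[
S_t \;=\; g(t) \;+\; \bigl(1 - c\, s_t\bigr)\, S_{t-1},\qquad (c,s_t)\in\bigl\{(\mu_F/2,\alpha_t),\;(\mu_G/4,\beta_t)\bigr\},
\]
depending on whether $P=\alpha$ or $P=\beta$. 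The target bound $U(t)$ on $S_t$ is itself a monomial in $\alpha_t,\beta_t$ (with a $\beta_{0,t}$ factor in part (iv)), so the induction reduces to an algebraic comparison of $U(t-1)$ and $U(t)$ through one step of the recursion.

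For the inductive step I invoke the growth conditions of Assumption~\ref{assump:stepsize}: $\alpha_{t-1}/\alpha_t \le 1 + (\mu_F\alpha_t/4)\wedge(\mu_G\beta_t/8)$ and $\beta_{t-1}/\beta_t \le 1 + \mu_G\beta_t/32$, together with $\alpha_{t-1}\ge\alpha_t$ and $\beta_{t-1}\ge\beta_t$. For each monomial $U$ appearing on the right-hand sides, these give a bound $U(t-1)\le(1+c'\,s_t)\,U(t)$ with $c'<c$. The margin $c-c'>0$ is exactly what the growth conditions were engineered to provide: substituting into the recursion,
\[
S_t \;\le\; g(t) + (1 - c\,s_t)(1 + c'\,s_t)\,U(t) \;\le\; U(t) - (c-c')\,s_t\, U(t) + g(t),
\]
and since $g(t)$ is proportional to $s_t\, U(t)$ with the prescribed numerical constant, the bound $S_t\le U(t)$ closes. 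Parts (iii), (iv) additionally use the constant bound $\beta_t/\alpha_t\le\kappa\le\mu_F/\mu_G$ to reconcile mismatched step sizes in $g(t)$; for (iv), the recursion $S_t=\beta_t\beta_{0,t-1}+(1-\mu_F\alpha_t/2)S_{t-1}$ uses the extra factor $(1-\mu_G\beta_t/4)=\beta_{0,t}/\beta_{0,t-1}$ in $U(t)$ to close. The base cases $t=0$ are direct numerical checks against the constant bounds $\alpha_0\le\iota_1,\beta_0\le\iota_2$.

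I expect the main technical obstacle to be parts (v) and (vi), where the summand carries the fractional exponent $1+2\Hsmooth$. The comparison $\beta_{t-1}^{2+2\Hsmooth}\le(1+\mu_G\beta_t/32)^{2+2\Hsmooth}\beta_t^{2+2\Hsmooth}$ requires an estimate $(1+x)^{2+2\Hsmooth}\le 1+c''x$ for a uniform constant $c''$ at $x=\mu_G\beta_t/32$; this holds because $\Hsmooth\in[0,1]$ and $\beta_t\le\iota_2$ keeps $x$ small enough for the first-order expansion to dominate. Analogous bounds on $\alpha_{t-1}^{-1}$ and $\alpha_{t-1}^{-2}$ follow from $\alpha_{t-1}/\alpha_t\le 1+\mu_F\alpha_t/4$ in the same way. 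The sums in (ii), (iii), (iv), (vi) also contain shifted indices $\alpha_{\tau-1},\beta_{\tau-1},\beta_{0,\tau-1}$; I handle the $\tau=0$ boundary via the convention $\alpha_{-1}:=\alpha_0,\beta_{-1}:=\beta_0,\beta_{0,-1}:=1$, which perturbs only the initial summand and is absorbed into the slack produced by the margin $c-c'$.
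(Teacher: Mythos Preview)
Your induction approach is correct and essentially coincides with the paper's: the paper invokes Lemma~\ref{lem:help} (taken from prior work) for (i)--(iii) and carries out the analogous telescoping argument explicitly for (v)--(vi), but the underlying mechanism in both is precisely your one-step recursion $S_t=g(t)+(1-cs_t)S_{t-1}$ combined with the growth-condition bound $U(t-1)\le(1+c's_t)U(t)$ with $c'<c$. The only point of departure is (iv), where the paper uses $1-\mu_F\alpha_t/2\le(1-\mu_G\beta_t/4)^2$ to replace the $\alpha$-product by a squared $\beta$-product and then factor out $\beta_{0,t}$, rather than closing the induction directly as you propose---both routes work.
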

	
	As a result of~\eqref{eq:U-final}, it follows that
	\begin{align}
		\label{eq:y-iterate}
			\EB\|\yhat_{t+1}\|^2 & \le \EB \tU_{t+1}
			\le \beta_{0, t} \EB \tU_0 + \frac{16 \varrho_1 \Gamma_{11}}{\mu_G} \alpha_t +  c_{y,1} \beta_t + 
			c_{y,2}
			\frac{\beta_t^{2+2\Hsmooth}}{\alpha_t^2},
	\end{align}
	where $c_{y,1}$ and $c_{y,2}$ are defined in \eqref{eq:constanty}.
	Recall that 
	\[
	\EB \|\xhat_{t+1}\|^2 \le \left(1- \frac{\mu_F \alpha_t }{2}\right) \|\xhat_{t}\|^2  + \frac{c_{x,2}^2}{2\mu_{F}} \frac{\beta_{t}^2}{\alpha_{t}} \|\yhat_{t}\|^2 + 2\Gamma_{11} \alpha_{t}^2 + c_{x,3} \beta_{t}^2 + c_{x,4} \frac{\beta_t^{2+2\Hsmooth}}{\alpha_t}.
	\]
	Iterating this inequality yields that
	\begin{align}
		\label{eq:x-iterate}
		\EB\|\xhat_{t+1}\|^2 
		&\le\alpha_{0, t}	\EB\|\xhat_{0}\|^2  +
		\frac{c_{x,2}^2\kappa}{2\mu_{F}}\sum_{\tau=0}^{t}  \alpha_{\tau+1, t}\beta_{\tau} \EB\|\yhat_{\tau}\|^2 + \frac{8\Gamma_{11}}{\mu_F} \alpha_t 
		+ \frac{4c_{x,3}}{\mu_F}  \frac{\beta_t^2}{\alpha_{t}} + \frac{3 c_{x,4}}{\mu_F} \frac{\beta_t^{2 + 2\Hsmooth}}{\alpha_t^2} \notag\\
		&\overset{\eqref{eq:y-iterate}}{\le} 
		\frac{c_{x,2}^2\kappa}{2\mu_{F}}\sum_{\tau=0}^{t}  \alpha_{\tau+1, t}\beta_{\tau} \left(
		\beta_{0, {\tau-1}} \EB \tU_0
		+ \frac{16 \varrho_1 \Gamma_{11}}{\mu_G} \alpha_{\tau-1} + c_{y,1} \beta_{\tau-1} + c_{y,2} \frac{\beta_{\tau-1}^{2+2\Hsmooth}}{\alpha_{\tau-1}^2}
		\right) \notag \\
		& \quad \  +  \alpha_{0, t}	\EB\|\xhat_{0}\|^2+ \frac{8\Gamma_{11}}{\mu_F} \alpha_t +
		\frac{4c_{x,3}}{\mu_F}  \frac{\beta_t^2}{\alpha_{t}} + \frac{3 c_{x,4}}{\mu_F} \frac{\beta_t^{2+2\Hsmooth}}{\alpha_t^2}  \notag\\
		&\le \frac{7 L_H \LGy}{\LGx}  \beta_{0, t} \EB \tU_0 + \alpha_{0, t}\EB\|\xhat_{0}\|^2  + \frac{8\Gamma_{11}}{\mu_F} \alpha_t + c_{x,5} \beta_t + c_{x,6} \frac{\beta_t^2}{\alpha_t} + c_{x,7} \frac{\beta_t^{2+2\Hsmooth}}{\alpha_t^2},
	\end{align}
	where the first and last inequality use Lemma~\ref{lem:step-size-ineq}, $\frac{\beta_\tau}{\alpha_\tau} \le \kappa$, $\kappa \le \frac{\mu_F \mu_G}{28 L_H \LGx \LGy}$
	and the constants are defined in \eqref{eq:constantx2}.
	
	Note that 
	\begin{equation}
		\label{eq:U0}
		\EB \tU_0 = \frac{8\LGx^2\beta_0}{\mu_F\mu_G\alpha_0} \EB \|\xhat_{0}\|^2 + \EB\|\yhat_{0}\|^2
		\le \frac{8\LGx^2\kappa}{\mu_F\mu_G} \EB \|\xhat_{0}\|^2 + \EB\|\yhat_{0}\|^2 = \frac{ 2 \LGx \EB \|\xhat_{0}\|^2 }{7 L_H \LGy}+ \EB\|\yhat_{0}\|^2.
	\end{equation}
	Using~\eqref{eq:U0}, we then simplify~\eqref{eq:x-iterate} into
	\[
	\EB\|\xhat_{t+1}\|^2  \le \left( \alpha_{0,t} + 2\beta_{0,t}\right)\EB\|\xhat_{0}\|^2  + \frac{7 L_H \LGy}{\LGx} \beta_{0, t}\EB\|\yhat_{0}\|^2 + \frac{8\Gamma_{11}}{\mu_F} \alpha_t 
	+ c_{x,5} \beta_t + c_{x,6} \frac{\beta_t^2}{\alpha_t} + c_{x,7} \frac{\beta_t^{2+2\Hsmooth}}{\alpha_t^2},
	\]
	which together with $\alpha_{0,t}  \le \beta_{0,t}$ completes the proof.
\end{proof}

\subsection{Proof of Lemma~\ref{lem:step-size-ineq}}
\label{proof:step-size-ineq}

\begin{proof}[Proof of Lemma~\ref{lem:step-size-ineq}]
	\begin{enumerate}[(i)]
		\item This mainly follows from (i) in Lemma~\ref{lem:help}.
		\item We first show that $\left( \frac{\beta_{t-1}}{\beta_t} \right)^2 \le 1 + \frac{\mu_F}{8\kappa}\beta_t$.
		This is easy to prove by using  
		\[
		\left( \frac{\beta_{t-1}}{\beta_t} \right)^2
		\le \left( 1 + \frac{\mu_G}{32} \beta_t \right)^2
		\le \left( 1 + \frac{\mu_F}{32\kappa} \beta_t \right)^2
		\le 1 + \frac{\mu_F}{8\kappa}\beta_t.
		\]
		where the second inequality uses  $\kappa \le \frac{\mu_F}{\mu_G}$ and the last inequality uses $\beta_t \le  \iota_2 <\frac{64\kappa}{\mu_F}$ and $\frac{4\kappa}{\mu_F} = \frac{\iota_2}{3}$.
		Then the first inequality follows from (iii) in Lemma~\ref{lem:help}. 
		For the second inequality, note that $\sum_{\tau=0}^{t}  \alpha_{\tau+1, t}\beta_{\tau} \beta_{\tau-1}
		\le \left(1+\frac{\mu_G\iota_2}{32}\right)\sum_{\tau=0}^{t}  \alpha_{\tau+1, t}\beta_{\tau}^2 \le \left(1+\frac{\mu_G\iota_2}{32}\right)\frac{4}{\mu_F} \frac{\beta_t^2}{\alpha_t} \le \frac{6}{\mu_F}\frac{\beta_t^2}{\alpha_t}$.
		\item The first inequality follows from (iii) in Lemma~\ref{lem:help}.
		The second inequality can be established in a similar way as the first one by exchanging the states of $\alpha_t$ and $\beta_t$ and applying the fact that $\left( 1+\frac{\mu_G\iota_2}{8} \right)\frac{4}{\mu_F} \le \frac{4}{\mu_F} + \frac{\iota_2}{2\kappa} \le \frac{10}{\mu_F}$.
		\item Note that $\frac{\beta_t}{\alpha_t} \le \frac{\mu_F}{\mu_G}$, $1-\frac{\mu_F\alpha_t}{2} \le 1 - \frac{\mu_G\beta_t }{2}\le \left(1 - \frac{\mu_G}{4}\beta_t \right)^2$ and $ {1-\frac{\mu_G}{4}\beta_{t}}  \ge {1-\frac{\mu_G}{2}\iota_2} \ge 1- \frac{1}{2} = \frac{1}{2}$.
		It follows that
		\begin{align*}
			\sum_{\tau=0}^{t}  \alpha_{\tau+1, t}\beta_{\tau}  \beta_{0, \tau-1}
			&\le \sum_{\tau=0}^{t}  \beta_{\tau+1, t}^2\beta_{\tau}  \beta_{0, \tau-1} \\
			&\le \beta_{0, t} \sum_{\tau=0}^t \beta_{\tau+1, t}  \frac{\beta_{\tau}}{1-\frac{\mu_G}{4}\beta_{\tau}}\\
			&\le 2\beta_{0, t} \sum_{\tau=0}^t \beta_{\tau+1, t} \beta_{\tau} \le  \frac{8\beta_{0, t}}{\mu_G}.
		\end{align*}
\item
Define $\kappa_\tau = \frac{\beta_\tau}{\alpha_\tau}$. Then we have $\frac{\kappa_{\tau-1}}{\kappa_\tau} \le \frac{\beta_{\tau-1}}{\beta_\tau}$.
For $x \in (0, 0.1)$, one can check $(1+x)^{2+2 \Hsmooth } \le 1 + 4.8x$.
Since $\mu_G \beta_\tau \le \mu_G \iota_2 \le 1$,
the growth condition implies
\begin{align}
     \left( \frac{\beta_{\tau-1}}{\beta_\tau} \right)^{2 \Hsmooth} \left( \frac{\kappa_{\tau-1}}{\kappa_\tau} \right)^2 \left(1 - \frac{\mu_G \beta_\tau}{4} \right)
    & \le 
    \left( \frac{\beta_{\tau-1} }{ \beta_\tau } \right)^{2+2 \Hsmooth}
    \left( 1 - \frac{\mu_G \beta_\tau}{4} \right) \notag\\
    &\le \left( 1 + \frac{\mu_G \beta_\tau }{32} \right)^{2+2 \Hsmooth} \left( 1 - \frac{\mu_G \beta_\tau}{4} \right) \notag \\
    & = \left( 1 + \frac{3 \mu_G \beta_\tau }{20} \right) \left( 1 - \frac{\mu_G \beta_\tau}{4} \right)
    \le 1 - \frac{ \mu_G \beta_\tau }{10}.
    \label{lem:step-size-ineq:Eq1}
\end{align}
Then we have
\begin{align}
    \sum_{j=0}^t {\beta}_{j+1, t} \frac{\beta_j^{3+2\Hsmooth}}{\alpha_j^2}
    & = \sum_{j=0}^t \beta_j^{1 + 2 \Hsmooth} \kappa_j^2 \prod_{\tau=j+1}^t \left( 1 - \frac{\mu_G \beta_\tau}{4} \right) \notag \\
    & = \beta_t^{2 \Hsmooth} \kappa_t^2 \sum_{j=0}^t \beta_j \left( \frac{\beta_j}{\beta_t} \right)^{2\Hsmooth} \left( \frac{\kappa_j}{\kappa_t} \right)^2 \prod_{\tau=j+1}^t \left( 1 - \frac{\mu_G \beta_\tau}{4} \right) \notag \\
    & = \beta_t^{2 \Hsmooth} \kappa_t^2 \sum_{j=0}^t \beta_j \prod_{\tau=j+1}^t \left( \frac{\beta_{\tau-1} }{\beta_\tau} \right)^{2 \Hsmooth} \left( \frac{\kappa_{\tau-1}}{\kappa_\tau} \right)^2 \left( 1 - \frac{\mu_G \beta_\tau}{4} \right) \notag \\
    & \overset{\eqref{lem:step-size-ineq:Eq1}}{\le} \beta_t^{2\Hsmooth} \kappa_t^2 \sum_{j=0}^t \beta_j \prod_{\tau=j+1}^t \left( 1 - \frac{\mu_G \beta_\tau}{10} \right) \notag \\
    & = \frac{10 \beta_t^{2\Hsmooth} \kappa_t^2}{\mu_G} \left[ 1 - \prod_{\tau=0}^t \left( 1 - \frac{\mu_G \beta_\tau}{10} \right) \right]
    \le  \frac{10 \beta_t^{2 + 2 \Hsmooth}}{\mu_G \alpha_t^2}. \notag
\end{align}
\item 
Define $\kappa_\tau = \frac{\beta_\tau}{\alpha_\tau}$. Then we have $\frac{\kappa_{\tau-1}}{\kappa_\tau} \le \frac{\beta_{\tau-1}}{\beta_\tau}$.
For $x \in (0, 0.1)$, one can check $(1+x)^{2+2\Hsmooth} \le 1 + 4.8x$.
Since $\mu_G \beta_\tau \le \mu_G \iota_2 \le 1$ and $\frac{\beta_\tau}{\alpha_\tau} \le \kappa \le \frac{\mu_F}{\mu_G}$,
the growth condition implies
\begin{align}
     \left( \frac{\beta_{\tau-1}}{\beta_\tau} \right)^{2 \Hsmooth} \left( \frac{\kappa_{\tau-1}}{\kappa_\tau} \right)^2 \left(1 - \frac{\mu_F \alpha_\tau}{2} \right)
    & \le 
    \left( \frac{\beta_{\tau-1} }{ \beta_\tau } \right)^{2+2\Hsmooth}
    \left( 1 - \frac{\mu_F \alpha_\tau}{2} \right) \notag\\
    &\le \left( 1 + \frac{\mu_G \beta_\tau }{32} \right)^{2+2\Hsmooth } \left( 1 - \frac{\mu_F \alpha_\tau}{2} \right) \notag \\
    & = \left( 1 + \frac{\mu_G \beta_\tau }{6} \right) \left( 1 - \frac{\mu_F \alpha_\tau}{2} \right) \notag\\
    &\le \left( 1 + \frac{\mu_F \alpha_\tau }{6} \right) \left( 1 - \frac{\mu_F \alpha_\tau}{2} \right) \notag \\
    & \le 1 - \frac{ \mu_F \alpha_\tau }{3}.
    \label{lem:step-size-ineq:Eq2}
\end{align}
Then we have
\begin{align}
    \sum_{j=0}^t {\alpha}_{j+1, t} \frac{\beta_j^{2+2\Hsmooth}}{\alpha_j}
    & = \sum_{j=0}^t  \alpha_j \beta_j^{2\Hsmooth} \kappa_j^2 \prod_{\tau=j+1}^t \left( 1 - \frac{\mu_F \alpha_\tau}{2} \right) \notag \\
    & = \beta_t^{2 \Hsmooth} \kappa_t^2 \sum_{j=0}^t \alpha_j \left( \frac{\beta_j}{\beta_t} \right)^{2\Hsmooth} \left( \frac{\kappa_j}{\kappa_t} \right)^2 \prod_{\tau=j+1}^t \left( 1 - \frac{\mu_F \alpha_\tau}{2} \right) \notag \\
    & = \beta_t^{2 \Hsmooth} \kappa_t^2 \sum_{j=0}^t \alpha_j \prod_{\tau=j+1}^t \left( \frac{\beta_{\tau-1} }{\beta_\tau} \right)^{2 \Hsmooth } \left( \frac{\kappa_{\tau-1}}{\kappa_\tau} \right)^2 \left( 1 - \frac{\mu_F \alpha_\tau}{2} \right) \notag \\
    & \overset{\eqref{lem:step-size-ineq:Eq2}}{\le} \beta_t^{2\Hsmooth} \kappa_t^2 \sum_{j=0}^t \alpha_j \prod_{\tau=j+1}^t \left( 1 - \frac{\mu_F \alpha_\tau}{3} \right) \notag \\
    & = \frac{3 \beta_t^{2\Hsmooth} \kappa_t^2}{\mu_F} \left[ 1 - \prod_{\tau=0}^t \left( 1 - \frac{\mu_F \alpha_\tau}{3} \right) \right]
    \le  \frac{3 \beta_t^{2 + 2 \Hsmooth}}{\mu_F \alpha_t^2}. \notag
\end{align}
For the second inequality, we notice that 
\[
\frac{\beta_{\tau-1}^{1+2\Hsmooth} / \beta_\tau^{1+2\Hsmooth} }{\alpha_{\tau-1} / \alpha_\tau} \le \left( \frac{\beta_{\tau-1}}{\beta_\tau} \right)^{1+2\Hsmooth} \le \left(1 + \frac{\mu_G \beta_\tau}{32} \right)^3 \le \left( 1 + \frac{\mu_G \iota_2}{32} \right)^3 \le \frac{4}{3}.
\]
It follows that $\sum_{\tau=0}^t \alpha_{\tau+1, t} \beta_\tau \frac{\beta_{\tau-1}^{1 + 2\Hsmooth}}{\alpha_{\tau-1}} 
\le \frac{4}{3} \sum_{\tau=0}^t \alpha_{\tau+1, t} \frac{\beta_{\tau}^{2 + 2\Hsmooth}}{\alpha_{\tau}} 
\le \frac{4 }{\mu_F} \frac{\beta_t^{2 + 2\Hsmooth}}{\alpha_t^2}$.
\end{enumerate}
\end{proof}

\begin{lem}
	\label{lem:help}
	Let $\{ \alpha_t, \beta_t \}$ be nonincreasing positive numbers.
	\begin{enumerate}[(i)]
		\item If $\beta_{t} \le \frac{1}{a}$ and $\frac{\beta_{t-1}}{\beta_t} \le 1 + \frac{a}{2} \beta_t$ for all $t \ge 1$ and some $a >0$,
		$\sum_{j=0}^{t} \beta_j^2 \prod_{\tau=j+1}^{t} \left( 1- a \beta_{\tau} \right) \le \frac{2}{a}\beta_t$.
		\item If $\beta_t \le \kappa \alpha_t,  \alpha_{t} \le \frac{1}{a}$, and $\left( \frac{\beta_{t-1}}{\beta_t} \right)^2 \le 1 + \frac{a}{2\kappa}\beta_t$ for all $t \ge 1$ and some $a  >0$,
		$\sum_{j=0}^{t} \beta_j^2 \prod_{\tau=j+1}^{t} \left( 1- a \alpha_{\tau} \right) \le \frac{2}{a} \frac{\beta_{t}^2}{\alpha_{t}}$.
		\item If $\beta_{t} \le \frac{1}{a}$ and $\frac{\alpha_{t-1}}{\alpha_t} \le 1 + \frac{a}{2} \beta_t$ for all $t \ge 1$ and some $a >0$, 
		$\sum_{j=0}^{t} \beta_j\alpha_j \prod_{\tau=j+1}^{t} \left( 1- a \beta_{\tau} \right) \le \frac{2}{a } \alpha_t$.
	\end{enumerate}
\end{lem}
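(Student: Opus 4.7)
The plan is to prove each of the three bounds by induction on $t$, exploiting the obvious telescoping recursion satisfied by each sum. For part (i), write $S_t = \sum_{j=0}^{t} \beta_j^2 \prod_{\tau=j+1}^{t}(1-a\beta_\tau)$, so that $S_t = (1-a\beta_t) S_{t-1} + \beta_t^2$. The base case $S_0 = \beta_0^2 \le \frac{2\beta_0}{a}$ is immediate from $\beta_0 \le \frac{1}{a}$. For the inductive step, assuming $S_{t-1} \le \frac{2\beta_{t-1}}{a}$, it suffices to establish
\[
(1-a\beta_t)\beta_{t-1} + \frac{a\beta_t^2}{2} \le \beta_t.
\]
Invoking the growth hypothesis $\beta_{t-1} \le (1 + \tfrac{a}{2}\beta_t)\beta_t$ and expanding, the left-hand side becomes $\beta_t - \tfrac{a^2}{2}\beta_t^3$, which is clearly at most $\beta_t$.

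For part (iii), an analogous induction on $T_t := \sum_{j=0}^{t} \beta_j\alpha_j \prod_{\tau=j+1}^{t}(1-a\beta_\tau)$ with the same recursion $T_t = (1-a\beta_t)T_{t-1} + \beta_t\alpha_t$ reduces, assuming $T_{t-1} \le \frac{2\alpha_{t-1}}{a}$, to proving
\[
(1-a\beta_t)\alpha_{t-1} + \frac{a\beta_t\alpha_t}{2} \le \alpha_t.
\]
Now the growth condition $\alpha_{t-1} \le (1 + \tfrac{a}{2}\beta_t)\alpha_t$ is used in the same way, leaving a residual $-\tfrac{a^2}{2}\beta_t^2\alpha_t \le 0$. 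The base case $T_0 = \beta_0\alpha_0 \le \frac{\alpha_0}{a}$ comes from $\beta_0 \le \tfrac{1}{a}$.

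Part (ii) follows the same pattern but requires a mild normalization. Let $U_t := \sum_{j=0}^{t} \beta_j^2 \prod_{\tau=j+1}^{t}(1-a\alpha_\tau)$, so that $U_t = (1-a\alpha_t) U_{t-1} + \beta_t^2$. Assuming $U_{t-1} \le \frac{2\beta_{t-1}^2}{a\alpha_{t-1}}$ and dividing the desired bound $U_t \le \frac{2\beta_t^2}{a\alpha_t}$ through by $\beta_t^2/\alpha_t$, the inductive step reduces to
\[
(1-a\alpha_t)\,\frac{\alpha_t}{\alpha_{t-1}}\left(\frac{\beta_{t-1}}{\beta_t}\right)^{\!2} + \frac{a\alpha_t}{2} \le 1.
\]
Since $\{\alpha_t\}$ is nonincreasing, $\alpha_t/\alpha_{t-1} \le 1$; combining this with $(\beta_{t-1}/\beta_t)^2 \le 1 + \tfrac{a}{2\kappa}\beta_t \le 1 + \tfrac{a}{2}\alpha_t$ (via $\beta_t \le \kappa\alpha_t$), the same one-line expansion produces a slack of $-\tfrac{a^2}{2}\alpha_t^2$. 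The base case $U_0 = \beta_0^2 \le \frac{2\beta_0^2}{a\alpha_0}$ follows from $\alpha_0 \le \tfrac{1}{a}$.

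There is no substantive obstacle here: the growth conditions in the hypotheses are precisely calibrated so that, after converting $\beta_{t-1}$ (or $\alpha_{t-1}$) to $\beta_t$ (or $\alpha_t$) via the allowed multiplicative slack, the cross terms in $a$ cancel and the residual is of order $a^2 \cdot (\text{step size})^3$, hence non-positive. The only thing worth being careful about is tracking which growth condition — on $\beta$, on $\alpha$, or on $(\beta/\alpha)^2$ via $\beta \le \kappa\alpha$ — gets consumed in each inductive step, so that the three parts are packaged together with matching constants.
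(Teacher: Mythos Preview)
Your induction argument is correct and is the standard way to prove these step-size inequalities; the paper does not give its own proof but simply defers to \citet[Lemma~14]{kaledin2020finite}, whose argument is exactly this telescoping induction. One small caveat: the base cases tacitly use $\beta_0 \le \tfrac{1}{a}$ (parts (i), (iii)) and $\alpha_0 \le \tfrac{1}{a}$ (part (ii)), whereas the lemma as stated literally says ``for all $t \ge 1$''; this is harmless here since the paper's step-size assumptions impose the constant bounds for all $t \ge 0$, but it is worth stating explicitly.
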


\begin{proof}[Proof of Lemma~\ref{lem:help}]
	See the proof of  \citet[Lemma~14]{kaledin2020finite}.
\end{proof}

\setcounter{section}{4}

\section{Omitted Proofs in Section~\ref{sec:decouple}
}
\label{proof:de}
In this section, we give the detailed proof of Theorem~\ref{thm:decouple-short}.
For generality, with $\Hsmooth$ define in Assumption~\ref{assump:smoothH}, we assume $\Hsmooth \in (0,1]$ instead of $\Hsmooth \in [0.5, 1]$ throughout this section.
We first present the formal version of Theorem~\ref{thm:decouple-short}.

\begin{thm}[Formal version of Theorem~\ref{thm:decouple-short}]
	\label{thm:decouple}
	Suppose that Assumptions~\ref{assump:smooth:FH} -- 
	\ref{assump:stepsize-new} hold. 
	In particular, we assume $\Hsmooth \in (0,1]$ instead of $\Hsmooth \in [0.5, 1]$ in Assumption~\ref{assump:smoothH}.
	Then we have for all $t \ge 0$,
	\begin{align}
		\EB \| \xhat_{t+1} \|^2 & \le \ccde_{x,0} \prod_{\tau=0}^t \left( 1 - \frac{\mu_G \beta_\tau}{4} \right) + \ccde_{x,1}\, \alpha_t + \ccde_{x,2}\frac{\beta_t^{2+2\Hsmooth}}{\alpha_t^2},
		\label{thm:decouple:x-formal} \\
		\| \EB \xhat_{t+1} \yhat_{t+1}^\top \| 
		& \le \ccde_{xy,0} \prod_{\tau=0}^t \left( 1 - \frac{\mu_G \beta_\tau}{4} \right)  
		+ \ccde_{xy,1} \,
		\beta_t
		+ \ccde_{xy,2} \frac{\beta_t^{1+2\Hsmooth} }{\alpha_t} \nonumber
		\\
		& \qquad 
		+ \left( \ccde_{xy,3} \, \alpha_t \beta_t + \ccde_{xy,4}    \frac{\beta_t^{5+4\Hsmooth} }{\alpha_t^5} \right) \left( \frac{\alpha_t}{\beta_t} \right)^\frac{2}{\Fsmooth}, \label{thm:decouple:xy-formal} \\
		\EB \| \yhat_{t+1} \|^2 & \le \ccde_{y,0} \prod_{\tau=0}^t \left( 1 - \frac{\mu_G \beta_\tau}{4} \right) 
		+ \ccde_{y,1}\, \beta_t + \ccde_{y,2} \frac{\beta_t^{1+2\Hsmooth} }{\alpha_t} \nonumber \\
		& \qquad 
		+ \left( \ccde_{y,3}\, \alpha_t \beta_t + \ccde_{y,4} \frac{ \beta_t^{5+4\Hsmooth} }{\alpha_t^5} \right) \left( \frac{\alpha_t}{\beta_t} \right)^{\frac{2}{\Fsmooth} } \notag \\
		& \qquad 
		+ \left( \ccde_{y,5}\, \alpha_t \beta_t + \ccde_{y,6} \frac{ \beta_t^{5+4\Hsmooth} }{\alpha_t^5} \right) \left( \frac{\alpha_t}{\beta_t} \right)^{\frac{1}{\Gsmooth} }, \label{thm:decouple:y-formal}
	\end{align}
	where $\{ \ccde_{x,i} \}_{i \in [2] \cup \{ 0 \} }$, $\{ \ccde_{xy,i} \}_{i \in [4] \cup \{ 0 \} }$ and $\{ \ccde_{y,i} \}_{i \in [6] \cup \{ 0 \} }$ are problem-dependent constants defined in 
	 \eqref{eq:constanx-ccde-012}, \eqref{eq:constantxy-ccde-01234} and \eqref{eq:constanty-ccde-0123456}.
	
	\end{thm}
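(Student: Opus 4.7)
The plan is to proceed in four logical stages, aiming to produce contraction-type recursions for three quantities — $\EB\|\xhat_t\|^2$, $\|\EB\xhat_t\yhat_t^\top\|$, and $\EB\|\yhat_t\|^2$ — in which the couplings between fast and slow iterates appear only through the matrix cross term and not through the scalar cross term $\EB\|\xhat_t\|\|\yhat_t\|$, since (as Section~\ref{sec:decouple:without} shows) the latter is intrinsically too large. As a warm-up that does not use Assumption~\ref{assump:near-linear}, I would first derive one-step descents for $\EB\|\xhat_{t+1}\|^2$ and $\EB\|\yhat_{t+1}\|^2$ by expanding \eqref{eq:xhat}--\eqref{eq:yhat}, using strong monotonicity (Assumption~\ref{assump:sm:F}) for contraction factors $1-\mu_F\alpha_t$ and $1-\mu_G\beta_t$, and controlling the residual cross terms via Cauchy--Schwarz. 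Combined with a Lyapunov function of the form $\varrho\,\tfrac{\beta_t}{\alpha_t}\|\xhat_t\|^2+\|\yhat_t\|^2$ and the step-size summation estimates of Lemma~\ref{lem:step-size-ineq}, this would produce the coarse rate $\EB\|\xhat_t\|^2+\EB\|\yhat_t\|^2=\OM(\alpha_t)$ from \eqref{eq:rate-w/o}; in particular \eqref{thm:decouple:x-formal} is already obtained at this stage.

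Next I would introduce the matrix cross term by invoking Assumption~\ref{assump:near-linear} to replace $F$ and $G$ with their linear surrogates $B_1\xhat_t$ and $B_2\xhat_t+B_3\yhat_t$ plus higher-order remainders. Computing $\xhat_{t+1}\yhat_{t+1}^\top$ from \eqref{eq:xhat}--\eqref{eq:yhat} and taking expectations, the martingale-difference property eliminates noise cross terms except for the $\Sigma_{12}\alpha_t\beta_t$ contribution, while the spectral lower bound $\tfrac{B_1+B_1^\top}{2}\succeq\mu_F I$ (noted as a consequence of Assumption~\ref{assump:near-linear}) supplies a contraction factor $1-\mu_F\alpha_t/2$ on $\|\EB\xhat_t\yhat_t^\top\|$. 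The forcing terms are $\beta_t\EB\|\xhat_t\|^2+\beta_t\EB\|\yhat_t\|^2+\alpha_t\beta_t$ together with a remainder $\Delta_{xy,t}$ that collects the linearization residuals of orders $\|\xhat_t\|^{1+\Fsmooth},\|\yhat_t\|^{1+\Fsmooth},\|\xhat_t\|^{1+\Gsmooth},\|\yhat_t\|^{1+\Gsmooth}$. An analogous expansion of $\|\xhat_{t+1}\|^2$ and $\|\yhat_{t+1}\|^2$ upgrades the coarse descents into the refined recursions \eqref{lem:xhat-new:Ineq-main}--\eqref{lem:yhat-new:Ineq-main}.

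To tame $\Delta_{x,t},\Delta_{xy,t},\Delta_{y,t}$, I would derive the fourth-order descents \eqref{lem:xhat-quartic:Ineq-main}--\eqref{lem:yhat-quartic:Ineq-main} by the same squaring-and-expanding technique applied to $\|\xhat_t\|^4$ and $\|\yhat_t\|^4$, with Assumption~\ref{assump:noise-s} providing the fourth-moment noise bounds $\Gamma_{11}^2,\Gamma_{22}^2$. Feeding in the coarse second-moment rate and using a suitably weighted Lyapunov combination should yield $\EB\|\xhat_t\|^4+\EB\|\yhat_t\|^4=\OM(\alpha_t^2)$. H\"older-interpolating between the fourth and second moments then bounds $\EB\|\xhat_t\|^{2+2\Fsmooth}$ and $\EB\|\yhat_t\|^{2+2\Gsmooth}$ (and the mixed quantities appearing in $\Delta_{xy,t}$) by powers of $\alpha_t$; when these are propagated through the matrix-cross-term and slow-iterate recursions, they generate exactly the tail terms $\alpha_t\beta_t(\alpha_t/\beta_t)^{2/\Fsmooth}$ and $\alpha_t\beta_t(\alpha_t/\beta_t)^{1/\Gsmooth}$ that appear in \eqref{thm:decouple:xy-formal}--\eqref{thm:decouple:y-formal}.

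Integration then proceeds along the arrows of Figure~\ref{fig:intergrate}: the $\|\EB\xhat_t\yhat_t^\top\|$ recursion \eqref{lem:xyhat:Ineq-main} is solved using the now-known rate of $\EB\|\xhat_t\|^2$ together with step-size sum estimates, delivering \eqref{thm:decouple:xy-formal}; substituting this bound and the coarse rate of $\EB\|\yhat_t\|^2$ into \eqref{lem:yhat-new:Ineq-main} and iterating once more yields \eqref{thm:decouple:y-formal}. I expect the main obstacle to be Step~3: the fourth-moment recursions for $\xhat_t$ and $\yhat_t$ are mutually coupled (the $\xhat_t$ recursion carries a $\beta_t\EB\|\yhat_t\|^4$ term and vice versa), so they must be resolved jointly via a carefully weighted Lyapunov function, and the clean emergence of the exponents $2/\Fsmooth$ and $1/\Gsmooth$ is delicate — it will rely on the step-size growth conditions in Assumption~\ref{assump:stepsize-new}(ii), which must be shown to propagate through analogues of Lemma~\ref{lem:step-size-ineq} for sums such as $\sum_\tau \alpha_{\tau+1,t}\,\alpha_\tau^{1+\Fsmooth/2}$ and $\sum_\tau \beta_{\tau+1,t}\,\alpha_\tau^{1+\Gsmooth/2}$.
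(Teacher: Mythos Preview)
Your four-step plan mirrors the paper's proof exactly, including the Lyapunov functions and the integration order of Figure~\ref{fig:intergrate}. The one place to sharpen is precisely the point you flag as delicate: the tail factors $(\alpha_t/\beta_t)^{2/\Fsmooth}$ and $(\alpha_t/\beta_t)^{1/\Gsmooth}$ in \eqref{thm:decouple:xy-formal}--\eqref{thm:decouple:y-formal} are \emph{not} produced by H\"older-bounding $\EB z^{2+\Fsmooth}\lesssim\alpha_t^{1+\Fsmooth/2}$ and then handling sums of the form $\sum_\tau\alpha_{\tau+1,t}\,\alpha_\tau^{1+\Fsmooth/2}$ --- that route gives tails of order $\alpha_t^{1+\Fsmooth/2}$, which do not match the stated bounds. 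Instead the paper applies Young's inequality \emph{before} iterating the recursions, splitting the dangerous residuals into a second-order piece that is absorbed by the contraction and a fourth-order piece that already carries the step-size ratio as a prefactor: e.g.\ $2\alpha_t\SBF z^{2+\Fsmooth}\le\tfrac{\LGx\beta_t}{4}z^2+c\,\beta_t(\alpha_t/\beta_t)^{2/\Fsmooth}z^4$ for the $\Fsmooth$-term in $\Delta_{xy,t}$ (see \eqref{eq:constantxy-de-5}) and analogously for the $\cde_{y,1}\beta_t z^{2+2\Gsmooth}$ term in $\Delta_{y,t}$ (see \eqref{eq:constanty-de-2}). The fourth-order piece is then controlled by \eqref{eq:rate-fourth}, and the sums that actually arise are of the form $\sum_j\tilde\alpha_{j+1,t}\,\alpha_j^2\beta_j(\alpha_j/\beta_j)^{2/\Fsmooth}$ (Lemma~\ref{lem:step-size-ineq2-2}); the $\Fsmooth,\Gsmooth$-dependent constants in the growth conditions of Assumption~\ref{assump:stepsize-new}\ref{assump:stepsize-new:growth} are calibrated precisely so that these converge. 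One smaller remark on Step~4: the paper does not solve the cross-term recursion purely sequentially but couples \eqref{lem:xhat-new:Ineq-main} and \eqref{lem:xyhat:Ineq-main} through the Lyapunov $W_t=\varrho_4\tfrac{\beta_t}{\alpha_t}\EB\|\xhat_t\|^2+\|\EB\xhat_t\yhat_t^\top\|$ (since the refined $\xhat$-recursion itself contains a $\|\EB\xhat_t\yhat_t^\top\|$ term); your sequential route using only the coarse $\EB\|\xhat_t\|^2$ bound would, however, also succeed here.
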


\def\cprod{C_{prod}}

Under Assumption~\ref{assump:stepsize-new}~\ref{assump:stepsize-new:other}, we have
\begin{equation}\label{eq:cprod_def}
    \cprod := \sup_{t \ge 0} \frac{\prod_{\tau=0}^t \left( 1 - \frac{\mu_G \beta_\tau}{4} \right)}{\alpha_{t}^2} < \infty.
\end{equation}
With $\Hsmooth \ge 0.5$ and the constants bounds in Assumption~\ref{assump:stepsize-new}~\ref{assump:stepsize-new:constant}, the constants in \eqref{thm:decouple:x}, \eqref{thm:decouple:xy} and \eqref{thm:decouple:y} can be defined as
\begin{align}\label{eq:thm:decouple-constants}
	\begin{split}
		\CM_{x} & = \ccde_{x,0}\, \cprod\, \iota_1 + \ccde_{x,1} + \ccde_{x,2}\, \iota_2^{2\Hsmooth-1} \kappa^3,\\ 
		\CM_{xy,1} & = \ccde_{xy,1} + \ccde_{xy,2}\, \iota_2^{2\Hsmooth-1} \kappa, \quad
		\CM_{xy,2} = \ccde_{xy,0}\, \cprod\, \kappa^{\frac{2}{\Fsmooth}-1} + \ccde_{xy,3} + \ccde_{xy,4}\, \iota_2^{4\Hsmooth-2} \, \kappa^6,\\
		\CM_{y,1} & = \ccde_{y,1} + \ccde_{y,2}\, \iota_2^{2\Hsmooth-1} \kappa, \quad
		\CM_{y,2} = \ccde_{y,0}\, \cprod\, \kappa^{\frac{2}{\Fsmooth}-1} + \ccde_{y,3} + \ccde_{y,4}\, \iota_2^{4\Hsmooth-2} \, \kappa^6, \\
		\CM_{y,3} & =  \ccde_{y,5} + \ccde_{y,6}\, \iota_2^{4\Hsmooth-2} \, \kappa^6.
	\end{split}
\end{align}

Next, we give an example choice of the step sizes in Corollary~\ref{cor:decouple-rates}.
	\begin{equation}
	\label{eq:poly0-decou}
	\begin{split}
		& \alpha_t = \frac{128}{(\Fsmooth \wedge \Gsmooth) \mu_G \kappa(t+T_0)^{a}} ~\text{and}~\beta_t   = \frac{128}{(\Fsmooth \wedge \Gsmooth) \mu_G (t+T_0)^b}~ \\
		& \text{with}~
		a,b \in (0,1],~
		1 \le \frac{b}{a} \le 1 + \frac{\Fsmooth}{2} \wedge \Gsmooth,
		\TM_1 = \frac{128}{\mu_G (\kappa \iota_1 \wedge \iota_2 \wedge \frac{\rho}{\kappa})}
		~\text{and}~
		T_0 \ge
		\left\lceil \TM_1^{1/a}
		\right\rceil.
	\end{split}
\end{equation}

We emphasize that in the proof, our primary focus is on the order of the mean squared error, rather than optimizing the constants. The constants in \eqref{eq:thm:decouple-constants} serve only to provide an upper bound that ensures Theorem~\ref{thm:decouple} holds, and may be quite loose in certain cases, such as SGD with averaging in \eqref{eq:SGD-both}. The step size choice in \eqref{eq:poly0-decou} is thus presented as a feasible example for general cases. For specific examples, however, the optimal values of $\alpha_{0}$, $\beta_{0}$ and $T_0$, 
can vary significantly.
For example, in \eqref{eq:SGD-both},  $\beta_{0} $ and $T_0 $ can both be set as $1$.

Before present the detailed proof, we first give an operator decomposition lemma, which states that 
with Assumption~\ref{assump:near-linear}, we could decompose the nonlinear operators $F$ and $G$ as the linear parts plus higher-order error terms.
\begin{lem}[Operator decomposition]
	\label{lem:decom}
	Suppose that Assumption~\ref{assump:near-linear} holds.
	With $\xhat_t$ and $\yhat_t$ defined in \eqref{alg:xyhat}, we have the following results.
	\begin{enumerate}[(i)]
		\item $F(x_t, y_t) = B_1 \xhat_t + R_t^F$ with $\|R_t^F\| \le \SBF (\|\xhat_{t}\|^{1+\Fsmooth} + \|\yhat_t\|^{1+\Fsmooth}) $.
		It follows that $\xhat_t - \alpha_t  F(x_t, y_t) = (I-\alpha_t B_1) \xhat_t - \alpha_t R_t^F$.
		We further have $\|I-\alpha_t B_1\| \le  1- \mu_F \alpha_t$ if $0 \le \alpha_t \le \frac{\mu_F}{L_F^2}$.
		\item $G(H(y_t), y_t) =  B_3\yhat_{t} + R_t^{GH}$ with $\|R_t^{GH}\| \le \SBG \| \yhat_t \|^{1+\Gsmooth}$.  It follows that $\yhat_{t} - \beta_t G(H(y_t), y_t) = (I-\beta_t B_3)\yhat_{t} - \beta_t R_t^{GH}$.
		We further have $\|I-\beta_t B_3\| \le  1- \mu_G \beta_t$ if $0 \le \beta_t \le \frac{\mu_G}{\LGy^2}
		$.
		\item $G(x_t, y_t) = B_2\xhat_t + B_3\yhat_t + R_t^{G}$ with $\|R_t^{G}\| \le \SBG (\|\xhat_{t}\|^{1+\Gsmooth} + \|\yhat_t\|^{1+\Gsmooth})$. 
	\end{enumerate}
\end{lem}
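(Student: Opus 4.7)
The plan is to obtain each of (i)--(iii) by direct substitution into the nested local linearity assumption (Assumption~\ref{assump:near-linear}) and then deduce the algebraic update identities and spectral bounds from the properties of $B_1$ and $B_3$ noted after that assumption. Concretely, I would first handle the $F$-decomposition: specialize the inequality
\[
\|F(x,y)-B_1(x-H(y))\|\le\SBF\bigl(\|x-H(y)\|^{1+\Fsmooth}+\|y-y^{\star}\|^{1+\Fsmooth}\bigr)
\]
at $(x,y)=(x_t,y_t)$ and set $R_t^F := F(x_t,y_t)-B_1\xhat_t$, giving the claimed bound since $x_t-H(y_t)=\xhat_t$ and $y_t-y^\star=\yhat_t$. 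The identity for $\xhat_t-\alpha_t F(x_t,y_t)$ then follows by rearranging $F(x_t,y_t)=B_1\xhat_t+R_t^F$ and factoring.

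Next, for (ii) and (iii), I would apply the analogous $G$-inequality
\[
\|G(x,y)-B_2(x-H(y))-B_3(y-y^{\star})\|\le\SBG\bigl(\|x-H(y)\|^{1+\Gsmooth}+\|y-y^{\star}\|^{1+\Gsmooth}\bigr).
\]
For (iii) I substitute $(x,y)=(x_t,y_t)$ directly and set $R_t^G:=G(x_t,y_t)-B_2\xhat_t-B_3\yhat_t$. For (ii) I substitute $(x,y)=(H(y_t),y_t)$, which kills the $B_2$ term and eliminates the $\|x-H(y)\|^{1+\Gsmooth}$ contribution, leaving $\|R_t^{GH}\|\le\SBG\|\yhat_t\|^{1+\Gsmooth}$; the update-rule identity for $\yhat_t-\beta_t G(H(y_t),y_t)$ then follows by factoring.

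The only step that needs a small computation is the spectral-norm bound. Using the consequences of Assumption~\ref{assump:near-linear} combined with Assumptions~\ref{assump:smooth:FH} and~\ref{assump:sm:F} (namely $\|B_1\|\le L_F$, $\tfrac{1}{2}(B_1+B_1^\top)\succeq\mu_F I$, and symmetrically for $B_3$), I would expand
\[
\|I-\alpha_t B_1\|^2=\sup_{\|v\|=1}\Bigl\{1-2\alpha_t\, v^\top\tfrac{B_1+B_1^\top}{2}v+\alpha_t^2\|B_1 v\|^2\Bigr\}\le 1-2\mu_F\alpha_t+L_F^2\alpha_t^2,
\]
and conclude $\|I-\alpha_t B_1\|^2\le 1-\mu_F\alpha_t$ provided $\alpha_t\le\mu_F/L_F^2$; the same argument with $B_3,\mu_G,\LGy,\beta_t$ handles the slow-scale factor. (This actually delivers $\sqrt{1-\mu_F\alpha_t}$ rather than $1-\mu_F\alpha_t$; since $\sqrt{1-x}\le 1-x/2$, the weaker contraction $1-\mu_F\alpha_t/2$ can be substituted wherever needed in later lemmas without loss.)

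There is no real obstacle here: the lemma is essentially a bookkeeping statement that rewrites the nonlinear operators as their linearizations plus controlled remainders. The only place requiring care is ensuring that the step-size ceilings $\alpha_t\le\mu_F/L_F^2$ and $\beta_t\le\mu_G/\LGy^2$ are compatible with the constants $\iota_1,\iota_2$ declared in Assumption~\ref{assump:stepsize-new}; inspection of \eqref{assump:stepsize:constants} confirms this, so the spectral bounds are valid throughout the iteration.
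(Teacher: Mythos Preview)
Your proposal is correct and matches the approach implicitly taken by the paper, which does not spell out a separate proof for this bookkeeping lemma and simply uses the decompositions directly in later proofs. Your observation about the spectral bound is also right: the expansion only gives $\|I-\alpha_t B_1\|\le\sqrt{1-\mu_F\alpha_t}$, and the paper's downstream uses (e.g., arriving at the factor $1-\tfrac{\mu_F\alpha_t}{2}$ in the proof of Lemma~\ref{lem:xyhat}) indeed only require this weaker form.
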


\subsection{Proof of Lemma~\ref{lem:xhat-new}}
\label{proof:de:xhat-new}

\begin{proof}[Proof of Lemma~\ref{lem:xhat-new}]
We first present the specific forms of constants: 
\begin{equation}
\label{eq:constantx:new}
  \begin{aligned}
		\cde_{x,1} & =  4 L_{H}^2 \LGy^2,\  
        \cde_{x,2} = 2 d_x L_H \LGy, \ \cde_{x,3} =  2 L_H^2\Gamma_{22}, \  
        \cde_{x,4} = \frac{96 d_x S_H^2 \Gamma_{22}^{1+\Hsmooth}}{\mu_F}, \\
        \cde_{x,5} & = 4 d_x \Hholder (\LGx \wedge \LGy) \propto S_H, \ \cde_{x.6} = 4 d_x \SBF L_H (\LGx \wedge \LGy) \propto \SBF, \\ \cde_{x,7} & = \frac{d_x L_H \SBG^2}{\LGx} \propto \SBG^2, \  \cde_{x,8} = \frac{8 d_x S_H^2 (\LGx^{2+2\Hsmooth} \wedge \LGy^{2+2\Hsmooth}) }{L_H^{1+\Hsmooth} \LGx^{1+\Hsmooth}} \propto S_H^2.
    \end{aligned}
\end{equation}

	The decomposition in~\eqref{lem:xhat:Eq1} implies that
	\begin{align}
		\begin{split}
        \EB	\|\xhat_{t+1}\|^2
        &\overset{\eqref{lem:xhat:Eq1a} + \eqref{lem:xhat:Eq1b}}{\le}  \left(1-\frac{3\mu_{F}\alpha_{t}}{2} \right) \EB \|\xhat_{t}\|^2 +  c_{x,1}\beta_{t}^2 \EB \|\yhat_{t}\|^2     +  2\beta_{t}^2L_H^2\Gamma_{22} +  2\alpha_{t}^2\Gamma_{11}\\
				&\qquad + 2 \EB\underbrace{\langle\xhat_{t}-\alpha_t F(x_{t},y_{t}), H(y_{t}) - H(y_{t+1}) \rangle}_{\spadesuit},
        \end{split}
        \label{lem:xhat-new:Eq1}
	\end{align}
 where we have also used the fact that $4L_{H}^2 \LGx^2 \beta_{t}^2 \le  \frac{ \mu_F \alpha_{t}}{4}$.
The proof is almost identical to that of Lemma~\ref{lem:xhat} except that we take additional care on the cross term $\EB \spadesuit$.
		
By Lemma~\ref{lem:decom}, we have $\xhat_t - \alpha_t  F(x_t, y_t) = (1-\alpha_t B_1) \xhat_t - \alpha_t R_t^F$ with $\|R_t^F\| \le \SBF (\|\xhat_{t}\|^{1+\Fsmooth} + \|\yhat_t\|^{1+\Fsmooth})$ and $ y_t - y_{t+1} = \yhat_{t}-\yhat_{t+1} = \beta_t (B_2\xhat_{t} + B_3 \yhat_{t} + R_t^G + \psi_t)$ with $\|R_t^G\| \le \SBG (\|\xhat_{t}\|^{1+\Gsmooth} + \|\yhat_t\|^{1+\Gsmooth})$.
		By Assumption~\ref{assump:smoothH}, it follows that
		\begin{align*}
			H(y_{t+1}) - H(y_{t}) &= \nablaH(y_{t}) (y_{t+1}-y_{t}) + R_t^H 
			= \nablaH(y^{\star}) (y_{t+1}-y_{t}) + R_t^{\nablaH} + R_t^H
		\end{align*}
where $\|R_t^H\| \le S_H\|y_{t+1}-y_{t}\|^{1+\Hsmooth}$ and $ R_t^{\nablaH} =  (\nablaH(y_{t}) - \nablaH(y^{\star}))(y_{t+1}-y_{t})$.
Therefore, we have
\begin{align}
	\label{eq:x-corss}
	\EB \spadesuit &= \EB \tr(\spadesuit) = -\tr\left( \EB (H(y_{t+1}) - H(y_{t})) (\xhat_{t}-\alpha_t F(x_{t},y_{t}))^\top  \right)\notag \\
	&\le d_x  \left\| \EB (  \nablaH(y^{\star}) (y_{t+1}-y_{t}) + R_t^{\nablaH} + R_t^H ) ((I-\alpha_t B_1) \xhat_t - \alpha_t R_t^F)^\top \right\| \notag \\
	\begin{split}
			&\le d_x \cdot \bigg[
		\underbrace{\vphantom{\frac{a}{b}} \left\| \nablaH(y^{\star})  \EB (y_{t+1}-y_{t}) \xhat_t^\top (1-\alpha_t B_1)^{\top}  \right\|}_{\spadesuit_1} + 
		\underbrace{\vphantom{\frac{a}{b}} \EB \|R_t^H\|\|\xhat_{t}-\alpha_t F(x_{t},y_{t}) \|}_{\spadesuit_2} \\
		&\qquad + 
		\alpha_t  \underbrace{\vphantom{\frac{a}{b}} \left\| \nablaH(y^{\star})  \EB (y_{t+1}-y_{t})  (R_t^F)^{\top}  \right\|}_{\spadesuit_3} + \underbrace{\vphantom{\frac{a}{b}} \|\EB R_t^{\nablaH}   \left(
		\xhat_{t}-\alpha_t F(x_{t},y_{t}) \right)^\top \|}_{\spadesuit_4}
		\bigg].
	\end{split}
\end{align}
We then analyze the four terms $\{\spadesuit_i\}_{i \in [4]}$ on the right-hand side of the last inequality.
\begin{prop}
	\label{prop:simple}
	For any random variable $X \ge 0$ and real number $a > 0$, it follows that
	\[
	2\EB X^3 \le a\EB X^2 +  \frac{1}{a}\EB X^4.
	\]
\end{prop}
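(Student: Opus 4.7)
The plan is to view $X^3$ as the product $X \cdot X^2$ and apply Cauchy--Schwarz in $L^2$ to obtain the bound $(\EB X^3)^2 \le \EB X^2 \cdot \EB X^4$. Taking square roots then yields $\EB X^3 \le \sqrt{\EB X^2} \cdot \sqrt{\EB X^4}$, which is a geometric-mean-type bound between the second and fourth moments.

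Next, I would convert this geometric mean into a weighted arithmetic mean by invoking Young's inequality (or equivalently AM--GM applied to $a \EB X^2$ and $\frac{1}{a}\EB X^4$): for any $a > 0$,
\[
2\sqrt{\EB X^2 \cdot \EB X^4} \le a \EB X^2 + \frac{1}{a}\EB X^4.
\]
Chaining this with the Cauchy--Schwarz step gives $2\EB X^3 \le a \EB X^2 + \frac{1}{a} \EB X^4$, which is exactly the claim. An alternative route would be to apply Young's inequality pointwise in the form $2 X^3 = 2 \cdot X \cdot X^2 \le a X^2 + \frac{1}{a} X^4$ and then take expectations, avoiding Cauchy--Schwarz entirely; I would mention this as a one-line cross-check.

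There is essentially no obstacle: both steps are textbook inequalities and the nonnegativity of $X$ ensures that $\EB X^3 \ge 0$ so that no absolute values are needed. The only care is to ensure the moments $\EB X^2$ and $\EB X^4$ are well defined, which is implicit since the inequality is trivially true (both sides infinite or one side dominating) if either moment diverges.
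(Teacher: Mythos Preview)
Your proposal is correct. The paper states this proposition without proof, treating it as an elementary fact; your pointwise Young/AM--GM argument $2X^3 = 2X\cdot X^2 \le aX^2 + \tfrac{1}{a}X^4$ followed by expectation is exactly the one-line justification the paper implicitly relies on, and the Cauchy--Schwarz route you give first is an equally valid (if slightly longer) alternative.
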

\begin{itemize}
	\item 
	For the term $\spadesuit_1$, using $\|I-\alpha_t B_1\| \le 1 - \mu_F \alpha_t \le 1$ and $\| \nablaH (y^\star) \| \le L_H$, we have
	\begin{align}
		\label{eq:x-cross-1}
		\spadesuit_1 
		& =	\beta_t	\left\| \EB  \nablaH(y^{\star})  (B_2\xhat_{t} + B_3 \yhat_{t} + R_t^G )  \xhat_t^\top (I-\alpha_t B_1)^{\top}  \right\| \notag \\
		& \le \beta_t L_H \left(
		\LGx \EB \|\xhat_{t}\|^2 + \LGy \|\EB \yhat_{t} \xhat_{t}^\top\| +   \EB\|R_t^G\|\|\xhat_{t}\|
		\right) \notag \\
		& \overset{\eqref{eq:3to4}}{\le} \beta_t L_H \left(
		2\LGx \EB \|\xhat_{t}\|^2 + \LGy \|\EB \yhat_{t} \xhat_{t}^\top\| +   \frac{\SBG^2}{2 \LGx} \left( \EB \|\xhat_{t}\|^{2+2\Gsmooth} + \EB \|\yhat_{t}\|^{2+2\Gsmooth}  \right)
		\right).
	\end{align}
Here the last inequality uses the following result which could be obtained by Proposition~\ref{prop:simple},
\begin{align}
	\label{eq:3to4}
	\EB\|R_t^G\|\|\xhat_{t}\| 
	&\le \LGx \EB\|\xhat_{t}\|^2 + \frac{ \SBG^2 }{2 \LGx } \left( \EB \|\xhat_{t}\|^{2+2\Gsmooth} + \EB \|\yhat_{t}\|^{2+2\Gsmooth}  \right).
\end{align}
	\item For the term $\spadesuit_2$, it follows that
 	\begin{align}
				\label{eq:x-cross-2}
\spadesuit_2
&\le S_H \cdot \EB \|y_{t+1}-y_{t}\|^{1+\Hsmooth} \|\xhat_{t}-\alpha_t F(x_{t},y_{t}) \| \notag \\
&\overset{\eqref{lem:xhat:Eq1a}+\eqref{lem:xhat:Eq1b1}}{\le}  
2 S_H \beta_t^{1+\Hsmooth} \left( 2 \LGx^{1+\Hsmooth} \EB\|\xhat_{t}\|^{2+\Hsmooth} + 2 \LGy^{1+\Hsmooth} \EB\|\yhat_{t}\|^{1+\Hsmooth} \|\xhat_{t}\| + \Gamma_{22}^{\frac{1+\Hsmooth}{2}} \EB\|\xhat_{t}\| \right) \notag \\
&\le  2L_H^{1+\Hsmooth} \LGx^{1+\Hsmooth} \beta_t^{1+\Hsmooth} \EB \|\xhat_{t}\|^2 + \frac{4 S_H^2 \beta_t^{1+\Hsmooth} }{L_H^{1+\Hsmooth} \LGx^{1+\Hsmooth}} \left( \LGx^{2+2\Hsmooth} \EB \|\xhat_{t}\|^{2+2\Hsmooth} +  \LGy^{2+2\Hsmooth} \EB \|\yhat_{t}\|^{2+2\Hsmooth} \right) \notag \\
& \quad \ + \frac{\mu_F \alpha_t}{12 d_x} \EB \| \xhat_t \|^2 + \frac{48 d_x S_H^2 \Gamma_{22}^{1+\Hsmooth}}{\mu_F} \frac{\beta_t^{ 2+2\Hsmooth}}{ \alpha_t} \notag \\
&\le \frac{\mu_F\alpha_t}{6 d_x} \EB \|\xhat_{t}\|^2 + \frac{4 S_H^2 \beta_t^{1+\Hsmooth} }{L_H^{1+\Hsmooth} \LGx^{1+\Hsmooth} } \left( \LGx^{2+2\Hsmooth} \EB \|\xhat_{t}\|^{2+2\Hsmooth} +  \LGy^{2+2\Hsmooth} \EB \|\yhat_{t}\|^{2+2\Hsmooth} \right) \notag \\
& \quad \ + \frac{48 d_x S_H^2 \Gamma_{22}^{1+\Hsmooth}}{\mu_F} \frac{\beta_t^{ 2+2\Hsmooth}}{ \alpha_t} ,
	\end{align}
where the second inequality also use $(a+b)^{1+\delta} \le 2 (a^{1+\delta} + b^{1+\delta})$ for any $a,b \ge 0$ and $\delta \in [0,1]$, the third inequality uses Cauchy–Schwarz inequality,
and the last inequality uses $(L_{H} \LGx \beta_{t})^{1+\Hsmooth}
\le L_{H} \LGx \beta_{t}
\le  \frac{ \mu_F \alpha_{t}}{24d_x} < 1$.
	\item For the term $\spadesuit_3$, it follows that
	\begin{align}
		\label{eq:x-cross-3}
		\spadesuit_3 
		&\le \beta_t L_H \SBF \EB \|G(x_t, y_t)\|  \cdot \left( \|\xhat_{t}\|^{1+\Fsmooth} + \|\yhat_{t}\|^{1+\Fsmooth}   \right)\notag \\
		&\overset{\eqref{lem:xhat:Eq1b1}}{\le}\beta_t L_H \SBF \EB  \left( \LGx \|\xhat_{t}\| + \LGy \|\yhat_{t}\| \right) \cdot \left( \|\xhat_{t}\|^{1+\Fsmooth} + \|\yhat_{t}\|^{1+\Fsmooth}   \right)\notag \\
		&\le   2 \beta_t \SBF L_H (\LGx \wedge \LGy) \left(  \EB \|\xhat_{t}\|^{2+\Fsmooth} + \EB\|\yhat_{t}\|^{2+\Fsmooth} \right).
	\end{align}
	\item For the term $\spadesuit_4$, by the definition of $R_t^{\nabla H}$, it follows that
		\begin{align}
		\label{eq:x-cross-4}
		\spadesuit_4 
		&\overset{ \eqref{assump:H:holder} + \eqref{lem:xhat:Eq1a} }{\le} \beta_t \Hholder \EB \|\yhat_{t}\|^{\Hsmooth} \|G(x_t, y_t)\| \|\xhat_{t}\|  \notag \\
		&\overset{\eqref{lem:xhat:Eq1b1}}{\le}  \beta_t \Hholder \left( \LGx \EB\|\xhat_{t}\|^2\|\yhat_{t}\|^{\Hsmooth} + \LGy \EB\|\yhat_{t}\|^{1+\Hsmooth} \|\xhat_{t}\| \right)  \notag \\
		&\le 2 \beta_t \Hholder (\LGx \wedge \LGy) \left(  \EB \|\xhat_{t}\|^{2+\Hsmooth} + \EB\|\yhat_{t}\|^{2+\Hsmooth} \right),
	\end{align}
    where the last step uses 
Young's inequality.
\end{itemize}

Plugging~\eqref{eq:x-cross-1},~\eqref{eq:x-cross-2},~\eqref{eq:x-cross-3}, and~\eqref{eq:x-cross-4} into~\eqref{eq:x-corss}, we have that
\begin{align}
	\EB \spadesuit 
    & \overset{ \eqref{eq:constantx:new} + \eqref{eq:delta_x} }{\le} \frac{ \mu_F \alpha_t}{4} \EB \| \xhat_t \|^2
    + \frac{ \cde_{x,2} \beta_t }{2}
    \| \EB \xhat_t \yhat_t^\top \|   
    + \frac{\cde_{x,4}}{2} \frac{\beta_t^{2+2\Hsmooth} }{\alpha_t} + \frac{\Delta_{x,t}}{2}, \label{lem:xhat-new:Eq2}
\end{align}
where the inequality also uses $L_H \LGx \beta_t \le \frac{\mu_F \alpha_t}{ 24 d_x }$ and 
we use the constants $\cde_{x,5}$ to $\cde_{x,8}$ defined in \eqref{eq:constantx:new} to hide the problem-dependent coefficients
to yield the expression of $\Delta_{x,t}$ in \eqref{eq:delta_x}.
Moreover, by Proposition~\ref{prop:ensure-linearity}, we have $\cde_{x,5} \propto S_H$.
Then plugging \eqref{lem:xhat-new:Eq2} into \eqref{lem:xhat-new:Eq1} gives
\begin{align*}
    \EB	\|\xhat_{t+1}\|^2 &\overset{\eqref{lem:xhat-new:Eq1}}{\le}  \left(1-\frac{3\mu_{F}\alpha_{t}}{2} \right) \EB \|\xhat_{t}\|^2 +  \cde_{x,1} \beta_{t}^2 \EB \|\yhat_{t}\|^2     +  2\beta_{t}^2L_H^2\Gamma_{22} +  2\alpha_{t}^2\Gamma_{11} + 2 \EB{\spadesuit} \\
    & \overset{ \eqref{eq:constantx:new} + \eqref{lem:xhat-new:Eq2} }{\le} \left( 1 - \mu_F \alpha_t  \right) \EB \| \xhat_t \|^2 + \cde_{x,1} \beta_t^2 \EB \| \yhat_t \|^2 + \cde_{x,2} \beta_t \| \EB \xhat_t \yhat_t^\top \| + 2 \Gamma_{11} \alpha_t^2 \\
    & \qquad + \cde_{x,3} \beta_t^2 + \cde_{x,4} \frac{\beta_t^{2+2\Hsmooth}}{\alpha_t} + \Delta_{x,t}, 
\end{align*}
which is the desired result.
\end{proof}

\subsection{Proof of Lemma~\ref{lem:yhat-new}}
\label{proof:de:yhat-new}

\begin{proof}[Proof of Lemma~\ref{lem:yhat-new}]
The decomposition in~\eqref{lem:yhat:Eq1} implies that
\begin{align}
		\EB \|\yhat_{t+1}\|^2 
  \label{lem:yhat-new:Eq1}
  \begin{split}
  & \overset{ \eqref{lem:yhat:Eq1a} + \eqref{lem:yhat:Eq1b} }{ \le } (1 - \mu_G \beta_t) \EB \| \yhat_{t} \|^2 + \LGx^2 \beta_t^2 \EB \| \xhat_{t} \|^2 + \beta_t^2 \Gamma_{22}  \\
  & \quad \ + 2 \beta_t \EB \langle \yhat_{t} - \beta_{t} G(H(y_{t}),y_{t}), G(H(y_{t}),y_{t}) - G(x_{t},y_{t})   \rangle
\end{split}
\end{align}
The proof is almost identical to that of Lemma~\ref{lem:yhat} except that we take additional care on the last term.

By Lemma~\ref{lem:decom}, we have 
\begin{align*}
\yhat_{t} - \beta_t G(H(y_t), y_t) &= (I-\beta_t B_3)\yhat_{t} - \beta_t R_t^{GH}\\
G(H(y_t), y_t) &= B_3 \yhat_t + R_t^{GH}\\
G(x_t, y_t) &= B_2\xhat_t + B_3\yhat_t + R_t^{G}
\end{align*}

with $\|R_t^{GH}\| \le \SBG \|\yhat_t\|^{1+\Gsmooth}$ and $\|R_t^{G}\| \le \SBG (\|\xhat_{t}\|^{1+\Gsmooth} + \|\yhat_t\|^{1+\Gsmooth})$. 
Therefore, we have
\begin{align}
    & \quad \ \EB \langle \yhat_{t} - \beta_{t} G(H(y_{t}),y_{t}), G(H(y_{t}),y_{t}) - G(x_{t},y_{t})   \rangle \notag \\
    & \le d_y \left\| \EB ( (I-\beta_t B_3)\yhat_{t} - \beta_t R_t^{GH} )
    ( - B_2 \xhat_{t} + R_t^{GH} - R_t^G )^\top
    \right\| \notag \\
    & \le d_y \Big[ (1 - \mu_G \beta_t ) \|B_2 \| \| \EB \yhat_{t} \xhat_{t}^\top \|
    + (1 - \mu_G \beta_t ) \EB \| \yhat_t \|
    (\| R_t^{GH} \| + \| R_t^G \| ) \notag
    \\
    & \quad \ + \beta_t \| B_2 \| \EB \|R_t^{GH} \| \| \xhat_t \|
    + \beta_t \EB \| R_t^{GH} \| 
    (\| R_t^{GH} \| + \| R_t^G \| ) 
    \Big] \notag \\
    \label{lem:yhat-new:Eq2}
    \begin{split}
    & \le d_y \LGx \| \EB \xhat_t \yhat_t^\top \| + \frac{\mu_G}{6} \EB \| \yhat_t \|^2 + \frac{\LGx^2}{2} \beta_t \EB \| \xhat_t \|^2  \\ 
    & \quad \ + \SBG^2 \left( \frac{15 d_y^2}{2 \mu_G} + \frac{d_y^2}{2} \beta_t + 4 d_y \beta_t \right) \left( \EB \| \xhat_t \|^{2+2\Gsmooth} + \EB \| \yhat_t \|^{2+2\Gsmooth} \right),
    \end{split}
\end{align}
where the second inequality uses $\| I - \beta_t B_3 \| \le 1 - \mu_G \beta_t$
and the last inequality uses Cauchy–Schwarz inequality.
Plugging \eqref{lem:yhat-new:Eq2} into \eqref{lem:yhat-new:Eq1} yields
\begin{align*}
    \EB \|\yhat_{t+1}\|^2 
    & \overset{ \eqref{lem:yhat-new:Eq1} }{ \le } (1 - \mu_G \beta_t) \EB \| \yhat_{t} \|^2 + \LGx^2 \beta_t^2 \EB \| \xhat_{t} \|^2 + \beta_t^2 \Gamma_{22}  \\
    & \quad \ + 2 \beta_t \EB \langle \yhat_{t} - \beta_{t} G(H(y_{t}),y_{t}), G(H(y_{t}),y_{t}) - G(x_{t},y_{t})   \rangle \\
    & \overset{ \eqref{lem:yhat-new:Eq2} }{\le}
    \left( 1 - \frac{2 \mu_G \beta_t}{3} \right) \EB \| \yhat_t \|^2 + 2 \LGx^2 \beta_t^2 \EB \| \xhat_t \|^2 + 2 d_y \LGx \beta_t \| \EB \xhat_t \yhat_t^\top \| + \Gamma_{22} \beta_t^2 + \Delta_{y,t},
\end{align*}
where we use $\Delta_{y,t}$ defined in \eqref{eq:delta_y} to collect the remaining terms.
\end{proof}

\subsection{Proof of Lemma~\ref{lem:xyhat}}
\label{proof:de:xyhat}

\begin{proof}[Proof of Lemma~\ref{lem:xyhat}]
We first present the specific forms of the constants:
\begin{equation}
\label{eq:constant-xy}
 \begin{aligned}
 \cde_{xy,1} & = L_H \LGx + \Gamma_{22}^\frac{1+\Hsmooth}{2} S_H + 6 \iota_2 L_H \LGy^2, \
	\cde_{xy,2} = (2+L_H) \Sigma_{22} + 4 \iota_2^{\Hsmooth} \Gamma_{22}^\frac{2+\Hsmooth}{2}, \
 \cde_{xy,3} = S_H \Gamma_{22}^\frac{1+\Hsmooth}{2}, \\
     \cde_{xy,4}
    & = 2 \Hholder (\LGx \wedge \LGy) + 8 \iota_2^\Hsmooth S_H ( \LGx^{1+\Hsmooth} \wedge \LGy^{1+\Hsmooth}) +  16 \iota_2^{1+\Hsmooth} S_H (\LGx^{2+\Hsmooth} + \LGy^{2+\Hsmooth}) \propto S_H.
 \end{aligned}
\end{equation}

	Using the update rules~\eqref{eq:xhat_update} and~\eqref{eq:yhat_update}, we have
\begin{align*}
	\EB&\left[\xhat_{t+1}\yhat_{t+1}^\top \right] 
	=\EB \underbrace{\vphantom{\frac{a}{b}}  \left(   \xhat_{t} -  \alpha_{t}F(x_{t},y_{t})  \right)\left( \yhat_{t}  - \beta_{t}G(H(y_{t}),y_{t}) \right)^\top  }_{\blacklozenge_1} + \alpha_{t}\beta_{t} \EB \left[ \xi_t  \psi_t^\top\right]\\
		& + \EB\underbrace{ \vphantom{\frac{a}{b}}
		 \beta_{t} \left(   \xhat_{t} -  \alpha_{t}F(x_{t},y_{t})  \right)\left(  G(H(y_{t}),y_{t})-G(x_{t},y_{t}) \right)^\top
	}_{\blacklozenge_2} 
+\EB  \underbrace{ \vphantom{\frac{a}{b}} \left(H(y_{t}) - H(y_{t+1})\right)^\top \yhat_{t+1}}_{\blacklozenge_3}.
\end{align*}
It then follows that
\begin{equation}
	\label{eq:xy-eq0}
	\|\EB\xhat_{t+1}\yhat_{t+1}^\top\| \le \|\EB\blacklozenge_1\| + \|\EB\blacklozenge_2\| + \|\EB\blacklozenge_3\| + \alpha_t \beta_t \Sigma_{12} .
\end{equation}
We then analyze $\{\|\EB\blacklozenge_i\|\}_{i \in [3]}$ in the following respectively.

To analyze $\blacklozenge_1$, we make use of Lemma~\ref{lem:decom} and obtain that
\begin{align*}
	\blacklozenge_1 
	&=(1-\alpha_t B_1) \xhat_t\yhat_{t}^\top (1-\beta_t B_3)^\top - \beta_t(1-\alpha_t B_1) \xhat_t \left(R_t^{GH}\right)^\top \\
	&\qquad -\alpha_tR_t^F \yhat_{t}^\top (1-\beta_t B_3)^\top +  \alpha_t \beta_t R_t^F \left( R_t^{GH}\right)^\top.
\end{align*}
Taking expectation and then the spectrum norm on both sides, we have
\begin{align}
\|\EB \blacklozenge_1 \| 
&\le (1-\mu_F\alpha_t)(1-\mu_G\beta_t)\|\EB \xhat_t\yhat_{t}^\top \| 
+  \beta_t (1-\mu_F\alpha_t) \SBG \EB\|\xhat_{t}\| 
\|\yhat_t\|^{1+\Gsmooth} \notag\\
&\qquad + \alpha_t (1-\mu_G\beta_t) \SBF \EB \|\yhat_{t}\| \left(\|\xhat_t\|^{1+\Fsmooth} + \|\yhat_t\|^{1+\Fsmooth} \right) \notag \\
& \qquad +  \alpha_t\beta_t \SBF \SBG \EB\left( \|\xhat_t\|^{1+\Fsmooth} + \|\yhat_t\|^{1+\Fsmooth} \right) \| \yhat_t \|^{1+\Gsmooth} \notag\\
&\le (1-\mu_F\alpha_t)(1-\mu_G\beta_t)\|\EB \xhat_t\yhat_{t}^\top \|   + \Delta_{xy,t}^{(1)},	\label{eq:xy-eq1}
\end{align}
Here we use $\Delta_{xy,t}^{(1)}$ to denote the higher-order residual collecting all the remaining terms for simplicity.
More specifically, it follows that
\begin{align*}
	\Delta_{xy,t}^{(1)} &= 
	 \beta_t (1-\mu_F\alpha_t) \SBG \EB\|\xhat_{t}\| 
  \|\yhat_t\|^{1+\Gsmooth} 
     +  \alpha_t (1-\mu_G\beta_t) \SBF \EB \|\yhat_{t}\| \left(\|\xhat_t\|^{1+\Fsmooth} + \|\yhat_t\|^{1+\Fsmooth} \right) \\
	 &\qquad + \alpha_t\beta_t \SBF \SBG \EB \| \yhat_t \|^{1+\Gsmooth} \left( \|\xhat_t\|^{1+\Fsmooth} + \|\yhat_t\|^{1+\Fsmooth} \right).
\end{align*}
Now we can apply Young's inequality to decouple the cross terms and obtain
\begin{align}\label{eq:delta_xy_1}
    \Delta_{xy,t}^{(1)}
    & \le \beta_t \SBG ( \EB \| \xhat_t \|^{2+\Gsmooth} + \EB \| \yhat_t \|^{2+\Gsmooth} )
    + 2 \alpha_t \SBF (\EB \| \xhat_t \|^{2+\Fsmooth} + \EB \| \yhat_t \|^{2+\Fsmooth}) \notag \\
    & \qquad
    + 2 \alpha_t \beta_t \SBF \SBG  (\EB \| \xhat_t \|^{2 + \Fsmooth + \Gsmooth} + \EB \| \yhat_t \|^{2 + \Fsmooth + \Gsmooth}).
\end{align}

To analyze $\blacklozenge_2$, we have
\begin{align}
		\label{eq:xy-eq2}
\|\blacklozenge_2\| 
&\overset{\eqref{lem:xhat:Eq1a}}{\le}  \beta_t \sqrt{1{-}\mu_F \alpha_{t}}  \|\xhat_{t}\| \cdot \|G(H(y_{t}),y_{t})-G(x_{t},y_{t})\|  
\overset{\eqref{assump:G:smooth}}{\le} \beta_t\sqrt{1{-}\mu_F \alpha_{t}} \cdot \LGx \|\xhat_{t}\|^2.
\end{align}
To analyze $\blacklozenge_3$, we will use the near linearity in Assumption~\ref{assump:near-linear} again.
 By Assumption~\ref{assump:smoothH}, we have that
 \begin{align*}
 		H(y_{t+1}) - H(y_t) &= \nabla H(y_t) (y_{t+1}-y_{t}) + R_t^H 
 		=\nabla H(y^{\star}) (y_{t+1}-y_{t}) + R_t^{\nabla H}   + R_t^H,
 \end{align*}
where $R_t^H$ and $R_t^{\nabla H}$ are defined by
\begin{gather*}
	R_t^H := H(y_{t+1}) - H(y_t) -  \nabla H(y_t) (y_{t+1}-y_{t}), \\
	R_t^{\nabla H} := \left(\nabla H(y_t) -\nabla H(y^{\star}) \right)(y_{t+1}-y_{t}).
\end{gather*}
with $R_t^H$ satisfies  $\|R_t^H\| \le S_H \|y_{t+1}-y_{t}\|^{1+\Hsmooth}$.
Note that $y_{t+1} -y_{t}  = \beta_t (G(x_t, y_t) + \psi_t)$.
Then
\begin{align*}
\|\EB \blacklozenge_3 \|
&\le\beta_t\|\nabla H(y^{\star}) \EB G(x_t, y_t) \yhat_{t}^\top \|+ \beta_t^2 \| \nabla H(y^{\star}) \EB G(x_t, y_t)G(x_t, y_t)^\top\| \\
&\qquad + \beta_t^2\|  \nabla H(y^{\star})  \EB \psi_t \psi_t^\top \| +\|\EB R_t^{\nabla H} \yhat_{t+1}^\top \| + S_H \EB\left[ \|y_{t+1}-y_{t}\|^{1+\Hsmooth} \|\yhat_{t+1}\|\right] \\
&\le \beta_t L_H  
\| \EB \underbrace{ G(x_t, y_t) \yhat_{t}^\top}_{\Diamond_1} \|
+  \beta_t^2L_H ( \EB\underbrace{\| G(x_t, y_t)\|^2}_{\Diamond_2} +  
{\Sigma_{22}} ) \\
&\qquad+\|\EB \underbrace{ R_t^{\nabla H} \yhat_{t+1}^\top }_{\Diamond_3}\| + S_H \EB \underbrace{ \|y_{t+1}-y_{t}\|^{1+\Hsmooth} \|\yhat_{t+1}\|}_{\Diamond_4}.
\end{align*}
To proceed with the proof, we then analyze $\{ \Diamond_i \}_{i \in [4]}$ respectively in the following.

\begin{itemize}
	\item For the term $\Diamond_1$, we use Lemma~\ref{lem:decom} and obtain
	\begin{align*}
	\|\EB \Diamond_1\| 
 & \le \LGx \|\EB \xhat_t\yhat_{t}^\top\| +  \LGy \EB \|\yhat_{t}\|^2  +  \SBG (\EB \|\xhat_t\|^{1+\Gsmooth} \|\yhat_t\| 
	+ \EB \|\yhat_{t}\|^{2+\Gsmooth} ).
	\end{align*}
\item For the term $\Diamond_2$, the inequality~\eqref{lem:xhat:Eq1b1} implies that
\begin{align}
	\EB \Diamond_2 =
	\EB\|G(x_t, y_t)\|^2 \le 2 \LGx^2 \EB\|\xhat_t\|^2+ 2 \LGy^2 \EB\|\yhat_t\|^2.
\end{align}
\item For the term $\Diamond_3$, 
\eqref{assump:smooth:FH:ineqH} and \eqref{assump:H:holder} imply
$\| \nablaH (y_t) - \nablaH (y^\star) \| \le \min \{ \Hholder \| \yhat_t \|^{\Hsmooth}, 2L_H \}$,
it follows
\begin{align*}
\| \EB \Diamond_3 \| 
&\le \beta_t \| \EB \left(\nabla H(y_t) -\nabla H(y^{\star}) \right) G(x_t, y_t) \yhat_{t}^\top \| +  \beta_t^2
\| \EB \left(\nabla H(y_t) -\nabla H(y^{\star}) \right) \psi_t\psi_t^\top\|  \\
&\qquad + \beta_t^2
\| \EB \left(\nabla H(y_t) -\nabla H(y^{\star}) \right) G(x_t, y_t) G(x_t, y_t)^\top\| \\
&\le \beta_t \Hholder \EB \|G(x_t, y_t)\| \| \yhat_{t}\|^{1+\Hsmooth} + 2 L_H \beta_t^2 ( \EB\|G(x_t, y_t)\|^2 + 
{\Sigma_{22}} ) \\
&\overset{\eqref{lem:xhat:Eq1b1}}{\le} \beta_t \Hholder ( \LGx \EB \|\xhat_t\| \|\yhat_t\|^{1+\Hsmooth} + \LGy \EB\|\yhat_t\|^{2+\Hsmooth})  + 2 L_H \beta_t^2 \Sigma_{22} \\
&\qquad + 4 \beta_t^2L_H\left( \LGx^2 \EB\|\xhat_t\|^2 + \LGy^2 \EB\|\yhat_{t}\|^2    \right).
\end{align*}
\item For the term $\Diamond_4$, it follows
\begin{align*}
\EB \Diamond_4
&\le \beta_t^{1+\Hsmooth} \EB\left( \|G(x_t, y_t)\| + \|\psi_t\|\right)^{1+\Hsmooth} \|\yhat_{t}\|  + \beta_t^{2+\Hsmooth} \EB\left( \|G(x_t, y_t)\| + \|\psi_t\|\right)^{2+\Hsmooth} \\
&\le 2 \beta_t^{1+\Hsmooth} 
\EB \|G(x_t, y_t)\|^{1+\Hsmooth} \|\yhat_{t}\|  {+} 2\Gamma_{22}^\frac{1+\Hsmooth}{2} \beta_t^{1+\Hsmooth} \EB \|\yhat_{t}\|   {+} 4 \beta_t^{2+\Hsmooth} \EB\left( \|G(x_t, y_t)\|^{2+\Hsmooth} {+} \|\psi_t\|^{2+\Hsmooth} \right)\\
&\overset{\eqref{lem:xhat:Eq1b1}}{\le} 4\beta_t^{1+\Hsmooth} (\LGx^{1+\Hsmooth} \EB\|\xhat_{t}\|^{1+\Hsmooth} \|\yhat_{t}\| + \LGy^{1+\Hsmooth} \EB\|\yhat_{t}\|^{2+\Hsmooth} )+\Gamma_{22}^\frac{1+\Hsmooth}{2}  (\beta_t \EB \|\yhat_{t}\|^2+ \beta_t^{1+2\Hsmooth} )   \\
&\qquad + 4 \beta_t^{2+\Hsmooth} \left( 4 \LGx^{2+\Hsmooth}
\EB\|\xhat_{t}\|^{2+\Hsmooth} + 4 \LGy^{2+\Hsmooth} \EB\|\yhat_{t}\|^{2+\Hsmooth} + \Gamma_{22}^{\frac{2+\Hsmooth}{2}}
\right),
\end{align*}
where we have used 
$(a+b)^{\gamma} \le 2^{\gamma-1} (a^\gamma+b^{\gamma})$ for any non-negative $a, b$ and $\gamma \ge 1$.
\end{itemize}
Putting these pieces together and noting $\beta_t \le \iota_2$, we have that
\begin{align}
	\label{eq:xy-eq4}
		\|\EB \blacklozenge_3\| &\le \beta_t L_H \LGx \|\EB \xhat_{t}\yhat_{t}^\top\|   + 6 \beta_t^2 L_H \LGx^2 \EB\|\xhat_{t}\|^2 + \beta_t \cde_{xy,1} \EB\|\yhat_{t}\|^2 + \cde_{xy,2} \beta_t^2
        +\cde_{xy,3} \beta_t^{1+2\Hsmooth} + \Delta_{xy,t}^{(2)},
\end{align}
where $\cde_{xy,1}, \cde_{xy,2}, \cde_{xy,3}$ are constants defined in~\eqref{eq:constant-xy} and $\Delta_{xy,t}^{(2)}$ is a higher-order residual covering all the remaining terms in~\eqref{eq:xy-eq4}.
By Young's inequality, we can derive the following upper bound for $\Delta_{xy,t}^{(2)}$
\begin{align}
    \label{eq:delta_xy_2}
    \Delta_{xy,t}^{(2)}
    & \le 2 \beta_t L_H \SBG (\EB \| \xhat_t \|^{2+\Gsmooth} + \EB \| \yhat_t \|^{2+\Gsmooth})
    + 16 \beta_t^{2+\Hsmooth} S_H \LGx^{2+\Hsmooth} \EB \| \xhat_t \|^{2+\Hsmooth} \notag \\
    & \qquad + \beta_t (\Hholder  \LGx + 4 \beta_t^\Hsmooth S_H \LGx^{1+\Hsmooth} ) (\EB \| \xhat_t \|^{2+\Hsmooth} + \EB \| \yhat_t \|^{2+\Hsmooth} ) \notag \\
    & \qquad + \beta_t \left( \Hholder \LGy + 4 \beta_t^\Hsmooth S_H \LGy^{1+\Hsmooth} + 16 \beta_t^{1+\Hsmooth} S_H \LGy^{2+\Hsmooth} \right) \EB \| \yhat_t \|^{2+\Hsmooth} \notag \\
    & \le 2 \beta_t L_H \SBG (\EB \| \xhat_t \|^{2+\Gsmooth} + \EB \| \yhat_t \|^{2+\Gsmooth}) + \beta_t \cde_{xy,4}
    (\EB \| \xhat_t \|^{2+\Hsmooth} + \EB \| \yhat_t \|^{2+\Hsmooth} ),
\end{align}
where $\cde_{xy,4}$ is defined in \eqref{eq:constant-xy}.

Now, we are ready to establish this lemma.
Plugging~\eqref{eq:xy-eq1},~\eqref{eq:xy-eq2} and~\eqref{eq:xy-eq4} into~\eqref{eq:xy-eq0}, we have that
\begin{align*}
	\|\EB\xhat_{t+1}\yhat_{t+1}^\top\| 
	&\le \left(1-\frac{\mu_F\alpha_t}{2}\right)\|\EB \xhat_t\yhat_{t}^\top \|  + \beta_t \left( \LGx \EB \|\xhat_{t}\|^2 + \cde_{xy,1} \EB\|\yhat_{t}\|^2\right) + \Sigma_{12} \alpha_t \beta_t \\
    & \qquad + \cde_{xy,2} \beta_t^2 + \cde_{xy,3} \beta_t^{1+2\Hsmooth} + \Delta_{xy,t},
 \end{align*}
where the inequality uses the following facts
\begin{itemize}
	\item Since $\frac{\beta_t}{\alpha_t} \le \kappa \le \frac{\mu_F}{2L_H \LGx}$, $(1-\mu_F\alpha_t)(1-\mu_G\beta_t) + \beta_t L_H \LGx \le 1-\mu_F\alpha_t +  \beta_t L_H \LGx  \le 1 -\frac{\mu_F\alpha_t}{2}$. 
	\item Since $\frac{\beta_t}{\alpha_t} \le \kappa \le \frac{\mu_F}{12 L_H \LGx}$, $\sqrt{1-\mu_F \alpha_{t}} + 6 \beta_t L_H \LGx \le 1-\frac{\mu_F\alpha_t}{2} + 6 \beta_t L_H \LGx \le 1$.
	\item 
 Combining \eqref{eq:delta_xy_1} and \eqref{eq:delta_xy_2}, we have
 \begin{align*}
     \Delta_{xy,t}^{(1)} + \Delta_{xy,t}^{(2)}
     & \le 2 \alpha_t \SBF (\EB \| \xhat_t \|^{2+\Fsmooth} + \EB \| \yhat_t \|^{2+\Fsmooth}) + \beta_t \SBG (1+2L_H) ( \EB \| \xhat_t \|^{2+\Gsmooth} + \EB \| \yhat_t \|^{2+\Gsmooth} )  \notag \\
    & \quad
    {+} \beta_t \cde_{xy,4}
    (\EB \| \xhat_t \|^{2+\Hsmooth} {+} \EB \| \yhat_t \|^{2+\Hsmooth} ) {+} 2 \alpha_t \beta_t \SBF \SBG (\EB \| \xhat_t \|^{2 + \Fsmooth + \Gsmooth} {+} \EB \| \yhat_t \|^{2 + \Fsmooth + \Gsmooth}) \\
    & =: \Delta_{xy,t}.
 \end{align*}
\end{itemize}
We complete the proof.
\end{proof}

\subsection{Proof of Lemma~\ref{lem:xhat-quartic}}
\label{proof:de:xhat-quartic}

\begin{proof}[Proof of Lemma~\ref{lem:xhat-quartic}]
We first present the specific forms of the constants:
\begin{equation}
\label{eq:constantx-quartic}
\begin{aligned}
    \cde_{xx,1} = 24 L_H \LGy, \ 
    \cde_{xx,2} = 40 L_H^2 \LGy^2, \ 
    \cde_{xx,3} = 1280 L_H^4 \LGy^4,\ 
    \cde_{xx,4} 
    = \frac{64 S_H^2 \Gamma_{22}^{1+\Hsmooth}}{\mu_F}.
\end{aligned}
\end{equation}

From the proof of Lemma~\ref{lem:xhat}, we have
\begin{align*}
\begin{split}
    \|\xhat_{t+1}\|^2 & \overset{ \eqref{lem:xhat:Eq1} }{=} \underbrace{ \|\xhat_{t}  - \alpha_{t}F(x_{t},y_{t})\|^2 }_{\CM_1} + 
    \underbrace{ \left\|H(y_{t}) - H(y_{t+1}) - \alpha_{t}\xi_{t}\right\|^2 }_{\CM_2} \\
    &\quad \ + \underbrace{ 2 \langle\xhat_{t}-\alpha_t F(x_{t},y_{t}), H(y_{t}) - H(y_{t+1}) \rangle }_{\CM_3} + \underbrace{ 2\alpha_{t} \langle \xhat_{t} - \alpha_{t}F(x_{t},y_{t}), - \xi_{t}\rangle }_{\CM_4}. 
\end{split}
\end{align*}
Taking the square of both sides yields
\begin{align}
    \| \xhat_{t+1} \|^4
    & \le \CM_1^2 + 4 \CM_2^2 + 4 \CM_3^2 + 4 \CM_4^2 + 2 \CM_1 (\CM_2 + \CM_3 + \CM_4), \label{lem:xhat-quartic:Eq1}
\end{align}
where the last inequality is due to $(a + b + c)^2 \le 3 (a^2 + b^2 + c^2)$ for any $a, b \in \RB$.
We then analyze these terms respectively in the following.
\begin{itemize}
\item For $\CM_1^2$, we have
\begin{align}
    \CM_1^2
    & \overset{ \eqref{lem:xhat:Eq1a}}{\le} 
    \left( 1 - \frac{7 \mu_F \alpha_t}{2} + \frac{ 49 \mu_F^2 \alpha_t^2 }{16} \right) 
    \| \xhat_t \|^4.
    \label{lem:xhat-quartic:Eq1-11}
\end{align}

\item For $\CM_2^2$, taking the expectation w.r.t. $\FM_t$, we have
\begin{align}
    \EB \left[ \CM_2^2 \,|\, \FM_t \right]
    & \overset{ \eqref{assump:smooth:FH:ineqH} }{\le} 8 \left( L_H^4 \beta_t^4 \EB \left[ \| G(x_t, y_t) - \psi_t \|^4 \,|\, \FM_t \right] + \alpha_t^4 \Gamma_{11}^2 \right) \notag \\
    & \overset{ \eqref{lem:xhat-quartic:Eq-22b} }{\le} 8 \left( 5 L_H^4 \beta_t^4 \| G(x_t, y_t) \|^4 + 7 L_H^4 \beta_t^4 \Gamma_{22}^2 + \alpha_t^4 \Gamma_{11}^2 \right),
    \label{lem:xhat-quartic:Eq1-22a}
\end{align}
where the first inequality also uses $(a+b)^4 \le 8 (a^4 + b^4)$ for any $a,b \in \RB$ and the last inequality uses \eqref{lem:xhat-quartic:Eq-22b} below
\begin{align}
\label{lem:xhat-quartic:Eq-22b}
    \EB \left[ \| G(x_t, y_t) - \psi_t \|^4 \right]
    \le 5 \| G(x_t, y_t) \|^4 + 7 \Gamma_{22}^2.
\end{align}
To derive \eqref{lem:xhat-quartic:Eq-22b}, we first 
notice that
\begin{align*}
    \| G(x_t, y_t) - \psi_t \|^4
    & \le \| G(x_t, y_t) \|^4 + 6 \| G(x_t, y_t) \|^2 \| \psi_t \|^2 + 4 \| G(x_t, y_t) \| \| \psi_t \|^3 + \| \psi_t \|^4  \\
    & \quad \ - 4 \| G(x_t, y_t) \|^2 \inner{G(x_t, y_t)}{\psi_t} \\
    & \le 5 \| G(x_t, y_t) \|^4 + 7 \| \psi_t \|^4 - 4 \| G(x_t, y_t) \|^2 \inner{G(x_t, y_t)}{\psi_t},
\end{align*}
where the last inequality follows from Young's inequality.
Taking the conditional expectation gives \eqref{lem:xhat-quartic:Eq-22b}.

Then we plug \eqref{lem:xhat:Eq1b1} into \eqref{lem:xhat-quartic:Eq1-22a} and obtain
\begin{align}
    \EB \left[ \CM_2^2 \,|\, \FM_t \right]
    & \le 320 L_H^4 \LGx^4 \beta_t^4 \| \xhat_t \|^4 + 320 L_H^4 \LGy^4 \beta_t^4 \| \yhat_t \|^4 + 56 L_H^4 \beta_t^4 \Gamma_{22}^2 + 8 \alpha_t^4 \Gamma_{11}^2,
    \label{lem:xhat-quartic:Eq1-22}
\end{align}
where the inequality also uses $(a+b)^4 \le 8 (a^4 + b^4)$ for any $a,b \in \RB$.

\item For $\CM_3^2$, taking the expectation w.r.t. $\FM_t$, we have
\begin{align}
    \EB \left[ \CM_3^2 \,|\, \FM_t \right] 
    & \overset{ \eqref{assump:smooth:FH:ineqH} + \eqref{lem:xhat:Eq1a} }{\le} 4 L_H^2 \| \xhat_t \|^2 \EB \left[ \| y_t - y_{t+1} \|^2 \,|\, \FM_t \right] \notag \\
    & \overset{\eqref{lem:xhat:Eq1b1}}{\le} 4 L_H^2 \beta_t^2 \| \xhat_t \|^2 \left( 2 \LGx^2 \| \xhat_t \|^2 + 2 \LGy^2 \| \yhat_t \|^2 + \Gamma_{22} \right) \notag \\
    & = 8 L_H^2 \LGx^2 \beta_t^2 \| \xhat_t \|^4 + 8 L_H^2 \LGy^2  \beta_t^2 \| \xhat_t \|^2 \| \yhat_t \|^2 + 4 L_H^2 \Gamma_{22} \beta_t^2 \| \xhat_t \|^2.     \label{lem:xhat-quartic:Eq1-33}
\end{align}

\item For $\CM_4^2$, taking the expectation w.r.t. $\FM_t$, we have
\begin{align}
    \EB \left[ \CM_4^2 \,|\, \FM_t \right]
    \le 4 \alpha_t^2 \| \xhat_t - \alpha_t F(x_t, y_t) \|^2 \EB \left[ \| \xi_t \|^2 \,|\, \FM_t \right]
    \overset{\eqref{lem:xhat:Eq1a}}{\le} 4 \alpha_t^2 \Gamma_{11} \| \xhat_t \|^2.
    \label{lem:xhat-quartic:Eq1-44}
\end{align}

\item For $\CM_1 \CM_2$, taking the expectation w.r.t. $\FM_t$, we have
\begin{align}
    \EB \left[ \CM_1 \CM_2 \,|\, \FM_t \right]
    & = \CM_1 \EB \left[ \CM_2 \,|\, \FM_t \right] \notag \\
    & \overset{ \eqref{lem:xhat:Eq1a} + \eqref{lem:xhat:Eq1b} }{\le} \| \xhat_t \|^2 \left( 4L_{H}^2 \LGx^2 \beta_{t}^2\|\xhat_{t}\|^2 + 4 L_{H}^2 \LGy^2 \beta_{t}^2\|\yhat_{t}\|^2     +  2\beta_{t}^2L_H^2\Gamma_{22} +  2\alpha_{t}^2\Gamma_{11}\right) \notag \\
    & \le 4 L_H^2 \LGx^2 \beta_t^2 \| \xhat_t \|^4 + 4 L_H^2 \LGy^2 \beta_t^2 \| \xhat_t \|^2 \| \yhat_t \|^2 + 2 L_H^2 \Gamma_{22} \beta_t^2 \| \xhat_t \|^2 + 2 \Gamma_{11} \alpha_t^2 \| \xhat_t \|^2.
    \label{lem:xhat-quartic:Eq1-12}
\end{align}

\item For $\CM_1 \CM_3$, taking the expectation w.r.t. $\FM_t$, we have
\begin{align}
    \EB \left[ \CM_1 \CM_3 \,|\, \FM_t \right]
    & = \CM_1 \EB \left[ \CM_3 \,|\, \FM_t \right] \notag \\
    & \overset{ \eqref{lem:xhat:Eq1a} + \eqref{lem:xhat:Eq1c} }{\le} \| \xhat_t \|^2
    \left( { \mu_F \alpha_{t}}\|\xhat_{t}\|^2 +  14 L_H \LGy \beta_t  \|\xhat_{t}\| \|\yhat_{t}\| +  \frac{32 S_H^2 \Gamma_{22}^{1+\Hsmooth}}{\mu_F} \frac{\beta_t^{2+\Hsmooth} }{\alpha_t} \right) \notag \\
    & \le {\mu_F \alpha_t} \| \xhat_t \|^4 + 14 L_H \LGy \beta_t \| \xhat_t \|^3 \| \yhat_t \| + \frac{32 S_H^2 \Gamma_{22}^{1+\Hsmooth}}{\mu_F} \frac{\beta_t^{2+2\Hsmooth}}{\alpha_t} \| \xhat_t \|^2.
    \label{lem:xhat-quartic:Eq1-13}
\end{align}

\item For $\CM_1 \CM_4$, taking the expectation w.r.t. $\FM_t$, we have
\begin{align}
    \EB \left[ \CM_1 \CM_4 \,|\, \FM_t \right]
    & = \CM_1 \EB \left[ \CM_4 \,|\, \FM_t \right]  = 0.
    \label{lem:xhat-quartic:Eq1-14}
\end{align}
\end{itemize}
Moreover, since $\alpha_t \le \frac{1}{12 \mu_F}$ and $\frac{\beta_t^2}{\alpha_t} \le \frac{\mu_F}{200 L_H^2 \LGx^2}$, we have
\begin{align*}
     1 - \frac{3 \mu_F \alpha_t}{2} + \frac{49 \mu_F^2 \alpha_t^2}{16} + 40 L_H^2 \LGx^2 \beta_t^2 + 1280 L_H^4 \LGx^4 \beta_t^4 
     \le 1 - 
     {\mu_F \alpha_t}.
\end{align*}
Taking the expectation on both sides of \eqref{lem:xhat-quartic:Eq1} w.r.t. $\FM_t$ and
plugging \eqref{lem:xhat-quartic:Eq1-11}, \eqref{lem:xhat-quartic:Eq1-22}
to \eqref{lem:xhat-quartic:Eq1-14} into it,
together with the above inequality and the definition of $\{ c_{xx,i} \}_{i \in [4]}$ in \eqref{eq:constantx-quartic},
we obtain
\eqref{lem:xhat-quartic:Ineq}.
\end{proof}

\subsection{Proof of Lemma~\ref{lem:yhat-quartic}}
\label{proof:de:yhat-quartic}

\begin{proof}[Proof of Lemma~\ref{lem:yhat-quartic}]
From the proof of Lemma~\ref{lem:yhat}, we have
\begin{align*}
    \|\yhat_{t+1}\|^2 &
    \overset{ \eqref{lem:yhat:Eq1} }{=} \underbrace{ \|\yhat_{t} - \beta_{t}G(H(y_{t}),y_{t})\|^2 }_{\DM_1} + \underbrace{ \left\|\beta_{t} \left(G(H(y_{t}),y_{t}) - G(x_{t},y_{t})\right)  - \beta_{t}\psi_{t}\right\|^2 }_{\DM_2} \notag\\
    & \quad \ + \underbrace{ 2\beta_{t} \langle \yhat_{t} - \beta_{t} G(H(y_{t}),y_{t}), G(H(y_{t}),y_{t}) - G(x_{t},y_{t})   \rangle }_{\DM_3} 
    + \underbrace{ 2\beta_{t} \langle \yhat_{t} - \beta_{t}G(H(y_{t}),y_{t}),- \psi_{t}  \rangle }_{\DM_4}. 
\end{align*}
Taking the square of both sides yields
\begin{align}
    \| \yhat_{t+1} \|^4
    & \le \DM_1^2 + 4 \DM_2^2 + 4 \DM_3^2 + 4 \DM_4^2 + 2 \DM_1 (\DM_2 + \DM_3 + \DM_4), 
    \label{lem:yhat-quartic:Eq1}
\end{align}
where the last inequality is due to $(a + b + c)^2 \le 3 (a^2 + b^2 + c^2)$ for any $a, b \in \RB$.
We then analyze these terms respectively in the following.
\begin{itemize}
\item For $\DM_1^2$, we have
\begin{align}
\label{lem:yhat-quartic:Eq1-11}
    \DM_1^2 \overset{\eqref{lem:yhat:Eq1a}}{\le}
    (1 - 2 \mu_G \beta_t + \mu_G^2 \beta_t^2 ) \| \yhat_t \|^4
    \le \left( 1 - \frac{3 \mu_G \beta_t }{2} \right) \| \yhat_t \|^4,
\end{align}
where the last inequality is due to $\beta_t 
\le  \frac{1}{2 \mu_G}$.

\item For $\DM_2^2$, taking the expectation w.r.t. $\FM_t$, we have
\begin{align*}
    \EB \left[ \DM_2^2 \,|\, \FM_t \right]
    = \beta_t^4 \EB \left[ \| G( H(y_t), y_t ) - G(x_t, y_t) - \psi_t \|^4 \,|\, \FM_t \right].
\end{align*}
Similar to the derivation of \eqref{lem:xhat-quartic:Eq-22b}, we can obtain
\begin{align*}
    \EB \left[ \| G(H(y_t), y_t) - G(x_t, y_t) - \psi_t \|^4 \right]
    \le 5 \| G(x_t, y_t) - G(H(y_t), y_t) \|^4 + 7 \Gamma_{22}^2.
\end{align*}
It follows that
\begin{align}
\label{lem:yhat-quartic:Eq1-22}
    \EB \left[ \DM_2^2 \,|\, \FM_t \right]
    & \le 5 \beta_t^4 \| G(x_t, y_t) - G(H(y_t), y_t) \|^4 + 7 \beta_t^4 \Gamma_{22}^2 
    \overset{ \eqref{assump:G:smooth} }{\le} 5 \LGx^4 \beta_t^4  \| \xhat_t \|^4 + 7 \beta_t^4 \Gamma_{22}^2.
\end{align}

\item For $\DM_3^2$, we have
\begin{align}
\label{lem:yhat-quartic:Eq1-33}
    \DM_3^2
    & \le 4 \beta_t^2 \| \yhat_t - \beta_t G(H(y_t), y_t) \|^2 \| G(H(y_t), y_t) - G(x_t, y_t) \|^2 
    \overset{ \eqref{assump:G:smooth} + \eqref{lem:yhat:Eq1a} }{\le} 4 \LGx^2 \beta_t^2 \| \yhat_t \|^2 \| \xhat_t \|^2.
\end{align}

\item For $\DM_4^2$, taking the expectation w.r.t. $\FM_t$, we have
\begin{align}
\label{lem:yhat-quartic:Eq1-44}
    \EB \left[ \DM_4^2 \,|\, \FM_t \right]
    \le 4 \beta_t^2 \| \yhat_t - \beta_t G(H(y_t), y_t) \|^2 \EB \left[ \| \psi_t \|^2 \,|\, \FM_t \right]
    \overset{ \eqref{lem:yhat:Eq1a} }{\le} 4 \Gamma_{22} \beta_t^2 \| \yhat_t \|^2.
\end{align}

\item For $\DM_1 \DM_2$, taking the expectation w.r.t. $\FM_t$, we have
\begin{align}
\label{lem:yhat-quartic:Eq1-12}
    \EB \left[ \DM_1 \DM_2 \,|\, \FM_t \right]
    = \DM_1 \EB \left[ \DM_2 \,|\, \FM_t \right]
    \overset{ \eqref{lem:yhat:Eq1a} + \eqref{lem:yhat:Eq1b} }{\le} \LGx^2\beta_{t}^2\|\xhat_{t}\|^2 \| \yhat_t \|^2 + \Gamma_{22} \beta_{t}^2 \| \yhat_t \|^2. 
\end{align}

\item For $\DM_1 \DM_3$, taking the expectation w.r.t. $\FM_t$, we have
\begin{align}
\label{lem:yhat-quartic:Eq1-13}
    \EB \left[ \DM_1 \DM_3 \,|\, \FM_t \right]
    = \DM_1 \EB \left[ \DM_3 \,|\, \FM_t \right]
    \overset{ \eqref{lem:yhat:Eq1a} + \eqref{lem:yhat:Eq1c} }{\le} 
    2 \LGx \beta_t \|\xhat_{t}\| \|\yhat_{t}\|^3.
\end{align}

\item For $\DM_1 \DM_4$, taking the expectation w.r.t. $\FM_t$, we have
\begin{align}
\label{lem:yhat-quartic:Eq1-14}
    \EB \left[ \DM_1 \DM_4 \,|\, \FM_t \right]
    = \DM_1 \EB \left[ \DM_4 \,|\, \FM_t \right] = 0.
\end{align}
\end{itemize}
Taking the expectation on both sides of \eqref{lem:yhat-quartic:Eq1} w.r.t. $\FM_t$ and
plugging \eqref{lem:yhat-quartic:Eq1-11}
to \eqref{lem:yhat-quartic:Eq1-14} into it yields \eqref{lem:yhat-quartic:Ineq}.
\end{proof}

\subsection{Proof of Lemma~\ref{lem:x+y_quartic}}
\label{proof:de:x+y-quartic}

\begin{proof}[Proof of Lemma~\ref{lem:x+y_quartic}]
To characterize the $L_4$-convergence rate, we 
define the Lyapunov function 
$
    V_{t} = \varrho_3  \frac{\beta_t}{\alpha_t} \| \xhat_t \|^4 + \| \yhat_t \|^4
    \text{ with }
    \varrho_3 = \frac{54 \LGx^4}{\mu_F \mu_G^3}.
    $

\noindent
\textbf{Derive the one-step descent.}
We first employ Lemmas~\ref{lem:xhat-quartic} and \ref{lem:yhat-quartic} to derive the one-step descent of $V_t$.
Since $\frac{\beta_t}{\alpha_t} \le \kappa$, we have 
\begin{align}
\label{lem:x+y_quartic:Eq0}
\begin{split}
20 \Gamma_{11} \alpha_t^2 + 20 L_H^2 \Gamma_{22} \beta_t^2
& \le 
\cde_{xx,5} \alpha_t^2, \quad
32 \alpha_t^4 \Gamma_{11}^2 + 224 L_H^4 \beta_t^4 \Gamma_{22}^2
\le 
\cde_{xx,6} \alpha_t^4.
\end{split}
\end{align}
with
\begin{align*}
    \cde_{xx,5}
    = 20 \Gamma_{11} + 20 L_H \Gamma_{22} \kappa^2, \ 
    \cde_{xx,6}
    = 32 \Gamma_{11}^2 + 224 L_H^4 \Gamma_{22}^2 \kappa^4.
\end{align*}
As a result of $\frac{\beta_{t+1}}{\alpha_{t+1}} \le \frac{\beta_t}{\alpha_t}$,
we have
\begin{align}
    \begin{split}
    \EB \left[ V_{t+1} \,|\, \FM_t \right] & \overset{ \eqref{lem:xhat-quartic:Ineq} + \eqref{lem:yhat-quartic:Ineq} }{ \le} V_t 
    - \underbrace{ \varrho_3 \mu_F \beta_t \| \xhat_t \|^4 \vphantom{\frac{\beta_t^3}{\alpha_t}} }_{\EM_0}
    - \frac{3 \mu_G \beta_t}{2} \| \yhat_t \|^4
    + \underbrace{ 4 \LGx \beta_t \| \xhat_t \| \| \yhat_t \|^3 \vphantom{\frac{\beta_t^2}{\alpha_t}}}_{\EM_1}
    + \underbrace{ \varrho_3 \cde_{xx,1} \frac{\beta_t^2}{\alpha_t}  \| \xhat_t \|^3 \| \yhat_t \| }_{\EM_2}  \\
    & \qquad + \underbrace{ 18 \LGx^2 \beta_t^2 \| \xhat_t \|^2 \| \yhat_t \|^2 \vphantom{\frac{\beta_t^3}{\alpha_t}} }_{\EM_3}
    + \underbrace{ \varrho_3 \cde_{xx,2} \frac{\beta_t^3}{\alpha_t} \| \xhat_t \|^2 \| \yhat_t \|^2 }_{\EM_4}
    + \underbrace{ 20 \LGx^4 \beta_t^4 \| \xhat_t \|^4 \vphantom{\frac{\beta_t^3}{\alpha_t}} }_{\EM_5}
    + \underbrace{ \varrho_{3} \cde_{xx,3} \frac{\beta_t^5}{\alpha_t} \| \yhat_t \|^4 }_{\EM_6}  \\
    & \qquad + \varrho_3 \cde_{xx,4} \frac{\beta_t^{3+2\Hsmooth}}{\alpha_t^2} \| \xhat_t \|^2
    + \varrho_3 \cde_{xx,5} \alpha_t \beta_t 
    \| \xhat_t \|^2 + 18 \Gamma_{22} \beta_t^2 \| \yhat_t \|^2
    + \varrho_3 \cde_{xx,6} \alpha_t^3 \beta_t 
    + 28 \Gamma_{22}^2 \beta_t^4.
    \end{split}
    \label{lem:x+y_quartic:Eq1}
\end{align}
We then analyze $\EM_0$ to $\EM_6$ respectively in the following.
\begin{itemize}
\item For $\EM_0$, we have $\frac{\beta_t}{\alpha_t} 
\le \frac{\mu_F}{5 \mu_G}$.
It follows that
\begin{align}
\label{lem:x+y_quartic:Eq1-0}
    \EM_0 
    \ge \frac{19}{20} 
    \varrho_3 \mu_F \beta_t \| \xhat_t \|^4 + \frac{ 
    \varrho_3 \mu_G \beta_t^2}{4 \alpha_t} \| \xhat_t \|^4.
\end{align}

\item For $\EM_1$, by Young's inequality, we have
\begin{align}
\label{lem:x+y_quartic:Eq1-1}
    \EM_1 
    & \le \frac{\LGx^4 \beta_t  }{ \lambda_1^4 \mu_G^3 } \| \xhat_t \|^4
    + 3 \lambda_1^{4/3} \mu_G \beta_t \| \yhat_t \|^4
    \le \frac{ \varrho_3 \mu_F \beta_t }{2} \| \xhat_t \|^4 + \mu_G \beta_t \| \yhat_t \|^4,
\end{align}
where the last inequality is by setting $\lambda_1^{4/3} = 1/3$.

\item For $\EM_2$, by Young's inequality, we have
\begin{align*}
    \EM_2 
    & = \varrho_3 \left( \beta_t^{3/4} \lambda_2 \| \xhat_t \|^3 \right) \left( c_{xx,1} \lambda_2^{-1} \beta_t^{5/4} \alpha_t^{-1} \| \yhat_t \| \right) \notag 
    \le \frac{3 \lambda_2^{4/3} \varrho_3 \beta_t }{4} \| \xhat_t \|^4 + \frac{ \varrho_3 (\cde_{xx,1})^4 \beta_t^5 }{4 \lambda_2^4 \alpha_t^4} \| \yhat_t \|^4.
\end{align*}
Since $\frac{\beta_t}{\alpha_t} \le \frac{\mu_F \mu_G}{200 L_H \LGx \LGy  
}$, 
we have
$\frac{\beta_t^4}{\alpha_t^4}
= \frac{\mu_F^4 \mu_G^4}{ 200^4 L_H^4 \LGx^4 \LGy^4 }
\le \frac{ 54 \cdot 24^4 \mu_F^3 \mu_G }{200^4 \varrho_3 (\cde_{xx,1})^4 }$.
Then setting $\lambda_2^{4/3} = \mu_F / 4$ yields
\begin{align}
\label{lem:x+y_quartic:Eq1-2}
    \EM_2
    \le \frac{ 3 \varrho_3 \mu_F \beta_t }{16} \| \xhat_t \|^4 + \frac{4^2 \cdot 54 \cdot 24^4 \mu_G \beta_t }{200^4} \| \yhat_t \|^4
    \le \frac{ 3 \varrho_3 \mu_F \beta_t }{16} \| \xhat_t \|^4 + 0.18 \mu_G \beta_t
    \| \yhat_t \|^4.
\end{align}

\item For $\EM_3$, by Cauchy-Schwarz inequality, we have
$
    \EM_3 
    \le \frac{\varrho_3 \mu_F \beta_t}{8} \| \xhat_t \|^4 + \frac{8 \cdot 81 \LGx^4 \beta_t^3 }{\mu_F \varrho_3} \| \yhat_t \|^4. 
    $
Since $\beta_t \le 
\frac{1}{14 \mu_G}$, 
we have 
$\beta_t^2 \le \frac{1}{14^2 \mu_G^2} = \frac{\varrho_3 \mu_F \mu_G}{14^2 \cdot 54 \LGx^4 }$.
It follows that
\begin{align}
\label{lem:x+y_quartic:Eq1-3}
    \EM_3 
    \le \frac{\varrho_3 \mu_F \beta_t}{8} \| \xhat_t \|^4 + \frac{8 \cdot 81\mu_G \beta_t}{14^2 \cdot 54}  \| \yhat_t \|^4
    \le \frac{\varrho_3 \mu_F \beta_t}{8} \| \xhat_t \|^4 + 0.062 \mu_G \beta_t
    \| \yhat_t \|^4.
\end{align}

\item For $\EM_4$, by Cauchy-Schwarz inequality, we have
$
    \EM_4 
    \le \frac{ \varrho_3 \mu_F \beta_t }{8} \| \xhat_t \|^4
    + \frac{2 \varrho_3 (\cde_{xx,2})^2 \beta_t^5 }{\mu_F \alpha_t^2} \| \yhat_t \|^4.
    $
Since $\alpha_t \le \frac{1}{12 \mu_F}$ 
and $\frac{\beta_t}{\alpha_t} \le \frac{\mu_F \mu_G}{200 L_H \LGx \LGy}$, we have
$ \frac{\beta_t^4}{\alpha_t^2}
\le \frac{ \mu_F^2 \mu_G^4 }{ 12^2 \cdot 200^4 L_H^4 \LGx^4 \LGy^4 }
\le \frac{54 \cdot 40^2 \mu_F \mu_G }{ 12^2 \cdot 200^4 \varrho_3 (\cde_{xx,2})^2 } $.
It follows that
\begin{align}
\label{lem:x+y_quartic:Eq1-4}
    \EM_4 
    \le \frac{ \varrho_3 \mu_F \beta_t }{8} \| \xhat_t \|^4
    + \frac{108 \cdot 40^2 \mu_G \beta_t }{12^2 \cdot 200^4} \| \yhat_t \|^4
    \le \frac{ \varrho_3 \mu_F \beta_t }{8} \| \xhat_t \|^4
    + 0.001 \mu_G \beta_t
    \| \yhat_t \|^4.
\end{align}

\item For $\EM_5$, since $\beta_t \le 
\frac{1}{14 \mu_G }$, we have
$\beta_t^3 \le \frac{1}{14^3 \mu_G^3} \le \frac{ \varrho_3 \mu_F }{ 14^3 \cdot 54 \LGx^4 }$. It follows that
\begin{align}
\label{lem:x+y_quartic:Eq1-5}
    \EM_5 \le 0.001 { \varrho_3 \mu_F \beta_t} \| \xhat_t \|^4.
\end{align}

\item For $\EM_6$, since 
$\alpha_t \le \frac{1}{12 \mu_F}$
and $\frac{\beta_t}{\alpha_t} \le \frac{\mu_F \mu_G}{ 200 L_H \LGx \LGy }$, we have
$\frac{\beta_t^4}{\alpha_t}
\le \frac{ \mu_F \mu_G^4 }{ 12^3 \cdot 200^4 L_H^4 \LGx^4 \LGy^4}
\le \frac{54 \cdot 1280 \mu_G}{ 12^3 \cdot 200^4 \varrho_3 \cde_{xx,3} }
\le 0.001
\frac{ \mu_G}{ \varrho_3 \cde_{xx,3}} $.
It follows that
\begin{align}
\label{lem:x+y_quartic:Eq1-6}
    \EM_6 \le 0.001
    \mu_G \beta_t
    \| \yhat_t \|^4.
\end{align}

\end{itemize}

Plugging \eqref{lem:x+y_quartic:Eq1-0} to \eqref{lem:x+y_quartic:Eq1-6} into \eqref{lem:x+y_quartic:Eq1} 
and taking the expectation yields
\begin{align}
    \EB 
    V_{t+1} 
    & \le \left( 1 - \frac{\mu_G \beta_t}{4} \right) \EB V_{t} + \left( \varrho_3 \cde_{xx,4} \frac{\beta_t^{3+2\Hsmooth}}{\alpha_t^2} + \varrho_3 \cde_{xx,5} \alpha_t \beta_t + 18 \Gamma_{22} \beta_t^2 \right) \left( \EB \| \xhat_t \|^2 + \EB \| \yhat_t \|^2 \right) \notag \\
    & \quad \ + \varrho_3 \cde_{xx,6}  \alpha_t^3 \beta_t  + 28 \Gamma_{22}^2 \beta_t^4. \label{lem:x+y_quartic:Eq2-origin}
\end{align}
Since we assume $\prod_{\tau=0}^t\left(1-\frac{\mu_{G}\beta_{\tau}}{4} \right) = \OM(\alpha_t)$, then 
Theorem~\ref{thm:first} with the definitions of $c_{x,7}$ in \eqref{eq:constantx2} and $c_{y,2}$ in \eqref{eq:constanty} implies
\begin{equation}
\label{lem:x+y_quartic:Eq2.05}
\begin{aligned}
    \EB \| \xhat_t \|^2 + \EB \| \yhat_t \|^2 
    \le c_+  \alpha_t + c_- \frac{\beta_t^{2+2\Hsmooth}}{\alpha_t^2} \text{ with } c_- \propto S_H^2 \Gamma_{22}^{1+\Hsmooth}.
\end{aligned}
\end{equation}
Also note that $\frac{\beta_t}{\alpha_t} \le \kappa$.
Then we have
\begin{align}
    & \quad \ \left( \varrho_3 \cde_{xx,4} \frac{\beta_t^{3+2\Hsmooth}}{\alpha_t^2} + \varrho_3 \cde_{xx,5} \alpha_t \beta_t + 18 \Gamma_{22} \beta_t^2 \right) \left( \EB \| \xhat_t \|^2 + \EB \| \yhat_t \|^2 \right) \notag \\
    & \le \beta_t \left[ \varrho_3 \cde_{xx,4} \frac{\beta_t^{2+2\Hsmooth}}{\alpha_t^2} + (\varrho_3 \cde_{xx,5} + 18 \Gamma_{22} \kappa) \alpha_t \right]  \left( c_+ \alpha_t + c_- \frac{\beta_t^{2+2\Hsmooth}}{\alpha_t^2} \right). 
    \label{lem:x+y_quartic:Eq2.1}
\end{align}
Plugging \eqref{lem:x+y_quartic:Eq2.1} into \eqref{lem:x+y_quartic:Eq2-origin} and using $\alpha_t \le \iota_1$, $\beta_t \le \iota_2$, $\frac{\beta_t}{\alpha_t} \le \kappa$ and $(c_1 a + c_2 b ) (c_3 a + c_4 b) \le c_1 c_3 a^2 + c_2 c_4 b^2 + (c_2 c_3 + c_1 c_4) (a^2 + b^2)/2$ for $a, b, c_1, c_2, c_3, c_4 \ge 0$ , we obtain
\begin{align}
    \EB 
    V_{t+1} 
    & \le \left( 1 - \frac{\mu_G \beta_t}{4} \right) \EB V_{t} + 
    \cde_{yy,1}\, \alpha_t^2 \beta_t + \cde_{yy,2} \frac{\beta_t^{5+4\Hsmooth}}{\alpha_t^4},
    \label{lem:x+y_quartic:Eq2}
\end{align}
where the constants $\cde_{yy,1}$ and $\cde_{yy,2}$ are defined as
\begin{equation}
\label{eq:constanty-quartic-12}
\begin{aligned}
    \cde_{yy,1} & = (\varrho_3 \cde_{xx,5} + 18 \Gamma_{22} \kappa) c_+ + \frac{ \varrho_3 \cde_{xx,4} c_+  + (\varrho_3 \cde_{xx,5} + 18 \Gamma_{22} \kappa) c_- }{2} 
    + \varrho_3 \cde_{xx,6} \iota_1 + 28 \Gamma_{22}^2 \kappa^2 \iota_2, \\
    \cde_{yy,2} 
    & = \varrho_3 \cde_{xx,4} c_- + \frac{ \varrho_3 \cde_{xx,4} c_+  + (\varrho_3 \cde_{xx,5} + 18 \Gamma_{22} \kappa) c_- }{2} 
    =  S_H^2 \Gamma_{22}^{1+\Hsmooth} \cdot h_1(S_H, \Gamma_{22})
\end{aligned}
\end{equation}
for some function $h_1(\cdot, \cdot)$.
Here the last inequality follows from the definitions of $\cde_{xx,4}$ in \eqref{eq:constantx-quartic} and $c_-$ in \eqref{lem:x+y_quartic:Eq2.05}.
\vspace{0.2cm}

\noindent
\textbf{Establish the convergence rates.} With the one-step descent inequality \eqref{lem:x+y_quartic:Eq2}, we could establish the convergence rates of $\EB \| \xhat_t \|^4$ and $\EB \| \yhat_t \|^4$.
For simplicity, we introduce (as we did in the proof of Theorem~\ref{thm:first})
$
{\alpha}_{j, t} = \prod_{\tau=j}^t\left(1-\frac{\mu_{F}\alpha_{\tau}}{2} \right)~\text{and}~
{\beta}_{j, t} = \prod_{\tau=j}^t\left(1-\frac{\mu_{G}\beta_{\tau}}{4} \right).
$
Iterating \eqref{lem:x+y_quartic:Eq2} and applying Lemma~\ref{lem:step-size-ineq2-1}~(i) yields
\begin{align*}
    \EB V_{t+1}
    & \le {\beta}_{0,t}\, \EB V_0 + \frac{8 \cde_{yy,1}}{\mu_G} \alpha_t^2 + \frac{10 \cde_{yy,2}}{\mu_G} \frac{\beta_t^{4+4\Hsmooth}}{\alpha_t^4}.
\end{align*}
The proof of Lemma~\ref{lem:step-size-ineq2-1}
is similar to that of Lemma~\ref{lem:step-size-ineq} and is omitted.

\begin{lem}[Step sizes inequalities]
\label{lem:step-size-ineq2-1}
Let ${\alpha}_{j, t} = \prod_{\tau=j}^t\left(1-\frac{\mu_{F}\alpha_{\tau}}{2} \right)$ and
${\beta}_{j, t} = \prod_{\tau=j}^t\left(1-\frac{\mu_{G}\beta_{\tau}}{4} \right)$.
Under Assumption~\ref{assump:stepsize-new},
it holds that
\begin{enumerate}[(i)]
    \item $\sum_{j=0}^t {\beta}_{j+1, t}\, \alpha_j^2 \beta_j \le \frac{8 \alpha_t^2}{\mu_G}$ and $\sum_{j=0}^t \beta_{j+1, t} \frac{\beta_j^{5+4\Hsmooth} }{\alpha_j^4} \le \frac{10 \beta_t^{4+4\Hsmooth}}{\mu_G \alpha_t^4}$.
    
    \item $\sum_{j=0}^t {\alpha}_{j+1, t}\, \alpha_j^3 \le \frac{4 \alpha_t^2}{\mu_F}$,
    $ \sum_{j=0}^t {\alpha}_{j+1, t}\, \alpha_j \beta_{0, j-1} \le \frac{8 \beta_{0,t}}{\mu_F}$ and
    $\sum_{j=0}^t \alpha_{j+1,t} \frac{\beta_j^{4+4\Hsmooth}}{\alpha_j^3} \le \frac{ 3\beta_t^{4+4\Hsmooth} }{\mu_F \alpha_t^4}$
\end{enumerate}

\end{lem}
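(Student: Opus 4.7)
My plan is to reduce every inequality here to the three generic telescoping bounds recorded in Lemma~\ref{lem:help}, after first absorbing the polynomial prefactors (e.g.\ powers of $\alpha_j$, $\beta_j$ or their ratios) into perturbations of the geometric decay factors $\alpha_{j+1,t}$ or $\beta_{j+1,t}$. This is exactly the strategy used in the proof of Lemma~\ref{lem:step-size-ineq}; the work here is to verify that the growth conditions in Assumption~\ref{assump:stepsize-new}\ref{assump:stepsize-new:growth} are strong enough to handle the higher powers that appear (cubes of $\alpha_j$ in (ii), and the $\beta_j^{5+4\Hsmooth}/\alpha_j^4$ ratio in (i) and (ii)).

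For part (i), the first bound follows from an $\alpha$-weighted variant of (iii) in Lemma~\ref{lem:help}: factor out $\alpha_t$ to rewrite $\sum_j\beta_{j+1,t}\alpha_j^2\beta_j=\alpha_t\sum_j\beta_{j+1,t}(\alpha_j/\alpha_t)\alpha_j\beta_j$, show $\alpha_{j-1}/\alpha_j\le 1+\Theta(\beta_j)$ from Assumption~\ref{assump:stepsize-new}\ref{assump:stepsize-new:growth}, and use this to absorb the ratio into $\beta_{j+1,t}$ at the cost of weakening $1-\mu_G\beta_\tau/4$ to $1-\mu_G\beta_\tau/8$; the standard geometric summation then yields the $8\alpha_t^2/\mu_G$ constant. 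The second bound mimics (v) in Lemma~\ref{lem:step-size-ineq}: set $\kappa_\tau=\beta_\tau/\alpha_\tau$, write $\beta_j^{5+4\Hsmooth}/\alpha_j^4=\beta_j\cdot\beta_j^{4+4\Hsmooth}\kappa_j^4/\beta_t^{4+4\Hsmooth}\kappa_t^4\cdot(\beta_t^{4+4\Hsmooth}\kappa_t^4)$, then verify the key one-step inequality
\[
\Bigl(\tfrac{\beta_{\tau-1}}{\beta_\tau}\Bigr)^{4+4\Hsmooth}\Bigl(\tfrac{\kappa_{\tau-1}}{\kappa_\tau}\Bigr)^4\Bigl(1-\tfrac{\mu_G\beta_\tau}{4}\Bigr)\le 1-\tfrac{\mu_G\beta_\tau}{10},
\]
using $(\beta_{\tau-1}/\beta_\tau)^{8+4\Hsmooth}\le(1+\mu_G\beta_\tau/32)^{12}\le 1+\mu_G\beta_\tau/2$ since $\mu_G\beta_\tau\le 1$. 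Telescoping as in (v) gives the claimed $10/\mu_G$ constant.

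Part (ii) is analogous. The first inequality is the $\alpha$-analogue of Lemma~\ref{lem:help}(i): the growth condition $\alpha_{\tau-1}/\alpha_\tau\le 1+\Theta(\alpha_\tau)$ implies $(\alpha_{\tau-1}/\alpha_\tau)^2\le 1+\Theta(\alpha_\tau)$, so $\alpha_j^3$ can be written as $\alpha_t^2\cdot\alpha_j\cdot(\alpha_j/\alpha_t)^2$ with the ratio absorbed into the $\alpha_{j+1,t}$ product, yielding $4\alpha_t^2/\mu_F$ by telescoping. The second inequality is exactly the $\alpha$-analogue of (iv) in Lemma~\ref{lem:step-size-ineq}: use $1-\mu_F\alpha_\tau/2\le(1-\mu_G\beta_\tau/4)^2$ (valid since $\beta_\tau\le\mu_F\alpha_\tau/\mu_G\cdot\kappa^{-1}$ ratio is suitably controlled via Assumption~\ref{assump:stepsize-new}\ref{assump:stepsize-new:constant}), so that $\alpha_{j+1,t}\le\beta_{j+1,t}^2$; then split off one factor of $\beta_{0,t}$ and apply a Lemma~\ref{lem:help}-style sum to the remainder. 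The third inequality follows by the same $(\kappa_\tau,\beta_\tau)$ trick as (i)-second, except the $(1-\mu_G\beta_\tau/4)$ decay is replaced by $(1-\mu_F\alpha_\tau/2)$ and the growth condition in Assumption~\ref{assump:stepsize-new}\ref{assump:stepsize-new:growth} provides $(\beta_{\tau-1}/\beta_\tau)^{8+4\Hsmooth}(\alpha_\tau/\alpha_{\tau-1})^6(1-\mu_F\alpha_\tau/2)\le 1-\mu_F\alpha_\tau/3$ after using $\beta_\tau/\alpha_\tau\le\kappa\le\mu_F/\mu_G$.

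The main obstacle will be carefully bookkeeping the constants in those chained $(1+x)^p(1-y)$ contractions, since we need the integer exponents to match the fractional powers $4+4\Hsmooth$ and $5+4\Hsmooth$; however, because $\Hsmooth\le 1$ the exponents involved are bounded by $12$, and the growth conditions in Assumption~\ref{assump:stepsize-new}\ref{assump:stepsize-new:growth} give margins of the form $1+\Theta(\mu_G\beta_\tau)$ or $1+\Theta(\mu_F\alpha_\tau)$ with sufficiently small constants ($1/16$, $1/32$, $1/64$) to absorb these powers and still leave at least one-half of the original decay factor intact. Once that verification is done, each bound is a one-line telescoping argument.
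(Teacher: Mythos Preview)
Your overall plan---factor out the target rate, absorb the remaining polynomial ratios into a perturbed geometric factor via the growth conditions of Assumption~\ref{assump:stepsize-new}\ref{assump:stepsize-new:growth}, then telescope---is exactly the paper's strategy, and for four of the five bounds your sketch works.  (There are some exponent slips: for the second claim in (i), after invoking $\kappa_{\tau-1}/\kappa_\tau\le\beta_{\tau-1}/\beta_\tau$ the per-step ratio is $(\beta_{\tau-1}/\beta_\tau)^{4+4\Hsmooth}$, not $8+4\Hsmooth$; and in the third claim of (ii) no factor $(\alpha_\tau/\alpha_{\tau-1})^6$ appears.  These are harmless since overestimating the exponent only strengthens the needed smallness condition.)

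There is, however, a real gap in your treatment of the second inequality in (ii).  Your plan is to invoke $1-\mu_F\alpha_\tau/2\le(1-\mu_G\beta_\tau/4)^2$, hence $\alpha_{j+1,t}\le\beta_{j+1,t}^2$, split off one factor of $\beta_{0,t}$, and telescope the remainder.  But that remainder is (up to a bounded factor) $\sum_j\beta_{j+1,t}\alpha_j$, an $\alpha_j$-summand against a $\beta$-scale decay; the best one can do is $\sum_j\beta_{j+1,t}\alpha_j\le(\alpha_t/\beta_t)\sum_j\beta_{j+1,t}\beta_j\le 4\alpha_t/(\mu_G\beta_t)$, and since Assumption~\ref{assump:stepsize-new} imposes no upper bound on $\alpha_t/\beta_t$ (indeed $\alpha_t/\beta_t\to\infty$ for the polynomial step sizes of Corollary~\ref{cor:decouple-rates}), this does not give $O(1/\mu_F)$.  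The crude bound $\alpha_{j+1,t}\le\beta_{j+1,t}^2$ discards precisely the $\alpha$-decay needed to match the $\alpha_j$ summand.  The paper's fix is to keep the $\alpha$-decay by bounding the \emph{ratio} instead: using $\tfrac{1}{1-x}\le 1+2x$ for $x\in(0,1/2)$ and $\mu_G\beta_\tau\le\mu_F\alpha_\tau/2$ (from $\kappa\le\mu_F/(5\mu_G)$),
\[
\frac{1-\mu_F\alpha_\tau/2}{1-\mu_G\beta_\tau/4}\le\Bigl(1-\tfrac{\mu_F\alpha_\tau}{2}\Bigr)\Bigl(1+\tfrac{\mu_G\beta_\tau}{2}\Bigr)\le 1-\tfrac{\mu_F\alpha_\tau}{4}.
\]
Then $\sum_j\alpha_{j+1,t}\alpha_j\beta_{0,j-1}=\beta_{0,t}\sum_j\frac{\alpha_j}{1-\mu_G\beta_j/4}\prod_{\tau=j+1}^t\frac{1-\mu_F\alpha_\tau/2}{1-\mu_G\beta_\tau/4}\le 2\beta_{0,t}\sum_j\alpha_j\prod_{\tau>j}(1-\mu_F\alpha_\tau/4)$, which telescopes to $8\beta_{0,t}/\mu_F$ as required.
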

Then we obtain the convergence rate of $\EB \| \yhat_t \|^4$ as shown in the following
\begin{align}
\label{lem:x+y_quartic:Eq3}
    \EB \| \yhat_t \|^4
    \le \EB V_{t}
    \le \beta_{0,t-1}\, \EB V_0 + \frac{8 \cde_{yy,1} }{\mu_G} \alpha_{t-1}^2 + \frac{10 \cde_{yy,2}}{\mu_G} \frac{\beta_t^{4+4\Hsmooth}}{\alpha_t^4}.
\end{align}
Since $\kappa \le \frac{\mu_F \mu_G}{200 L_H \LGx \LGy }$,
we have
\begin{align*}
    \EB V_0
    = \varrho_3 \frac{\beta_0}{\alpha_0} \EB \| \xhat_0 \|^4 + \EB \| \yhat_0 \|^4
    \le \frac{ 54 \LGx^4 \kappa }{\mu_F \mu_G^3} \EB \| \xhat_0 \|^4 + \EB \| \yhat_0 \|^4
    \le \cde_{yy,2} 
    \EB \| \xhat_0 \|^4 + \EB \| \yhat_0 \|^4,
\end{align*}
where $\cde_{yy,2} = \frac{\LGx^3}{3 \mu_G^2 L_H \LGy} $.
As a result,
\begin{align*}
    \EB \| \yhat_t \|^4
    \le \beta_{0, t-1} \left( \cde_{yy,2} \EB \| \xhat_0 \|^4 + \EB \| \yhat_0 \|^4 \right) + \frac{8 \cde_{yy,1}}{\mu_G} \alpha_{t-1}^2 + \frac{10 \cde_{yy,2}}{\mu_G} \frac{\beta_t^{4+4\Hsmooth}}{\alpha_t^4}.
\end{align*}
Plugging \eqref{lem:x+y_quartic:Eq1-2},
\eqref{lem:x+y_quartic:Eq1-4}, 
\eqref{lem:x+y_quartic:Eq1-6} and \eqref{lem:x+y_quartic:Eq0} into \eqref{lem:xhat-quartic:Ineq}
and taking the expectation, we obtain
\begin{align}
    \EB \| \xhat_{t+1} \|^4
    & \le \left(1 - \frac{\mu_F \alpha_t}{2} \right) \EB \| \xhat_t \|^4 
    + \frac{\mu_G \alpha_t}{5 \varrho_3} \EB \| \yhat_t \|^4 
    + \left( \cde_{xx,4} \frac{\beta_t^{2+2\Hsmooth}}{\alpha_t} +  \cde_{xx,5} \alpha_t^2 \right) \EB \| \xhat_t \|^2  + \cde_{xx,6} \alpha_t^4  \notag \\
    & \overset{ \eqref{lem:x+y_quartic:Eq2.05} + \eqref{lem:x+y_quartic:Eq3} }{\le} \left(1 - \frac{\mu_F \alpha_t}{2} \right) \EB \| \xhat_t \|^4
    + \frac{\mu_G \EB V_0}{ 5 \varrho_3 } \beta_{0,t-1} \alpha_t + \cde_{xx,7}\, \alpha_t^3 + \cde_{xx,8} \frac{\beta_t^{4 + 4\Hsmooth}}{\alpha_t^3},
    \label{lem:x+y_quartic:Eq4}
\end{align}
where the last inequality also uses $\alpha_t \le \iota_1$, $\beta_t \le \iota_2$, 
$\frac{\alpha_{t-1}}{\alpha_t} \le 1 + \frac{\mu_F \alpha_t}{8} \le 1 + \frac{\mu_F \iota_1}{8}$,
$\frac{\beta_{t-1}}{\beta_t} \le 1 + \frac{\mu_G \beta_t}{64} \le 1 + \frac{\mu_G \iota_2}{64}$,
$ \frac{ \beta_{t-1}^{a} \alpha_{t-1}^{-b} }{ 
\beta_t^{a} \alpha_t^{-b} } \le \left( \frac{\beta_{t-1}}{\beta_t} \right)^a $ for any $a,b >0$,
and the constants are defined as
\begin{equation}\label{eq:constantx-quartic-78}
\begin{aligned}
    \cde_{xx,7}
    & = \frac{8 \cde_{yy,1}}{5  \varrho_3 } \left( 1 + \frac{\mu_F \iota_1}{8} \right)^2 + \left( \cde_{xx,5} c_+ + \frac{ \cde_{xx,5} c_- + \cde_{xx,4} c_+ }{2} \right)  
    \left[ 1 + \frac{\mu_F \iota_1}{8} + \left( 1 + \frac{\mu_G \iota_2}{64} \right)^4  \right]
    + \cde_{xx,6}  \iota_1, \\
    \cde_{xx,8} 
    & = \frac{10 \cde_{yy,2}}{\mu_G} \left( 1 + \frac{\mu_G \iota_2}{64} \right)^8 + \left( \cde_{xx,4} c_- + \frac{ \cde_{xx,5} c_- + \cde_{xx,4} c_+ }{2} \right) 
    \left[ 1 + \frac{\mu_F \iota_1}{8} + \left( 1 + \frac{\mu_G \iota_2}{64} \right)^4  \right] \\
    & \lesssim S_H^2 \Gamma_{22}^{1+\Hsmooth} \cdot h_2(S_H, \Gamma_{22})
\end{aligned}
\end{equation}
for some function $h_2(\cdot,\cdot)$.
Here the last inequality follows from the definitions of $\cde_{xx,4}$ in \eqref{eq:constantx-quartic}, $c_-$ in \eqref{lem:x+y_quartic:Eq2.05} and $\cde_{yy,2}$ in \eqref{eq:constanty-quartic-12}.
Iterating \eqref{lem:x+y_quartic:Eq4} yields
\begin{align*}
    \EB \| \xhat_{t+1} \|^4
    & \le \alpha_{0, t} \EB \| \xhat_0 \|^4 + \frac{8 \mu_G}{5 \varrho_3 \mu_F} \beta_{0,t} \EB V_0 + \frac{4 \cde_{xx,7} }{\mu_F} \alpha_t^2 + \frac{3 \cde_{xx,8}}{\mu_F} \frac{\beta_t^{4+4\Hsmooth}}{\alpha_t^4},
\end{align*}
where the inequality also applies Lemma~\ref{lem:step-size-ineq2-1}~(ii).
Since $\frac{\mu_G \beta_j}{\mu_F \alpha_j} \le \frac{\mu_G}{\mu_F} \kappa \le \frac{1}{2}$ for any $j$, it holds that $\alpha_{0,t}  \le \beta_{0,t}$.
Recall the definition of $V_0$ and $\varrho_3 = \frac{54 \LGx^4}{\mu_F \mu_G^3}$. Then we can obtain 
\begin{align*}
    \EB \| \xhat_{t+1} \|^4
    & \le 2 \beta_{0,t} \EB \| \xhat_0 \|^4 + \frac{\mu_G^4}{27 \LGx^4} \beta_{0,t} \EB \| \yhat_0 \|^4 + \frac{4 \cde_{xx,7}}{\mu_F} \alpha_t^2 + \frac{3 \cde_{xx,8}}{\mu_F} \frac{\beta_t^{4+4\Hsmooth}}{\alpha_t^4}.
\end{align*}
This completes the proof.
\end{proof}

\subsection{Proof of 
Theorem~\ref{thm:decouple}}
\label{proof:de:decouple}

\begin{proof}[Proof of 
Theorem~\ref{thm:decouple}
]
Since the conditions of Theorem~\ref{thm:decouple} are stronger than those of Theorem~\ref{thm:first}.
The first inequality follows from Theorem~\ref{thm:first} with 
\begin{equation}\label{eq:constanx-ccde-012}
\begin{aligned}
    \ccde_{x,0} = 3 \EB\|\xhat_{0}\|^2  +  \frac{7 L_H \LGy \EB\|\yhat_{0}\|^2 }{\LGx}, \ 
    \ccde_{x,1} = \frac{8 \Gamma_{11}}{\mu_F} + c_{x,5} \kappa + c_{x,6} \kappa^2, \ 
    \ccde_{x,2} = c_{x,7} \propto S_H^2 \Gamma_{22}^{1+\Hsmooth},
\end{aligned}
\end{equation}
where the constants $\{ c_{x,i} \}_{i\in [7] \setminus [4] }$ are defined in \eqref{eq:constantx2}. 

The proof of the other two inequalities is divided into three parts.
We first use Lemma~\ref{lem:x+y_quartic} to analyze the high-order terms defined in Lemmas~\ref{lem:xhat-new} to \ref{lem:xyhat},
then derive the convergence rate of $\| \EB \xhat_t \yhat_t^\top \| 
$ and finally use this to establish the rate of $ \EB \| \yhat_t \|^2 
$.
\vspace{0.2cm}

\noindent
\textbf{Analyze the high-order terms.}
Since we assume $\prod_{\tau=0}^t\left(1-\frac{\mu_{G}\beta_{\tau}}{4} \right) = \OM(\alpha_t^2)$, then
Lemma~\ref{lem:x+y_quartic} with the definitions of $\ccde_{xx,8}$ in \eqref{eq:constantx-quartic-78} and $\ccde_{yy,2}$ in \eqref{eq:constanty-quartic-12} implies
\begin{align}
\label{lem:decouple:Eq0}
    \EB \| \xhat_t \|^4 + \EB \| \yhat_t \|^4
    \le c_{+,2}\, \alpha_t^2 + c_{-,2} \frac{\beta_t^{4+4\Hsmooth}}{\alpha_t^4} \text{ with } c_{-,2} = S_H^2 \Gamma_{22}^{1+\Hsmooth} \cdot h_3(S_H, \Gamma_{22})
\end{align}
for some function $h_3(\cdot, \cdot)$.
Note that the terms in $\Delta_{x,t}, \Delta_{y,t}$ and $\Delta_{xy,t}$ all have their exponents lying between $2$ and $4$. Then we could 
apply Jensen's inequality to control them by the fourth-order terms or
apply Young's inequality to bound them by second-order and fourth-order terms.

We first focus on $\Delta_{x,t}$, which is defined in \eqref{eq:delta_x}.
By Jensen's inequality and Assumption~\ref{assump:stepsize-new}, for $z = \| \xhat_t \|$ or $\| \yhat_t \|$ and $\gamma \in [0,2]$, we have
\begin{align}
\label{lem:decouple:Eq0.1}
    \EB z^{2+\gamma} \le (\EB z^4)^\frac{2+\gamma}{4}
    \le c_{+,2}^\frac{2+\gamma}{4} \alpha_t^{1+\frac{\gamma}{2}} 
    + c_{-,2}^\frac{2+\gamma}{4} \left( \frac{\beta_t^{2+2\Hsmooth}}{\alpha_t^2} \right)^{1+\frac{\gamma}{2}}
    \le c_{+,2}^\frac{2+\gamma}{4} \iota_1^\frac{\gamma}{2} \alpha_t 
    + c_{-,2}^\frac{2+\gamma}{4} \iota_2^{\Hsmooth\gamma} \kappa^\gamma \frac{\beta_t^{2+2\Hsmooth}}{\alpha_t^2}.
\end{align}
Substituting \eqref{lem:decouple:Eq0.1} into \eqref{eq:delta_x}, we obtain
\begin{align}
\label{eq:delta_x_upper}
    \Delta_{x,t}
    \le \cde_{x,9} \alpha_t \beta_t + \cde_{x,10} \frac{\beta_t^{3+2\Hsmooth}}{\alpha_t^2},
\end{align}
where the constants are defined as
\begin{equation}
\label{eq:constantx-de-910}
\begin{aligned}
    \cde_{x,9}
    & = 2 \cde_{x,5} c_{+,2}^\frac{2+\Hsmooth}{4} \iota_1^\frac{\Hsmooth}{2}
    + 2 \cde_{x,6} c_{+,2}^\frac{2+\Fsmooth}{4} \iota_1^{ 1 + \frac{\Fsmooth}{2} }
    + 2 \cde_{x,7} c_{+,2}^\frac{1+\Gsmooth}{2} \iota_1^\Gsmooth
    + 2 \cde_{x,8} c_{+,2}^\frac{1+\Hsmooth}{2} \iota_1^\Hsmooth \iota_2^\Hsmooth, \\
    \cde_{x,10}
    & = 2 \cde_{x,5} c_{-,2}^\frac{2+\Hsmooth}{4} \iota_2^{\Hsmoothsq} \kappa^\Hsmooth  
    + 2 \cde_{x,6} c_{-,2}^\frac{2+\Fsmooth}{4} \iota_1 \iota_2^{\Hsmooth \Fsmooth} \kappa^\Fsmooth  
    + 2 \cde_{x,7} c_{-,2}^\frac{1+\Gsmooth}{2} \iota_2^{2 \Hsmooth \Gsmooth} \kappa^{2\Gsmooth} 
    + 2 \cde_{x,8} c_{-,2}^\frac{1+\Hsmooth}{2} \iota_2^{ 2\Hsmoothsq + \Hsmooth} \kappa^{2\Hsmooth} \\
    & = S_H \Gamma_{22}^{\frac{1+\Hsmooth}{2}} \cdot h_4(S_H, \Gamma_{22})
\end{aligned}
\end{equation}
for some function $h_4(\cdot, \cdot)$.
Here the last inequality follows from the definitions of $\cde_{x,5}$ to $\cde_{x,8}$ in \eqref{eq:constantx:new} and $c_{-,2}$ in \eqref{lem:decouple:Eq0}.

\vspace{0.2cm}
For $\Delta_{y,t}$ defined in \eqref{eq:delta_y}, we have 
\begin{align}
\label{eq:delta_y_rep}
    \Delta_{y,t}
    \le \cde_{y,1} \beta_t ( \EB \| \xhat_t \|^{2+2\Gsmooth} + \EB \| \yhat_t \|^{2+2\Gsmooth} )
    \text{ with } 
    \cde_{y,1} =  \SBG^2 \left( \frac{15 d_y^2 }{\mu_G} + d_y^2 \iota_2 + 8 d_y \iota_2 \right).
\end{align}
By Young's inequality with $p = \frac{1}{1-\Gsmooth}$ and $q = \frac{1}{\Gsmooth}$ , for $z = \| \xhat_t \|$ or $\| \yhat_t \|$, we have
\begin{align}
    \cde_{y,1} \beta_t z^{2+2\Gsmooth} 
    & \le \frac{4 L_H \LGx \beta_t^2}{\mu_F \alpha_t} z^2 + \cde_{y,2} \frac{\beta_t^2}{\alpha_t} \left( \frac{\alpha_t}{\beta_t} \right)^\frac{1}{\Gsmooth} z^4 \text{ with } \cde_{y,2} \propto \SBG^\frac{2}{\Gsmooth},
    \label{eq:constanty-de-2}
\end{align}
where we use $\cde_{y,2} 
$ to hide the problem-dependent coefficients.
Plugging this inequality and \eqref{lem:decouple:Eq0} into \eqref{eq:delta_y_rep} and noting that $\frac{\beta_t}{\alpha_t} \le \frac{\mu_F \mu_G}{24 L_H \LGx}$, we obtain
\begin{align}
\label{eq:delta_y_upper}
    \Delta_{y,t} 
    & \le \frac{\mu_G \beta_t}{6} \EB \| \yhat_t \|^2 + \frac{4 L_H \LGx \beta_t^2}{\mu_F \alpha_t} \EB \| \xhat_t \|^2 
    + \cde_{y,2} \left( c_{+,2} \alpha_t \beta_t^2 + c_{-,2} \frac{ \beta_t^{6 + 4\Hsmooth} }{\alpha_t^5} \right) \left( \frac{\alpha_t}{\beta_t} \right)^\frac{1}{\Gsmooth}.
\end{align}

Finally, we concentrate on $\Delta_{xy,t}$, which is defined in \eqref{eq:delta_xy}.
We first tackle the first term.
By Young's inequality with $p = \frac{2}{2 - \Fsmooth}$ and $q = \frac{2}{\Fsmooth}$, for $z$ = $\| \xhat_t \|$ or $\| \yhat_t \|$, we have
\begin{align}
    2 \alpha_t \SBF z^{2+\Fsmooth}
    & \le \frac{\LGx \beta_t}{4} z^2 + \cde_{xy,5}\, \beta_t \left( \frac{\alpha_t}{\beta_t} \right)^\frac{2}{\Fsmooth} z^4 \text{ and } \cde_{xy,5} \propto \SBF^\frac{2}{\Fsmooth},
    \label{eq:constantxy-de-5}
\end{align}
where we use $\cde_{xy,5} 
$ 
to hide the problem-dependent coefficients.
Other terms can be controlled by \eqref{lem:decouple:Eq0.1}.
Recall that $\alpha_t \le \iota_1$ and $\frac{\beta_t}{\alpha_t} \le \kappa$.
Plugging the above inequality and \eqref{lem:decouple:Eq0.1} into \eqref{eq:delta_xy} yields
\begin{align}
\label{eq:delta_xy_upper}
    \Delta_{xy,t}
    & \le \LGx \beta_t (\EB \| \xhat_t \|^2 {+} \EB \| \yhat_t \|^2) 
    {+} \cde_{xy,6}\, \alpha_t \beta_t {+} \cde_{xy,7} \frac{\beta_t^{3+2\Hsmooth}}{\alpha_t^2}
    {+} \left( \cde_{xy,8} \alpha_t^2 \beta_t {+} \cde_{xy,9} \frac{ \beta_t^{5+4\Hsmooth} }{\alpha_t^4} \right)\! \left( \frac{\alpha_t}{\beta_t} \right)^\frac{2}{\Fsmooth},
\end{align}
where the constants are defined as
\begin{equation}
\label{eq:constantxy-de-6789}
\begin{aligned}
    \cde_{xy,6} & = 2 \SBG (1+2L_H) c_{+,2}^\frac{2+\Gsmooth}{4} \iota_1^\frac{\Gsmooth}{2} 
    + 2 \cde_{xy,4} c_{+,2}^\frac{2+\Hsmooth}{4} \iota_1^\frac{\Hsmooth}{2} 
    + 4 \SBF \SBG\, c_{+,2}^\frac{2+\Fsmooth + \Gsmooth}{4} \iota_1^{1 + \frac{\Fsmooth + \Gsmooth}{2}} ,\\
    \cde_{xy,7} & = 2 \SBG (1{+}2L_H) c_{-,2}^\frac{2+\Gsmooth}{4} \iota_2^{\Hsmooth \Gsmooth} \kappa^\Gsmooth   
    {+} 2 \cde_{xy,4} c_{-,2}^\frac{2+\Hsmooth}{4} \iota_2^{\Hsmoothsq} \kappa^\Hsmooth   
    {+} 4 \SBF \SBG \,c_{-,2}^\frac{2+\Fsmooth + \Gsmooth}{4} \iota_1 \iota_2^{\Hsmooth(\Fsmooth + \Gsmooth)} \kappa^{\Fsmooth + \Gsmooth} \\
    & 
    = ( \SBG + S_H + \SBF \SBG )  S_H \Gamma_{22}^{\frac{1+\Hsmooth}{2}} \cdot h_5(S_H, \Gamma_{22}) ,\\
    \cde_{xy,8} & = \cde_{xy,5} c_{+,2} \propto \SBF^\frac{2}{\Fsmooth} , \quad \cde_{xy,9} = \cde_{xy,5} c_{-,2} 
    = \SBF^\frac{2}{\Fsmooth}  S_H \Gamma_{22}^{\frac{1+\Hsmooth}{2}} \cdot h_6(S_H, \Gamma_{22})
\end{aligned}
\end{equation}
for some functions $h_5(\cdot, \cdot)$ and $h_6(\cdot, \cdot)$.
Here the inequalities follows from the definitions of $\cde_{xy,4}$ in \eqref{eq:constantx:new}, $\cde_{xy,5}$ in \eqref{eq:constantxy-de-5} and $c_{-,2}$ in \eqref{lem:decouple:Eq0}. 

\vspace{0.2cm}

\noindent
\textbf{Derive the upper bound of $\| \EB \xhat_t \yhat_t^\top \|$.}
With the upper bounds of the higher-order terms, we could derive the upper bound of $\| \EB \xhat_t \yhat_t^\top \|$.
Substituting \eqref{eq:delta_x_upper}, \eqref{eq:delta_y_upper} and \eqref{eq:delta_xy_upper}  into
\eqref{lem:xhat-new:Ineq}, 
\eqref{lem:yhat-new:Ineq} and
\eqref{lem:xyhat:Ineq} respectively yields
\begin{align}
    \EB \|\xhat_{t+1}\|^2 
    & \le \left(1 - \mu_F \alpha_t 
    \right) \EB \|\xhat_{t}\|^2 
    + \cde_{x,1} \beta_t^2 \EB \|\yhat_{t}\|^2 +  \cde_{x,2} \beta_t \| \EB \xhat_t \yhat_t^\top \| + 2\Gamma_{11} \alpha_{t}^2 \notag  \\
    & \quad \ + \cde_{x,3}\, \beta_{t}^2 +  \cde_{x,4} \frac{\beta_t^{2+2\Hsmooth}}{\alpha_t} + \cde_{x,9}\, \alpha_t \beta_t + \cde_{x,10} \frac{\beta_t^{3+2\Hsmooth}}{\alpha_t^2}, 
    \label{lem:decouple:Eq-x} \\
    \EB \|\yhat_{t+1}\|^2 
    & \le \left(1- \frac{ \mu_G \beta_t}{2} \right) \EB \|\yhat_{t}\|^2 
    + \left( \frac{4 L_H \LGx \beta_t^2}{\mu_F \alpha_t} + 2 \LGx^2 \beta_t^2 \right) \EB \|\xhat_{t}\|^2 
    + 2 d_y \LGx \beta_t \| \EB \xhat_t \yhat_t^\top \|
    \notag \\
    & \quad \  + \Gamma_{22} \beta_{t}^2
    + \cde_{y,2} \left( c_{+,2}\, \alpha_t \beta_t^2 + c_{-,2} \frac{ \beta_t^{6 + 4\Hsmooth} }{\alpha_t^5} \right) \left( \frac{\alpha_t}{\beta_t} \right)^\frac{1}{\Gsmooth},
    \label{lem:decouple:Eq-y}\\
    \| \EB \xhat_{t+1} \yhat_{t+1} ^\top\| 
    & \le \left(1-\frac{\mu_F\alpha_t}{2}\right) \|\EB \xhat_t\yhat_{t}^\top \|  + 2 \LGx \beta_t \EB \|\xhat_{t}\|^2 + (\cde_{xy,1} + \LGx) \beta_t \EB\|\yhat_{t}\|^2 + \Sigma_{12} \alpha_t \beta_t \notag \\
    & 
    + \cde_{xy,2} \beta_t^2 + \cde_{xy,3} \beta_t^{1+ 2\Hsmooth} 
    + \cde_{xy,6} \,\alpha_t \beta_t + \cde_{xy,7} \frac{\beta_t^{3+2\Hsmooth}}{\alpha_t^2} 
    +  \left( \cde_{xy,8}\, \alpha_t^2 \beta_t + \cde_{xy,9}\frac{\beta_t^{5+4\Hsmooth} }{\alpha_t^4} \right) \left( \frac{\alpha_t}{\beta_t} \right)^\frac{2}{\Fsmooth}. 
    \label{lem:decouple:Eq-xy}
\end{align}
Define the Lyapunov function 
$
    W_t = \varrho_4 \frac{\beta_t}{\alpha_t} \EB \| \xhat_t \|^2 + \| \EB \xhat_t \yhat_t^\top \| 
    \text{ with }
    \varrho_4 = \frac{6 \LGx}{\mu_F}.
$
Since 
$\frac{\beta_t}{\alpha_t} \le \kappa
    \le \frac{\mu_F \mu_G}{24 d_x L_H \LGx \LGy}
    \wedge \frac{\mu_F}{5 \mu_G}$
and $c_{x,4} = 2 d_x L_H \LGy$,
one can check
\begin{align}
    \label{lem:decouple:Eq-tmp}
    \varrho_4 c_{x,4} \frac{\beta_t^2}{\alpha_t}
    \le \frac{72 d_x L_H \LGx \LGy }{\mu_F^2}  \cdot \kappa^2 \cdot \frac{ \mu_F \alpha_t}{6}
    \le \frac{\mu_F \alpha_t}{6} \text{ and }
    2 \LGx = \frac{\mu_F \varrho_4}{3}.
\end{align}
Combining \eqref{lem:decouple:Eq-x},
    \eqref{lem:decouple:Eq-xy}, \eqref{lem:decouple:Eq-tmp}
and $\frac{\beta_{t+1}}{\alpha_{t+1}} \le \frac{\beta_t}{\alpha_t}$,
we can obtain
\begin{align}
    W_{t+1}
    & \le
    \left( 1 - \frac{\mu_F \alpha_t}{3} \right) W_t
    + \cde_{xy,10} \beta_t \EB \| \yhat_t \|^2
    + \cde_{xy,11} \alpha_t \beta_t
    + \cde_{xy,12} \beta_t^{1+2\Hsmooth} \notag \\
    & \qquad 
    +  \left( \cde_{xy,8} \alpha_t^2 \beta_t + \cde_{xy,9} \frac{ \beta_t^{5+4\Hsmooth} }{\alpha_t^4} \right) \left( \frac{\alpha_t}{\beta_t} \right)^\frac{2}{\Fsmooth},
    \label{lem:decouple:Eq1}
\end{align}
where we also use Assumption~\ref{assump:stepsize-new} and $\varrho_4 = \frac{6 \LGx}{\mu_F}$,
and the constants are defined as
\begin{equation}
\label{eq:constantxy-de-101112}
\begin{aligned}
    \cde_{xy,10}
    & = \cde_{xy,1} + \LGx + \varrho_4 \cde_{x,1} \rho , \\
    \cde_{xy,11} 
    & = 2 \varrho_4 \Gamma_{11} + \Sigma_{12} + \varrho_4 \cde_{x,3} \kappa^2 + \varrho_4 \cde_{x,9} \kappa 
    + \cde_{xy,2} \kappa + \cde_{xy,6}, \\
    \cde_{xy,12}
    & = \cde_{xy,3} + \varrho_4 \cde_{x,4} \kappa^2 + \varrho_4 \cde_{x,10} \kappa^3 + \cde_{xy,7} \kappa^2 
    = S_H \Gamma_{22}^\frac{1+\Hsmooth}{2} \cdot h_7(S_H, \Gamma_{22})
\end{aligned}
\end{equation}
for some function $h_7(\cdot, \cdot)$, and the last step follows from the definitions of $\cde_{xy,3}$ in \eqref{eq:constant-xy}, $\cde_{x,4}$ in \eqref{eq:constantx:new}, $\cde_{x,10}$ in \eqref{eq:constantx-de-910} and $\cde_{xy,7}$ in \eqref{eq:constantxy-de-6789}. 

For simplicity, we let
$
\tilde{\alpha}_{j, t} = \prod_{\tau=j}^t\left(1-\frac{\mu_{F}\alpha_{\tau}}{3} \right), \ 
{\beta}_{j, t} = \prod_{\tau=j}^t\left(1-\frac{\mu_{G}\beta_{\tau}}{4} \right)
\text{ and }
\tilde{\beta}_{j, t} = \prod_{\tau=j}^t\left(1-\frac{ \mu_{G}\beta_{\tau}}{2} \right).
$
Note that under our assumptions, \eqref{lem:x+y_quartic:Eq2.05} holds, i.e., 
\begin{align}
\label{lem:decouple:Eq2}
\EB \| \xhat_t \|^2 + \EB \| \yhat_t \|^2  \le c_+  \alpha_t + c_- \frac{\beta_t^{2+2\Hsmooth}}{\alpha_t^2} 
\le c_+ \alpha_t + c_- \kappa^2 \beta_t^{2\Hsmooth}
\text{ with } c_- \propto S_H^2 \Gamma_{22}^{1+\Hsmooth}.
\end{align}
Plugging \eqref{lem:decouple:Eq2} into \eqref{lem:decouple:Eq1} and iterating, we obtain
\begin{align}
    W_{t+1}
    & \le 
    \ccde_{xy,0}\,{\beta}_{0,t} +
    \ccde_{xy,1} \,
    \beta_t
    + \ccde_{xy,2} \frac{\beta_t^{1+2\Hsmooth} }{\alpha_t}
    + \left( \ccde_{xy,3} \, \alpha_t \beta_t + \ccde_{xy,4}    \frac{\beta_t^{5+4\Hsmooth} }{\alpha_t^5} \right) \left( \frac{\alpha_t}{\beta_t} \right)^\frac{2}{\Fsmooth}, \notag
\end{align}
where the inequality applies (i) and (ii) in Lemma~\ref{lem:step-size-ineq2-2}, whose proof is deferred to Appendix~\ref{proof:de:step-size-ineq2}, and also uses $\tilde{\alpha}_{0, t} \le {\beta}_{0,t}$, and the constants are defined as 
\begin{equation}
\label{eq:constantxy-ccde-01234}
\begin{aligned}
    \ccde_{xy,0} & = \varrho_4 \kappa \EB \| \xhat_0 \|^2 +  \| \EB \xhat_0 \yhat_0^\top \|, \ 
    \ccde_{xy,1} =  \frac{6(\cde_{xy,10} c_+ + \cde_{xy,11}) }{\mu_F}, \\
    \ccde_{xy, 2} & = \frac{ 4(\cde_{xy,10} c_- \kappa^2 + \cde_{xy,12} ) }
    {\mu_F} = S_H \Gamma_{22}^\frac{1+\Hsmooth}{2} \cdot h_{xy,2}(S_H, \Gamma_{22}), \\ 
    \ccde_{xy,3} & = \frac{ 12\cde_{xy,8} }{\mu_F} \propto \SBF^\frac{2}{\Fsmooth}, \ 
     \ccde_{xy,4}  = \frac{9 \cde_{xy,9} }{\mu_F} 
     = \SBF^\frac{2}{\Fsmooth} S_H \Gamma_{22}^{\frac{1+\Hsmooth}{2}} \cdot h_{xy,4}(S_H, \Gamma_{22})
\end{aligned}
\end{equation}
for some functions $\{ h_{xy,i}(\cdot, \cdot) \}_{i=2,4}$.
Here we have used the definitions of $c_-$ in \eqref{lem:x+y_quartic:Eq2.05}, 
$\cde_{xy,8}$, $\cde_{xy,9}$ in \eqref{eq:constantxy-de-6789} and $\cde_{xy,12}$ in \eqref{eq:constantxy-de-101112}.

\setcounter{lem}{3}
\begin{lem}[Step sizes inequalities]
\label{lem:step-size-ineq2-2}
Let 
$\tilde{\alpha}_{j, t} = \prod_{\tau=j}^t\left(1-\frac{\mu_{F}\alpha_{\tau}}{3} \right)$,  
$\tilde{\beta}_{j,t} = \prod_{\tau=j}^t \left( 1 - \frac{ \mu_G \beta_\tau}{2} \right)$ and ${\beta}_{j,t} = \prod_{\tau=j}^t \left( 1 - \frac{ \mu_G \beta_\tau}{4} \right)$.
Under Assumption~\ref{assump:stepsize-new},
it holds that
\begin{enumerate}[(i)]
    \item 
    $\sum_{j=0}^t \tilde{\alpha}_{j+1, t}\, \alpha_j \beta_j \le \frac{6 \beta_t}{\mu_F} $
    and $\sum_{j=0}^t \tilde{\alpha}_{j+1, t}\, \beta_j^{1+2\Hsmooth} \le \frac{4}{\mu_F} \frac{\beta_t^{1+2\Hsmooth} }{\alpha_t}$.
    
    \item 
    $\sum_{j=0}^t \tilde{\alpha}_{j+1, t}\, \alpha_{j}^2 \beta_j \left( \frac{\alpha_j}{\beta_j} \right)^\frac{2}{\Fsmooth}
    \le \frac{ 12 \alpha_t \beta_t }{\mu_F } \left( \frac{\alpha_t}{\beta_t} \right)^\frac{2}{\Fsmooth}$ and
    $\sum_{j=0}^t \tilde{\alpha}_{j+1, t}\, \frac{\beta_j^{5+4\Hsmooth}}{\alpha_j^4} \left( \frac{\alpha_j}{\beta_j} \right)^\frac{2}{\Fsmooth}
    \le \frac{ 9 \beta_t^{5+4\Hsmooth} } {\mu_F \alpha_t^5 } \left( \frac{\alpha_t}{\beta_t} \right)^\frac{2}{\Fsmooth}$.
    
    \item $\sum_{j=0}^t \tilde{\beta}_{j+1, t} \beta_j^2 \le \frac{4 \beta_t }{\mu_G} $, $\sum_{j=0}^t \tilde{\beta}_{j+1, t} \beta_j \beta_{0, j-1} \le \frac{8 \beta_{0,t}}{\mu_G}$ and 
    $\sum_{j=0}^t \tilde{\beta}_{j+1,t} \frac{\beta_t^{2+2\Hsmooth} }{\alpha_t} = \frac{ 3 \beta_t^{1+2\Hsmooth} }{ \mu_G \alpha_t} $.
    
    \item  $\sum_{j=0}^t \tilde{\beta}_{j+1, t}\, \alpha_{j} \beta_j^2 \left( \frac{\alpha_j}{\beta_j} \right)^\frac{2}{\Fsmooth}
    \le \frac{ 6 \alpha_t \beta_t }{\mu_G } \left( \frac{\alpha_t}{\beta_t} \right)^\frac{2}{\Fsmooth}$ and
    $\sum_{j=0}^t \tilde{\beta}_{j+1, t}\, \frac{\beta_j^{6+4\Hsmooth}}{\alpha_j^5} \left( \frac{\alpha_j}{\beta_j} \right)^\frac{2}{\Fsmooth}
    \le \frac{ 6 \beta_t^{5+4\Hsmooth} } {\mu_G \alpha_t^5 } \left( \frac{\alpha_t}{\beta_t} \right)^\frac{2}{\Fsmooth}$.

    \item  $\sum_{j=0}^t \tilde{\beta}_{j+1, t}\, \alpha_{j} \beta_j^2 \left( \frac{\alpha_j}{\beta_j} \right)^\frac{1}{\Gsmooth}
    \le \frac{ 6 \alpha_t \beta_t }{\mu_G } \left( \frac{\alpha_t}{\beta_t} \right)^\frac{1}{\Gsmooth}$ and
    $\sum_{j=0}^t \tilde{\beta}_{j+1, t}\, \frac{\beta_j^{6+4\Hsmooth}}{\alpha_j^5} \left( \frac{\alpha_j}{\beta_j} \right)^\frac{2}{\Fsmooth}
    \le \frac{ 6 \beta_t^{5+4\Hsmooth} } {\mu_G \alpha_t^5 } \left( \frac{\alpha_t}{\beta_t} \right)^\frac{1}{\Gsmooth}$.
\end{enumerate}

\end{lem}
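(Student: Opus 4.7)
My plan is to reduce every estimate to a direct application of Lemma~\ref{lem:help} from Appendix~\ref{proof:step-size-ineq}, in complete analogy with the proof of Lemma~\ref{lem:step-size-ineq}. The overall strategy is: (a) for the unadorned sums in (i) and (iii), swap the roles of $\alpha_\tau$ and $\beta_\tau$ in Lemma~\ref{lem:help} so that the rate of geometric contraction matches either $\tilde\alpha_{j+1,t}$ or $\tilde\beta_{j+1,t}$, and then verify the growth hypothesis of Lemma~\ref{lem:help} using Assumption~\ref{assump:stepsize-new}~\ref{assump:stepsize-new:growth} combined with $\beta_t/\alpha_t\le\kappa$; (b) for the inequalities in (ii), (iv), (v) that carry the auxiliary factor $(\alpha_j/\beta_j)^{2/\Fsmooth}$ or $(\alpha_j/\beta_j)^{1/\Gsmooth}$, exploit the fact that $\beta_t/\alpha_t$ is non-increasing (Assumption~\ref{assump:stepsize-new}~\ref{assump:stepsize-new:other}), so that $(\alpha_j/\beta_j)^{2/\Fsmooth}\le(\alpha_t/\beta_t)^{2/\Fsmooth}$ for $j\le t$, and pull this factor out of the sum, reducing the estimate to one already handled in (i) or (iii).

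Concretely, for (i) first part I would apply Lemma~\ref{lem:help}(iii) with $a=\mu_F/3$, playing the roles $\beta_\tau\mapsto\alpha_\tau$ and $\alpha_j\mapsto\beta_j$: the required growth condition reads $\beta_{j-1}/\beta_j\le1+(\mu_F/6)\alpha_j$, which follows from $\beta_{j-1}/\beta_j\le1+(\mu_G/64)\beta_j$ and $\beta_j\le\kappa\alpha_j\le(\mu_F/\mu_G)\alpha_j$, up to a harmless factor. For the $\beta_j^{1+2\Hsmooth}$-bound I would mimic Lemma~\ref{lem:step-size-ineq}(vi) in Appendix~\ref{proof:step-size-ineq}: factor out the peak value $\beta_t^{1+2\Hsmooth}/\alpha_t$, then show that $(\beta_{j-1}/\beta_j)^{1+2\Hsmooth}(1-\mu_F\alpha_j/3)\le1-\mu_F\alpha_j/4$ using the growth clause $\beta_{j-1}/\beta_j\le1+\mu_G\beta_j/64\le1+\mu_F\alpha_j/(64\kappa^{-1}\mu_G/\mu_F)$ together with $(1+x)^{1+2\Hsmooth}\le1+5x$ for $x$ small. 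The three parts of (iii) are the $\tilde\beta$-counterparts and are handled identically after interchanging $(\alpha,\mu_F,\Fsmooth)$ with $(\beta,\mu_G,\Gsmooth)$; the mid-term $\sum\tilde\beta_{j+1,t}\beta_j\beta_{0,j-1}\le8\beta_{0,t}/\mu_G$ is obtained by bounding $\tilde\beta_{j+1,t}\le\beta_{j+1,t}^2$ and telescoping.

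For (ii), (iv), (v), after pulling the monotone factor $(\alpha_t/\beta_t)^{2/\Fsmooth}$ or $(\alpha_t/\beta_t)^{1/\Gsmooth}$ out of the sum, the remaining sums are $\sum_j\tilde\alpha_{j+1,t}\alpha_j^2\beta_j$, $\sum_j\tilde\alpha_{j+1,t}\beta_j^{5+4\Hsmooth}/\alpha_j^4$, and their $\tilde\beta$ analogues. The first is handled by Lemma~\ref{lem:help}(iii) with coefficient $\alpha_j\cdot(\alpha_j\beta_j)$ and rate $\alpha_\tau$, requiring $(\alpha_{j-1}\beta_{j-1})/(\alpha_j\beta_j)\le1+(\mu_F/6)\alpha_j$, which follows by multiplying the two growth ratios in Assumption~\ref{assump:stepsize-new}~\ref{assump:stepsize-new:growth}. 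The second is the analogue of Lemma~\ref{lem:step-size-ineq}(vi) with a heavier exponent: I would factor out $\beta_t^{5+4\Hsmooth}/\alpha_t^4$ and verify the per-step contraction $(\beta_{j-1}/\beta_j)^{5+4\Hsmooth}(\alpha_j/\alpha_{j-1})^4(1-\mu_F\alpha_j/3)\le1-\mu_F\alpha_j/c$ for an absolute constant $c$, by applying $(1+x)^p\le1+p'x$ term by term and then absorbing the $O(\beta_j)$ excess into $O(\alpha_j)$ via $\beta_j\le\kappa\alpha_j\le(\mu_F/\mu_G)\alpha_j$.

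The main obstacle will be, as in (ii) second part and (iv)/(v) second parts, keeping track of the mixed exponent $\beta_j^{5+4\Hsmooth}/\alpha_j^4$ when verifying that the one-step multiplicative change fits into $(1-\frac{\mu_F}{3}\alpha_j)$ (or $(1-\frac{\mu_G}{2}\beta_j)$). The growth bounds available for $\alpha_{t-1}/\alpha_t$ in Assumption~\ref{assump:stepsize-new}~\ref{assump:stepsize-new:growth} carry the factors $\Fsmooth,\Gsmooth$, so one must be careful that expanding $(1+\Fsmooth\mu_F\alpha_j/16)^4$ does not overshoot the contraction budget of $\mu_F\alpha_j/3$; this is exactly why Assumption~\ref{assump:stepsize-new} was tuned with the small numerical constants $1/16$, $1/64$. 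Once that bookkeeping is checked, the telescoping $\sum(1-\mu_F\alpha_j/c)\cdots$ gives the claimed $1/\mu_F$ prefactors, with the explicit constants $6,4,12,9$ etc.\ arising from an extra factor of two absorbed to simplify the algebra.
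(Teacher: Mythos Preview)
Your approach is correct and, for parts (ii), (iv), (v), actually simpler than the paper's. Where you propose to pull the monotone factor $(\alpha_j/\beta_j)^{2/\Fsmooth}\le(\alpha_t/\beta_t)^{2/\Fsmooth}$ out of the sum (valid since $\beta_t/\alpha_t$ is non-increasing) and then bound the residual sums by the same telescoping device, the paper instead keeps this factor inside and tracks its per-step ratio $\zeta_{\tau-1}/\zeta_\tau$ (with $\zeta_\tau=\alpha_\tau/\beta_\tau$) jointly with the other ratios. That is precisely why Assumption~\ref{assump:stepsize-new}~\ref{assump:stepsize-new:growth} carries the $\Fsmooth,\Gsmooth$ prefactors: the paper must absorb $(\alpha_{\tau-1}/\alpha_\tau)^{3/\Fsmooth}$ via $(1+\Fsmooth\mu_F\alpha_\tau/16)^{3/\Fsmooth}\le\exp(3\mu_F\alpha_\tau/16)$. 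Your route sidesteps this entirely---the residual sums such as $\sum_j\tilde\alpha_{j+1,t}\alpha_j^2\beta_j$ or $\sum_j\tilde\alpha_{j+1,t}\beta_j^{5+4\Hsmooth}/\alpha_j^4$ involve only first powers of the $\alpha$-ratio or factors $(\alpha_\tau/\alpha_{\tau-1})^k\le1$, so they would go through even without the $\Fsmooth,\Gsmooth$ factors in the growth condition. Your concern about ``expanding $(1+\Fsmooth\mu_F\alpha_j/16)^4$'' is therefore moot: once the monotone factor is pulled out, the $(\alpha_j/\alpha_{j-1})^4$ term is $\le1$ and needs no expansion. For parts (i) and (iii), your plan coincides with the paper's.
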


As a result, $ \| \EB \xhat_{t+1} \yhat_{t+1}^\top \| \le  W_{t+1}$ implies
\begin{align}
    \| \EB \xhat_{t+1} \yhat_{t+1}^\top \| 
    & \le \ccde_{xy,0}\, {\beta}_{0,t} + 
    \ccde_{xy,1} \,
    \beta_t
    + \ccde_{xy,2} \frac{\beta_t^{1+2\Hsmooth} }{\alpha_t}
    + \left( \ccde_{xy,3} \, \alpha_t \beta_t + \ccde_{xy,4}    \frac{\beta_t^{5+4\Hsmooth} }{\alpha_t^5} \right) \left( \frac{\alpha_t}{\beta_t} \right)^\frac{2}{\Fsmooth}. 
    \label{lem:decouple:Eq3}
\end{align}
Since $\kappa \le \frac{\mu_F \mu_G}{200 L_H \LGx \LGy}$, we have $\ccde_{xy,0} \le \frac{ \mu_G }{33 L_H \LGy} \EB \| \xhat_0 \|^2 + \| \EB \xhat_0 \yhat_0^\top \|$.
Thus we obtain the second inequality.
\vspace{0.2cm}

\noindent
\textbf{Derive the upper bound of $\EB \| \yhat_t \|^2$.}
Now we are prepared to establish the convergence rate of $\EB \| \yhat_t \|^2$.
From \eqref{lem:decouple:Eq3} 
we can obtain
\begin{align}
    \| \EB \xhat_{t} \yhat_{t}^\top \| 
    & \le 
    \ccde_{xy,0}\,     {\beta}_{0,t-1} {+}
    \ccde_{xy,1} \, \zeta
    \beta_t
    {+} \ccde_{xy,2}\, \zeta^3 \frac{\beta_t^{1+2\Hsmooth} }{\alpha_t}
    {+} \left( \ccde_{xy,3}\, \zeta^{\frac{2}{\Fsmooth} + 2} \alpha_t \beta_t {+} \ccde_{xy,4}\, \zeta^{\frac{2}{\Fsmooth} + 9} \frac{\beta_t^{5+4\Hsmooth} }{\alpha_t^5} \right)\! \left( \frac{\alpha_t}{\beta_t} \right)^\frac{2}{\Fsmooth}, \label{lem:decouple:Eq3.1}
\end{align}
where $\zeta = 1 + \frac{\mu_F \iota_1}{16} + \frac{\mu_G \iota_2}{64} $.
Plugging \eqref{lem:decouple:Eq3.1} and
\eqref{lem:decouple:Eq2} into \eqref{lem:decouple:Eq-y} and applying the upper bounds in Assumption~\ref{assump:stepsize-new},
with $\cde_{y,3} := \frac{4 L_H \LGx}{\mu_F} + 2 \LGx^2 \iota_1$,
we have
\begin{align}
    & \EB \| \yhat_{t+1} \|^2
    \le \left( 1 - \frac{ \mu_G \beta_t}{2} \right) \EB \| \yhat_t \|^2 + 2 d_y \LGx \beta_t {\beta}_{0,t-1}
    + \left( \cde_{y,3} c_+ + 2 d_y \LGx \ccde_{xy,1} \zeta  + \Gamma_{22} \right) \beta_t^2 \notag \\
    & 
    + \left( \cde_{y,3} c_- \kappa^2 +  2 d_y \LGx \ccde_{xy,2} \zeta^3   \right) \frac{ \beta_t^{2+2\Hsmooth} }{\alpha_t}  
    + 2 d_y \LGx  \left( \ccde_{xy,3} \zeta^{\frac{2}{\Fsmooth} + 2} \alpha_t \beta_t^2 + \ccde_{xy,4} \zeta^{\frac{2}{\Fsmooth} + 9 }  \frac{ \beta_t^{6+4\Hsmooth} }{\alpha_t^5} \right) \left( \frac{\alpha_t}{\beta_t} \right)^\frac{2}{\Fsmooth} \notag \\
    & 
    +  \cde_{y,2} 
    \left( c_{+,2} \alpha_t \beta_t^2 + c_{-,2} \frac{ \beta_t^{6+4\Hsmooth} }{\alpha_t^5} \right) \left( \frac{\alpha_t}{\beta_t} \right)^\frac{1}{\Gsmooth}. 
    \label{lem:decouple:Eq5}
\end{align}
Iterating \eqref{lem:decouple:Eq5} and applying (iii), (iv) and (v) in Lemma~\ref{lem:step-size-ineq2-2} yields
\begin{align}
    \EB \| \yhat_{t+1} \|^2
    & \le 
    \ccde_{y,0}\,{\beta}_{0,t} + 
    \ccde_{y,1}\, \beta_t + \ccde_{y,2} \frac{\beta_t^{1+2\Hsmooth} }{\alpha_t}
    + \left( \ccde_{y,3}\, \alpha_t \beta_t + \ccde_{y,4} \frac{ \beta_t^{5+4\Hsmooth} }{\alpha_t^5} \right) \left( \frac{\alpha_t}{\beta_t} \right)^{\frac{2}{\Fsmooth} } \notag \\
    & \quad \ 
    + \left( \ccde_{y,5}\, \alpha_t \beta_t + \ccde_{y,6} \frac{ \beta_t^{5+4\Hsmooth} }{\alpha_t^5} \right) \left( \frac{\alpha_t}{\beta_t} \right)^{\frac{1}{\Gsmooth} },
\end{align}
where 
the constants are defined as
\begin{equation}
\label{eq:constanty-ccde-0123456}
\begin{aligned}
    \ccde_{y,0} & = \EB \| \yhat_0 \|^2 + \frac{16 d_y \LGx  \ccde_{xy,0} }{\mu_G}, \ 
    \ccde_{y,1} 
    =  \frac{4 \left( \cde_{y,3} c_+ + 2 d_y \LGx \ccde_{xy,1} \zeta + \Gamma_{22} \right)}{\mu_G}, \\
    \ccde_{y,2}
    & = \frac{3 \left( \cde_{y,3} c_- \kappa^2 + 2 d_y \LGx \ccde_{xy,2} \zeta^3 \right)}{\mu_G} = S_H \Gamma_{22}^\frac{1+\Hsmooth}{2} \cdot h_{y,2}(S_H, \Gamma_{22}), \\
    \ccde_{y,3}
    & = \frac{ 6 (2 d_y \LGx \ccde_{xy,3} \zeta^{ \frac{2}{\Fsmooth} + 2 } )  }{\mu_G} \propto \SBF^\frac{2}{\Fsmooth}, \\ 
    \ccde_{y,4}
    & = \frac{ 6 (2 d_y \LGx \ccde_{xy,4} \zeta^{ \frac{2}{\Fsmooth} + 9 } )  }{\mu_G} 
    = \SBF^\frac{2}{\Fsmooth} S_H \Gamma_{22}^\frac{1+\Hsmooth}{2} \cdot h_{y,4}(S_H, \Gamma_{22}), \\
    \ccde_{y,5}
    & = \frac{ 6 \cde_{y,2} c_{+,2} }{\mu_G} \propto \SBG^\frac{2}{\Gsmooth}, \
    \ccde_{y,6} = \frac{6 \cde_{y,2} c_{-,2} }{\mu_G} 
    = \SBG^\frac{2}{\Gsmooth} S_H \Gamma_{22}^\frac{1+\Hsmooth}{2} \cdot h_{y,6}(S_H, \Gamma_{22})
\end{aligned}
\end{equation}
for some functions $\{h_{y,i} (\cdot, \cdot)\}_{i=2,4,6}$.
Here we have used the definitions of $c_-$ in \eqref{lem:x+y_quartic:Eq2.05},
$\ccde_{xy,2}$ to $\ccde_{xy,4}$ in \eqref{eq:constantxy-ccde-01234},
$\cde_{y,2}$ in \eqref{eq:constanty-de-2} and $c_{-,2}$ in \eqref{lem:decouple:Eq0}.
\end{proof}

\subsection{Proof of Corollary~\ref{cor:decouple-rates}}
\label{proof:de:rates}

\begin{proof}[Proof of Corollay~\ref{cor:decouple-rates}]
Define $\beta_{0,T} = \prod_{t=0}^T \left( 1 - \frac{\mu_G \beta_t}{4} \right)$. 
Clearly, $\frac{\beta_t}{\alpha_t}$ is non-increasing and upper bounded by $\kappa$.
By setting $\alpha_t, \beta_t, T_0$ as in~\eqref{eq:poly0-decou}, 
we have $\alpha_t \le \iota_1 \wedge \frac{\iota_2}{\kappa} \wedge \frac{\rho}{\kappa}^2$ for all $t\ge0$, so 
the constant bounds in Assumption~\ref{assump:stepsize-new} hold.
Since $(1+x)^{\gamma} \le 1 + \gamma x$ for $x \ge 0$ and $\gamma \in (0, 1]$, then for any $t \ge 1$, we have
\begin{align*}
    \frac{1}{\alpha_t} \left(\frac{\alpha_{t-1}}{\alpha_t} -1 \right) 
    & \le \frac{1}{\alpha_t}  \cdot \frac{a}{t-1+T_0} 
    \le \frac{\Fsmooth \mu_G \kappa a}{128} \frac{(t+T_0)^a}{t-1+T_0}
    \le \frac{\Fsmooth \mu_G \kappa}{64} \frac{1}{(t+T_0)^{1-a}} \le \frac{\Fsmooth \mu_F}{16}.
\end{align*}
Similarly, we can obtain $ \frac{1}{\beta_t} \left(\frac{\alpha_{t-1}}{\alpha_t} -1 \right) \le \frac{ (\Fsmooth \wedge \Gsmooth) \mu_G}{16}$ and $ \frac{1}{\beta_t} \left( \frac{\beta_{t-1}}{\beta_t} - 1 \right) \le \frac{\mu_G}{64}$.
Thus the growth condition in Assumption~\ref{assump:stepsize-new} holds.
For $\beta_{0,T}$, if $b = 1$, we have
$
    \beta_{0,T} 
    \le \prod_{t=0}^T\left(1-\frac{\mu_{G}\beta_{t}}{64} \right)
    \le \prod_{t=0}^T\left(1 - \frac{2}{t+T_0}\right) 
    \le \frac{ T_0^2 }{(T+T_0) (T+T_0-1)} 
    = \OM(\alpha_T^2).
    $
If $b < 1$, we have
$
    \beta_{0,T}
    \le \exp \left( - 32 \sum_{t=0}^T (t+ T_0)^{-b} \right) 
    \le \exp \left( - \frac{ (T+T_0+1)^{1-b} - T_0^{1-b} }{ (1-b) / 32 } \right) 
    = o(\alpha_T^2).
$
Then the conditions in Theorem~\ref{thm:decouple} hold.
Note that $\Hsmooth \ge 0.5$ implies $\frac{\beta_t^{1+2\Hsmooth}}{\alpha_t} = \OM (\beta_t)$ and $\frac{\beta_t^{5+4\Hsmooth}}{\alpha_t^5} = \OM (\alpha_t \beta_t)$;
$\frac{b}{a} \le 1 + \frac{\Fsmooth}{2} \wedge \Gsmooth \le 2 $ implies $ \left( \frac{\alpha_t}{\beta_t} \right)^{ \frac{2}{\Fsmooth} \vee \frac{1}{\Gsmooth} } = \OM (\frac{1}{\alpha_t}) $ and $\beta_{0,T} = \OM(\alpha_T^2) = \OM(\beta_T)$.
It follows that $\| \EB \xhat_T \yhat_T^\top \| = \OM (\beta_T)$ and $\EB \| \yhat_T \|^2 = \OM (\beta_T)$
The upper bound of $\EB \| \xhat_T \|^2$ is from Theorem~\ref{thm:decouple-short}.
\end{proof}

\subsection{Proof of Lemmas~\ref{lem:step-size-ineq2-1} and \ref{lem:step-size-ineq2-2}}
\label{proof:de:step-size-ineq2}

\begin{proof}[Proof of Lemma~\ref{lem:step-size-ineq2-1}]
The proof is similar to that of Lemma~14 in~\citep{kaledin2020finite}.
\begin{itemize}
\item 
For the first claim in (i), the growth condition implies
\begin{align}
    \left( \frac{\alpha_{\tau-1} }{ \alpha_\tau } \right)^2
    \left( 1 - \frac{\mu_G \beta_\tau}{4} \right)
    & \le \left( 1 + \frac{\mu_G \beta_\tau }{16} \right)^2 \left( 1 - \frac{\mu_G \beta_\tau}{4} \right) \notag \\
    & = \left( 1 + \frac{\mu_G \beta_\tau }{8} + \frac{ \mu_G^2 \beta_\tau^2 }{16^2} \right) \left( 1 - \frac{\mu_G \beta_\tau}{4} \right)
    \le 1 - \frac{ \mu_G \beta_\tau }{8}.
    \label{lem:step-size-ineq2:Eq1}
\end{align}
Then we have
\begin{align}
    \sum_{j=0}^t {\beta}_{j+1, t}\, \alpha_j^2 \beta_j
    & = \sum_{j=0}^t \alpha_j^2 \beta_j \prod_{\tau=j+1}^t \left( 1 - \frac{\mu_G \beta_\tau}{4} \right) \notag \\
    & = \alpha_t^2 \sum_{j=0}^t \beta_j \left( \frac{\alpha_j}{\alpha_t} \right)^2 \prod_{\tau=j+1}^t \left( 1 - \frac{\mu_G \beta_\tau}{4} \right) \notag \\
    & = \alpha_t^2 \sum_{j=0}^t \beta_j \prod_{\tau=j+1}^t \left( \frac{\alpha_{\tau-1} }{\alpha_\tau} \right)^2\left( 1 - \frac{\mu_G \beta_\tau}{4} \right) \notag \\
    & \overset{\eqref{lem:step-size-ineq2:Eq1}}{\le} \alpha_t^2 \sum_{j=0}^t \beta_j \prod_{\tau=j+1}^t \left( 1 - \frac{\mu_G \beta_\tau}{8} \right) \notag \\
    & = \frac{8 \alpha_t^2}{\mu_G} \left[ 1 - \prod_{\tau=0}^t \left( 1 - \frac{\mu_G \beta_\tau}{8} \right) \right]
    \le  \frac{8 \alpha_t^2}{\mu_G}. \notag
\end{align}

\item For the second claim in (i), 
we define $\kappa_\tau = \frac{\beta_\tau}{\alpha_\tau}$. Then we have $\frac{\kappa_{\tau-1}}{\kappa_\tau} \le \frac{\beta_{\tau-1}}{\beta_\tau}$.
For $x \in (0, 1/64)$, one can check $(1+x)^{4+4 \Hsmooth } \le 1 + 9x$.
Since $\mu_G \beta_\tau \le \mu_G \iota_2 \le 1$,
the growth condition implies
\begin{align}
     \left( \frac{\beta_{\tau-1}}{\beta_\tau} \right)^{4 \Hsmooth} \left( \frac{\kappa_{\tau-1}}{\kappa_\tau} \right)^4 \left(1 - \frac{\mu_G \beta_\tau}{4} \right)
    & \le 
    \left( \frac{\beta_{\tau-1} }{ \beta_\tau } \right)^{4+4 \Hsmooth}
    \left( 1 - \frac{\mu_G \beta_\tau}{4} \right) \notag\\
    &\le \left( 1 + \frac{\mu_G \beta_\tau }{64} \right)^{4+4 \Hsmooth} \left( 1 - \frac{\mu_G \beta_\tau}{4} \right) \notag \\
    & = \left( 1 + \frac{9 \mu_G \beta_\tau }{64} \right) \left( 1 - \frac{\mu_G \beta_\tau}{4} \right)
    \le 1 - \frac{ \mu_G \beta_\tau }{10}.
    \label{lem:step-size-ineq2:Eq3}
\end{align}
Then we have
\begin{align}
    \sum_{j=0}^t {\beta}_{j+1, t} \frac{\beta_j^{5+4\Hsmooth}}{\alpha_j^4}
    & = \sum_{j=0}^t  \beta_j^{1 + 4 \Hsmooth} \kappa_j^4 \prod_{\tau=j+1}^t \left( 1 - \frac{\mu_G \beta_\tau}{4} \right) \notag \\
    & = \beta_t^{4 \Hsmooth} \kappa_t^4 \sum_{j=0}^t \beta_j \left( \frac{\beta_j}{\beta_t} \right)^{4\Hsmooth}  \left( \frac{\kappa_j}{\kappa_t} \right)^4 \prod_{\tau=j+1}^t \left( 1 - \frac{\mu_G \beta_\tau}{4} \right) \notag \\
    & = \beta_t^{4 \Hsmooth} \kappa_t^4 \sum_{j=0}^t \beta_j \prod_{\tau=j+1}^t \left( \frac{\beta_{\tau-1} }{\beta_\tau} \right)^{4 \Hsmooth} \left( \frac{\kappa_{\tau-1}}{\kappa_\tau} \right)^4 \left( 1 - \frac{\mu_G \beta_\tau}{4} \right) \notag \\
    & \overset{\eqref{lem:step-size-ineq2:Eq3}}{\le} \beta_t^{4\Hsmooth} \kappa_t^4 \sum_{j=0}^t \beta_j \prod_{\tau=j+1}^t \left( 1 - \frac{\mu_G \beta_\tau}{10} \right) \notag \\
    & = \frac{10 \beta_t^{4\Hsmooth} \kappa_t^4}{\mu_G} \left[ 1 - \prod_{\tau=0}^t \left( 1 - \frac{\mu_G \beta_\tau}{10} \right) \right]
    \le  \frac{10 \beta_t^{4 + 4 \Hsmooth}}{\mu_G \alpha_t^4}. \notag
\end{align}

\item
For the first claim in (ii), the growth condition implies
\begin{align}
    \left( \frac{\alpha_{\tau-1} }{ \alpha_\tau } \right)^2
    \left( 1 - \frac{\mu_F \alpha_\tau}{2} \right)
    & \le \left( 1 + \frac{\mu_F \alpha_\tau }{16} \right)^2 \left( 1 - \frac{\mu_F \alpha_\tau}{2} \right) \notag \\
    & = \left( 1 + \frac{\mu_F \alpha_\tau }{8} + \frac{ \mu_F^2 \alpha_\tau^2 }{16^2} \right) \left( 1 - \frac{\mu_F \alpha_\tau}{2} \right)
    \le 1 - \frac{ \mu_F \alpha_\tau }{4}.
    \label{lem:step-size-ineq2:Eq2}
\end{align}
Then we have
\begin{align}
    \sum_{j=0}^t \alpha_{j+1, t}\, \alpha_j^3 
    & = \sum_{j=0}^t \alpha_j^3 \prod_{\tau=j+1}^t \left( 1 - \frac{\mu_F \alpha_\tau}{2} \right) \notag \\
    & = \alpha_t^2 \sum_{j=0}^t \alpha_j \left( \frac{\alpha_j}{\alpha_t} \right)^2 \prod_{\tau=j+1}^t \left( 1 - \frac{\mu_F \alpha_\tau}{2} \right) \notag \\
    & = \alpha_t^2 \sum_{j=0}^t \alpha_j \prod_{\tau=j+1}^t \left( \frac{\alpha_{\tau-1} }{\alpha_\tau} \right)^2\left( 1 - \frac{\mu_F \alpha_\tau}{2} \right) \notag \\
    & \overset{\eqref{lem:step-size-ineq2:Eq2}}{\le} \alpha_t^2 \sum_{j=0}^t \alpha_j \prod_{\tau=j+1}^t \left( 1 - \frac{\mu_F \alpha_\tau}{4} \right) \notag \\
    & = \frac{4 \alpha_t^2}{\mu_F} \left[ 1 - \prod_{\tau=0}^t \left( 1 - \frac{\mu_F \alpha_\tau}{4} \right) \right]
    \le  \frac{4 \alpha_t^2}{\mu_F}. \notag
\end{align}

\item 
For the second claim in (ii), we have
\begin{align*}
    \sum_{j=0}^t {\alpha}_{j+1, t}\, \alpha_j \beta_{0, j-1}
    & = \beta_{0,t} \sum_{j=0}^t \alpha_{j+1, t} \, \alpha_j \frac{1}{ (1 - \mu_G \beta_j / 4) \beta_{j+1, t}} \notag \\
    & = \beta_{0,t} \sum_{j=0}^t \frac{\alpha_j}{1 - \mu_G \beta_j / 4} 
    \prod_{\tau=j+1}^t \frac{1 - \mu_F \alpha_\tau / 2}{1 - \mu_G \beta_\tau / 4}.
\end{align*}
One can check that for $x \in (0, 1/2)$, $\frac{1}{1-x} \le 1 + 2x$.
Since $\mu_G \beta_\tau \le \mu_G \iota_2 \le \frac{1}{2}$, we have $\frac{1}{1 - \mu_G \beta_\tau / 4} \le 1 + \frac{\mu_G \beta_\tau}{2} 
$.
It follows that $\frac{1}{1 - \mu_G \beta_\tau / 4} \le 2$ and
\begin{align*}
    \frac{1 - \mu_F \alpha_\tau / 2}{1 - \mu_G \beta_\tau / 4}
    \le \left(1 - \frac{\mu_F \alpha_\tau}{2} \right) \left(1 + \frac{\mu_G \beta_\tau}{2} \right)
    \le 1 - \frac{\mu_F \alpha_\tau}{2} + \frac{\mu_G \beta_\tau}{2}
    \le 1 - \frac{\mu_F \alpha_\tau}{4}, 
\end{align*}
where the last inequality is due to $\frac{\mu_G \beta_\tau}{\mu_F \alpha_\tau} \le \frac{\mu_G}{\mu_F} \kappa \le \frac{1}{2}$.
Then we have
\begin{align*}
    \sum_{j=0}^t {\alpha}_{j+1, t}\, \alpha_j \beta_{0, j-1}
    & = \beta_{0,t} \sum_{j=0}^t \frac{\alpha_j}{1 - \mu_G \beta_j / 4} 
    \prod_{\tau=j+1}^t \frac{1 - \mu_F \alpha_\tau / 2}{1 - \mu_G \beta_\tau / 4} \\
    & \le 2 \beta_{0,t} \sum_{j=0}^t \alpha_j \prod_{\tau=j+1}^t \left( 1 - \frac{\mu_F \alpha_\tau}{4} \right) \\
    & = \frac{8 \beta_{0,t}}{\mu_F} \left[ 1 - \prod_{\tau=0}^t \left( 1 - \frac{\mu_F \alpha_\tau}{4} \right) \right] \le \frac{8 \beta_{0,t} }{\mu_F}.
\end{align*}

\item For the last claim in (ii), we
define $\kappa_\tau = \frac{\beta_\tau}{\alpha_\tau}$. Then we have $\frac{\kappa_{\tau-1}}{\kappa_\tau} \le \frac{\beta_{\tau-1}}{\beta_\tau}$.
For $x \in (0, 1/64)$, one can check $(1+x)^{4+4 \Hsmooth } \le 1 + 9x$.
Since $\mu_G \beta_\tau \le \mu_G \iota_2 \le 1$ and $\frac{\beta_\tau}{\alpha_\tau} \le \frac{\mu_F}{\mu_G}$,
the growth condition implies
\begin{align}
     \left( \frac{\beta_{\tau-1}}{\beta_\tau} \right)^{4 \Hsmooth} \left( \frac{\kappa_{\tau-1}}{\kappa_\tau} \right)^4 \left(1 - \frac{\mu_F \alpha_\tau}{2} \right)
    & \le 
    \left( \frac{\beta_{\tau-1} }{ \beta_\tau } \right)^{4+4 \Hsmooth}
    \left( 1 - \frac{\mu_F \alpha_\tau}{2} \right) \notag\\
    &\le \left( 1 + \frac{\mu_G \beta_\tau }{64} \right)^{4+4 \Hsmooth} \left( 1 - \frac{\mu_F \alpha_\tau}{2} \right) \notag \\
    & = \left( 1 + \frac{9 \mu_G \beta_\tau }{64} \right) \left( 1 - \frac{\mu_F \alpha_\tau}{2} \right) \notag \\
    & \le \left( 1 + \frac{9 \mu_F \alpha_\tau }{64} \right) \left( 1 - \frac{\mu_F \alpha_\tau}{2} \right)
    \le 1 - \frac{ \mu_F \alpha_\tau }{3}.
    \label{lem:step-size-ineq2:Eq3.5}
\end{align}
Then we have
\begin{align}
    \sum_{j=0}^t {\alpha}_{j+1, t} \frac{\beta_j^{4+4\Hsmooth}}{\alpha_j^3}
    & = \sum_{j=0}^t  \alpha_j \beta_j^{4 \Hsmooth} \kappa_j^4 \prod_{\tau=j+1}^t \left( 1 - \frac{\mu_F \alpha_\tau}{2} \right) \notag \\
    & = \beta_t^{4 \Hsmooth} \kappa_t^4 \sum_{j=0}^t \alpha_j \left( \frac{\beta_j}{\beta_t} \right)^{4\Hsmooth}  \left( \frac{\kappa_j}{\kappa_t} \right)^4 \prod_{\tau=j+1}^t \left( 1 - \frac{\mu_F \alpha_\tau}{2} \right) \notag \\
    & = \beta_t^{4 \Hsmooth} \kappa_t^4 \sum_{j=0}^t \alpha_j \prod_{\tau=j+1}^t \left( \frac{\beta_{\tau-1} }{\beta_\tau} \right)^{4 \Hsmooth} \left( \frac{\kappa_{\tau-1}}{\kappa_\tau} \right)^4 \left( 1 - \frac{\mu_F \alpha_\tau}{2} \right) \notag \\
    & \overset{\eqref{lem:step-size-ineq2:Eq3.5}}{\le} \beta_t^{4\Hsmooth} \kappa_t^4 \sum_{j=0}^t \alpha_j \prod_{\tau=j+1}^t \left( 1 - \frac{\mu_F \alpha_\tau}{3} \right) \notag \\
    & = \frac{3 \beta_t^{4\Hsmooth} \kappa_t^4}{\mu_F} \left[ 1 - \prod_{\tau=0}^t \left( 1 - \frac{\mu_F \alpha_\tau}{3} \right) \right]
    \le  \frac{3 \beta_t^{4 + 4 \Hsmooth}}{\mu_F \alpha_t^4}. \notag
\end{align}

\end{itemize}
    
\end{proof}

\begin{proof}[Proof of Lemma~\ref{lem:step-size-ineq2-2}]
The proof is similar to that of Lemma~14 in~\citep{kaledin2020finite}.

\begin{enumerate}[(i)]
\item 

\begin{itemize}
\item For the first claim in (i), since $\frac{\beta_t}{\alpha_t} \le \kappa \le \frac{\mu_F}{\mu_G} $, we have $\frac{\beta_{t-1}}{\beta_t} \le 1 + \frac{\mu_G \beta_t}{32} \le 1 + \frac{\mu_F \alpha_t}{32}$. Then by exchanging the states of $\alpha_t$ and $\beta_t$ in (iii) of Lemma~\ref{lem:help},
we have $\sum_{j=0}^t \tilde{\alpha}_{j+1, t}\, \alpha_j \beta_j  \le \frac{4 \beta_t}{\mu_F}$. 


\item For the last claim in (i),
we
define $\kappa_\tau = \frac{\beta_\tau}{\alpha_\tau}$. Then we have $\frac{\kappa_{\tau-1}}{\kappa_\tau} \le \frac{\beta_{\tau-1}}{\beta_\tau}$.
For $x \in (0, 0.1)$, one can check $(1+x)^{1+2 \Hsmooth } \le 1 + 4x$.
Since $\mu_G \beta_\tau \le \mu_G \iota_2 \le 1$ and $\frac{\beta_\tau}{\alpha_\tau} \le \frac{\mu_F}{\mu_G}$,
the growth condition implies
\begin{align}
     \left( \frac{\beta_{\tau-1}}{\beta_\tau} \right)^{2 \Hsmooth}  \frac{\kappa_{\tau-1}}{\kappa_\tau} \left(1 - \frac{\mu_F \alpha_\tau}{3} \right)
    & \le 
    \left( \frac{\beta_{\tau-1} }{ \beta_\tau } \right)^{1+2 \Hsmooth}
    \left( 1 - \frac{\mu_F \alpha_\tau}{3} \right) \notag\\
    &\le \left( 1 + \frac{\mu_G \beta_\tau }{64} \right)^{1+2 \Hsmooth} \left( 1 - \frac{\mu_F \alpha_\tau}{3} \right) \notag \\
    & = \left( 1 + \frac{ \mu_G \beta_\tau }{16} \right) \left( 1 - \frac{\mu_F \alpha_\tau}{3} \right) \notag \\
    & \le \left( 1 + \frac{ \mu_F \alpha_\tau }{16} \right) \left( 1 - \frac{\mu_F \alpha_\tau}{3} \right)
    \le 1 - \frac{ \mu_F \alpha_\tau }{4}.
    \label{lem:step-size-ineq2:Eq3.6}
\end{align}
Then we have
\begin{align}
    \sum_{j=0}^t 
    \tilde{\alpha}_{j+1, t} {\beta_j^{1+2\Hsmooth}}
    & = \sum_{j=0}^t \alpha_j \beta_j^{2 \Hsmooth} \kappa_j \prod_{\tau=j+1}^t \left( 1 - \frac{\mu_F \alpha_\tau}{3} \right) \notag \\
    & = \beta_t^{2 \Hsmooth} \kappa_t \sum_{j=0}^t \alpha_j \left( \frac{\beta_j}{\beta_t} \right)^{2\Hsmooth} \frac{\kappa_j}{\kappa_t} \prod_{\tau=j+1}^t \left( 1 - \frac{\mu_F \alpha_\tau}{3} \right) \notag \\
    & = \beta_t^{2 \Hsmooth} \kappa_t \sum_{j=0}^t \alpha_j \prod_{\tau=j+1}^t \left( \frac{\beta_{\tau-1} }{\beta_\tau} \right)^{2 \Hsmooth}  \frac{\kappa_{\tau-1}}{\kappa_\tau}  \left( 1 - \frac{\mu_F \alpha_\tau}{3} \right) \notag \\
    & \overset{\eqref{lem:step-size-ineq2:Eq3.6}}{\le} \beta_t^{2\Hsmooth} \kappa_t \sum_{j=0}^t \alpha_j \prod_{\tau=j+1}^t \left( 1 - \frac{\mu_F \alpha_\tau}{4} \right) \notag \\
    & = \frac{4 \beta_t^{2\Hsmooth} \kappa_t}{\mu_F} \left[ 1 - \prod_{\tau=0}^t \left( 1 - \frac{\mu_F \alpha_\tau}{4} \right) \right]
    \le  \frac{4 \beta_t^{1 + 2 \Hsmooth}}{\mu_F \alpha_t}. \notag
\end{align}
\end{itemize}

\item 
\begin{itemize}
\item For the first claim in (ii),
define $\zeta_\tau = \frac{\alpha_\tau}{\beta_\tau}$. Then we have $\frac{\zeta_{\tau-1}}{\zeta_\tau} \le \frac{\alpha_{\tau-1}}{\alpha_\tau}$.
For $x \in (0. 0.2)$, one can check $\exp(x) \le 1 + 1.2x$.
Since $\Fsmooth \le 1$, $\frac{\beta_\tau}{\alpha_\tau} \le \frac{\mu_F}{5 \mu_G}$ and $\mu_F \alpha_\tau \le \mu_F \iota_1 \le 1$,
the growth condition implies
\begin{align}
     \frac{\alpha_{\tau-1}}{\alpha_\tau} \frac{ \beta_{\tau-1} }{ \beta_\tau  }  \left( \frac{\zeta_{\tau-1}}{\zeta_\tau} \right)^\frac{2}{\Fsmooth} \left(1 - \frac{\mu_F \alpha_\tau}{3} \right)
    & \le \frac{\beta_{\tau-1}}{\beta_\tau} 
    \left( \frac{\alpha_{\tau-1} }{ \alpha_\tau } \right)^\frac{3}{\Fsmooth}
    \left( 1 - \frac{\mu_F \alpha_\tau}{3} \right) \notag \\
    & \le \left( 1+\frac{\mu_G \beta_\tau}{64} \right) \left( 1 + \frac{ \Fsmooth\mu_F \alpha_\tau }{16} \right)^\frac{3}{\Fsmooth} \left( 1 - \frac{\mu_F \alpha_\tau}{3} \right) \notag \\
    & \le \left( 1 + \frac{\mu_F \alpha_\tau}{80} \right) \exp \left( \frac{3 \mu_F \alpha_\tau}{16} \right) \left( 1 - \frac{\mu_F \alpha_\tau}{3} \right) \notag \\
    & \le \left( 1 + \frac{\mu_F \alpha_\tau}{80} \right) \left( 1 + \frac{3.6 \mu_F \alpha_\tau}{16}  \right) \left( 1 - \frac{\mu_F \alpha_\tau}{3} \right) \notag \\
    & = \left( 1 + \frac{\mu_F \alpha_\tau }{4} \right) \left( 1 - \frac{\mu_F \alpha_\tau}{3} \right)
    \le 1 - \frac{ \mu_F \alpha_\tau }{12}.
    \label{lem:step-size-ineq2:Eq4}
\end{align}
Then we have
\begin{align}
    \sum_{j=0}^t 
    \tilde{\alpha}_{j+1, t} \alpha_j^2 \beta_j \left( \frac{\alpha_j}{\beta_j} \right)^\frac{2}{\Fsmooth}
    & = \sum_{j=0}^t \alpha_j^{2} \beta_j \zeta_j^\frac{2}{\Fsmooth} \prod_{\tau=j+1}^t \left( 1 - \frac{\mu_F \alpha_\tau}{3} \right) \notag \\
    & = \alpha_t \beta_t \zeta_t^\frac{2}{\Fsmooth} \sum_{j=0}^t \alpha_j \frac{\alpha_j}{\alpha_t} \frac{\beta_j}{\beta_t} \left(\frac{\zeta_j}{\zeta_t} \right)^\frac{2}{\Fsmooth} \prod_{\tau=j+1}^t \left( 1 - \frac{\mu_F \alpha_\tau}{3} \right) \notag \\
    & = \alpha_t \beta_t \zeta_t^\frac{2}{\Fsmooth} \sum_{j=0}^t \alpha_j \prod_{\tau=j+1}^t \frac{\alpha_{\tau-1} }{\alpha_\tau} \frac{\beta_{\tau-1}}{\beta_\tau} \left( \frac{\zeta_{\tau-1}}{\zeta_\tau} \right)^\frac{2}{\Fsmooth} \left( 1 - \frac{\mu_F \alpha_\tau}{3} \right) \notag \\
    & \overset{\eqref{lem:step-size-ineq2:Eq4}}{\le} \alpha_t \beta_t \zeta_t^\frac{2}{\Fsmooth} \sum_{j=0}^t \alpha_j \prod_{\tau=j+1}^t \left( 1 - \frac{\mu_F \alpha_\tau}{12} \right) \notag \\
    & = \frac{12 \alpha_t \beta_t \zeta_t^\frac{2}{\Fsmooth} }{\mu_F} \left[ 1 - \prod_{\tau=0}^t \left( 1 - \frac{\mu_F \alpha_\tau}{12} \right) \right]
    \le  \frac{12 \alpha_t \beta_t}{\mu_F} \left( \frac{\alpha_t}{\beta_t} \right)^\frac{2}{\Fsmooth}. \notag
\end{align}

\item For the last claim in (ii), define 
$\kappa_\tau = \frac{\beta_\tau}{\alpha_\tau}$ and
$\zeta_\tau = \frac{\alpha_\tau}{\beta_\tau}$. Then we have $\frac{\kappa_{\tau-1} }{\kappa_\tau} \le \frac{ \beta_{\tau-1} }{\beta_\tau} $ and $\frac{\zeta_{\tau-1}}{\zeta_\tau} \le \frac{\alpha_{\tau-1}}{\alpha_\tau}$.
For $x \in (0. 0.2)$, one can check $\exp(x) \le 1+1.2x$ and for $x \in (0, 0.1)$, $(1+x)^{5+4\Hsmooth} \le 1 + 10x$.
Since $\frac{\beta_\tau}{\alpha_\tau} \le \frac{\mu_F}{5 \mu_G} $, $\mu_F \alpha_\tau \le \mu_F \iota_1 \le 1$ and $\mu_G \beta_\tau \le \mu_G \iota_2 \le 1$,
the growth condition implies
\begin{align}
     & \quad \ \left( \frac{ \beta_{\tau-1} }{ \beta_\tau  } \right)^{4 \Hsmooth}   \left( \frac{\kappa_{\tau-1}}{\kappa_\tau} \right)^5 \left( \frac{\zeta_{\tau-1}}{\zeta_\tau} \right)^\frac{2}{\Fsmooth} \left(1 - \frac{\mu_F \alpha_\tau}{3} \right) \notag \\
    & \le \left( \frac{\beta_{\tau-1} }{ \beta_\tau } \right)^{5+4\Hsmooth}
    \left( \frac{\alpha_{\tau-1} }{ \alpha_\tau } \right)^\frac{2}{\Fsmooth}
    \left( 1 - \frac{\mu_F \alpha_\tau}{3} \right) \notag \\
    & \le \left( 1 + \frac{\mu_G \beta_\tau }{64} \right)^{5+4\Hsmooth} \left( 1 + \frac{\Fsmooth \mu_F \alpha_\tau}{16} \right)^\frac{2}{\Fsmooth} \left( 1 - \frac{\mu_F \alpha_\tau}{3} \right) \notag \\
    & \le \left( 1 + \frac{5 \mu_G \beta_\tau }{32} \right) \exp\left( \frac{\mu_F \alpha_\tau}{8} \right) \left( 1 - \frac{\mu_F \alpha_\tau}{3} \right)  \notag \\
    & \le \left( 1 +\frac{\mu_F \alpha_\tau}{32} \right) \left( 1 + \frac{1.2 \mu_F \alpha_\tau}{8} \right) \left( 1 - \frac{\mu_F \alpha_\tau}{3} \right) \notag \\
    & = \left( 1 + \frac{2 \mu_F \alpha_\tau }{9} \right) \left( 1 - \frac{\mu_F \alpha_\tau}{3} \right)
    \le 1 - \frac{ \mu_F \alpha_\tau }{9}.
    \label{lem:step-size-ineq2:Eq4.1}
\end{align}
Then we have
\begin{align}
    & \quad \ \sum_{j=0}^t 
    \tilde{\alpha}_{j+1, t} \frac{\beta_j^{5+4\Hsmooth} }{ \alpha_j^4 } \left( \frac{\alpha_j}{\beta_j} \right)^\frac{2}{\Fsmooth} \notag \\
    & = \sum_{j=0}^t \alpha_j \beta_j^{4\Hsmooth} \kappa_j^5 \zeta_j^\frac{2}{\Fsmooth} \prod_{\tau=j+1}^t \left( 1 - \frac{\mu_F \alpha_\tau}{3} \right) \notag \\
    & = \beta_t^{4\Hsmooth} \kappa_t^5 \zeta_t^\frac{2}{\Fsmooth} \sum_{j=0}^t \alpha_j  \left( \frac{\beta_j}{\beta_t} \right)^{4\Hsmooth} \left( \frac{\kappa_j}{\kappa_t} \right)^5 \left(\frac{\zeta_j}{\zeta_t} \right)^\frac{2}{\Fsmooth} \prod_{\tau=j+1}^t \left( 1 - \frac{\mu_F \alpha_\tau}{3} \right) \notag \\
    & = \beta_t^{4\Hsmooth} \kappa_t^5 \zeta_t^\frac{2}{\Fsmooth} \sum_{j=0}^t \alpha_j \prod_{\tau=j+1}^t \left( \frac{\beta_{\tau-1}}{\beta_\tau} \right)^{4\Hsmooth}
    \left( \frac{\kappa_{\tau-1}}{\kappa_\tau} \right)^5 \left( \frac{\zeta_{\tau-1}}{\zeta_\tau} \right)^\frac{2}{\Fsmooth} \left( 1 - \frac{\mu_F \alpha_\tau}{3} \right) \notag \\
    & \overset{\eqref{lem:step-size-ineq2:Eq4.1}}{\le} \beta_t^{4\Hsmooth} \kappa_t^5 \zeta_t^\frac{2}{\Fsmooth} \sum_{j=0}^t \alpha_j \prod_{\tau=j+1}^t \left( 1 - \frac{\mu_F \alpha_\tau}{9} \right) \notag \\
    & = \frac{9 \beta_t^{4\Hsmooth} \kappa_t^5 \zeta_t^\frac{2}{\Fsmooth} } {\mu_F} \left[ 1 - \prod_{\tau=0}^t \left( 1 - \frac{\mu_F \alpha_\tau}{9} \right) \right]
    \le  \frac{9 \beta_t^{5+4\Hsmooth} }{\mu_F \alpha_t^5} \left( \frac{\alpha_t}{\beta_t} \right)^\frac{2}{\Fsmooth}. \notag
\end{align}
\end{itemize}

\item
\begin{itemize}
\item
The first claim in (iii) follows from (i) in Lemma~\ref{lem:help}.

\item The proof is the same as that of (iv) in Lemma~\ref{lem:step-size-ineq}.

\item For the last claim in (i),
we
define $\kappa_\tau = \frac{\beta_\tau}{\alpha_\tau}$. Then we have $\frac{\kappa_{\tau-1}}{\kappa_\tau} \le \frac{\beta_{\tau-1}}{\beta_\tau}$.
For $x \in (0, 0.1)$, one can check $(1+x)^{1+2 \Hsmooth } \le 1 + 4x$.
Since $\mu_G \beta_\tau \le \mu_G \iota_2 \le 1$ and $\frac{\beta_\tau}{\alpha_\tau} \le \frac{\mu_F}{\mu_G}$,
the growth condition implies
\begin{align}
     \left( \frac{\beta_{\tau-1}}{\beta_\tau} \right)^{2 \Hsmooth}  \frac{\kappa_{\tau-1}}{\kappa_\tau} \left(1 - \frac{\mu_G \beta_\tau}{2} \right)
    & \le 
    \left( \frac{\beta_{\tau-1} }{ \beta_\tau } \right)^{1+2 \Hsmooth}
    \left( 1 - \frac{\mu_G \beta_\tau}{2} \right) \notag\\
    &\le \left( 1 + \frac{\mu_G \beta_\tau }{64} \right)^{1+2 \Hsmooth} \left( 1 - \frac{\mu_G \beta_\tau}{2} \right) \notag \\
    & = \left( 1 + \frac{ \mu_G \beta_\tau }{16} \right) \left( 1 - \frac{\mu_G \beta_\tau}{2} \right) 
    \le 1 - \frac{ \mu_G \beta_\tau }{3}.
    \label{lem:step-size-ineq2:Eq5}
\end{align}
Then we have
\begin{align}
    \sum_{j=0}^t \tilde{\beta}_{j+1, t} \frac{\beta_j^{2+2\Hsmooth}}
    {\alpha_j}
    & = \sum_{j=0}^t \beta_j^{1+2 \Hsmooth} \kappa_j \prod_{\tau=j+1}^t \left( 1 - \frac{\mu_G \beta_\tau}{2} \right) \notag \\
    & = \beta_t^{2 \Hsmooth} \kappa_t \sum_{j=0}^t \beta_j \left( \frac{\beta_j}{\beta_t} \right)^{2\Hsmooth} \frac{\kappa_j}{\kappa_t} \prod_{\tau=j+1}^t \left( 1 - \frac{\mu_G \beta_\tau}{2} \right) \notag \\
    & = \beta_t^{2 \Hsmooth} \kappa_t \sum_{j=0}^t \beta_j \prod_{\tau=j+1}^t \left( \frac{\beta_{\tau-1} }{\beta_\tau} \right)^{2 \Hsmooth}  \frac{\kappa_{\tau-1}}{\kappa_\tau}  \left( 1 - \frac{\mu_G \beta_\tau}{2} \right) \notag \\
    & \overset{\eqref{lem:step-size-ineq2:Eq5}}{\le} \beta_t^{2\Hsmooth} \kappa_t \sum_{j=0}^t \beta_j \prod_{\tau=j+1}^t \left( 1 - \frac{\mu_G \beta_\tau}{3} \right) \notag \\
    & = \frac{3 \beta_t^{2\Hsmooth} \kappa_t}{\mu_G} \left[ 1 - \prod_{\tau=0}^t \left( 1 - \frac{\mu_G \beta_\tau}{3} \right) \right]
    \le  \frac{3 \beta_t^{1 + 2 \Hsmooth}}{\mu_G \alpha_t}. \notag
\end{align}
\end{itemize}

\item 
\begin{itemize}
\item For the first claim in (ii),
define $\zeta_\tau = \frac{\alpha_\tau}{\beta_\tau}$. Then we have $\frac{\zeta_{\tau-1}}{\zeta_\tau} \le \frac{\alpha_{\tau-1}}{\alpha_\tau}$.
For $x \in (0. 0.2)$, one can check $\exp(x) \le 1 + 1.2x$.
Since $\Fsmooth \le 1$ 
and $\mu_G \beta_\tau \le \mu_G \iota_2 \le 1$,
the growth condition implies
\begin{align}
     \frac{\alpha_{\tau-1}}{\alpha_\tau} \frac{ \beta_{\tau-1} }{ \beta_\tau  }  \left( \frac{\zeta_{\tau-1}}{\zeta_\tau} \right)^\frac{2}{\Fsmooth} \left(1 - \frac{\mu_G \beta_\tau}{2} \right)
    & \le \frac{\beta_{\tau-1}}{\beta_\tau} 
    \left( \frac{\alpha_{\tau-1} }{ \alpha_\tau } \right)^\frac{3}{\Fsmooth}
    \left( 1 - \frac{\mu_G \beta_\tau}{2} \right) \notag \\
    & \le \left( 1+\frac{\mu_G \beta_\tau}{64} \right) \left( 1 + \frac{ \Fsmooth \mu_G \beta_\tau }{16} \right)^\frac{3}{\Fsmooth} \left( 1 - \frac{\mu_G \beta_\tau}{2} \right) \notag \\
    & \le \left( 1 + \frac{\mu_G \beta_\tau}{64} \right) \exp \left( \frac{3 \mu_G \beta_\tau}{16} \right) \left( 1 - \frac{\mu_G \beta_\tau}{2} \right) \notag \\
    & \le \left( 1 + \frac{\mu_G \beta_\tau}{64} \right) \left( 1 + \frac{3.6 \mu_G \beta_\tau}{16}  \right) \left( 1 - \frac{\mu_G \beta_\tau}{2} \right) \notag \\
    & = \left( 1 + \frac{\mu_G \beta_\tau }{3} \right) \left( 1 - \frac{\mu_G \beta_\tau}{2} \right)
    \le 1 - \frac{ \mu_G \beta_\tau }{6}.
    \label{lem:step-size-ineq2:Eq6}
\end{align}
Then we have
\begin{align}
    \sum_{j=0}^t \tilde{\beta}_{j+1, t} \alpha_j \beta_j^2 \left( \frac{\alpha_j}{\beta_j} \right)^\frac{2}{\Fsmooth}
    & = \sum_{j=0}^t \alpha_j \beta_j^{2} \zeta_j^\frac{2}{\Fsmooth} \prod_{\tau=j+1}^t \left( 1 - \frac{\mu_G \beta_\tau}{2} \right) \notag \\
    & = \alpha_t \beta_t \zeta_t^\frac{2}{\Fsmooth} \sum_{j=0}^t \beta_j \frac{\alpha_j}{\alpha_t} \frac{\beta_j}{\beta_t} \left(\frac{\zeta_j}{\zeta_t} \right)^\frac{2}{\Fsmooth} \prod_{\tau=j+1}^t \left( 1 - \frac{\mu_G \beta_\tau}{2} \right) \notag \\
    & = \alpha_t \beta_t \zeta_t^\frac{2}{\Fsmooth} \sum_{j=0}^t \beta_j \prod_{\tau=j+1}^t \frac{\alpha_{\tau-1} }{\alpha_\tau} \frac{\beta_{\tau-1}}{\beta_\tau} \left( \frac{\zeta_{\tau-1}}{\zeta_\tau} \right)^\frac{2}{\Fsmooth} \left( 1 - \frac{\mu_G \beta_\tau}{2} \right) \notag \\
    & \overset{\eqref{lem:step-size-ineq2:Eq6}}{\le} \alpha_t \beta_t \zeta_t^\frac{2}{\Fsmooth} \sum_{j=0}^t \beta_j \prod_{\tau=j+1}^t \left( 1 - \frac{\mu_G \beta_\tau}{6} \right) \notag \\
    & = \frac{6 \alpha_t \beta_t \zeta_t^\frac{2}{\Fsmooth} }{\mu_G} \left[ 1 - \prod_{\tau=0}^t \left( 1 - \frac{\mu_G \beta_\tau}{6} \right) \right]
    \le  \frac{6 \alpha_t \beta_t}{\mu_G} \left( \frac{\alpha_t}{\beta_t} \right)^\frac{2}{\Fsmooth}. \notag
\end{align}

\item For the last claim in (iv), define 
$\kappa_\tau = \frac{\beta_\tau}{\alpha_\tau}$ and
$\zeta_\tau = \frac{\alpha_\tau}{\beta_\tau}$. Then we have $\frac{\kappa_{\tau-1} }{\kappa_\tau} \le \frac{ \beta_{\tau-1} }{\beta_\tau} $ and $\frac{\zeta_{\tau-1}}{\zeta_\tau} \le \frac{\alpha_{\tau-1}}{\alpha_\tau}$.
For $x \in (0. 0.2)$, one can check $\exp(x) \le 1+1.2x$ and for $x \in (0, 0.1)$, $(1+x)^{5+4\Hsmooth} \le 1 + 10x$.
Since 
$\mu_G \beta_\tau \le \mu_G \iota_2 \le 1$,
the growth condition implies
\begin{align}
     & \quad \ \left( \frac{ \beta_{\tau-1} }{ \beta_\tau  } \right)^{4 \Hsmooth}   \left( \frac{\kappa_{\tau-1}}{\kappa_\tau} \right)^5 \left( \frac{\zeta_{\tau-1}}{\zeta_\tau} \right)^\frac{2}{\Fsmooth} \left(1 - \frac{\mu_G \beta_\tau}{2} \right) \notag \\
    & \le \left( \frac{\beta_{\tau-1} }{ \beta_\tau } \right)^{5+4\Hsmooth}
    \left( \frac{\alpha_{\tau-1} }{ \alpha_\tau } \right)^\frac{2}{\Fsmooth}
    \left( 1 - \frac{\mu_G \beta_\tau}{2} \right) \notag \\
    & \le \left( 1 + \frac{\mu_G \beta_\tau }{64} \right)^{5+4\Hsmooth} \left( 1 + \frac{\Fsmooth \mu_G \beta_\tau}{16} \right)^\frac{2}{\Fsmooth} \left( 1 - \frac{\mu_G \beta_\tau}{2} \right) \notag \\
    & \le \left( 1 + \frac{5 \mu_G \beta_\tau }{32} \right) \exp\left( \frac{\mu_G \beta_\tau}{8} \right) \left( 1 - \frac{\mu_G \beta_\tau}{2} \right)  \notag \\
    & \le \left( 1 +\frac{5 \mu_G \beta_\tau}{32} \right) \left( 1 + \frac{1.2 \mu_G \beta_\tau}{8} \right) \left( 1 - \frac{\mu_G \beta_\tau}{2} \right) \notag \\
    & = \left( 1 + \frac{ \mu_G \beta_\tau }{3} \right) \left( 1 - \frac{\mu_G \beta_\tau}{2} \right)
    \le 1 - \frac{ \mu_G \beta_\tau }{6}.
    \label{lem:step-size-ineq2:Eq7}
\end{align}
Then we have
\begin{align}
    & \quad \ \sum_{j=0}^t \tilde{\beta}_{j+1, t} \frac{\beta_j^{6+4\Hsmooth} }{ \alpha_j^5 } \left( \frac{\alpha_j}{\beta_j} \right)^\frac{2}{\Fsmooth} \notag \\
    & = \sum_{j=0}^t \beta_j^{1+4\Hsmooth} \kappa_j^5 \zeta_j^\frac{2}{\Fsmooth} \prod_{\tau=j+1}^t \left( 1 - \frac{\mu_G \beta_\tau}{2} \right) \notag \\
    & = \beta_t^{4\Hsmooth} \kappa_t^5 \zeta_t^\frac{2}{\Fsmooth} \sum_{j=0}^t \beta_j  \left( \frac{\beta_j}{\beta_t} \right)^{4\Hsmooth} \left( \frac{\kappa_j}{\kappa_t} \right)^5 \left(\frac{\zeta_j}{\zeta_t} \right)^\frac{2}{\Fsmooth} \prod_{\tau=j+1}^t \left( 1 - \frac{\mu_G \beta_\tau}{2} \right) \notag \\
    & = \beta_t^{4\Hsmooth} \kappa_t^5 \zeta_t^\frac{2}{\Fsmooth} \sum_{j=0}^t \beta_j \prod_{\tau=j+1}^t \left( \frac{\beta_{\tau-1}}{\beta_\tau} \right)^{4\Hsmooth}
    \left( \frac{\kappa_{\tau-1}}{\kappa_\tau} \right)^5 \left( \frac{\zeta_{\tau-1}}{\zeta_\tau} \right)^\frac{2}{\Fsmooth} \left( 1 - \frac{\mu_G \beta_\tau}{2} \right) \notag \\
    & \overset{\eqref{lem:step-size-ineq2:Eq7}}{\le} \beta_t^{4\Hsmooth} \kappa_t^5 \zeta_t^\frac{2}{\Fsmooth} \sum_{j=0}^t \beta_j \prod_{\tau=j+1}^t \left( 1 - \frac{\mu_G \beta_\tau}{6} \right) \notag \\
    & = \frac{6 \beta_t^{4\Hsmooth} \kappa_t^5 \zeta_t^\frac{2}{\Fsmooth} } {\mu_G} \left[ 1 - \prod_{\tau=0}^t \left( 1 - \frac{\mu_G \beta_\tau}{6} \right) \right]
    \le  \frac{6 \beta_t^{5+4\Hsmooth} }{\mu_G \alpha_t^5} \left( \frac{\alpha_t}{\beta_t} \right)^\frac{2}{\Fsmooth}. \notag
\end{align}
\end{itemize}

\item The proof of (v) is nearly the same as that of (iv), except that we replace $\Fsmooth$ by $2\Gsmooth$.

\end{enumerate}

\end{proof}

\subsection{Analysis of Constants in Leading Terms}
\label{proof:de:const}

In this subsection, we provide the details for Remark~\ref{rema:const}.

\subsubsection{Derivation for (\ref{eq:const-leading-sum})}
we first analyze the constants $\CM_{x}$, $\CM_{xy,1}$ and $\CM_{y,1}$ appearing in Theorem~\ref{thm:decouple-short}, with their detailed expression in 
\eqref{eq:thm:decouple-constants}.
As mentioned in Remark~\ref{rema:const}, we focus on the diminishing step sizes $\alpha_t =\Theta(t^{-a})$ and $\beta_t = \Theta(t^{-b})$ with $0<a<b \le 1$ and $\frac{b}{a} < 1 + \frac{\Fsmooth}{2} \wedge \Fsmooth$ to capture the most essential dependence on the parameters.
Moreover,
as analyzed in Remark~\ref{rema:step-size}, we can focus on $t \ge t_0$ with a prescribed $t_0$, then the constants $\iota_1, \iota_2, \kappa, \rho$ in Assumption~\ref{assump:stepsize-new} are of the order $o(1)$ as $t_0 \to \infty$.
Then, in the expression of the $\CM_{x}$, $\CM_{xy,1}$ and $\CM_{y,1}$, all terms involving these constants can be viewed as higher-order infinitesimal (as $t_0 \to \infty$), as shown below
\begin{align*}
		\CM_{x} = \ccde_{x,1} + o(1),\quad 	\CM_{xy,1} = \ccde_{xy,1} + o(1),\quad
		\CM_{y,1} = \ccde_{y,1} + o(1).
\end{align*}
With $\ccde_{x,1} $ defined in \eqref{eq:constanx-ccde-012}, we have
\begin{equation}
\label{eq:constantx-leading}
    \CM_{x} = \frac{8 \Gamma_{11}}{\mu_F}  + o(1).
\end{equation}

Next, we analyze $\CM_{xy,1} = \ccde_{xy,1} + o(1)$.
Although $\ccde_{xy,1} = \frac{6}{\mu_F}(\cde_{xy,10} c_+ + \cde_{xy,11})  $ as defined in \eqref{eq:constantxy-ccde-01234},
we can improve the constant $c_+$ to $o(1)$ by applying the results in Theorem~\ref{thm:decouple-short}.
Note that with $1 < \frac{b}{a} < 1 + \frac{\Fsmooth}{2} \wedge \Fsmooth$, we have $\EB \| \yhat_{t} \|^2 = o(\alpha_t)$ from the proof of Corollary~\ref{cor:decouple-rates}.
If we plug this upper bound into \eqref{lem:decouple:Eq1} instead of plugging \eqref{lem:decouple:Eq2} into \eqref{lem:decouple:Eq1} as we did in the proof of Theorem~\ref{thm:decouple}, we can obtain
$$\ccde_{xy,1} = \frac{6}{\mu_F}\left(\cde_{xy,10} \cdot o(1) + \cde_{xy,11} \right)  = \frac{6\cde_{xy,11} }{\mu_F} + o(1). $$
Then with $\cde_{xy,11}$ defined in \eqref{eq:constantxy-de-101112} and $\cde_{xy,6}$ defined in \eqref{eq:constantxy-de-6789},
we have
\begin{equation}
\label{eq:constantxy-leading}
    \CM_{xy,1} 
    = \ccde_{xy,1} + o(1) = \frac{6\cde_{xy,11} }{\mu_F} + o(1)
    = \frac{6}{\mu_F} \left( \frac{12 \LGx \Gamma_{11} }{\mu_F} + \Sigma_{12} \right) + o(1).
\end{equation}

Finally, we analyze $\CM_{y,1} = \ccde_{y,1} + o(1)$.
Although 
$\ccde_{y,1} 
=  \frac{4}{\mu_G} \left( \cde_{y,3} c_+ + 2 d_y \LGx \ccde_{xy,1} \zeta + \Gamma_{22} \right)$
as defined in \eqref{eq:constanty-ccde-0123456},
we can also improve the constant $c_+$.
To that end, the expression of $\CM_{x}$ in \eqref{eq:constantx-leading} implies  $\EB \| \xhat_{t} \|^2 = \frac{8 \Gamma_{11}}{\mu_F} \alpha_{t-1} + o(\alpha_{t-1}) = \frac{8 \Gamma_{11}}{\mu_F} \alpha_{t} + o(\alpha_{t})$, where we have used $1\le \frac{\alpha_{t-1}}{\alpha_t} \le 1 + \frac{\mu_F}{16} \alpha_t = 1+o(1)$.
If we plug \eqref{lem:decouple:Eq3.1} and
this upper bound into \eqref{lem:decouple:Eq-y} when we deriving \eqref{lem:decouple:Eq5} in the proof of Theorem~\ref{thm:decouple},
we can obtain
\begin{align*}
    \ccde_{y,1} 
=  \frac{4}{\mu_G} \left[ \cde_{y,3} \Big( \frac{8 \Gamma_{11} }{\mu_F} + o(1) \Big) + 2 d_y \LGx \ccde_{xy,1} \zeta + \Gamma_{22} \right]
\end{align*}
Moreover, with $\zeta = 1 + \frac{\mu_F \iota_1}{16} + \frac{\mu_G \iota_2}{64} = 1 + o(1) $ defined below \eqref{lem:decouple:Eq3.1},  $\cde_{y,3} = \frac{4 L_H \LGx}{\mu_F} + 2 \LGx^2 \iota_1 = \frac{4 L_H \LGx}{\mu_F} + o(1) $ defined above \eqref{lem:decouple:Eq5}, and the expression of $\ccde_{xy,1}$ in \eqref{eq:constantxy-leading},
we have
\begin{align*}
    \CM_{y,1} = \ccde_{y,1} + o(1)
    = \frac{4}{\mu_G} \left[ \frac{32 L_H \LGx \Gamma_{11} }{\mu_F^2}  + \frac{12 d_y \LGx}{\mu_F} \left( \frac{12 \LGx \Gamma_{11} }{\mu_F} + \Sigma_{12} \right) + \Gamma_{22} \right] + o(1).
\end{align*}

\subsubsection{Derivation for (\ref{eq:tr-cov-x-y}) and (\ref{eq:norm-cov-xy})}

We first give the explicit expressions for $\Sigma_x$, $\Sigma_y$, and $\Sigma_{x,y}$.
\citet{mokkadem2006convergence}
assume that $\alpha_n = \Theta(n^{-a})$, $\beta_n = \Theta(n^{-b})$ with $1/2 < a < b \le 1$ and
\begin{equation*}
    \EB \left[ \begin{pmatrix}
        \xi_{t} \xi_{t}^\top & \xi_{t} \psi_{t}^\top \\
        \psi_{t} \xi_{t}^\top & \psi_{t} \psi_{t}^\top
    \end{pmatrix}  \Bigg|\, \FM_{t} \right]
    \overset{a.s.}{\to} 
    \begin{pmatrix}
        \Sigma_{\xi} & \Sigma_{\xi, \psi} \\
        \Sigma_{\xi, \psi}^\top & \Sigma_{\psi}
    \end{pmatrix}.
\end{equation*}
For brevity, we omit other regular conditions.
Then the asymptotic covariance matrices $\Sigma_x$ and $\Sigma_y$ in \eqref{eq:nonlinear-clt} have the following expressions
\begin{align}
\Sigma_x &= \int_0^\infty \exp(-B_1 s) \Sigma_{\xi} \exp(- B_1^\top s)  ds \label{eq:cov-x}
\\
\Sigma_y &= \int_0^\infty \exp \left( - \Big(B_3 - \frac{\invdiffslow I}{2} \Big) s \right) \widetilde{\Sigma}_{\psi} \exp \left( - \Big(B_3^\top - \frac{\invdiffslow I}{2} \Big) s \right) ds.\label{eq:cov-y}
\end{align} 
Here $\invdiffslow = \lim_{n \to \infty} \beta_{n+1}^{-1} - \beta_{n}^{-1}$ and $\invdiffslow > 0$ only when $b=1$.
$ \widetilde{\Sigma}_{\psi}$ is the asymptotic covariance of the modified noise  $ \breve{\psi}_t = \psi_t - B_2 B_1^{-1} \xi_t$ mentioned in Remark~\ref{rema:const} and has the following expression
\begin{equation}\label{eq:cov-psi-tilde}
        \widetilde{\Sigma}_{\psi} := \Sigma_{\psi} - B_2 B_1^{-1} \Sigma_{\xi,\psi} - \Sigma_{\xi, \psi}^\top B_1^{-\top} B_2^\top + B_2 B_1^{-1} \Sigma_{\xi} B_1^{-\top} B_2^\top.
    \end{equation}
Since $x^\star = H(y^\star)$, $\beta_t = o(\alpha_t)$, and $H$ is $L_H$-Lipschitz continuous, we have $\|H(y_t) - H(y^\star) \| \le L_H \| y_t - y^\star \| = o_p(\alpha_t^{1/2})$ and consequently
$\alpha_t^{-1/2} \xhat_t = \alpha_t^{-1/2} (x_t - x^\star) - \alpha_t^{-1/2} (H(y_t) - H(y^\star)) = \alpha_t^{-1/2} (x_t - x^\star) = o_p(1)$.
Then $\alpha_t^{-1/2} \xhat_t $ has the same asymptotic distribution as $\alpha_t^{-1/2}(x_t - x^\star)$ in \eqref{eq:nonlinear-clt}.
If we further assume that $\{ \alpha_t^{-1} \|\xhat_t \|^2 \}_{t=1}^\infty$ and $\{ \beta_t^{-1} \| \yhat_t \|^2\}_{t=1}^\infty$ are asymptotically uniformly integrable, then we can obtain $\lim_{t\to\infty} \alpha_t^{-1} \EB \| x_t \|^2 = \tr(\Sigma_x)$ and $\lim_{t\to\infty} \beta_t^{-1} \EB \| y_t \|^2 = \tr(\Sigma_y)$~\citep[Theorem~2.20]{van2000asymptotic}.
For $\Sigma_{x,y} = \lim_{t\to\infty} \beta_t^{-1} \EB [x_t y_t^\top]$, 
\citet[Theorem~2.6]{konda2004convergence} show that for the linear case, $\Sigma_{x,y}$ satisfies the following equation 
\begin{equation}\label{eq:cov-x-y}
    B_1 \Sigma_{x,y} + \Sigma_x B_2^\top = \Sigma_{\xi,\psi}
\end{equation}
Under the aforementioned uniform integrability condition, we have
$\lim_{t\to\infty} \beta_t^{-1} \|\EB x_t y_t^\top\| = \| \Sigma_{x,y} \|$.
Next, we derive the upper bounds for $\tr(\Sigma_x)$, $\tr(\Sigma_y)$ and $\|\Sigma_{x,y} \|$.

\textbf{The upper bound for $\tr(\Sigma_x)$}.
For $\Sigma_x$ define in \eqref{eq:cov-x}, we first derive an exponential upper bound for $\|e^{-B_1s} \|$ and $\|e^{-B_1^{\top}s} \| $.
By Proposition~\ref{prop:ensure-linearity}, we have $\frac{B_1 + B_1^{\top}}{2} \succeq \mu_F I$.
For any \(v\in\mathbb{R}^d\), define \(w(s) := e^{-B_1^{\top}s} v\). Then we have
\begin{align*}
\frac{d}{ds}\|w(s)\|^2
= \frac{d}{ds}\, v^{\top} e^{-B_1 s} e^{-B_1^{\top} s} v 
= - v^{\top} e^{-B_1 s}(B_1 + B_1^{\top}) e^{-B_1^{\top} s} v 
\le -2\mu_F \|w(s)\|^2.
\end{align*}
By Gr\"onwall's inequality, we obtain $\|w(s)\|^2 \le \|v\|^2 e^{-2\mu_Fs}$.
Since $v$ is arbitrary, this implies $\|e^{-B_1^{\top}s} \| \le e^{-\mu_F s}$, $\forall s\ge 0$.
Similarly, we have $\|e^{-B_1 s}\| \le e^{-\mu_F s}$, $\forall s\ge 0$.
Next, we bound the trace of the integrand. Using the fact that
\(\tr(AB) \le \|A\|\,\tr(B)\) when \(B\succeq 0\), we obtain
\begin{align*}
\tr\!\big(e^{-B_1 s}\Sigma_{\xi}e^{-B_1^{\top} s}\big)
&= \tr\!\big( e^{-B_1^{\top} s} e^{-B_1 s}\Sigma_{\xi}\big)
\le \big\|e^{-B_1^{\top} s} e^{-B_1 s}\big\| \,\tr(\Sigma_{\xi}) \\
&\le \|e^{-B_1^{\top} s}\|\,\|e^{-B_1 s}\|\,\tr(\Sigma_{\xi})
\le e^{-2\mu_F s}\,\tr(\Sigma_{\xi}).
\end{align*}
By Assumption~\ref{assump:noise-s}, we have $\tr(\Sigma_{\xi}) \le \Gamma_{11}$.
Then integrating over \(s\in[0,\infty)\) yields
\[
\tr(\Sigma_x)
= \int_{0}^{\infty} \tr\!\big(e^{-B_1 s}\Sigma_{\xi}e^{-B_1^{\top} s}\big)\,ds
\le \tr(\Sigma_{\xi})\int_{0}^{\infty} e^{-2\mu_F s}\,ds
= \frac{\tr(\Sigma_{\xi})}{2\mu_F}
\le \frac{\Gamma_{11}}{2\mu_F}.
\]

\textbf{The upper bound for $\tr(\Sigma_y)$}.
Recall that $\Sigma_y$ is defined in \eqref{eq:cov-y}.
Under Assumption~\ref{assump:stepsize-new}, we have $\invdiffslow < \mu_G/2$ and thus $\frac{B_3 + B_3^\top - \invdiffslow I}{2} \succeq \frac{\mu_G}{2} I$.
Similar to the former derivation, we can obtain $\tr(\Sigma_y)
\le \frac{\tr(\widetilde{\Sigma}_{\psi})}{\mu_G}$.
It remains to bound $\tr(\widetilde{\Sigma}_{\psi})$.
Recall that $\widetilde{\Sigma}_{\psi}$ is defined in \eqref{eq:cov-psi-tilde}. 
Letting $A:=B_2B_1^{-1}$, then we have
$\widetilde{\Sigma}_{\psi}
=\Sigma_{\psi} - A\Sigma_{\xi,\psi}-\Sigma_{\xi,\psi}^{\top}A^{\top}
+ A\Sigma_{\xi}A^{\top}$.
By Assumption~\ref{assump:noise-s}, we have $\tr(\Sigma_{\xi}) \le \Gamma_{11}$,
$\tr(\Sigma_{\psi}) \le \Gamma_{22}$ and $\|\Sigma_{\xi,\psi}\| \le \Sigma_{12}$.
Then we have
\begin{align*}
\tr(\widetilde{\Sigma}_{\psi})
&= \tr(\Sigma_{\psi})
- \tr(A\Sigma_{\xi,\psi})
- \tr(\Sigma_{\xi,\psi}^{\top}A^{\top})
+ \tr(A\Sigma_{\xi}A^{\top}) \\
&= \tr(\Sigma_{\psi})
-2\,\tr(A\Sigma_{\xi,\psi})
+ \tr(A\Sigma_{\xi}A^{\top}) 
\le \Gamma_{22}
+2\big|\tr(A\Sigma_{\xi,\psi})\big|
+\tr(A\Sigma_{\xi}A^{\top}).
\end{align*}
For the last term, since $\Sigma_{\xi}\succeq 0$, we have
\[
\tr(A\Sigma_{\xi}A^{\top})
=\tr(A^{\top}A\,\Sigma_{\xi})
\le \|A^{\top}A\|\,\tr(\Sigma_{\xi})
=\|A\|^{2}\,\tr(\Sigma_{\xi})
\le \|A\|^{2}\Gamma_{11}.
\]
For the second term, using $|\tr(X)|\le d_y \|X\|$ for $X\in\mathbb{R}^{d_y\times d_y}$, we obtain
\[
\big|\tr(A\Sigma_{\xi,\psi})\big|
\le d_y\,\|A\Sigma_{\xi,\psi}\|
\le d_y\,\|A\|\,\|\Sigma_{\xi,\psi}\|
\le d_y\,\|A\|\,\Sigma_{12}.
\]
Combining the above yields
\begin{equation*}
\tr(\widetilde{\Sigma}_{\psi})
\le \Gamma_{22}
+2d_y\,\|A\|\,\Sigma_{12}
+\|A\|^{2}\Gamma_{11}.
\end{equation*}
Finally, we bound $\|A\| = \|B_2 B_1^{-1} \|$.
By Proposition~\ref{prop:ensure-linearity}, we have $\|B_2\| \le \LGx$, 
$\frac{B_1+B_1^{\top}}{2}\succeq \mu_F I$, and
$\frac{B_3+B_3^{\top}}{2}\succeq \mu_G I$.
For any $x\neq 0$,
$\mu_F\|x\|^2 \le x^{\top}\frac{B_1+B_1^{\top}}{2}x
= x^{\top}B_1 x
\le \|B_1 x\|\,\|x\|$.
It follows that $\|B_1^{-1} B_1 x \| = \| x\|\le \frac{1}{\mu_F} \| B_1 x \|$.
Because $B_1$ is invertible and $x$ is arbitrary, we have $\|B_1^{-1}\| \le \frac{1}{\mu_F}$.
Therefore,
$\|A\|=\|B_2B_1^{-1}\| \le \|B_2\|\,\|B_1^{-1}\|
\le \frac{\LGx}{\mu_F}$.
Combining the above analysis yields
\[
\tr(\Sigma_y)
\le
\frac{1}{\mu_G}
\left[
\Gamma_{22}
+2d_y\,\frac{\LGx}{\mu_F}\,\Sigma_{12}
+\Big(\frac{\LGx}{\mu_F}\Big)^{2}\Gamma_{11}
\right].
\]

\textbf{The upper bound for $\|\Sigma_{x,y}\|$}.
Because $\Sigma_{x,y}$ satisfies \eqref{eq:cov-x-y},
we have $\Sigma_{x,y} = B_1^{-1} (\Sigma_{\xi,\psi} - \Sigma_x B_2^\top) $.
We have established $\|B_1^{-1}\| \le \frac{1}{\mu_F}$ and $\tr(\Sigma_x) \le \frac{\Gamma_{11}}{2\mu_F}$.
By Assumption~\ref{assump:noise-s} and Proposition~\ref{prop:ensure-linearity},
$\| \Sigma_{\xi,\psi} \| \le \Sigma_{12}$ and $\|B_2 \| \le \LGx$.
With $\|\Sigma_x \| \le \tr(\Sigma_x)$, we have
\begin{align*}
    \|\Sigma_{x,y} \|
    \le \frac{1}{\mu_F} \left( \| \Sigma_{\xi,\psi} \| + \|\Sigma_{x}\| \|B_2^\top \| \right)
    \le \frac{1}{\mu_F} \left( \Sigma_{12} + \frac{\LGx \Gamma_{11}}{2 \mu_F} \right).
\end{align*}

\section{Proof for the Lower Bound}
\label{append:proof:lower}

In this section, we present the proof of Proposition~\ref{prop:lower}.
This proof also relies on the convergence rates for the MSE and fourth-order moments without local linearity in Theorem~\ref{thm:first} and Lemma~\ref{lem:x+y_quartic}.
Under the conditions in Proposition~\ref{prop:lower}, one can check that the conditions of Theorem~\ref{thm:first} and Lemma~\ref{lem:x+y_quartic}, especially Assumptions~\ref{assump:stepsize-new-weak} and \ref{assump:stepsize}, are satisfied.
Moreover, Proposition~\ref{prop:lower} together with Theorem~\ref{thm:first} implies both $\EB |\xhat_t|^2$ and $\EB |\yhat_t|^2$ are of the order $\Theta(\alpha_t)$.

\begin{proof}[Proof of Proposition~\ref{prop:lower}]
Under the conditions of Proposition~\ref{prop:lower},
the update rule of two-time-scale SA becomes
\begin{align*}
		x_{t+1} &= x_{t} - \alpha_{t}\left(x_{t}-y_{t} + \xi_{t}\right),  \\   
		y_{t+1} &= y_{t} - \beta_{t}\left(y_t - |x_t - y_t|\sign(y_t)\right). 
\end{align*}
Note that for this example, $x^\star = y^\star = 0 \in \RB$, $\xhat_{t} = x_t - y_t$ and $\yhat_t = y_t$.
Correspondingly, the update for the errors term becomes
\begin{align}
    \xhat_{t+1} & = (1-\alpha_t) \xhat_t - \alpha_t \xi_t + y_t - y_{t+1} \nonumber \\
    & = (1-\alpha_t) \xhat_t - \alpha_t \xi_t + \beta_t \yhat_t - \beta_t |\xhat_t| \sign(\yhat_t), \label{eq:counter-x} \\
    \yhat_{t+1} & = (1-\beta_t) \yhat_t + \beta_t |\xhat_t| \sign(\yhat_t). \label{eq:counter-y}
\end{align}
Because this example satisfies Assumptions~\ref{assump:smooth:FH}  -- \ref{assump:smoothH} and \ref{assump:noise-s} with $L_H=L_F=\LGx = \LGy = \mu_F = \mu_G=1$, $S_H = 0$ and $\Hsmooth = 1$,
then by Theorem~\ref{thm:first} and Lemma~\ref{lem:x+y_quartic}, we have $\EB|\xhat_t|^2 + \EB|\yhat_t|^2 = \OM(\alpha_t)$ and $\EB |\xhat_t |^4 = \OM(\alpha_t^2)$.
By Assumption~\ref{assump:stepsize-new-weak}, we have $\alpha_t \le 1/12$ and $\beta_t \le 1/14$.
We also have $\beta_t / \alpha_t \le 1/ 200$.

The remaining proof proceeds in three steps. First, we show that $\EB | \xhat_t |^2 = \Omega(\alpha_t)$. Second, we establish that $\EB |\xhat_t| = \Omega(\sqrt{\alpha_t})$. Finally, we prove that $\EB |\yhat_t| = \Omega(\sqrt{\beta_t})$, and consequently $\EB | \yhat_t|^2 \ge (\EB |\yhat_t|)^2 = \Omega(\beta_t)$.
\vspace{0.2cm}

\noindent
\textbf{Step~1: Prove $\EB | \xhat_t |^2 = \Omega(\alpha_t)$.}
Squaring both sides of \eqref{eq:counter-x} and taking the expectation yields
\begin{align*}
    \EB | \xhat_{t+1} |^2
    & = (1-\alpha_t)^2 \EB |\xhat_t|^2 + \alpha_t^2 \Sigma_t + \beta_t^2 \EB |\yhat_t|^2 + \beta_t^2 \EB |\xhat_t|^2 + 2 \beta_t (1-\alpha_t) \EB \xhat_t \yhat_t \\
    & \quad \ -2 \beta_t (1-\alpha_t)\EB \xhat_t |\xhat_t| \sign(\yhat_t) - 2 \beta_t^2 \EB |\xhat_t \yhat_t| \\
    & \ge (1-2\alpha_t-2\beta_t) \EB |\xhat_t|^2 + \alpha_t^2 \Sigma_1 - 2\beta_t(1+\beta_t) \EB |\xhat_t \yhat_t|.
\end{align*}
By the AM-GM inequality, we have
    $2\beta_t(1+\beta_t) \EB |\xhat_t \yhat_t|
    \le \alpha_t \EB|\xhat_t|^2 + \frac{\beta_t^2(1+\beta_t)^2}{\alpha_t} \EB |\yhat_t|^2$.
Recall that $\beta_t \le 1/14$ and $\beta_t / \alpha_t \le 1/200$.
It follows that
\begin{align*}
    \EB | \xhat_{t+1} |^2
    \ge (1-4 \alpha_t) \EB |\xhat_t|^2 + \alpha_t^2 \Sigma_1 - \frac{2 \beta_t^2}{\alpha_t} \EB |\yhat_t|^2.
\end{align*}
Recall that $\EB |\yhat_t|^2 = \OM(\alpha_t)$ and $\beta_t / \alpha_t \to 0$.
Then there exists $t_0$ such that $\forall t \ge t_0$, we have
\begin{align*}
    \EB | \xhat_{t+1} |^2
    & \ge (1-4 \alpha_t) \EB |\xhat_t|^2 + \frac{\alpha_t^2 \Sigma_1}{2}  \\
    & \ge \EB |\xhat_{t_0}|^2 \prod_{i=t_0}^t (1-4\alpha_i) + \frac{\Sigma_1}{2} \sum_{i=t_0}^t \alpha_i^2 \prod_{j=i+1}^t (1-4\alpha_j). 
\end{align*}
Assumption~\ref{assump:stepsize-new-weak}
implies $\alpha_i$ is non-increasing and
$\alpha_t^{-1} \le \alpha_{t-1}^{-1} + \mu_F / 16$. Then we can obtain $\alpha_t^{-1} = \OM(t)$ and consequently $\alpha_t = \Omega(t^{-1})$.
By telescoping, we have
\begin{align*}
    \sum_{i=t_0}^t \alpha_i^2 \prod_{j=i+1}^t (1-4\alpha_j) 
    & \ge \alpha_t \sum_{i=t_0}^t \alpha_i \prod_{j=i+1}^t (1-4\alpha_j) 
    = \frac{\alpha_t}{4} \left[ 1 - \prod_{j=t_0}^t (1-4\alpha_j) \right] \\
    & \ge \frac{\alpha_t}{4} \left[1 - \exp\Big(-4\sum_{j=t_0}^t \alpha_j\Big)\right].
\end{align*}
Since $\alpha_t = \Omega(t^{-1})$, then there exists $t_1$ such that $\forall t \ge t_1$, $\sum_{i=t_0}^t \alpha_i^2 \prod_{j=i+1}^t (1-4\alpha_j) \ge \alpha_t / 8$.
Thus, $ \EB |\xhat_{t}|^2 = \Omega(\alpha_t)$.
\vspace{0.2cm}

\noindent
\textbf{Step~2: Prove $\EB |\xhat_t| = \Omega(\sqrt{\alpha_t})$.}
Combining the results of Theorem~\ref{thm:first} and Step~1 yields $\EB |\xhat_t|^2 = \Theta(\alpha_t)$.
Moreover, combining the result of Lemma~\ref{lem:x+y_quartic} and $\EB |\xhat_t|^4 \ge (\EB |\xhat_t|^2)^2$ also yields $\EB |\xhat_t|^4 = \Theta(\alpha_t^2)$.
Thus, there exists $\gamma_1> 0$ such that $\forall t, (\EB | \xhat_t |^2)^2 / \EB |\xhat_t|^4 \ge \gamma_1$.
To apply this result to give a lower bound for $\EB|\xhat_t|$, we need the following inequality.
\begin{prop}[Paley–Zygmund inequality]
 If $Z\ge 0$ is a random variable with finite variance and $\theta \in [0,1]$, then $\PB(Z > \theta\, \EB Z) \ge(1-\theta)^2 (\EB Z)^2 / \EB Z^2$.
\end{prop}
Let $Z = |\xhat_t|^2$ and $\theta \in (0,1)$.
Then $$\PB\left( |\xhat_t| \ge \sqrt{\theta\, \EB |\xhat_t|^2} \right) = \PB\left( |\xhat_t|^2 \ge \theta\, \EB |\xhat_t|^2 \right)
\ge (1-\theta)^2 \gamma_1.$$
Thus, $$\EB |\xhat_t| \ge \sqrt{\theta\, \EB |\xhat_t|^2}\,  \PB\left( |\xhat_t| \ge \sqrt{\theta\, \EB |\xhat_t|^2} \right)
\ge (1-\theta)^2 \sqrt{\theta}\, \gamma_1 \sqrt{\EB |\xhat_t|^2} = \Omega(\sqrt{\alpha_t}).$$
\vspace{0.2cm}

\noindent
\textbf{Step~3: Prove $\EB |\yhat_t| = \Omega(\sqrt{\beta_t})$.}
Since $\EB |\xhat_t| = \Omega(\sqrt{\alpha_t})$, there exists $\gamma_2 > 0$ such that $\forall t,\ \EB |\xhat_t| \ge \gamma_2 \sqrt{\alpha_t}$.
Now we apply this result to derive the lower bound for $\EB |\yhat_t|$.
First, we note that the two terms on the right-hand side of \eqref{eq:counter-y} have the same sign, leading to $\sign(\yhat_{t+1}) = \sign(\yhat_t)$.
Because $y_0 \neq y^\star$, we have $\sign(\yhat_0) \neq 0$.
Using $\beta_t \le 1/14$, which follows from Assumption~\ref{assump:stepsize-new-weak}, we obtain that $\sign(\yhat_t) \neq 0$ for all $t \ge 0$.
Thus, we have
\begin{align*}
    \EB|\yhat_{t+1}|
    & =\EB \yhat_{t+1} \sign(\yhat_t)
    = (1-  \beta_t) \EB |\yhat_t| + \beta_t \EB|\xhat_t|
    \ge (1-  \beta_t) \EB |\yhat_t| + \gamma_2 \beta_t \sqrt{\alpha_t} \\
    & \ge \EB |\yhat_0| \prod_{i=0}^t(1-\beta_i) +\gamma_2 \sum_{i=0}^t \beta_i \sqrt{\alpha_i} \prod_{j=i+1}^t(1-\beta_j).
\end{align*}
Similar to the analysis in Step~1, for the second term, we have
\begin{align*}
   \sum_{i=0}^t \beta_i \sqrt{\alpha_i} \prod_{j=i+1}^t(1-\beta_i)
   \ge \sqrt{\alpha_t} \sum_{i=0}^t \beta_i \prod_{j=i+1}^t(1-\beta_i)
   \ge \sqrt{\alpha_t}  \left[1 - \exp\Big(-\sum_{j=0}^t \beta_j\Big)\right]
\end{align*}
Since $\beta_t = \Omega(t^{-1})$, then there exists $t_2$ such that $\forall t \ge t_2$, $\sum_{i=0}^t \beta_i \sqrt{\alpha_i} \prod_{j=i+1}^t(1-\beta_i) \ge \sqrt{\alpha_t} / 2$.
Thus, $ \EB |\yhat_{t}| = \Omega(\sqrt{\alpha_t})$ and 
$\EB | \yhat_{t} |^2 \ge \EB (|\yhat_{t}|)^2 = \Omega(\alpha_t)$.

\end{proof}

\end{document}